\setlist{itemsep=1pt,topsep=3pt,parsep=1pt,leftmargin=\parindent,itemindent=\parindent}
\setlist[itemize]{itemsep=1pt,topsep=2pt,parsep=1pt,wide}
\renewcommand{\bar}{\overline}
\newcommand{\lint}{\llbracket}
\newcommand{\rint}{\rrbracket}
\numberwithin{equation}{section}
\newtheorem{theorem}{Theorem}[section]
\newtheorem{lemma}[theorem]{Lemma}
\newtheorem{proposition}[theorem]{Proposition}
\newtheorem{rem}[theorem]{Remark}
\newcommand{\dd}{\mathrm{d}}
\newcommand{\ind}{\mathbf{1}}
\newcommand{\Z}{\mathbb{Z}}
\renewcommand{\P}{\mathrm{P}}
\newcommand{\E}{\mathrm{E}}
\renewcommand{\tilde}{\widetilde}
\renewcommand{\hat}{\widehat}
\newcommand{\bt}{\mathbf{t}}
\newcommand{\cG}{{\ensuremath{\mathcal G}} }
\newcommand{\cX}{{\ensuremath{\mathcal X}} }
\newcommand{\cJ}{{\ensuremath{\mathcal J}} }
\newcommand{\cN}{{\ensuremath{\mathcal N}} }
\newcommand{\cL}{{\ensuremath{\mathcal L}} }
\newcommand{\cT}{{\ensuremath{\mathcal T}} }
\newcommand{\cS}{{\ensuremath{\mathcal S}} }
\newcommand{\bP}{{\ensuremath{\mathbf P}} }
\newcommand{\bQ}{{\ensuremath{\mathbf Q}} }
\newcommand{\bE}{{\ensuremath{\mathbf E}} }
\DeclareMathSymbol{\leqslant}{\mathalpha}{AMSa}{"36} % nicer `smaller or equal'
\DeclareMathSymbol{\geqslant}{\mathalpha}{AMSa}{"3E} % nicer `larger or equal'
\DeclareMathSymbol{\eset}{\mathalpha}{AMSb}{"3F}     % nicer `emptyset'
\renewcommand{\leq}{\;\leqslant\;}                   % re-def. of < or \renewcommand{\geq}{\;\geqslant\;}                   % re-def. of > or \newcommand{\dd}{\,\text{\rm d}}             % a straight d for differentials
\newcommand{\bbE}{{\ensuremath{\mathbb E}} }
\newcommand{\bbN}{{\ensuremath{\mathbb N}} }
\newcommand{\bbP}{{\ensuremath{\mathbb P}} }
\newcommand{\bbR}{{\ensuremath{\mathbb R}} }
\newcommand{\bbZ}{{\ensuremath{\mathbb Z}} }
\newcommand{\gb}{\beta}
\newcommand{\gep}{\varepsilon}       % \ge already exists...
\newcommand{\go}{\omega}
\newcommand{\gl}{\lambda}
\def\captionfont@{\footnotesize}
\def\captionheadfont@{\scshape}
\long\def\@makecaption#1#2{%
  \vspace{2mm}
  \setbox\@tempboxa\vbox{\color@setgroup
    \advance\hsize-6pc\noindent
    \captionfont@\captionheadfont@#1\@xp\@ifnotempty\@xp
        {\@cdr#2\@nil}{.\captionfont@\upshape\enspace#2}%
    \unskip\kern-6pc\par
    \global\setbox\@ne\lastbox\color@endgroup}%
  \ifhbox\@ne % the normal case
    \setbox\@ne\hbox{\unhbox\@ne\unskip\unskip\unpenalty\unkern}%
  \fi
  \ifdim\wd\@tempboxa=\z@ % this means caption will fit on one line
    \setbox\@ne\hbox to\columnwidth{\hss\kern-6pc\box\@ne\hss}%
  \else % tempboxa contained more than one line
    \setbox\@ne\vbox{\unvbox\@tempboxa\parskip\z@skip
        \noindent\unhbox\@ne\advance\hsize-6pc\par}%
\fi
  \ifnum\@tempcnta<64 % if the float IS a figure...
    \addvspace\abovecaptionskip
    \moveright 3pc\box\@ne
  \else % if the float IS NOT a figure...
    \moveright 3pc\box\@ne
    \nobreak
    \vskip\belowcaptionskip
  \fi
\relax
}
\def\writefig#1 #2 #3 {\rlap{\kern #1 truecm
\raise #2 truecm \hbox{#3}}}
\newcommand{\tf}{\mathtt{F}}
\newcommand{\tg}{\mathfrak{f}}
\newcommand{\cons}{\texttt{c}}
\title[Random Walk Pinning Model I: lower bounds and disorder irrelevance]{The Random Walk Pinning Model  I:\\ 
Lower bounds on the free energy and disorder irrelevance}
\author{Quentin Berger}
\address{Université Sorbonne Paris Nord, Laboratoire d'Analyse, Géométrie et Applications, CNRS UMR 7539, 99 Av. J-B Clément, 93430 Villetaneuse, France and Institut Universitaire de France}
\email{quentin.berger@math.univ-paris13.fr}
\author{Hubert Lacoin}
\address{IMPA, Estrada Dona Castorina, 110,
Rio de Janeiro, Brazil}
\email{lacoin@impa.br}
\subjclass[2020]{Primary: 82B44; Secondary: 60K35, 82D60.}
\keywords{Random Walk Pinning Model, disordered systems, Harris criterion, disorder irrelevance}
\date{\today}
\begin{document}

\begin{abstract}
The Random Walk Pinning Model (RWPM) is a statistical mechanics model in which the trajectory of a continuous time random walk $X=(X_t)_{t\geq 0}$ is rewarded according to the time it spends together with a moving catalyst whose position is given by $Y=(Y_t)_{t\geq 0}$.
More specifically for a system of size $T$, the law of~$X$ is tilted by the Gibbs factor $\exp(\beta \int_0^T \mathbf{1}_{\{X_t=Y_t\}} \mathrm{d}t)$, where $\beta \geq 0$ is a parameter modeling the strength of the interaction.
The position $(Y_t)_{t\ge 0}$ is given by the \textit{quenched} trajectory of a second continuous-time random walk, with the same distribution as $X$ but a different jump rate~$\rho \geq 0$. 
The parameter $\rho$ is interpreted as the \textit{disorder intensity}.
For fixed $\rho\ge 0$, the RWPM undergoes a localization phase transition when $\beta$ passes a critical value $\beta_c(\rho)$.
We thoroughly investigate whether a disorder of arbitrarily small intensity affects the features of the phase transition of the RWPM, known as the question of disorder relevance.
We interpret our results in the light of a prediction made in the~70's in theoretical physics: Harris' criterion.
We focus our analysis on the case of transient \textit{$\gamma$-stable walks} on~$\mathbb{Z}$, \textit{i.e.}\ random walks in the domain of attraction of a $\gamma$-stable law, with $\gamma\in (0,1)$. 
In the present paper, we derive lower bounds for the free energy, which results in either a proof of disorder irrelevance when $\gamma \in(\frac23,1)$ or upper bounds on the critical point shift when \(\gamma \in (0,\frac23]\).
More precisely, when $\gamma \in(\frac23,1)$, our estimates imply that that $\beta_c(\rho)=\beta_c(0)$ and $\rho$ is small, showing disorder irrelevance.
When $\gamma\in (0,\frac23]$ our companion paper~\cite{BLrel} shows that $\beta_c(\rho)>\beta_c(0)$ for every $\rho>0$, showing disorder relevance: we derive here upper bounds on the critical point shift $\beta_c(\rho)-\beta_c(0)$, which are matching the lower bounds obtained in~\cite{BLrel}.
For good measure, our analysis also includes the case of the simple random walk of $\bbZ^d$ (for $d\ge 3$) for which no upper bound on the critical point shift was previously known.
\end{abstract}

\maketitle

% \setcounter{tocdepth}{1}
% \tableofcontents

\section{Introduction}

The continuous time Random Walk Pinning Model (RWPM) has been introduced in \cite{BS10}. It is  a random walk \((X_t)_{t\geq 0}\) rewarded according to the time it spends together with another \textit{quenched} random walk trajectory \((Y_t)_{t\geq 0}\). 
The partition function of the model -- which is the main object of study in this paper -- corresponds to the Parabolic Anderson Model with a moving catalyst, studied earlier in~\cite{GH06}.
Let us also mention that the discrete time version of the model had already appeared in~\cite{Bir04}, as a partial annealing of the partition function of the directed polymer model under the size biased law, see also \cite{BGdH11,GdH07} for the continuous time setup.
The RWPM model has since then been studied for its own sake in \cite{BL11,BT10,BS10,BS11}, with the scope of understanding how disorder affects its phase transition. 
We refer to \cite[\S3.2]{Zyg24} for an overview of the model as well as of its connection with the directed polymer in the random environment.

\subsection{The model}

Given $J: \bbZ^d \to \bbR_+$ a \textit{symmetric} function on $\bbZ^d$ such that $\sum_{x\in \bbZ^d} J(x) =1$ we let $W=(W_t)_{t\geq 0}$ be a continuous-time random walk on $\mathbb Z^d$ with transition kernel~$J$.
In other words, $W$ is a continuous time Markov chain with generator $\cL$ given by
\begin{equation}
\mathcal L f(x)= \sum_{y\in\bbZ^d} J(y)\left(f(x+y)-f(y)\right) \,.
\end{equation}
We let $\P$ denote the distribution of $W$.
In this article, we consider more specifically two families of kernel $J$: \textit{(i)} the simple random walk on $\bbZ^d$, for $d\geq 1$; \textit{(ii)} one-dimensional random walks in the domain of attraction of a $\gamma$-stable process with $\gamma \in (0,2)$.
(We will soon restrict our attention to the transient cases \(d\geq 3\) or \(\gamma\in (0,1)\).)
These are given by the respective expressions
\begin{align}
  \tag{SRW}
  \label{SRW}
  J(x)&= \frac{1}{2d} \; \ind_{\{|x|=1\}} \,, \qquad x\in \bbZ^d \,, \\
  J(x)&=\varphi(|x|)(1+|x|)^{-(1+\gamma)} \,,\qquad x\in \bbZ \,,
  \tag{$\gamma$-stable}
  \label{JPP}
\end{align}
where $\varphi$ is a positive slowly varying function, \textit{i.e.}\ such that \(\lim_{x\to\infty}\varphi(cx)/\varphi(x) =1\) for any \(c>0\), see \cite{BGT89}.
Let us note that in both cases, by the local central limit theorem, the return probability to zero satisfies
\begin{equation}
\label{eq:limitpn1}
\lim_{t\to\infty} \frac{\log \P(W_{t}=0)}{\log t}=  -a \,,
\end{equation}
with $a=\frac{d}{2}$ for~\eqref{SRW} and $a=\frac{1}{\gamma}$ for~\eqref{JPP}.
For technical convenience, when considering $\gamma$-stable walks, we assume that the function $J$ is unimodal, that is $J$ is a non-increasing function of~$|x|$.

\begin{rem}
The value of $J(0)$ is not relevant for the definition of $\cL$.
Our assumption that $J$ is unimodal however implies that $J(0)>0$. The ``true'' jump rate of~$W$ is thus $\sum_{x\ne 0} J(x) <1$.
\end{rem}

Given $\rho \in [0,1)$, we consider  $X,Y$  two \emph{independent} continuous-time random walks which are slowed-down versions of $W$, with respective jump rates $(1-\rho)$ and $\rho$. In other words 
\[
X_t = W^{(1)}_{(1-\rho)t} \quad \text{ and }\quad Y_t = W^{(2)}_{\rho t} , 
\]
where $W^{(1)},W^{(2)}$ are two independent copies of $W$.
Since~$X$ and~$Y$ play different roles, we use different letters to denote their distribution: we let $\bP_{1-\rho}$ (or simply $\bP$) denote the law of~$X$ and $\bbP_{\rho}$ (or simply~$\bbP$) the law of~$Y$. 
% In most instances the superscripts $\rho$ and $(1-\rho)$ will not appear to lighten the notation.
Given $T>0$ (the polymer length) and a fixed realization of $Y$ (\emph{quenched disorder}) we define an energy functional on the set of trajectories by setting
\[
H^Y_T(X):=\int^T_0 \ind_{\{X_t=Y_t\}} \dd t \,.
\]
Then, given $\gb>0$ (the inverse temperature), the Random Walk Pinning Model (RWPM) is defined as the probability distribution $\bP_{\gb,T}^Y$ which is absolutely continuous with respect to $\bP$, with Radon-Nikodym density given by
\begin{equation}
\label{def:gibbs}
\frac{\dd \bP^{Y}_{\gb,T}}{ \dd \bP} (X)  := \frac{1}{Z_{\gb,T}^{Y}}\, e^{\beta H^Y_T(X)} \,, \quad  \text{ where } \quad 
Z_{\gb,T}^{Y} := \bE\Big[ e^{\beta H^Y_T(X)} \Big] \,.
\end{equation}
The renormalization factor $Z_{\gb,T}^{Y}$ makes $\bP_{\gb,T}^Y$ a probability measure and is referred to as the (\textit{free boundary}) partition function of the model.
When compared with $\bP$, the measure $\bP^{Y}_{\gb,T}$ favors trajectories $(X_t)_{t\ge 0}$ which overlap with $Y$ within the time interval $[0,T]$.
For technical reasons, we also define a constrained version of the model, setting $\bP^{Y,\cons}_{\beta,T}:= \bP^{Y}_{\gb,T}(\, \cdot  \mid X_T=Y_T)$. 
The corresponding \textit{constrained boundary} partition function is
\begin{equation}\label{compart}
  Z^{Y,\cons}_{\beta,T}:= \beta \bE\left[ e^{\beta  H^Y_T(X)}  \ind_{\{X_T=Y_T\}}\right] \,,
\end{equation}
where the factor $\beta$ in front of $\bE[\cdot]$ is introduced for practical reasons.

\subsection{Free energy and phase transition}

We are interested in the behavior of $(X_t)_{t\in [0,T]}$ under $\bP^{Y}_{\gb,T}$ and in particular in the typical value of the overlap $H^Y_T(X)$.
This behavior can be extracted from the study of the (quenched) free energy of the model, the definition of which we now introduce (together with a summarized proof of its existence).

\begin{proposition}
\label{freeenergy}
If either \eqref{SRW} or \eqref{JPP} holds,
 the \emph{quenched free energy}, defined by
\[
\tf(\rho,\beta):= \lim_{T\to \infty} \frac{1}{T} \log  Z^{Y}_{\beta,T} = \lim_{T\to \infty} \frac{1}{T}\bbE\left[ \log  Z^{Y}_{\beta,T}\right],\]
exists for every $\rho\in [0,1)$ and $\beta>0$ and the convergence holds $\bbP$-almost surely and in $L^1(\bbP)$.
The same limit is obtained when one replaces $Z^{Y}_{\beta,T} $ with $Z^{Y,\cons}_{\beta,T}$. It satisfies the following properties:
\begin{enumerate}
\item \label{free:i}
For every $\beta\geq 0$ and $\rho\in [0,1)$, $\tf(\rho,\beta)\ge 0$;
\item \label{free:ii}
The function $\beta\mapsto \tf(\rho,\beta)$ is non-decreasing and convex;
\item \label{free:iii}
The function $\rho\mapsto \tf(\rho,\beta)$ is non-increasing.
\end{enumerate}

\noindent
We can then define the critical point 
\[
\beta_c(\rho):= \inf\big\{ \gb > 0 \,:\,  \tf(\rho,\gb) >0 \big\} \,,
\]
and we have the following property:
\begin{enumerate}[resume]
\item \label{free:iv}
The function $\rho\mapsto \beta_c(\rho)$ is non-decreasing.
\end{enumerate}
\end{proposition}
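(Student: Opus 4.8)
The plan is to establish the existence of the free energy by a superadditivity argument, then deduce the listed properties from standard convexity/monotonicity manipulations, and finally obtain the monotonicity of $\beta_c$ as an immediate consequence of property~\eqref{free:iii}. First I would prove existence: conditioning on the disorder $Y$ and using the Markov property of $X$ together with the Markov property of $Y$, one checks that $\log Z^{Y,\cons}_{\beta,T+S}(Y) \geq \log Z^{Y,\cons}_{\beta,T}(Y) + \log Z^{\theta_T Y, \cons}_{\beta,S}(\theta_T Y)$, where $\theta_T$ denotes the time-shift by $T$; the constrained boundary condition $X_T = Y_T$ is precisely what makes this splitting exact (both walks renew their relative position at time $T$). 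Since $(\log Z^{\theta_T Y,\cons}_{\beta,S})_{T\geq 0}$ has, for fixed $S$, the same law as $\log Z^{Y,\cons}_{\beta,S}$ and the increments are stationary, Kingman's subadditive ergodic theorem (applied to $-\log Z^{Y,\cons}$) gives $\bbP$-a.s.\ and $L^1$ convergence of $\frac1T \log Z^{Y,\cons}_{\beta,T}$ to a constant $\tf(\rho,\beta) = \sup_T \frac1T \bbE[\log Z^{Y,\cons}_{\beta,T}]$. One then shows the free and constrained partition functions have the same exponential growth rate by sandwiching: $Z^{Y,\cons}_{\beta,T} \leq \beta Z^{Y}_{\beta,T}$ trivially (up to the $\beta$ prefactor in \eqref{compart}), and conversely a one-step argument over the interval $[T, T+1]$ (forcing a meeting of the two walks, which costs only a constant depending on $\beta,\rho$) gives $Z^Y_{\beta,T} \leq C(\beta,\rho)\, Z^{Y,\cons}_{\beta,T+1}$, so the $\frac1T\log$ limits coincide.

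Next I would read off the properties. For~\eqref{free:i}: by Jensen's inequality applied to the convex function $\exp$, or simply by $e^{\beta H^Y_T(X)} \geq 1$, we get $Z^Y_{\beta,T} \geq 1$, hence $\tf(\rho,\beta) \geq 0$. For~\eqref{free:ii}: for fixed $Y$ and $T$, the map $\beta \mapsto \log Z^Y_{\beta,T}$ is convex (a log-moment-generating function of the random variable $H^Y_T(X)$ under $\bP$) and non-decreasing (since $H^Y_T \geq 0$); these properties pass to the limit $\tf(\rho,\cdot)$. For~\eqref{free:iii}: here I would use a coupling. Realize $Y$ with jump rate $\rho$ and $Y'$ with jump rate $\rho' \leq \rho$ on the same probability space by thinning the jump times of a rate-$\rho$ clock; conditionally this does not directly give $H^{Y'}_T(X) \geq H^{Y}_T(X)$ pointwise, so instead the cleaner route is the observation that in the constrained partition function $Z^{Y,\cons}$ one can use that $X$ and $Y$ enter only through the difference walk $D_t := X_t - Y_t$, which is itself a continuous-time random walk (a time-changed version of $W$, with a jump rate that is a function of $\rho$ only through the total rate $(1-\rho)+\rho$... — rather, $D$ has generator built from $J$ with the jump rates of $X$ and $Y$ superposed). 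Writing $Z^{Y,\cons}_{\beta,T}$ as $\beta\,\bE^{(D)}[e^{\beta \int_0^T \ind_{\{D_t = 0\}}\dd t}\ind_{\{D_T=0\}}]$ with $D$ having jump rate $(1-\rho)+\rho$... makes the $\rho$-dependence disappear from the *marginal* of $D$ — so one must be careful. The correct formulation: $D = X - Y$ where the jumps of $X$ occur at rate $(1-\rho)$ with increments $\sim J$ and those of $Y$ at rate $\rho$ with increments $\sim J$; since $J$ is symmetric, $-Y$ has the same law as a rate-$\rho$ walk, and $D$ is the superposition, i.e.\ a walk with jump rate $1$ and increment law still $J$ (because $(1-\rho)J + \rho J = J$). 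Thus $Z^{Y,\cons}_{\beta,T}$ does not depend on $\rho$ at all?! That cannot be, so the resolution is that $H^Y_T(X) = \int_0^T \ind_{\{D_t=0\}}\dd t$ does only involve $D$ — hence the *annealed-in-one-variable* quantity is $\rho$-free, but the *quenched* free energy fixes $Y$ first. So for~\eqref{free:iii} I would instead compare quenched free energies via the interpolation/concavity in $\rho$ using a direct argument: by Jensen, $\bbE_\rho[\log Z^Y_{\beta,T}] \leq \log \bbE_\rho[Z^Y_{\beta,T}] = \log \bE\bbE_\rho[e^{\beta\int_0^T \ind_{\{D_t=0\}}}] $, and as just noted $\bbE_\rho[Z^Y_{\beta,T}]$ is independent of $\rho$; combined with the $\rho=0$ case $\tf(0,\beta) = \lim \frac1T \log \bbE[e^{\beta\int_0^T\ind_{\{X_t=0\}}\dd t}]$ (since $Y\equiv 0$), this yields $\tf(\rho,\beta) \leq \tf(0,\beta)$. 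For general monotonicity in $\rho$ (not just comparison with $\rho=0$), one iterates: reveal $Y$ in two stages, the slow part (rate $\rho'$) and an independent fast increment (rate $\rho-\rho'$), apply Jensen conditionally on the slow part to integrate out the fast part, and use that the inner annealed object over the fast increment reduces to the $\rho'$-model; this gives $\bbE_\rho[\log Z^Y_{\beta,T}] \leq \bbE_{\rho'}[\log Z^{Y'}_{\beta,T}]$ for $\rho' \leq \rho$, hence $\tf(\rho,\beta) \leq \tf(\rho',\beta)$.

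Finally, property~\eqref{free:iv} follows with no extra work: fix $\rho' \leq \rho$. For any $\beta$ with $\tf(\rho,\beta) > 0$, property~\eqref{free:iii} gives $\tf(\rho',\beta) \geq \tf(\rho,\beta) > 0$, so $\beta \geq \beta_c(\rho') $ is not forced, rather $\beta > \beta_c(\rho')$ is allowed — precisely, $\{\beta : \tf(\rho,\beta)>0\} \subseteq \{\beta : \tf(\rho',\beta)>0\}$, and taking infima, $\beta_c(\rho') \leq \beta_c(\rho)$. The main obstacle is the correct bookkeeping in~\eqref{free:iii}: the naive "pass to the difference walk" identity shows the *fully annealed* quantity has no $\rho$-dependence, which is a red herring, and one must instead set up the two-stage revelation of $Y$ and apply Jensen at the right conditional level — getting this coupling clean (especially matching the thinned fast-increment process to an honest rate-$(\rho-\rho')$ walk so that the inner expectation is genuinely the $\rho'$-partition function) is the delicate point. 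Everything else is routine: superadditivity for existence, and convexity/monotonicity of $\beta \mapsto \log Z$ for fixed disorder passing to the limit.
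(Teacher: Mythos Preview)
Your overall strategy matches the paper's: superadditivity of the constrained partition function plus Kingman's theorem for existence, direct convexity/monotonicity in $\beta$ for~\ref{free:ii}, a two-stage revelation of $Y$ with conditional Jensen for~\ref{free:iii} (this is exactly the argument the paper cites from \cite[Appendix~C]{BS11}), and~\ref{free:iv} from~\ref{free:iii}. The meandering discussion around the difference walk is harmless---you land on the right argument in the end.

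There is one genuine gap. Your sandwich step $Z^Y_{\beta,T} \le C(\beta,\rho)\, Z^{Y,\cons}_{\beta,T+1}$ via ``forcing a meeting on $[T,T+1]$'' does not work: after a free trajectory on $[0,T]$, the position $X_T$ is uncontrolled, so $\bP(X_{T+1}=Y_{T+1}\mid X_T)$ has no lower bound uniform in $X_T$ (or in $Y$). No deterministic constant $C(\beta,\rho)$ can absorb this. The paper bypasses this entirely with the exact identity
\[
Z^{Y}_{\beta,T} = 1 + \int_0^T Z^{Y,\cons}_{\beta,t}\,\dd t,
\]
obtained by decomposing according to the last time $t\le T$ with $X_t=Y_t$ (equivalently, by reading off~\eqref{goodexpress}). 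This immediately gives both directions of the comparison once the constrained limit is known. A second, smaller omission: Kingman's theorem is naturally an integer-time statement, and you should say how to pass to real $T$. The paper does this explicitly in Appendix~\ref{app:freeen}: first establish the limit along $n\in\bbN$, then control $\log Z^{Y,\cons}_{\beta,[\lfloor T\rfloor -1,\,T]}$ and $\log Z^{Y,\cons}_{\beta,[T,\,\lceil T\rceil +1]}$ by bounding them via a single-jump estimate $\bP(X_s=y)\ge s e^{-s}J(y)$ and using that $|Y_T-Y_{\lfloor T\rfloor -1}|$ grows subexponentially in $T$ (Borel--Cantelli plus the polynomial tail of $J$).
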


\begin{proof}
The existence of the free energy in the case of the simple random walk has been established in \cite[Theorem~1.1]{BS10}, together with property~\ref{free:i}. 
For the sake of completeness we provided a proof for the $\gamma$-stable  walks in Appendix~\ref{app:freeen}.
It can be checked by hand that the property~\ref{free:ii} is satisfied by $\log  Z^{Y}_{\beta,T}$ hence by the limit. Property~\ref{free:iii} has been proved with a very general argument in \cite[Appendix~C]{BS11}, and property~\ref{free:iv} is a consequence of~\ref{free:iii}.
\end{proof}

Since we have $\bE^{Y}_{\gb,T} \big[ e^{u H^{Y}_{T}(X)} \big]= Z^{Y}_{\beta+u,T}\slash Z^{Y}_{\beta,T}$
the function $\tf(\rho,\cdot)$ encodes the asymptotic behavior of the Laplace transform of $H^{Y}_{T}(X)$ under $\bP^{Y}_{\gb,T}$.
Using standard large deviation techniques we obtain that for every $\beta\in \bbR$ and $\gep>0$ there exists $c(\gep,\beta)>0$ and $T_0(Y)<\infty$ such that for all $T>T_0$
\begin{equation}
  \label{eq:qontact}
\bP^{Y}_{\gb,T} \left(   \frac{1}{T} H^Y_{\beta,T}(X) \notin \left[\partial^-_{\beta}\tf(\rho,\beta)-\gep ,\partial^+_{\beta} \tf(\rho,\beta)+\gep \right]  \right) \le e^{-c(\gep,\beta)\, T} \,,
\end{equation}
where $\partial^{\pm}_\beta$ denotes the left or right derivative. The estimate is also valid under $\bP^{Y,\cons}_{\gb,T}$.
These derivative are always defined by convexity and $\partial^-_{\beta}\tf(\rho,\beta)>0$ for $\beta>\beta_c(\rho)$ and $\partial^+_{\beta}\tf(\rho,\beta)=0$ for $\beta<\beta_c(\rho)$.
Note also that \eqref{smooth} below implies that when \(\rho>0\), at \(\beta= \beta_c(\rho)\) we have $\partial^+_{\beta}\tf(\rho,\beta_c)=0$.

The critical point $\gb_c(\rho)$ therefore marks the transition between a \emph{delocalized phase} for $\gb<\gb_c(\rho)$, where $H^{Y}_{T}(X)$ is typically much smaller than~$T$, and a \emph{localized phase} for $\gb>\gb_c(\rho)$, where $H^{Y}_{T}(X)$ is typically of order $T$.

\subsection{The homogeneous (and annealed) model}

The case $\rho=0$ corresponds to the continuous time homogeneous pinning model on a defect line, which has been well studied and understood (we refer to \cite[Ch.~2]{Gia07} for an overview of the model in the discrete time setting, the continuous time setup being very similar).
In that case the free and constrained partition functions are equal to
\begin{equation}
  \label{annealedpartz}
z_{\beta,T}:=\E\left[ e^{\beta \int^T_0  \ind_{\{W_s=0\}} \dd s}\right] \quad \text{ and } \quad z^{\cons}_{\beta,T}:=\beta \, \E\left[ e^{\beta \int^T_0  \ind_{\{W_s=0\}} \dd s}\ind_{\{W_s=0\}}\right].
\end{equation}
Note that $z_{\beta,T}$  also corresponds, for any $\rho\in[0,1)$, to the \textsl{annealed} partition function $\bbE\big[ Z_{T,\gb}^{Y} \big]$, since
\begin{equation}
\bbE\left[Z^{Y}_{\beta,T} \right] = \bbE_{\rho} \otimes \bE_{1-\rho}\left[ e^{\beta \int^T_0  \ind_{\{X_s=Y_s\}} \dd s} \right]
\end{equation}
and $X-Y \stackrel{(d)}{=} W$ (recall we assumed that $J$ is symmetric and the jump rates of~$X$ and~$Y$ add up to~$1$).

\begin{rem}
In \cite{BS10,BS11}, the authors consider the case where the random walk~$X$ has jump rate~$1$ and~$Y$ has jump rate~$\varrho$.
Our setting is in fact recovered by letting $\rho = \frac{\varrho}{1+\varrho}$ and changing the time horizon by a factor~$1+\varrho$. 
Our choice was dictated by the commodity of having no dependence in $\rho$ in the annealed partition function.
\end{rem}

\noindent The value of the \textit{homogeneous} (or \textit{annealed}) free energy, defined as
\begin{equation*}
  % \label{annealed}
\tf(\beta) = \tf(0,\beta) := \lim_{T\to \infty} \frac{1}{T} \log z_{T,\beta}=  \lim_{T\to \infty} \frac{1}{T} \log z^{\cons}_{T,\beta}\,,
\end{equation*}
can be computed with the help of basic renewal theory, see \cite[Ch.~XI]{FellerII}.
Setting 
\begin{equation}
\label{def:beta0}
  \beta_0:= \left(\int^{\infty}_0 \P(W_t=0) \dd s\right)^{-1}\,,
\end{equation}
one can show that $\tf(\gb)$ is either the unique solution of the following implicit equation in $b$
\begin{equation}
\label{eq:freeenergy}
\beta \int_0^{+\infty} e^{- b t} \P(W_t=0) \dd t =  1 
\end{equation}
if such a solution exists, or $\tf(\beta)=0$ if \eqref{eq:freeenergy} does not admit any solution.
Note that the solution of \eqref{eq:freeenergy} exists if and only if $\beta\in [\beta_0,\infty)\setminus \{0\}$ and it is positive if $\beta>\beta_0$.
Recalling the definition of the critical point in Proposition \ref{freeenergy} we have  $\gb_c(0)=\gb_0$. Furthermore $\tf(\beta)$ is analytic on $\bbR\setminus\{0\}$. 

Combining \eqref{eq:freeenergy} with \eqref{eq:limitpn1} (or more precisely, with \eqref{K-reg} below) and some Tauberian theory, it possible to identify the critical behavior of the free energy. We refer to \cite[Thm.~2.1]{Gia07} for the analogous result (and its proof) in the discrete setup.

\begin{proposition}\label{homener}
The homogeneous free energy has the following critical behavior:
\begin{equation}
\label{eq:energyasymp}
\lim_{\beta \downarrow \beta_0}\frac{\log\tf(\gb)}{\log (\beta-\beta_0)} = \nu  \qquad \text{ with } \quad  \nu := 1\vee \frac{1}{|a-1|}\in [1,\infty] \,,
\end{equation}
where \(a\) is from~\eqref{eq:limitpn1}.
When $\gamma\ne 1$ or $d\ne 2$, $\nu<\infty$ and  we have more precisely
\begin{equation}
\tf(\gb) \stackrel{\beta\downarrow \beta_0}{\sim}  (\gb-\gb_0)^{\nu} \hat L\left(\frac{1}{\gb-\gb_0}\right) ,
\end{equation}
 where $\hat L$ is some (explicit) slowly varying function.  The function $\hat L$ can be replaced by a constant when either $\varphi$ is asymptotically constant and $\gamma\ne 1/2$ ($\gamma$-stable case) and when $d\ne 4$ (SRW case).
\end{proposition}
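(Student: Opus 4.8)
The plan is to reduce Proposition~\ref{homener} to an Abelian--Tauberian analysis of the Laplace transform
\[
\gP(b):=\int_0^{\infty} e^{-bt}\,\P(W_t=0)\,\dd t \,, \qquad b\geq 0 \,,
\]
near $b=0$, followed by an asymptotic inversion. Indeed, by \eqref{eq:freeenergy} and the discussion following it, for $\beta>\beta_0$ the value $\tf(\beta)$ is the unique positive solution of $\beta\,\gP(b)=1$; since $b\mapsto \gP(b)$ is continuous, strictly decreasing, with $\gP(0)=1/\beta_0$ (finite exactly when $a>1$) and $\gP(b)\to 0$ as $b\to\infty$, the map $\beta\mapsto \tf(\beta)$ is continuous and strictly increasing on $(\beta_0,\infty)$ with $\tf(\beta)\downarrow 0$ as $\beta\downarrow\beta_0$; hence the critical behaviour of $\tf$ is exactly dictated by that of $\gP$ at the origin. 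As a preliminary I would record, from the local limit theorem sharpened by the return-probability estimate~\eqref{K-reg} quoted below, that $\P(W_t=0)=L_0(t)\,t^{-a}\,(1+o(1))$ as $t\to\infty$ for an explicit slowly varying $L_0$ (a constant for~\eqref{SRW}, asymptotically constant for~\eqref{JPP} precisely when $\varphi$ is), while $\P(W_t=0)\in(0,1]$ stays bounded on compact time intervals.

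Next I would pin down $\gP$ near $0$, splitting on the value of $a$. If $a>1$, write $\gP(0)-\gP(b)=\int_0^{\infty}(1-e^{-bt})\,\P(W_t=0)\,\dd t$ and distinguish: (i) for $a>2$ one has $\mu:=\int_0^\infty t\,\P(W_t=0)\,\dd t<\infty$ and, since $b\mapsto(1-e^{-bt})/b$ increases to $t$ as $b\downarrow 0$, monotone convergence gives $\gP(0)-\gP(b)\sim \mu\, b$; (ii) for $a\in(1,2)$, Karamata's Tauberian theorem~\cite[Ch.~1]{BGT89} gives $\gP(0)-\gP(b)\sim \tfrac{\Gamma(2-a)}{a-1}\,b^{a-1}\,L_0(1/b)$; (iii) for the borderline $a=2$, the same circle of ideas yields $\gP(0)-\gP(b)\sim b\,\ell(1/b)$, where $\ell(x):=\int_1^{x}u^{-1}L_0(u)\,\dd u$ is slowly varying with $\ell(x)\to\infty$. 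If instead $a\leq 1$ (where $\gP(0)=\infty$, hence $\beta_0=0$), one analyses $\gP(b)$ itself: for $a\in(0,1)$ Karamata's theorem gives $\gP(b)\sim \Gamma(1-a)\,b^{a-1}\,L_0(1/b)$, while for $a=1$ one gets $\gP(b)\sim \int_1^{1/b}u^{-1}L_0(u)\,\dd u$, a slowly varying function diverging as $b\downarrow 0$. In every case, $\gP(0)-\gP(b)$ (respectively $\gP(b)$ when $a\leq 1$) is regularly varying at $0$ of index $(a-1)\wedge 1$ (respectively $a-1$).

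Then I would invert. When $a>1$, from $\beta\,\gP(\tf(\beta))=1$ and $\gP(0)=1/\beta_0$,
\[
\gP(0)-\gP(\tf(\beta))=\frac1{\beta_0}-\frac1\beta=\frac{\beta-\beta_0}{\beta_0\,\beta}\sim \frac{\beta-\beta_0}{\beta_0^{2}}\,, \qquad \beta\downarrow\beta_0 \,,
\]
so, applying the asymptotic inversion of regularly varying functions~\cite[\S1.5]{BGT89} to $b\mapsto\gP(0)-\gP(b)$ (index $(a-1)\wedge 1$), the function $\tf$ is regularly varying at $\beta_0$ of index $\tfrac1{(a-1)\wedge 1}=1\vee\tfrac1{|a-1|}=\nu$; this is precisely \eqref{eq:energyasymp} together with the asserted equivalent $\tf(\beta)\sim(\beta-\beta_0)^{\nu}\hat L(1/(\beta-\beta_0))$, with $\hat L$ produced explicitly by this inversion from $L_0$ and the relevant $\Gamma$-factors. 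When $0<a<1$, the identity $\gP(\tf(\beta))=1/\beta$ and the same inversion (of a function regularly varying of negative index) give $\tf$ regularly varying of index $\tfrac1{1-a}=\tfrac1{|a-1|}=\nu$ at $\beta_0=0$; when $a=1$, $\gP$ being slowly varying and divergent forces $-\log\tf(\beta)$ to grow faster than any fixed multiple of $\log(1/\beta)$, i.e.\ $\nu=\infty$, again matching \eqref{eq:energyasymp}. Finally, the statement about $\hat L$ reduces to the corresponding property of the slowly varying functions above: $\mu$ in case~(i) and $L_0$ in case~(ii) are asymptotically constant exactly in the stated regimes (automatically for~\eqref{SRW}, and for~\eqref{JPP} iff $\varphi$ is), whereas in the borderline $a=2$ (that is $d=4$ or $\gamma=\tfrac12$) one has $\ell(x)\sim c\log x\to\infty$, which is never asymptotically constant; this accounts for the exclusions.

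I expect the main obstacle to be the Abelian estimates of the second paragraph, specifically the bookkeeping of the slowly varying correction $L_0$ through the Laplace transform and the careful handling of the two borderline exponents $a=1$ and $a=2$; once $\gP(0)-\gP(b)$ (or $\gP(b)$) has been identified as a regularly varying function, the inversion and the identification of $\nu$ and $\hat L$ are routine applications of the regular-variation machinery of~\cite{BGT89}, and the comparison with~\cite[Thm.~2.1]{Gia07} confirms that the continuous-time computation runs parallel to the discrete one.
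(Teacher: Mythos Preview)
Your proposal is correct and follows exactly the approach the paper indicates: the paper does not spell out a proof but simply says to combine the implicit equation~\eqref{eq:freeenergy} with~\eqref{K-reg} ``and some Tauberian theory'', referring to~\cite[Thm.~2.1]{Gia07} for the discrete-time analogue. Your Abelian--Tauberian analysis of $\gP(b)$ near $b=0$, the case split on $a$, and the regular-variation inversion are precisely the details this reference contains, transposed to continuous time.
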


\noindent
The above covers both the case where $W$ is recurrent (in which case $a\in(0,1]$ and $\beta_0=0$)
and where~$W$ is transient ($a\in[1,\infty)$ and $\beta_0>0$).
In particular:
\begin{itemize}
  \item In the case~\eqref{SRW}, we have $\nu = 2$ in dimension $d=1$ ($\gb_0=0$) and $d=3$ ($\gb_0>0$), $\nu=+\infty$ in dimension $d=2$ ($\gb_0=0$), and $\nu =1$ in dimension $d\geq 4$ ($\gb_0 >0$);
  \item In the case~\eqref{JPP}, we have $\nu=\frac{\gamma}{\gamma-1}$ if $\gamma \in (1,2)$ ($\gb_0=0$), $\nu=+\infty$ if $\gamma=1$ ($\gb_0=0$ or $\gb_0>0$), and $\nu = 1\vee \frac{\gamma}{1-\gamma}$ if $\gamma \in (0,1)$ ($\gb_0>0$).
\end{itemize} 

\subsection{The question of disorder relevance}

The quenched and annealed free energies can be compared via Jensen's inequality:
\[
\tf(\rho,\beta) = \lim_{T\to \infty} \frac{1}{T}\bbE\left[ \log  Z^{Y}_{\beta,T}\right] \leq \lim_{T\to \infty} \frac{1}{T} \log  \bbE[Z^{Y}_{\beta,T}]  =\tf(\gb) \,.
\]
In particular, the quenched and annealed critical points verify $\gb_c(\rho) \geq \gb_0 = \gb_c(0)$ (this also follows from property~\ref{free:iv} of \Cref{freeenergy}).
There are therefore two natural questions when considering the influence of disorder on the model:
\begin{enumerate}
\item [(1)] Is the inequality $\gb_c(\rho) \ge \gb_0$ strict? In other words, does disorder \emph{shift} the critical point?
\item [(2)] Is the free energy critical exponent preserved when disorder is introduced. More precisely, do we have
$\tf(\rho,\gb) =  (\gb-\gb_c(\rho))^{\nu+o(1)}$ as $\gb\downarrow \gb_c(\rho)$ as in~\eqref{eq:energyasymp}?
\end{enumerate}
In this context, the jump rate parameter~$\rho$ can be interpreted as the ``intensity'' of the disorder and we can refine the above questions to whether the critical point and critical exponents are affected by a disorder of \textit{arbitrarily small} intensity.

\smallskip

The case of a recurrent random walk $W$ has been analyzed in the SRW case in dimension $d=1,2$ and \cite[Theorem 1.2]{BS10} shows that both the critical point and the critical exponents are conserved. 
The approach used in \cite{BS10} is specific to recurrent walks and seems sufficiently flexible to extend to the case of any recurrent walk that satisfies~\eqref{eq:limitpn1}.
In fact, the proof of~\cite[Thm.~2.1-(2.1)]{AB18a} should give that we also have \(\beta_c(\rho) = \beta_0\) even for a transient walk with \(a=1\) in~\eqref{eq:limitpn1}.
The present work is therefore concerned with the analysis of disorder relevance for the RWPM in the case where~$W$ is transient (with \(a>1\) in~\eqref{eq:limitpn1}), in particular:
\begin{itemize}
  \item In the case~\eqref{SRW},  we focus on dimensions $d\geq 3$;
  \item In the case~\eqref{JPP},  we focus on parameters $\gamma\in (0,1)$.
\end{itemize}
% We let aside the case $\gamma=1$ which is a bit singular, but we may expect that in this case $\beta_c(\rho)=\beta(\rho)$ \textit{for every $\rho\ge 0$} -- we refer the reader to  \cite{AB18a,AZ10}  for results with this flavor proved for related models.

% W let us shortly discuss the case recurrent random walks
% through the perspective of the following result
%
% \begin{theorem}\cite[]{BS10}
%  In dimension $d=1,2$, then $\gb_c(\rho) = \gb_0$ for any $\rho \in (0,1)$. Furthermore for every $\rho\in (0,1)$
%  \begin{equation}
%  \lim_{\beta\to 0+}\log \frac{\tf(\rho,\beta)}{\tf(\beta)}=\begin{cases} 2   \quad & \text{ when } d=1,\\
%  \infty  \quad & \text{ when } =2.
%                                                            \end{cases}
%  \end{equation}
% \end{theorem}

\subsubsection*{Harris criterion for disordered systems.}

In his influential paper on disordered system~\cite{Har74}, A.B.~Harris made a general prediction concerning the influence of disorder for statistical mechanics model which relies on the sign of the specific-heat exponent of the corresponding homogeneous model.
Applied to the RWPM pinning model under the assumption \eqref{eq:limitpn1}, the prediction reads as follows:
\begin{enumerate}
\item [(a)] If $\nu<2$, that is if $a >\frac 3 2$ (\textit{i.e.}\ $\gamma <\frac23$ or $d\geq 4$), then disorder is \textit{relevant}: the features of the phase transition of $\tf(\rho,\beta)$ with \textit{any} $\rho\in (0,1)$ should differ from that of the pure model.
\item [(b)] If $\nu >2$ that is if $a \in (1, \frac 3 2)$ (\textit{i.e.}\ $\gamma \in (\frac23,1)$), then disorder is \textit{irrelevant}: the effect of disorder on the phase transition should not be noticeable for small values of $\rho$.
\end{enumerate}
The case $\nu=2$ (\textit{i.e.}\ \(\gamma=\frac23\) or \(d=3\)) is called \emph{marginal} and Harris' criterion does make any prediction for the effect of disorder in that case, which should be model-dependent.
Let us mention that Harris' predictions have been conceived to study disorder relevance in contexts such as the random field Ising model~\cite{IM75} or the disordered pinning on a defect line~\cite{DHV92}, where the disorder is given by an i.i.d.\ field of random variables.
Our disorder, given by a realization of $Y$, displays some long-range correlations and it is thus not clear whether Harris' criterion should apply or not, see for instance \cite{WH83} for a generic heuristic study of disorder relevance in correlated environment.

\section{Main results}

For the sake of presenting a global picture, we introduce the results proved in this work together with those proved in the companion paper \cite{BLrel}.
\begin{enumerate}
  \item We prove disorder irrelevance when $\gamma \in (\frac23,1)$, showing that for small values of $\rho$, $\tf(\rho,\beta)$ and $\tf(\beta)$ have the same order of magnitude.
  \item We prove disorder relevance when $\gamma \in (0,\frac23]$ and we provide upper and lower bounds on the critical point shift $\beta_c(\rho)-\beta_c(0)$ in the small $\rho$ asymptotic. 
  When $\gamma\in (0, \frac 23)$ we obtain matching polynomial bounds, while for $\gamma=2/3$ we obtain (non-matching) subpolynomial bounds.
\end{enumerate}
% A companion paper \cite{BLrel} completes the picture by showing disorder relevance when \(\gamma \in (0,\frac23]\) and corresponding lower bounds on the critical point shift. In particular,~\cite{BLrel} shows that, in addition to being relevant for $\gamma\in(0,\frac23)$, disorder is also \textit{always} relevant in the marginal case $\gamma =\frac23$, for all choices of slowly varying function~$\varphi$ in~\eqref{JPP} -- this is in contrast with what happens in the disordered pinning model, see \Cref{sec:pinning} below.
These result are complemented with corresponding results for the SRW case, completing those that previously appeared in the literature in \cite{BL11,BT10,BS10,BS11}. 
We also compare in Section~\ref{sec:litterature} below our results with those obtained for the disordered pinning model  \cite{Ale08,AZ09,BCPSZ14,CTT17,CdH13b,DGLT09,GT06,GLT10,GLT11,Lac10ecp,Ton08a} to highlight a key difference between the two models in the marginal case (when $\nu=2$).
In both this paper and its companion one, in order to simplify the exposition, we focus on the case where  $\lim_{|x|\to \infty}\varphi(x)=1$ in~\eqref{JPP}, except when  $\gamma=\frac23$ where we consider arbitrary $\varphi$, as it is necessary to underline the key difference between the RWPM and the disordered pinning model.

\subsection{Irrelevance of disorder when \texorpdfstring{$\gamma\in (\frac23,1)$}{}}
\label{dfs}

Our first set of results deal with the case $\gamma\in(\frac{2}{3},1)$, in which case $\nu>2$.
Harris criterion thus predicts disorder irrelevance and we show that this prediction is indeed correct: for small values of $\rho$,  $\tf(\rho,\beta)$ and $\tf(\gb)$ are comparable around $\beta_0$. 

\begin{theorem}
\label{th:irrel}
If $J(x)\stackrel{|x|\to \infty}{\sim}  |x|^{-(1+\gamma)}$ with $\gamma\in (\frac{2}{3},1)$, we have $\beta_c(\rho)= \beta_0$ for $\rho$ sufficiently small.
 Furthermore, we have 

\begin{equation}\label{lauwer}
\lim_{\rho\downarrow 0} \liminf_{\gb \downarrow \gb_0} \frac{\tf(\rho,\beta)}{\tf(\gb)} =1.
\end{equation}
\end{theorem}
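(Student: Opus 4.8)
\emph{Overall plan.} The quantitative statement \eqref{lauwer} already contains the equality $\beta_c(\rho)=\beta_0$: since $\tf(\gb)>0$ for every $\gb>\beta_0$ by \Cref{homener}, once $\rho$ is small~\eqref{lauwer} forces $\tf(\rho,\gb)>0$ for all $\gb>\beta_0$, hence $\beta_c(\rho)\le\beta_0$, while $\beta_c(\rho)\ge\beta_0$ is the monotonicity of $\rho\mapsto\beta_c(\rho)$ from \Cref{freeenergy}. So the whole content is the lower bound in~\eqref{lauwer} (the upper bound being Jensen's inequality $\tf(\rho,\gb)\le\tf(\gb)$), and I would prove the quantitative version: for fixed $\rho>0$, $\liminf_{\gb\downarrow\beta_0}\tf(\rho,\gb)/\tf(\gb)\ge 1-\eta(\rho)$ with $\eta(\rho)\to0$. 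The method is a coarse-graining slightly above the annealed correlation length. Fix $\ell=\ell(\gb):=\lceil|\log\tf(\gb)|^{2}/\tf(\gb)\rceil$, so $\ell\tf(\gb)\to\infty$. Constraining $X$ to visit $Y$ at the times $j\ell$ ($j=1,\dots,n$) and applying the Markov property gives $Z^{Y,\cons}_{\gb,n\ell}\ge \gb^{1-n}\prod_{j=1}^{n}\zeta_j$ with $\zeta_j:=\gb\,\bE_{Y_{(j-1)\ell}}\!\big[e^{\gb\int_{(j-1)\ell}^{j\ell}\ind_{\{X_t=Y_t\}}\dd t}\ind_{\{X_{j\ell}=Y_{j\ell}\}}\big]$; the $\zeta_j$ depend only on the increments of $Y$ over disjoint blocks, hence are i.i.d.\ with $\bbE[\zeta_1]=z^{\cons}_{\gb,\ell}$. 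Taking $\log$, dividing by $n\ell$ and letting $n\to\infty$ yields $\tf(\rho,\gb)\ge\frac1\ell\bbE[\log\zeta_1]-\frac{\log\gb}{\ell}$, and the last term is $o(\tf(\gb))$ since $\beta_0>0$ when $\gamma\in(\tfrac23,1)$. Using the renewal representation of $z^{\cons}$ and the sharp near-critical asymptotics of \Cref{homener}, I would check that $\log z^{\cons}_{\gb,\ell}=\ell\tf(\gb)+O(|\log\tf(\gb)|)$ for $\ell\gg1/\tf(\gb)$, i.e.\ $\tfrac1\ell\log z^{\cons}_{\gb,\ell}=(1+o_\gb(1))\tf(\gb)$, so everything reduces to showing $\bbE[\log\zeta_1]\ge(1-\eta(\rho)-o_\gb(1))\log z^{\cons}_{\gb,\ell}$.

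\emph{Second-moment estimate (the core).} I would prove that for some $\beta_1>\beta_0$ there is $C(\rho)$ with $C(\rho)\to1$ as $\rho\downarrow0$ and $\bbE\big[(Z^{Y,\cons}_{\gb,\ell(\gb)})^{2}\big]\le C(\rho)\,\big(z^{\cons}_{\gb,\ell(\gb)}\big)^{2}$ for all $\gb\in(\beta_0,\beta_1]$. Expanding the two exponentials in $(Z^{Y,\cons}_{\gb,T})^2$ over the contact times of the independent copies $X,X'$ with $Y$ and integrating out $Y$, the ratio $\bbE[(Z^{Y,\cons}_{\gb,T})^2]/(z^{\cons}_{\gb,T})^2$ becomes the partition function of two independent $\tf(\gb)$-tilted contact renewals interacting through a nonnegative reward at their near-encounters, controlled by a sum of the shape $\sum_{n\le T}\widetilde u_\gb(n)^{2}\,r_\rho(n)$, where $\widetilde u_\gb$ is the (tilted) renewal mass function and $r_\rho(n)$ is a weight produced by the fluctuations of $Y$ which vanishes as $\rho\downarrow0$ (at $\rho=0$ the two replicas decouple completely, so $\bbE[(Z^{Y,\cons})^2]=(z^{\cons})^2$ exactly). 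For $T=\ell(\gb)$ one splits at $n\asymp1/\tf(\gb)$: below this scale $\widetilde u_\gb(n)\asymp n^{a-2}$ (Doney) and $\sum_n n^{2(a-2)}<\infty$ precisely because $a<\tfrac32$, i.e.\ $\gamma>\tfrac23$; above it $\widetilde u_\gb(n)\asymp\tf(\gb)^{2-a}$, so that part contributes $O(\ell(\gb)\tf(\gb)^{2(2-a)})=O(|\log\tf(\gb)|^{2}\tf(\gb)^{3-2a})=o_\gb(1)$, again using $a<\tfrac32$. Hence the sum is $O(1)$ uniformly in $\gb\in(\beta_0,\beta_1]$ and the correction to the ratio is $O(\sup_n r_\rho(n))=:\eta(\rho)\to0$. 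In particular $\Var(\zeta_1)\le(C(\rho)-1)(z^{\cons}_{\gb,\ell})^{2}$, and Chebyshev gives $\bbP\big(\zeta_1<(1-\gep)z^{\cons}_{\gb,\ell}\big)\le(C(\rho)-1)/\gep^{2}$ for all $\gep>0$.

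\emph{From the second moment to $\bbE[\log\zeta_1]$.} On $\{\zeta_1\ge(1-\gep)z^{\cons}_{\gb,\ell}\}$ we have $\log\zeta_1\ge\log z^{\cons}_{\gb,\ell}+\log(1-\gep)$. On the complement, of probability $\le(C(\rho)-1)/\gep^{2}$, I would use the crude deterministic bound $\zeta_1\ge\gb\,\bP(X_\ell=\widetilde Y_\ell)$ with $\widetilde Y_\ell$ the block increment of $Y$: since $|\widetilde Y_\ell|$ is at most polynomial in $\ell$ off a super-polynomially unlikely event and $\bP(X_\ell=x)\ge\exp(-O(\log(\ell\vee|x|)))$ for the $\gamma$-stable walk, this gives $\bbE[\log\zeta_1\,;\,\zeta_1<(1-\gep)z^{\cons}_{\gb,\ell}]\ge-O((C(\rho)-1)^{1/2}\log\ell)$. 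Combining, $\bbE[\log\zeta_1]\ge(1-\gep)\log z^{\cons}_{\gb,\ell}-O((C(\rho)-1)^{1/2}\log\ell)$; dividing by $\ell$, using $\log\ell/\ell\to0$ and choosing $\gep=(C(\rho)-1)^{1/3}$, one obtains $\tfrac1\ell\bbE[\log\zeta_1]\ge(1-o_\rho(1))\tfrac1\ell\log z^{\cons}_{\gb,\ell}-o_\gb(1)$. Feeding this into the reduction step of the first paragraph and letting $\gb\downarrow\beta_0$ and then $\rho\downarrow0$ yields~\eqref{lauwer}.

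\emph{Main obstacle.} The delicate point is the second-moment estimate: one has to carry out the path expansion for the RWPM, where the disorder is a correlated random-walk trajectory rather than an i.i.d.\ field, identify the near-encounter weight $r_\rho$ and quantify its vanishing as $\rho\downarrow0$ with enough uniformity in the separation; and, most of all, one must control the renewal mass function $\widetilde u_\gb$ uniformly for $\gb$ near $\beta_0$ at the scale $\ell(\gb)$, which sits just beyond the correlation length $1/\tf(\gb)$. This is exactly where the threshold $\gamma=\tfrac23$ (equivalently $a=\tfrac32$, $\nu=2$) appears, and where the sharp near-critical asymptotics of \Cref{homener} and the hypothesis $J(x)\sim|x|^{-(1+\gamma)}$ (which trivialises the slowly varying corrections) are needed; for $\gamma\le\tfrac23$ the corresponding sum diverges as $\gb\downarrow\beta_0$, which is consistent with disorder relevance.
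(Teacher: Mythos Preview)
Your overall architecture is the same as the paper's: coarse-grain at the correlation-length scale, control the second moment of a block partition function, apply Paley--Zygmund, and feed this into a subadditive lower bound for the free energy. The reductions in your first and third paragraphs are sound.

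The genuine gap is the second-moment computation. You write the ratio as ``controlled by a sum of the shape $\sum_{n\le T}\tilde u_\gb(n)^{2}\,r_\rho(n)$'', but that is the \emph{pinning-model} calculation, in which the two replicas interact only at the point intersection $\tau\cap\tau'$. For the RWPM the weight $w(s,t,Y)$ depends on the entire increment $Y_t-Y_s$, so $w(\tau_{i-1},\tau_i,Y)$ and $w(\tau'_{j-1},\tau'_j,Y)$ are correlated whenever the \emph{intervals} $[\tau_{i-1},\tau_i]$ and $[\tau'_{j-1},\tau'_j]$ overlap, not merely when renewal points coincide. The correct combinatorial object is therefore the count of interval crossings between the two renewals---the paper's $|\cJ^{(1)}|+|\cJ^{(2)}|$ in Proposition~\ref{topitop2}---and its Laplace transform is controlled via the sequence of \emph{iterated overshoots} $(\cS_j)_{j\ge0}$ in Section~\ref{trixx2}. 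The key input (Lemma~\ref{lespetitspoissons}) is that for $\alpha<\tfrac12$ the conditional law of $\cS_{j+1}/\cS_j$ is expanding in a precise sense, so the overshoots grow geometrically and $D_T=\min\{j:\cS_j\ge T\}$ has uniformly light tails. This is a different mechanism from the convergence of $\sum_n u(n)^2$, even though both single out the threshold $\alpha=\tfrac12$. The paper in fact flags (end of Section~\ref{sec:litterature}) that the pinning-style second-moment argument does not transfer: for the RWPM the second moment at $\beta_0$ is unbounded for \emph{every} $\gamma$, so there is no limiting $\sum u(n)^2<\infty$ criterion to invoke.

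A secondary issue: your block length $\ell\sim|\log\tf(\gb)|^{2}/\tf(\gb)$ sits logarithmically above the correlation length, whereas the paper's comparison between the tilted and untilted renewal densities (Lemma~\ref{lem:Zc}, equation~\eqref{cawre2}) is sharp only for $t\le\eta/\tf(\gb)$. The paper avoids this by taking $T=A/\tf(\gb)$ with $A$ a large fixed constant and using a more elaborate block factorization (Lemma~\ref{intend}) in which the contact points inside each block are allowed to float; the constant $A$ is sent to infinity only at the very end.
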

\noindent In \cite{BLrel}, we prove that, in spite of~\eqref{lauwer}, for fixed \(\rho>0\) we do not not have the asymptotic equivalence $\tf(\rho,\beta)\sim\tf(\gb)$ as \(\beta\downarrow 0\).
We also prove that the coincidence of critical points does not hold for every $\rho$.

\begin{proposition}[\cite{BLrel}, Propositions~1.5 and~1.6]
   If $J(x)\stackrel{|x|\to \infty}{\sim}  |x|^{-(1+\gamma)}$  with $\gamma\in (\frac{2}{3},1)$, then 
   \begin{enumerate}
    \item For every $\rho \in (0,1)$, we have $\limsup_{\gb \downarrow \gb_0} \frac{\tf(\rho,\beta)}{\tf(\gb)} <1$.
    \item There exists $\delta>0$ such that $\beta_c(\rho)>\beta_0$ for any $\rho >1-\delta$.
   \end{enumerate}

\end{proposition}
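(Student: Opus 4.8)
The plan is to prove both assertions with the fractional–moment method combined with a coarse‑graining scheme, i.e.\ the technology developed for the critical‑point shift of the disordered pinning model. Fix a block length $N$ and cut $[0,T]$ into consecutive blocks of length $N$. Decomposing $Z^{Y,\cons}_{\beta,T}$ according to which blocks are ``occupied'' (contain contacts of $X$ with $Y$) and the contact points they contain, one bounds it by a sum of products of shifted block partition functions $Z^{Y,\cons}_{\beta,N}$ times renewal‑kernel factors $K(\cdot)$ for the time gaps between occupied blocks; then, choosing $\theta\in(\gamma,1)$ (so that $\theta\,(1+\alpha)>1$, where $\alpha:=\frac1\gamma-1=\frac1\nu\in(0,\frac12)$ is the effective renewal exponent), applying $(\sum a_i)^\theta\leq\sum a_i^\theta$ and taking $\bbE_Y$, one is reduced to a coarse‑grained renewal partition function with single‑block weight $\bbE_Y[(Z^{Y,\cons}_{\beta,N})^\theta]$ and inter‑block kernel $\asymp(mN)^{-\theta(1+\alpha)}$. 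Since the disorder $Y$ is long‑range correlated, at the step where one wants the block contributions to factorize one conditions on the positions of $Y$ at the block endpoints (a Markov chain) and absorbs the residual dependence into the kernel factors, or truncates the jumps of $Y$ to make them finite‑range. The conclusion is twofold: on the one hand, $\bbE_Y[(Z^{Y,\cons}_{\beta,N})^\theta]$ small enough forces $\tf(\rho,\beta)=0$; on the other hand and more quantitatively, any fixed multiplicative gain of $\bbE_Y[(Z^{Y,\cons}_{\beta,N})^\theta]$ over the annealed weight $(z^{\cons}_{\beta,N})^\theta$ propagates into a multiplicative gain of $\tf(\rho,\beta)$ over $\tf(\beta)$.

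For assertion (ii) I would take $N$ a large constant and exploit $\rho\uparrow1$. Since $Y$ has jump rate $\rho$ it makes of order $\rho N$ jumps per block while $X$ has jump rate $1-\rho\ll\rho$, so for $X$ to build up an overlap of positive density it must make comparably many (and, $J$ being heavy‑tailed, occasionally large) jumps. One shows $\bbE_Y[(Z^{Y,\cons}_{\beta,N})^\theta]\leq e^{-c(\rho)N}$ with $c(\rho)\to\infty$ as $\rho\to1$, either directly --- the $\bP_{1-\rho}$‑probability that $X$ shadows the trajectory of $Y$ across a block is at most $e^{-c(\rho)N}$ by a Poisson large‑deviation estimate for the number of jumps of $X$ together with the kernel cost of matching the jumps of $Y$ --- or via a change of measure on $Y$ (speeding $Y$ up further, or pushing it away from the sites recently visited by $X$) of $O(1)$ relative‑entropy cost. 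Taking $\rho$ close enough to $1$ then makes the coarse‑grained partition function subcritical for every $\beta$ in a fixed right‑neighborhood of $\beta_0$ (only $\beta\leq\beta_0+\delta$ enters, as the block length is bounded), so $\tf(\rho,\beta)=0$ there; as $\beta\mapsto\tf(\rho,\beta)$ is non‑decreasing this gives $\beta_c(\rho)>\beta_0$.

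For assertion (i), fix $\rho\in(0,1)$ and take $N=N(\beta)$ of the order of the annealed correlation length $\xi(\beta)\asymp\tf(\beta)^{-1}\asymp(\beta-\beta_0)^{-\nu}$, on which scale $z^{\cons}_{\beta,N}$ is under precise renewal control. The crucial input is a quantitative improvement of the trivial Jensen bound $\bbE_Y[(Z^{Y,\cons}_{\beta,N})^\theta]\leq(z^{\cons}_{\beta,N})^\theta$: writing $W:=Z^{Y,\cons}_{\beta,N}/z^{\cons}_{\beta,N}$, so that $\bbE_Y[W]=1$, one shows that at $N=\xi(\beta)$ one has simultaneously $\bbE_Y[W^2]\leq K_0$ and $\Var_Y(W)\geq v_0$ for constants $0<v_0\leq K_0<\infty$ \emph{uniform in $\beta$ near $\beta_0$} --- this is precisely the signature of disorder being Harris‑irrelevant ($\alpha<\frac12$): the normalized second moment at the correlation‑length scale neither blows up nor degenerates to $1$. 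A truncation argument (split at $\{W\leq R\}$, use $\log(1+x)\leq x-cx^2$ on the bounded part, bound $\log W\leq W$ on $\{W>R\}$ and control that contribution by $\bbE_Y[W^2]\leq K_0$) then yields $\bbE_Y[W^\theta]\leq e^{-c_0}$, i.e.\ $\bbE_Y[(Z^{Y,\cons}_{\beta,N})^\theta]\leq e^{-c_0}(z^{\cons}_{\beta,N})^\theta$, with $c_0=c_0(\rho,\theta)>0$ independent of $\beta$; equivalently one can produce this gain through a fixed‑cost change of measure on $Y$ over the block that makes contacts with $X$ strictly less favorable. Plugging this into the coarse‑grained estimate --- and using that the same coarse‑graining applied to the annealed system reproduces $\tf(\beta)$ --- each correlation‑length block carries a fixed multiplicative improvement $e^{-c_0}$, which propagates to $\tf(\rho,\beta)\leq(1-\gep_0)\tf(\beta)$ for $\beta$ close to $\beta_0$, with $\gep_0=\gep_0(\rho)>0$; hence $\limsup_{\beta\downarrow\beta_0}\tf(\rho,\beta)/\tf(\beta)\leq1-\gep_0<1$.

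The main obstacle lies in the two uniformity requirements. First, restoring the block independence needed for coarse‑graining despite the long‑range correlations of $Y$: the Markov‑boundary conditioning (or the finite‑range truncation) has to be carried out without spoiling the renewal bookkeeping, and without the associated cost accumulating over the $\asymp T/N$ blocks. Second --- and this is the heart of assertion (i) --- the estimates $\bbE_Y[W^2]\leq K_0$, $\Var_Y(W)\geq v_0$, and the matching of the quenched and annealed coarse‑grainings, must hold uniformly along the entire family of diverging scales $\xi(\beta)\to\infty$ as $\beta\downarrow\beta_0$. This calls for sharp renewal/Tauberian control of the pure model near criticality (regular variation of $K$ and of $\tf$, via Proposition~\ref{homener}) together with a change‑of‑measure / second‑moment estimate whose constants do not deteriorate as the block length grows.
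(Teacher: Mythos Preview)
This proposition is not proved in the present paper: it is stated with attribution to the companion paper \cite{BLrel} (Propositions~1.5 and~1.6 there), and no argument is given here. I therefore cannot compare your approach to a proof in this paper.

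That said, your fractional-moment/coarse-graining architecture is the natural one for upper bounds on the free energy, and you correctly identify the two main obstacles (restoring block independence despite the long-range correlations of~$Y$, and uniformity of the estimates along the diverging scales $\xi(\beta)$). One point deserves caution. For assertion~(i) you rely on $\bbE_Y[W^2]\le K_0$ uniformly in $\beta$ near $\beta_0$, for \emph{every} fixed $\rho\in(0,1)$. The present paper notes explicitly (end of Section~\ref{sec:litterature}) that for the RWPM the second moment of the partition function diverges at $\beta=\beta_0$ for every $\gamma$ and every $\rho>0$; and while Section~\ref{trixx2} does control the second-moment ratio at the correlation-length scale when $\gamma\in(\tfrac23,1)$, that argument (see~\eqref{LaplaceDT} and the supermartingale estimates following it) is tailored to $\rho$ small and does not obviously extend to arbitrary~$\rho$. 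Your alternative route---a fixed-cost change of measure on~$Y$ over each block---does not require a uniform second-moment bound and is more likely to succeed; it is probably closer to what \cite{BLrel} actually does. For assertion~(ii) your heuristic ($X$ cannot track $Y$ when $\rho\uparrow 1$) and the implementation via fractional moments with a bounded block length are both reasonable.
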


\subsection{Estimates for the critical point shift in the relevant regime \texorpdfstring{$\gamma \in (0,\frac23]$}{}}
\label{fds}

Our result for the cases  $\gamma \in (0,\frac23)$ and $\gamma =\frac23$ differ in nature so we present them separately (we also include the case of the simple random walk for good measure).

\begin{theorem}
  \label{th:rel}
  Assume that either $J(x)\stackrel{|x|\to \infty}{\sim}  |x|^{-(1+\gamma)}$ with $\gamma\in (0,\frac{2}{3})$ or that \(d\geq 4\) in the case~\eqref{SRW}.
  Then there exists $C>0$ (depending on $\gamma$ or $d$) such that for every $\rho\in (0,\frac12)$ we have
  \begin{equation}\label{lkl}
  \beta_c(\rho)-\beta_0\le C \rho^{\frac{1}{2-\nu}} \,,
  \end{equation}
  with \(\nu\) as in \eqref{eq:energyasymp}, \textit{i.e.}\ \(\nu = 1\vee \frac{\gamma}{1-\gamma}\) or \(\nu =1\) for the SRW in dimension \(d\geq 4\).
\end{theorem}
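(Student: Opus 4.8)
Write $h:=\beta-\beta_0$. By monotonicity of $\beta\mapsto\tf(\rho,\beta)$ it suffices to show that $\tf(\rho,\beta)>0$ when $h=C\rho^{1/(2-\nu)}$, for a large constant $C$ and $\rho\le\rho_0(C)$. The idea is to show that at such a $\beta$ the quenched free energy differs from the annealed one $\tf(\beta)$ only by a \emph{multiplicative} factor close to $1$; since $\nu<2$, \Cref{homener} gives $\tf(\beta)=h^\nu\hat L(1/h)>0$, so this is enough. The comparison is done at the scale of the homogeneous correlation length $\ell_\beta:=1/\tf(\beta)\asymp h^{-\nu}$ (up to a slowly varying factor): one performs a second moment estimate on a block of length $\ell\asymp\ell_\beta$ and then concatenates blocks while tracking the relative loss.

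\smallskip
\emph{Second moment on one block.} The crucial estimate is
\[
\bbE\big[(Z^{Y,\cons}_{\beta,\ell})^2\big]\ \le\ (1+\gep_\ell)\,\big(z^{\cons}_{\beta,\ell}\big)^2,\qquad\text{with}\quad \gep_\ell\asymp\rho\,\ell^{(2-\nu)/\nu},
\]
so that for $h=C\rho^{1/(2-\nu)}$ and $\ell\asymp\ell_\beta$ one has $\gep_\ell\asymp C^{-(2-\nu)}$, which can be made as small as we wish by enlarging $C$. The computation: expanding the square produces two independent replicas $X^1,X^2$ of $X$ coupled through the single trajectory $Y$; integrating out $Y$ turns this into a pinning-type problem for $Y$ in the presence of the two random defect lines $X^1,X^2$, and comparison with the $\rho=0$ case (where everything factorizes into $(z^{\cons}_{\beta,\ell})^2$) bounds the ratio by the exponential moment, under the annealed two-replica law, of a functional measuring the time $X^1$ and $X^2$ spend simultaneously within the range of $Y$ over the block. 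Because $W$ is transient, i.e.\ $\int_0^\infty\P(W_t=0)\,\dd t<\infty$ since $a>1$, this functional has up to time $\ell$ finite exponential moments whose excess over $1$ is $\asymp\rho\,\ell^{(2-\nu)/\nu}$; the power $1/(2-\nu)$ in \eqref{lkl} is exactly the one making this excess of order $1$ when $\ell\asymp\ell_\beta$.

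\smallskip
\emph{From one block to the free energy.} By Chebyshev's inequality the second moment bound gives $\bbP\big(Z^{Y,\cons}_{\beta,\ell}\ge\tfrac12 z^{\cons}_{\beta,\ell}\big)\ge 1-4\gep_\ell$: a block is ``good'' with probability close to $1$. Using the Markov property at block endpoints one has $Z^{Y,\cons}_{\beta,N\ell}\ge\beta^{1-N}\prod_{i=1}^N Z^{Y,\cons,(i)}_{\beta,\ell}$, so by superadditivity and $N\to\infty$,
\[
\tf(\rho,\beta)\ \ge\ \tfrac1\ell\big(\bbE[\log Z^{Y,\cons}_{\beta,\ell}]-\log\beta\big).
\]
Splitting this expectation over the good event and its complement — on the latter using the deterministic floor $Z^{Y,\cons}_{\beta,\ell}\ge\beta\,\bP(X_\ell=Y_\ell)\gtrsim\beta\,\ell^{-a}$ from the local limit theorem applied to $X-Y$ — and choosing the block length $\ell$ slightly above $\ell_\beta$ so that $z^{\cons}_{\beta,\ell}$ is comfortably larger than $\beta$ (which follows from the renewal/Tauberian description of the homogeneous partition function behind \eqref{eq:freeenergy}), one obtains $\tf(\rho,\beta)\ge(1-o(1))\,\tf(\beta)>0$, the $o(1)$ vanishing as $C\to\infty$. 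The case~\eqref{SRW} with $d\ge4$ is entirely analogous, with $a=d/2\ge2$ and $\nu=1$ (so $1/(2-\nu)=1$), and is in fact the cleanest, since then $z^{\cons}_{\beta,\ell_\beta}$ stays bounded away from $0$.

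\smallskip
\emph{Main obstacle.} The heart of the argument is the second moment estimate: unlike in the disordered pinning model, the disorder $Y$ is long-range correlated, so the replica computation does not factorize over ``sites''; one must isolate the correct effective two-replica interaction obtained by integrating out $Y$ and prove its integrability with the sharp dependence $\gep_\ell\asymp\rho\,\ell^{(2-\nu)/\nu}$, which is where transience ($a>1$) and the exponent $1/(2-\nu)$ originate. The subsequent coarse-graining, while more routine, must be organized carefully in the regime $\gamma\in[\tfrac12,\tfrac23)$ — where the annealed block partition function at the correlation length is small — so that the $\gep_\ell\log\ell$ errors coming from the rare bad blocks are absorbed into the multiplicative loss rather than producing a spurious $\log(1/\rho)$ factor in \eqref{lkl}.
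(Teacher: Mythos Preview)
Your outline has the right architecture and the correct scale, but it skips the actual content of the second moment estimate and contains one incorrect step; it also misses that the paper uses two different arguments depending on whether $\nu=1$ or $\nu\in(1,2)$.

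\textbf{The case $\nu=1$ ($\gamma\le\tfrac12$ or $d\ge4$).} The paper does \emph{not} use a second moment here. It proves directly $\tf(\rho,\beta+C_0\rho)\ge\tf(\beta)$ by Jensen's inequality (Proposition~\ref{jenseninq}): write $Z^{Y,\cons}_{\beta+C_0\rho,T}$ as an expectation over the homogeneous tilted renewal $\bQ_{\beta,T}$, pass $\log$ inside, and use the uniform estimate $\bbE[\log w(0,t,Y)]\ge -C\rho$ (Lemma~\ref{envibou}). This already gives \eqref{lkl} with exponent~$1$, and is much shorter than any second moment computation.

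\textbf{The second moment for $\gamma\in(\tfrac12,\tfrac23)$.} Your sentence ``integrating out $Y$\ldots transience gives finite exponential moments with excess $\rho\ell^{(2-\nu)/\nu}$'' is where the whole difficulty lies, and it is not justified. The paper's route is: rewrite the partition function as a renewal expectation with weights $w(\tau_{i-1},\tau_i,Y)$, observe that $w(a,b,Y)$ and $w(c,d,Y)$ are independent iff $(a,b)\cap(c,d)=\emptyset$, and bound the two-replica expectation by $e^{c_0\rho(|\cJ^{(1)}|+|\cJ^{(2)}|)}$, where $\cJ^{(1)},\cJ^{(2)}$ count intervals of one renewal visited by the other (Proposition~\ref{topitop2}). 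Controlling the exponential moment of these overlap counts under the stationary renewal $\tilde Q_\beta^{\otimes2}$ is the content of Section~\ref{trixx1}: a dyadic decomposition in interval length, a multi-index H\"older inequality, and a study of the successive overlap times (Lemmas~\ref{squiz}, \ref{preskiid}, \ref{tobeshown}). The exponent $2\alpha-1=(2-\nu)/\nu$ only emerges there. Also, the paper does \emph{not} bound the second moment of $Z^{Y,\cons}_{\beta,\ell}$ directly but of a modified block quantity $\hat Z_1(Y)$ (equation~\eqref{def:barZ}) obtained by integrating over entry and exit points; this is what allows the comparison with the \emph{stationary} renewal.

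\textbf{The ``deterministic floor'' is not deterministic.} You write $Z^{Y,\cons}_{\beta,\ell}\ge\beta\bP(X_\ell=Y_\ell)\gtrsim\beta\ell^{-a}$. The first inequality is fine, but the second fails pointwise: $\bP(X_\ell=Y_\ell)=\P(W_{(1-\rho)\ell}=Y_\ell)$ depends on $|Y_\ell|$ and can be arbitrarily small. The paper avoids any pointwise floor: it proves a factorization $Z^{Y}_{\beta,T,I}\ge c^n\prod_{i\in I}\hat Z_i(Y)$ over an \emph{arbitrary} set $I$ of blocks (Lemma~\ref{intend}), and then takes $I$ to be the set of good blocks (those where Paley--Zygmund gives $\hat Z_i$ large). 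Bad blocks are simply skipped; the cost of the long jumps between consecutive good blocks is controlled in Section~\ref{technis} by a law of large numbers argument on $(Y_{iT}-Y_{(i-1)T})_i$, not by a pointwise bound on a single block.
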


For the SRW in dimension $d\ge 5$ the bound \eqref{lkl} matches the lower bound proved in \cite{BS10} (see Section~\ref{sec:litterature} below).
As far as asymptotically $\gamma$-stable walks are concerned, we also show in~\cite{BLrel} that the upper bound~\eqref{lkl} is sharp when $\gamma\in(0,\frac{2}{3})\setminus\{\frac12\}$.

\begin{theorem}[\cite{BLrel}, Theorem~1.3]
 If $J(x)\stackrel{|x|\to \infty}{\sim}  |x|^{-(1+\gamma)}$  with $\gamma\in (0,\frac{2}{3})\setminus\{\frac12\}$ then there is a constant $c>0$ such that for any \(\rho\in (0,\frac12)\)
    \begin{equation}\label{lkl2}
  \beta_c(\rho)-\beta_0\ge c\, \rho^{\frac{1}{2-\nu}} \quad  \text{with}   \quad \nu= 1\vee \frac{\gamma}{1-\gamma}.
  \end{equation}
  % In particular, note that \( \frac{1}{2-\nu} = \frac{1-\gamma}{2-3\gamma}\vee 1\).
\end{theorem}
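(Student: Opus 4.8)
\emph{Proof proposal.}\ The estimate~\eqref{lkl2} asserts \emph{disorder relevance} --- for $\gb$ slightly above $\gb_0$ the quenched free energy should still vanish --- so the plan is to run the fractional moment method together with a coarse-graining change of measure, as is done for the disordered pinning model in~\cite{DGLT09,GLT11}, importing the RWPM-specific inputs from~\cite{BS10,BS11,BL11}.

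The first step is a standard reduction: with $\gb=\gb_0+h$ and $\theta\in(0,1)$ fixed, Jensen's inequality together with \Cref{freeenergy} gives
\[
0\le\tf(\rho,\gb)=\lim_{T\to\infty}\frac{1}{\theta T}\,\bbE\big[\log (Z^{Y,\cons}_{\gb,T})^{\theta}\big]\le\limsup_{T\to\infty}\frac{1}{\theta T}\,\log\bbE\big[(Z^{Y,\cons}_{\gb,T})^{\theta}\big],
\]
so it is enough to exhibit $c>0$ such that $h\le c\,\rho^{1/(2-\nu)}$ forces $\sup_{T}\bbE\big[(Z^{Y,\cons}_{\gb,T})^{\theta}\big]<\infty$; this yields $\tf(\rho,\gb)=0$, hence $\gb_c(\rho)\ge\gb_0+c\,\rho^{1/(2-\nu)}$.

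The core is the coarse-graining. Choosing a block length $\ell\asymp\rho^{-\nu/(2-\nu)}$, I would split $[0,T]$ into blocks $B_1,\dots,B_m$ and write $Z^{Y,\cons}_{\gb,T}=\sum_{\cI}Z_{\gb,T}^{Y}(\cI)$, the sum over the set $\cI$ of blocks carrying at least one contact ($X_t=Y_t$), each term being a product of block partition functions and heat-kernel bridging factors $p_{(1-\rho)\Delta t}(Y_t-Y_{t'})$ that transport $X$ across the contact-free blocks. Applying $(\sum_j a_j)^{\theta}\le\sum_j a_j^{\theta}$ and then $\bbE$, and conditioning on the positions of $Y$ at the block boundaries so that the Markov property and translation invariance of the increments of $Y$ let one estimate the blocks one at a time, the whole problem reduces to a single-block bound $\bbE[(\text{block term})^{\theta}]\le\eta$ with $\eta$ a small constant: for $\theta$ close to~$1$ this makes the sum over $\cI$ converge uniformly in $T$, the bridging factors being summable by the transience of $W$ exactly as in the homogeneous model. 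The gain $\eta$ would be produced by a change of measure on each block: given the endpoints $Y_{\partial B}$, tilt the bridge $Y|_B$ by $e^{\lambda F}$ for a suitable additive functional $F$ of the jumps of $Y$, tuned to make contacts of $X$ with $Y$ inside $B$ atypically costly --- morally, forcing $Y$ to spend a macroscopic portion of $B$ at distance $\gg\ell^{1/\gamma}$ from its starting point, out of reach of a slowed $\gamma$-stable $X$ within time $\ell$. By Hölder,
\[
\bbE\big[(Z_B)^{\theta}\big]\le\Big(\tilde\bbE\Big[\big(\tfrac{\dd\bbP}{\dd\tilde\bbP}\big)^{\frac{1}{1-\theta}}\Big]\Big)^{1-\theta}\big(\tilde\bbE[Z_B]\big)^{\theta}.
\]
Since $Y$ performs only $\asymp\rho\ell$ jumps inside $B$, keeping the entropic first factor bounded caps $\lambda$ at $\asymp(\rho\ell)^{-1/2}$: at the block level the $Y$-disorder behaves like an i.i.d.\ pinning disorder of effective strength $\lambda_{\mathrm{eff}}\asymp\sqrt{\rho}$ (heuristically, $Y$ injects $\asymp\rho$ units of noise variance per unit time). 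For a disordered pinning model with free-energy exponent $\nu\in[1,2)$ (relevant regime) and disorder strength $\lambda_{\mathrm{eff}}$, this method delivers a critical-point shift $\asymp\lambda_{\mathrm{eff}}^{2/(2-\nu)}$ with optimal block length $\asymp\lambda_{\mathrm{eff}}^{-2\nu/(2-\nu)}$; inserting $\lambda_{\mathrm{eff}}\asymp\sqrt\rho$ reproduces $\ell\asymp\rho^{-\nu/(2-\nu)}$ and the shift $\rho^{1/(2-\nu)}$, while the sharp asymptotics $\tf(\gb_0+h)\asymp h^{\nu}$ of \Cref{homener} --- which hold with no slowly varying correction precisely because $\gamma\ne\tfrac12$ and $\varphi\to1$ --- guarantee $\ell\,\tf(\gb_0+h)$ is small for $h\le c\,\rho^{1/(2-\nu)}$, so $\tilde\bbE[Z_B]$ is controlled.

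The hard part is everything past the bookkeeping, i.e.\ implementing the coarse-graining and the change of measure rigorously. Since the disorder is the single \emph{correlated} trajectory $Y$ rather than an i.i.d.\ field, the decoupling of blocks must genuinely go through the Markov property and translation-invariance of the increments of $Y$, and the tilt must touch only $Y|_B$ conditionally on its endpoints. The delicate quantitative point is to make the cost/gain balance land on \emph{exactly} the exponent $1/(2-\nu)$, which requires choosing $F$ well and two-sided control of both the local ($|x|\lesssim t^{1/\gamma}$) and the one-big-jump ($|x|\gg t^{1/\gamma}$) regimes of the heat kernel of the slowed $\gamma$-stable walk (available since $J$ is unimodal with $J(x)\sim|x|^{-(1+\gamma)}$); and because $Y$ has infinite variance for $\gamma\in(0,1)$ one must truncate its large jumps when defining $F$ so that the heuristic $\lambda_{\mathrm{eff}}\asymp\sqrt\rho$ becomes a genuine estimate. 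Finally, $\gamma=\tfrac12$ (where $a=1/\gamma=2$) has to be excluded because there the local and heavy-tail regimes of the heat kernel cross over and the homogeneous free energy carries a genuine slowly varying correction (\Cref{homener}), degrading the clean power law; and the marginal case $\gamma=\tfrac23$, where $\nu=2$ and Harris' criterion is silent, needs a separate treatment and only yields subpolynomial bounds.
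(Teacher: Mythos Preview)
The present paper does not contain a proof of this statement: it is quoted from the companion paper~\cite{BLrel} (their Theorem~1.3) and serves here only to certify that the upper bound of \Cref{th:rel} is sharp. All the arguments in this paper go the other way --- they produce \emph{lower} bounds on the free energy and hence \emph{upper} bounds on $\beta_c(\rho)-\beta_0$ --- so there is nothing in the text to compare your proposal against.

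That said, your outline is the expected one: fractional-moment estimate plus coarse-graining plus a change of measure on the disorder is precisely the method of~\cite{DGLT09,GLT11} for the i.i.d.\ pinning model and of~\cite{BS10,BS11} for the RWPM with the simple random walk, and the companion paper~\cite{BLrel} proceeds along these lines. Your heuristic that the effective disorder strength per block is $\asymp\sqrt\rho$ is the right intuition, and the block scale $\ell\asymp\rho^{-\nu/(2-\nu)}$ and the shift $\asymp\rho^{1/(2-\nu)}$ follow from it as you say. What remains genuinely open in your sketch is the design of the tilting functional $F$ and the verification that it both keeps the Radon--Nikodym cost bounded and makes the tilted block partition function small; for a heavy-tailed $Y$ this is not automatic (the variance-based tilts used for i.i.d.\ disorder do not transfer directly), and the decoupling of blocks through the Markov property of $Y$ --- rather than through independence --- needs care. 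You correctly flag all of these as the hard part, but you do not resolve them; that resolution is the actual content of~\cite{BLrel}.
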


\begin{rem}
In the case $\gamma=\frac12$ (or $d=4$ for the SRW) we do not expect~\eqref{lkl} to be sharp.
One rather expects $\beta_c(\rho)-\beta_0$ to be of order $\frac{\rho}{\log (1/\rho)}$, just as for the disordered pinning model, see \cite{AZ09}. 
Proving this would require addressing some additional technical complications which we have chosen to leave aside.
\end{rem}

We now move to the marginal case $\gamma=\frac23$, allowing for a slowly varying function \(\varphi\) in~\eqref{JPP}.
In order to ease the exposition, we state here the result in a special case where $\varphi$ behaves like a power of~$\log$.
The result in its most general form is displayed in \Cref{prop:multiprop} below.
We also give a bound for the SRW in dimension $d=3$.

\begin{theorem}
\label{th:3DRW}
Assume that~\eqref{JPP} holds with \(\gamma =\frac23\) and with $\varphi(t) \stackrel{t\to \infty}{\sim}  (\log t)^{\kappa}$  for some $\kappa\in \bbR$.
Then there exists $C>0$ such that, for any $\rho\in (0,\frac12)$ 
\begin{equation}
  \label{soluce}
  \log (\beta_c(\rho)-\beta_0 ) \le -C \,
  \begin{cases}  
     \rho^{-\frac{1}{1+3\kappa}}  & \text{ if } \kappa>1/3 \,,\\ 
     \rho^{-\frac{1}{2}} \, \log^{\frac12}\big(\frac1\rho\big)   & \text{ if } \kappa=1/3 \,,\\
     \rho^{-\frac12} & \text{ if } \kappa<1/3 \,.
  \end{cases}
 \end{equation}
For the SRW in dimension $d=3$, there exists $C>0$ such that for any $\rho\in (0,\frac12)$ we have
 \begin{equation}
  \log (\beta_c(\rho)-\beta_0 ) \le - C \rho^{-\frac12} \,.
 \end{equation}
\end{theorem}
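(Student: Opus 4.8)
\textbf{Overall strategy.} The plan is to obtain a lower bound on the quenched free energy $\tf(\rho,\beta)$ via a suitable change-of-measure / second-moment argument adapted to the (correlated) disorder given by the trajectory $Y$, and then to convert a free energy lower bound at a given $\beta>\beta_0$ into the critical point shift bound through the known critical behavior of the homogeneous free energy from \Cref{homener}. Concretely, one fixes $\beta = \beta_0 + \delta$ with $\delta$ small and seeks to show $\tf(\rho,\beta_0+\delta)>0$ whenever $\delta \geq \delta_0(\rho)$ with $\delta_0(\rho) = \exp(-C\rho^{-1/2})$ (in the regime $\kappa<1/3$, and with the stated corrections otherwise). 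This immediately yields $\beta_c(\rho) \leq \beta_0 + \delta_0(\rho)$, i.e.\ $\log(\beta_c(\rho)-\beta_0) \leq -C\rho^{-1/2}$.

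\textbf{Key steps.} First I would set up a coarse-graining of the time interval $[0,T]$ into blocks of a well-chosen length $\ell = \ell(\rho,\delta)$, and estimate the contribution to $Z^{Y,\cons}_{\beta,T}$ coming from trajectories $X$ that stick to $Y$ on a positive density of ``good'' blocks. Second, within a single block, the relevant quantity is the constrained partition function $\beta\,\bE[e^{\beta H^Y_\ell(X)}\ind_{\{X_\ell=Y_\ell\}}]$; since $X-Y \overset{(d)}{=} W$ has return probability $\P(W_t=0) \approx t^{-3/2}\varphi$-type corrections (here $a = 1/\gamma = 3/2$), the homogeneous (annealed) block partition function just barely fails to be of order $1$ — this is exactly the marginal regime $\nu=2$. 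The heart of the argument is a truncated/tilted second moment computation: one compares the first and second moments of a modified block partition function where long excursions (or atypical behavior of $Y$) have been removed, and shows that for $\ell$ of the right size the second moment is comparable to the square of the first, so that the block is ``good'' with probability bounded below, uniformly. Third, I would relate the required block length $\ell$ to $\rho$ and $\delta$: the disorder correlations enter through the jump rate $\rho$ of $Y$, and $Y$ is essentially constant on time scales $\ll 1/\rho$, while on the diffusive/stable scale the fluctuations of $Y$ over a block of length $\ell$ are of order $(\rho\ell)^{1/\gamma} = (\rho\ell)^{3/2}$ in displacement — this interplay is what produces the scale $\ell \sim \rho^{-1}\delta^{-?}$ and ultimately the exponential-in-$\rho^{-1/2}$ threshold. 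Fourth, summing over blocks (a renewal/concatenation or fractional-moment-free direct estimate) gives $\tf(\rho,\beta)\geq \frac{1}{\ell}\log(\text{per-block gain})>0$ under the stated smallness condition on $\delta$. For the SRW in $d=3$ the argument is identical with $\P(W_t=0)\sim c\,t^{-3/2}$ (no slowly varying correction), which is the $\kappa=0<1/3$ case.

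\textbf{Main obstacle.} The hard part will be the block-level second moment estimate in the presence of correlated disorder: unlike the i.i.d.\ disordered pinning model, here the ``environment'' seen by $X$ is a single random walk path $Y$, so the second moment $\bbE\big[(Z^{Y,\cons}_{\beta,\ell})^2\big]$ involves two independent copies $X^{(1)}, X^{(2)}$ interacting with the \emph{same} $Y$, and controlling $\bbE[e^{\beta H^Y_\ell(X^{(1)}) + \beta H^Y_\ell(X^{(2)})}]$ requires understanding the three-body overlap structure $\int_0^\ell \ind_{\{X^{(1)}_t = Y_t\}}\ind_{\{X^{(2)}_t=Y_t\}}\,\dd t$. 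One must tune the truncation (restricting the range of $Y$, or the number of contacts, or the length of excursions) so that this triple overlap is controlled while the first moment is not appreciably reduced — getting the exponents in \eqref{soluce} sharp, in particular the transition at $\kappa=1/3$ and the logarithmic correction at $\kappa=1/3$, is where the slowly varying function $\varphi(t)\sim(\log t)^\kappa$ must be tracked carefully through the Tauberian/renewal estimates for $\int_0^\ell e^{-bt}\P(W_t=0)\,\dd t$. I expect this to be the technical core, with the coarse-graining and the free-energy-to-critical-point conversion being comparatively routine given \Cref{freeenergy} and \Cref{homener}.
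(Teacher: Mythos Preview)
Your overall architecture is correct and matches the paper: coarse-grain into blocks of length $T=A\tf(\beta)^{-1}$, show via a second-moment (Paley--Zygmund) argument that each block is ``good'' with probability close to $1$, concatenate the good blocks to get $\tf(\rho,\beta)\ge (1-\gep)\tf(\beta)>0$, and read off the critical point shift from the range of $\beta$ for which this works.

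However, your plan for the block-level second moment has a genuine gap. You propose to control $\bbE[(Z^{Y,\cons}_{\beta,\ell})^2]$ by studying a ``three-body overlap'' between $X^{(1)},X^{(2)},Y$ and by truncating the range of $Y$ or the excursion lengths. The paper does \emph{not} proceed this way, and it is not clear your approach leads to the sharp thresholds in \eqref{soluce}. The paper's key idea is to use the weighted-renewal representation of \Cref{sec:rewrite}: writing the partition function as $\bQ$-expectation of $\prod_i w(\tau_{i-1},\tau_i,Y)$ with $w$ as in~\eqref{def:w}, the ratio $\bbE[\hat Z_1(Y)^2]/\bbE[\hat Z_1(Y)]^2$ becomes an expectation over two \emph{independent} renewals $\tau,\tau'$ (no $Y$). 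The crucial trick is the uniform bound $w(s,t,Y)\le e^{c_0\rho}$ (from \eqref{CCCC}), which lets one simply replace by this constant the factors $w$ associated to $\tau$-intervals that overlap a $\tau'$-interval (and vice versa); the remaining factors are over disjoint intervals and average to $1$. This reduces the second moment to bounding $\tilde Q^{\otimes 2}[e^{c_0\rho(|\cJ^{(1)}_{[0,T]}|+|\cJ^{(2)}_{[0,T]}|)}]$, a pure renewal-overlap quantity with no disorder left (\Cref{topitop2}).

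This decoupling is what you are missing, and it is exactly where the function $\mathrm R(T)$ of \eqref{defRR} and the $\kappa$-dependent thresholds emerge: one stratifies the overlaps by the scale $H$ of the $\tau$-interval, and for each $H$ the expected number of overlaps on $[0,T]$ is $\asymp\psi_H(T)=\int_{H/2}^T L(H)^2 L(s)^{-2}\frac{\dd s}{s}$; summing over dyadic $H$ gives $R_T\asymp\mathrm R(T)$, and the condition becomes $\rho\,\mathrm R(T)\le\delta$. Your heuristic based on the displacement of $Y$ over a block (``$(\rho\ell)^{3/2}$'') does not produce this structure and in particular does not explain the transition at $\kappa=1/3$. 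Without the renewal representation and the $w\le e^{c_0\rho}$ decoupling, your truncated three-body computation would have to track the joint law of $(X^{(1)}-Y,X^{(2)}-Y)$ with correlated components, and you have not indicated how to close that argument.
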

\noindent It was shown previously that $\log (\beta_c(\rho)-\beta_0 ) \ge -c \rho^{-(2+\gep)}$ for the $3$-dimensional simple random walk, see~\cite{BS11}. 
For the $\gamma$-stable case, the following result illustrates that the bound~\eqref{soluce} is close to being sharp.
It also highlights that $\beta_c(\rho)>\beta_0$ for all $\rho>0$, no matter what the slowly varying function~$\varphi$ is.

\begin{theorem}[\cite{BLrel}, Theorem~1.4]
Assume that~\eqref{JPP} holds with \(\gamma =\frac23\).
Then we have $\beta_c(\rho)>\beta_0$ for all $\rho\in (0,1)$.
Furthermore if $\varphi(t) \stackrel{t\to \infty}{\sim}  (\log t)^{\kappa}$ for some $\kappa\in \bbR$,
then there exists $c>0$ such that, for any $\rho\in (0,\frac12)$ 
\begin{equation}
  \label{soluce2}
  \log (\beta_c(\rho)-\beta_0 ) \ge -c \,
  \begin{cases}  
     \rho^{-\frac{1}{3\kappa}}  & \text{ if } \kappa>1/3 \,,\\ 
     \rho^{-1} \log\big(\frac1\rho\big)   & \text{ if } \kappa=1/3 \,,\\
     \rho^{-1} & \text{ if } \kappa<1/3 \,.
  \end{cases}
 \end{equation}
\end{theorem}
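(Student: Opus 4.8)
The plan is to prove the mirror image, on the ``upper bound for the free energy'' side, of the lower bounds established in the present paper: to obtain $\beta_c(\rho)>\beta_0$ together with~\eqref{soluce2} it suffices to show that the quenched free energy \emph{vanishes}, $\tf(\rho,\beta)=0$, for every $\beta$ with $0<\beta-\beta_0\le \exp(-c\,g(\rho))$, where $g(\rho)$ equals $\rho^{-1/(3\kappa)}$, $\rho^{-1}\log(1/\rho)$ or $\rho^{-1}$ according to whether $\kappa>\tfrac13$, $\kappa=\tfrac13$ or $\kappa<\tfrac13$. By \Cref{freeenergy} one may work with the constrained partition function, and since $\bbE[\log Z^{Y,\cons}_{\beta,T}]\le \tfrac1\theta \log \bbE[(Z^{Y,\cons}_{\beta,T})^\theta]$ for any $\theta\in(0,1)$, it is enough to check that $\bbE[(Z^{Y,\cons}_{\beta,T})^\theta]$ stays bounded as $T\to\infty$ for such $\beta$. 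This is the \emph{fractional moment method}, which I would run together with a \emph{coarse-graining} of the time axis.

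Concretely, fix a block length $\ell$ (the free parameter, to be optimised against $\rho$), cut $[0,T]$ into $N=T/\ell$ consecutive blocks, decompose $Z^{Y,\cons}_{\beta,T}$ according to the set $\cI\subseteq\{1,\dots,N\}$ of blocks visited by a contact $\{X_t=Y_t\}$, and use $\theta$-subadditivity, $(\sum_i a_i)^\theta\le\sum_i a_i^\theta$. One is then reduced to bounding, for each admissible $\cI$, a product of a renewal-type combinatorial weight governed by the effective exponent $a-1=\tfrac12$ of the difference walk $W=X-Y$ (recall $a=1/\gamma=\tfrac32$), and of a per-block fractional moment of the form $\bbE[(Z^{Y,\cons}_{\beta,\ell})^\theta]$. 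Since $\gamma=\tfrac23$ is exactly the \emph{marginal} value ($\nu=2$), the resulting sum over $\cI$ is borderline: it converges only if one can make the per-block fractional moment smaller than the true per-block annealed quantity by a definite — albeit possibly only logarithmically small — factor $\delta(\ell)<1$. Producing this factor is the heart of the proof.

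The gain would be produced by a \emph{change of measure on the disorder $Y$}, localised to a single block. Using Hölder's inequality with exponents $(\tfrac1\theta,\tfrac1{1-\theta})$,
\begin{equation*}
\bbE\big[(Z^{Y,\cons}_{\beta,\ell})^\theta\big]\le \Big(\widetilde\bbE\Big[\big(\tfrac{\dd\bbP}{\dd\widetilde\bbP}\big)^{\frac{1}{1-\theta}}\Big]\Big)^{1-\theta}\Big(\widetilde\bbE\big[Z^{Y,\cons}_{\beta,\ell}\big]\Big)^{\theta},
\end{equation*}
one looks for a tilt $\widetilde\bbP$ of the law of the increments of $Y$ on the block that forces $Y$ to behave atypically so as to lower the tilted annealed partition function $\widetilde\bbE[Z^{Y,\cons}_{\beta,\ell}]$, while keeping the entropic cost $\widetilde\bbE[(\dd\bbP/\dd\widetilde\bbP)^{1/(1-\theta)}]$ of order one. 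The natural candidates penalise trajectories of $Y$ that stay well aligned with a typical localised trajectory of $X$ — for instance by tilting the jump times and/or the (heavy-tailed, $J(x)\sim |x|^{-(1+\gamma)}$) jump law of $Y$ so as to deplete the overlap $\int \ind_{\{X_t=Y_t\}}\dd t$. Here the \emph{long-range correlations} of the RWPM disorder work in our favour: a single modification of the path of $Y$ — one extra, or one suppressed, macroscopic jump — alters the whole geometry of the block at once, so that a gain $\delta(\ell)<1$ can be extracted for \emph{every} slowly varying $\varphi$. This is precisely where the RWPM parts ways with the i.i.d.\ disordered pinning model, whose equivalent renewal has inter-arrival exponent $\alpha=\tfrac12$ with slowly varying correction $(\log n)^{-3\kappa/2}$, for which disorder is \emph{irrelevant} when $\kappa<-\tfrac13$, whereas for the RWPM the critical point is shifted no matter the value of $\kappa$. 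Since $Y$ performs on average $\rho\ell$ jumps in a block, the best achievable $\delta(\ell)$ is governed by the competition between this number of available degrees of freedom and the merely logarithmic, $\varphi$-dependent efficiency of each of them; balancing this against the borderline combinatorial sum forces $\log\ell$ to be of order $g(\rho)$, the three regimes $\kappa\gtrless\tfrac13$ corresponding to whether a logarithmic correction or the polynomial term $\rho^{-1}$ wins the optimisation. Finally, near $\beta_0$ one has $\tf(\beta)=(\beta-\beta_0)^{2}\,\hat L(\tfrac1{\beta-\beta_0})$ with $\hat L(u)$ of order $(\log u)^{3\kappa}$ by \Cref{homener}, so the relevant correlation length is $\approx(\beta-\beta_0)^{-2}$ up to slowly varying corrections; matching it with $\ell$ yields $\log(\beta-\beta_0)\approx-\tfrac12\log\ell\approx-c\,g(\rho)$, i.e.~\eqref{soluce2}. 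The qualitative statement $\beta_c(\rho)>\beta_0$ for every $\rho\in(0,1)$ comes out of the same scheme run with a crude, non-optimised choice of $\ell$ and of the tilt.

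The main obstacle is twofold. First, and most seriously, the coarse-grained decomposition does \emph{not} factorise over blocks: $Y$ is a single trajectory, a large jump of $Y$ in one block displaces its position in all subsequent ones, and the standard pinning argument — which relies on independence of the disorder across blocks — cannot be transcribed verbatim. One has to condition on the positions $(Y_{k\ell})_k$ at block boundaries, use the Markov property of $Y$ to obtain conditional independence of the within-block increments, and then keep track of the residual dependence carried by these boundary values (and of the possibility that $X$ visits a block only because $Y$ has jumped into it from afar); this bookkeeping is where most of the technical effort goes. Second, engineering the within-block change of measure so that it simultaneously yields the correct gain $\delta(\ell)$ for \emph{all} slowly varying $\varphi$ and has bounded entropic cost is delicate: in the marginal case one may have to iterate the tilt over a whole range of intermediate scales rather than use a single one, as in the analysis of marginally relevant disorder for the pinning model.
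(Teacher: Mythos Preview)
This theorem is quoted from the companion paper~\cite{BLrel} and is \emph{not} proved in the present paper, so there is no proof here to compare against directly. Your outline --- fractional moments, coarse-graining into blocks of length~$\ell$, and a change of measure on the disorder~$Y$ within each block --- is the standard architecture for proving disorder relevance in pinning-type models (cf.~\cite{DGLT09,GLT10,GLT11,BS10,BS11}) and is almost certainly the skeleton of what~\cite{BLrel} does. You also correctly isolate the two RWPM-specific difficulties: the non-factorisation over blocks (since~$Y$ is a single trajectory, not an i.i.d.\ field) and the need for a change of measure that yields a gain for \emph{every} slowly varying~$\varphi$, including those for which the analogous i.i.d.\ pinning model is irrelevant.

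What is missing is the actual content: you never specify the tilt~$\widetilde\bbP$. The suggestions (``one extra or suppressed macroscopic jump'', ``iterate the tilt over a range of scales'') are plausible heuristics but neither is worked out, and in the marginal case the design of the change of measure is precisely where the proof lives. In particular, a single large jump of~$Y$ displaces the trajectory but does not obviously depress the tilted annealed partition function, since~$X$ can follow~$Y$ after the jump at no extra cost. Similarly, your derivation of the three regimes in~\eqref{soluce2} is a balancing-of-scales narrative without a computation: the claim that the optimal~$\delta(\ell)$ trades off ``$\rho\ell$ degrees of freedom'' against a ``logarithmic efficiency'' is not substantiated, and the specific exponents $\rho^{-1/(3\kappa)}$, $\rho^{-1}\log(1/\rho)$, $\rho^{-1}$ do not emerge from any displayed inequality. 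So this is a credible plan that names the right obstacles, but it is not yet a proof.
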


Overall, the above results confirm the validity of Harris' criterion for the RWPM. Furthermore it establishes that in the marginal case (which corresponds to $\gamma=2/3$), disorder is marginally relevant for every choice of slowly varying function $\varphi$.
To conclude this presentation of our main results, let us mention that the upper bound on $\beta_c(\rho)$ are obtained as consequences of lower bound obtained for the free energy $\tf(\rho, \beta)$. More precisely we prove that above the threshold  defined by~\eqref{lkl} and \eqref{soluce}, $\tf(\rho, \beta)$ and $\tf(\beta)$ are of the same order,
see Propositions \ref{jenseninq} and \ref{prop:multiprop} below.

\subsection{Comparing our results with the literature}
\label{sec:litterature}

We recall here, for context and discussion, the results that where previously obtained concerning disorder relevance for the RWPM and a closely related model, the disordered pinning model. 
We start with the latter.

\subsubsection*{The pinning model on a defect line.}
Let us now briefly review a model for which Harris criterion has fully shown its validity: \textit{the disordered pinning model on a defect line}. 
For the sake of making the connection with the RWPM more straightforward, we consider a continuous time version  of the model. 
(While all the result we cite have been proved for a discrete analogue, their proof can be adapted to the continuous setup.) 
We let $(W_t)_{t\ge 0}$ be a continuous time random walk satisfying \eqref{JPP} (the SRW case can also be considered but it would not add much to the discussion) and we let $(\go_t)_{t\ge 0}$ be a standard one-dimensional Brownian motion, playing the role of the disorder (for this reason we denote its law by $\bbP$).
Given $\rho\ge 0$ and $\beta\in \bbR$ we set
\[
H^{\rho,\go}_{\beta,T}(W):= \int^T_{0} \ind_{\{W_t=0\}}\left( \beta \dd t + \sqrt{\rho} \, \dd \go_t -\frac\rho2 \dd t\right) \,.
\]
The term $-\frac\rho2$ is present to cancel $\sqrt{\rho} \, \dd \go_t$ when taking the expectation w.r.t.\ $\go$, see~\eqref{eq:pinningannealed} below.
We consider the following Gibbs modification of the law \(\P\) of $W$, in analogy with~\eqref{def:gibbs}:
\begin{equation*}
\frac{\dd \P^{\rho,\go}_{\beta,T}}{\dd \P}(W):=  \frac{1}{ Z^{\rho,\go}_{\beta,T}} \, e^{ H^{\rho,\go}_{\beta,T}(W) }\quad \text{ with }  \quad    Z^{\rho,\go}_{\beta,T}:=   \E\left[ e^{ H^{\rho,\go}_{\beta,T}(W) }\right].
\end{equation*}
Note that this model has the same annealed partition function as the RWPM, namely
\begin{equation}
  \label{eq:pinningannealed}
\bbE[Z^{\rho,\go}_{\beta,T}]= z_{T,\beta} = \E\left[ e^{\beta \int^T_0  \ind_{\{W_s=0\}} \dd s}\right] \,.
\end{equation}
As for the RWPM, we define the free energy of this model by setting
\begin{equation*}
 \tg(\rho,\beta):=\lim_{T\to \infty} \frac{1}{T} \log Z^{\rho,\go}_{T,\beta}.
\end{equation*}
The function $\tg(\rho,\beta)$ verifies the same properties as $\tf(\rho,\beta)$ (see \Cref{freeenergy}) and we can define the critical point of the localization transition by setting
\begin{equation*}
\beta_c^{\rm pin}(\rho):=\inf \big\{ \beta \in \bbR  : \ \tg(\rho,\beta)>0 \big\}.
\end{equation*}
We also have that $\tg(\rho,\beta)\le \tf(\beta)$ and in particular $\beta_c^{\rm pin}(\rho) \geq \beta_0$.
The following results  have been proved concerning disorder relevance/irrelevance for the pinning model (in the analogue discrete time setting), regarding the phase diagram associated with $\tg(\rho,\beta)$:
\begin{enumerate}
 \item  For any $\rho>0$, there is a constant $c_{\rho}$ such that $\tg(\rho,\beta^{\rm pin}_c(\rho)+u) \leq c_\rho u^2$ for every $u\ge 0$. In particular the critical exponent associated to the free energy, if it exists, is always larger than~$2$ (and thus differs from $\nu$ when $\nu<2$), see \cite{CdH13b,GT06}.
 
 \item \label{ii-irrel}
  When $\nu>2$ (\textit{i.e.}\ $\gamma\in (\frac{2}{3},1)$) then we have disorder irrelevance. For small enough $\rho$ we have $\beta_c^{\rm pin}(\rho)=\beta_0$ and also $\tg(\rho,\gb) \stackrel {\beta \downarrow \beta_0} {\sim} \tf(\beta)$, see  \cite{Ale08,GT09,Lac10ecp,Ton08a}.

 \item Whenever $\nu<2$ (\textit{i.e.}\ $\gamma\in (0,\frac{2}{3})$) then we have disorder relevance: there is a shift of the critical point for every $\rho>0$, and $\beta_c^{\rm pin}(\rho)-\beta_0 = \rho^{\frac{1}{2-\nu}+o(1)}$ as $\rho\downarrow 0$; we refer to \cite{AZ09, DGLT09} (see also~\cite{BCPSZ14,CTT17} for a sharp estimate).
 
\item When $\nu=2$ (\textit{i.e.}\ $\gamma= \frac23$), whether or not we have a critical point shift for small $\rho$ depends on the detailed asymptotic behavior of $\P(W_t=0)$. 
More precisely, we have $\beta_c^{\rm pin}(\rho)=\beta_0$ for small values of~$\rho$ (disorder irrelevance) when
\begin{equation}
  \label{eq:criterion}
 \int^{\infty}_{1} \big(t^{2}\P(W_t=0)\big)^{-2}\dd t < \infty,
\end{equation}
and  $\beta_c^{\rm pin}(\rho)>\beta_0$ for every $\rho>0$ (disorder relevance) when the integral diverges. 
We refer to~\cite{Ale08,Lac10ecp,Ton08a} for the first part of the statement and to~\cite{BL18, GLT10,GLT11} for the second part. 
\end{enumerate}
The above results display striking similarities with the one displayed in Sections \ref{dfs} and \ref{fds}, but also a couple of important differences. 
In particular, the criterion \eqref{eq:criterion} establishes that for some choices of $\varphi$ disorder is marginally relevant while for some others disorder is marginally irrelevant, something that does not occur for the RWPM. 
Additionally, in the irrelevant regime, the equivalence between quenched and free energy displayed in \ref{ii-irrel} never holds for the RWPM.

\subsubsection*{Existing results for the Random Walk Pinning Model.}
As far as the RWPM is concerned, the question of disorder relevance has been considered only in the case~\eqref{SRW} where $W$ is the simple random walk on~$\bbZ^d$, either in continuous or discrete time, see~\cite{BL11,BT10,BS10,BS11}.
Let us collect existing results in the following theorem.

\begin{theorem}\label{oldth}
Let $W=(W_t)_{t\geq 0}$ be a simple random random walk in dimension $d\ge 3$.
Then, the following hold:
\begin{enumerate}
\item For any $\gep>0$, there exist constants $c,c_{\gep}$ such that for all $\rho\in(0,1)$
\begin{equation}
  \label{shiftos}
\gb_c(\rho) -\gb_0 \geq 
\begin{cases}
c \rho \quad & \text{ if } d\geq 5 \,,\\
c_{\gep} \rho^{1+\gep} \quad & \text{ if } d =4 \,, \\
\exp( - c_{\gep} / \rho^{2+\gep}) \quad & \text{ if } d =3\,.
\end{cases}
\end{equation}

\item For any $\rho \in (0,1)$, there is a constant $c_{\rho}>0$ such that, for any $u\ge 0$,
\begin{equation}
  \label{smooth}
  \tf(\rho,\beta_c(\rho)+u) \leq c_{\rho}\, u^2 \,.
\end{equation}
\end{enumerate}
\end{theorem}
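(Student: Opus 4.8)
The two parts are established in the cited works \cite{BS10,BT10,BS11,BL11}, and the plan below recalls their strategies, each an adaptation to the RWPM of a technique developed for the disordered pinning model. For \emph{Part (1)} the goal is to show that the quenched free energy still vanishes just above the annealed critical point, i.e.\ that $\tf(\rho,\gb)=0$ whenever $\gb-\gb_0$ lies below the claimed threshold. By \Cref{freeenergy}, $\tf(\rho,\gb)=\lim_{T\to\infty}\tfrac1T\bbE[\log Z^{Y,\cons}_{\gb,T}]$, so by concavity of $x\mapsto x^{\theta}$ it suffices to exhibit, for some $\theta\in(0,1)$, a uniform bound $\sup_T\bbE[(Z^{Y,\cons}_{\gb,T})^{\theta}]<\infty$: then $\bbE[\log Z^{Y,\cons}_{\gb,T}]\le\tfrac1\theta\log\bbE[(Z^{Y,\cons}_{\gb,T})^{\theta}]=O(1)$ and the free energy is zero. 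The \emph{fractional moment method} produces such a bound: expanding $Z^{Y,\cons}_{\gb,T}$ along the successive returns of $X$ to $Y$ and using $\big(\sum_i a_i\big)^{\theta}\le\sum_i a_i^{\theta}$ to move the exponent inside turns the fractional moment into a sum of products of one-excursion weights, so that the whole problem reduces to checking that $\int_0^\infty w(s)\,\dd s<1$ for the effective weight $w(s)=\bbE_{Y}\big[(\text{weight of a contact excursion of length }s)^{\theta}\big]$, for $\gb$ below the threshold.

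How hard this last step is depends on the dimension through the exponent $\alpha=\tfrac d2-1$ governing the return-time law of $W=X-Y$. When $d\ge5$ (so $\alpha>1$, the regular regime) the inequality $\int w<1$ can be checked by a direct second-moment/fractional-moment computation, essentially without a change of measure, and the resulting threshold is linear in $\rho$, giving $\gb_c(\rho)-\gb_0\ge c\rho$; this is the argument of \cite{BS10}. When $d=4$ ($\alpha=1$, borderline) one must first perform a \emph{change of measure} on the law of $Y$ on $[0,T]$ — tilting $Y$ so as to lower its expected overlap with $X$ — and control the resulting Radon--Nikodym factor by Hölder's inequality; the borderline divergence costs an arbitrarily small loss in the exponent, which is the source of the $\rho^{1+\gep}$ bound of \cite{BT10}. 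When $d=3$ ($\alpha=\tfrac12$, the marginal case) even this is not enough: following \cite{BS11} one partitions $[0,T]$ into blocks of a carefully chosen scale $\ell=\ell(\rho)$, applies the fractional-moment inequality block by block, and on each block performs a change of measure decorrelating $Y$ from $X$; balancing the block scale against the (only logarithmically divergent) cost of the tilt produces the exponentially small threshold $\exp(-c_{\gep}\rho^{-(2+\gep)})$. This marginal case is the main obstacle of Part~(1): the change of measure must absorb precisely the slowly diverging correlations created by the quenched trajectory while keeping the Radon--Nikodym bookkeeping summable.

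For \emph{Part (2)} the plan is the Giacomin--Toninelli smoothing bound, adapted to the walk-valued disorder as in \cite{BL11}. Fix $\gb=\gb_c(\rho)+u$ with $u>0$ and put $f:=\tf(\rho,\gb)$; assuming $f>0$, the aim is $f\le c_\rho u^2$. One chooses a block length $\ell\asymp 1/f$, so that a typical block of size $\ell$ contributes a factor of order $e^{f\ell}=\Theta(1)$ to the partition function, and one exhibits a ``good'' event $G$ for $Y$ restricted to a block with two properties: (i) $\bbP(G)\ge e^{-c_1\rho\ell}$, the cost of reshaping $Y$ over a time interval of length $\ell$ into a low-entropy path that $X$ can shadow being linear in $\ell$ with a prefactor proportional to the disorder intensity $\rho$; and (ii) on $G$ the constrained block partition function is at least $e^{c_2\ell}$ for a fixed $c_2>0$ (uniform for $\gb$ in a neighbourhood of $\gb_c(\rho)$), hence much larger than the typical contribution. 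Comparing, through a fractional-moment/change-of-measure argument, the partition functions at $\gb_c(\rho)+u$ and at $\gb_c(\rho)$ — where $\tf(\rho,\cdot)=0$ — and optimizing over the number of blocks then forces $f\lesssim_\rho u^2$. The delicate point here, specific to the disorder being a random-walk trajectory rather than an i.i.d.\ field, is precisely to produce the good event $G$ with the sharp linear-in-$\ell$ cost: any entropic overestimate at that step would only yield a bound weaker than quadratic.
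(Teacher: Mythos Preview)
The paper does not prove this theorem; it simply records results from the cited references, stating that ``the bound~(i) was established in \cite{BS10} (for $d\ge 4$) and \cite{BS11} for $d=3$, while~(ii) was proved in \cite{BL11}.'' Your proposal is therefore not a competing proof but a summary of those references, and as such should be judged on whether it accurately reflects their strategies.

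Your account of Part~(1) is essentially right: the fractional-moment method, possibly combined with a change of measure on~$Y$ and (in $d=3$) a coarse-graining into blocks, is indeed what \cite{BS10,BS11} do. A minor point of attribution: the paper credits \cite{BS10} for all of $d\ge 4$, not \cite{BT10} for $d=4$.

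Your sketch of Part~(2), however, has a genuine gap. You describe a good event $G$ with $\bbP(G)\ge e^{-c_1\rho\ell}$ and a gain $\log Z\ge c_2\ell$ on $G$, with neither quantity involving $u$, and then assert that ``comparing \ldots\ forces $f\lesssim_\rho u^2$.'' But nothing in properties~(i)--(ii) as you state them can produce a \emph{quadratic} dependence on~$u$: the exponent $2$ has to come from somewhere, and in your sketch there is no $u$-dependent cost to square. In the Giacomin--Toninelli mechanism (and in its adaptation to the RWPM in \cite{BL11}), the~$u^2$ arises because the rare-event/tilt that turns a typical block at $\beta_c(\rho)$ into one behaving like $\beta_c(\rho)+u$ has a relative-entropy cost of order $u^2\ell$ (second-order in the perturbation size~$u$); balancing this against the free-energy gain $f\ell$ on a block of length $\ell\asymp 1/f$ is exactly what yields $f\le c_\rho u^2$. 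Your event ``$Y$ is reshaped into a low-entropy path,'' costed at $e^{-c_1\rho\ell}$, is a different object --- it does not depend on~$u$ and would, if anything, lead to a bound like $f\le c\rho$ rather than $f\le c_\rho u^2$. To make the sketch correct you need to identify a $u$-dependent tilt of the environment~$Y$ (this is the ``delicate point specific to random-walk disorder'' you allude to) and explain why its cost scales like $u^2\ell$.
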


\noindent The bound $(i)$ was established in \cite{BS10} (for $d\ge 4$) and \cite{BS11} for $d=3$, while $(ii)$ was proved in \cite{BL11}.
These results establish that disorder is relevant for every $d\ge 3$.

The proofs in \cite{BL11,BT10,BS10,BS11} all adapt ideas developed in \cite{DGLT09,GLT10,GT06} in the context of the disordered pinning model to show that, for the RWPM, disorder is \textit{at least as relevant} as for the disordered pinning on a defect line.
However this comparison only goes one way.
Indeed the argument used to show disorder irrelevance for the disorder pinning model in \cite{Ale08,Lac10ecp,Ton08a} are based on the fact that the second moment of the partition function is bounded for $\beta=\beta_0$ when
$\rho$ is sufficiently small. 
This property \textit{fails to hold} for the RWPM, for any choice of $\gamma$.

\smallskip

This last observation is our main motivation to study the RWPM in a setup where the annealed exponent $\nu$ in \eqref{eq:energyasymp} is (strictly) larger than $2$, for which Harris criterion predicts irrelevance. The $\gamma$-stable random walk represents the simplest such setup. 
Theorems~\ref{th:rel} and~\ref{th:3DRW}, while very similar in spirit to analogous results proved for the disordered pinning model, require original arguments. 
Some of the ideas developed in the study of this specific model, may find application in the study of other disordered model for which the disorder has a non i.i.d.\ structure.

Let us finally mention that our results for the RWPM in the simple random walk case answers, at least partially, a question appearing in \cite{Zyg24} (as Question 3): `` Can one find matching lower bounds for~\eqref{shiftos}''.

\subsection{Organization of the paper and brief outline of ideas}
Let us now give a brief overview of the remainder of the paper.

\begin{itemize}
  \item Section~\ref{sec:prelim} is thought as a toolbox, where we gather several technical results which will be of use in the rest of the paper. 
  An important tool is the rewriting of the partition function  of the RWPM  which allows to interpret it as  the partition function of a continuous time renewal process $\tau$ interacting with a random potential $w$.
  While none of these results are original we provide a proof for some of them for the commodity of the reader (some of the proofs are postponed to the appendix).

  \item In \Cref{sec:Jensen} we prove Theorem \ref{th:rel}  in the case \(\gamma \in (0,\frac12]\) or \(d\geq 4\), using a relatively soft argument based on Jensen's inequality.
  
  \item In \Cref{sec:lower} we state the lower bounds on the free energy needed to prove \Cref{th:irrel,th:rel,th:3DRW}, see \Cref{prop:multiprop}. 
  The first steps of the proof are also performed in this section, since they are common to all needed estimates.
  The main idea is to reduce to a finite-volume estimate thanks to an approximate factorization of the model:
  the needed finite-volume estimate is stated in \Cref{trickyy}, the goal then being reduced to showing that a ratio of second-to-first moment of the partition function is close to~\(1\), in the spirit of the Paley-Zygmund inequality. 

  \item \Cref{trixx} gives the second step of the proof, which is a general method to control the moment ratio of \Cref{trickyy}.
  Here, the idea is to use the weighted renewal representation of Section~\ref{sec:prelim} in order to obtain an upper bound on the moment ratio in terms of ``interval overlaps'' of two independent renewals; this is stated in~\Cref{topitop2}.

  \item The remaining sections complete the different proofs by controlling the ``interval overlaps'', using ad-hoc techniques in each case.
  \Cref{trixx1} concludes the  proof of \Cref{prop:multiprop} in the case \(\gamma \in (\frac12,\frac23)\) and \(\gamma=\frac23\) (or \(d=3\)).
  \Cref{trixx2} concludes the proof of \Cref{prop:multiprop} in the case \(\gamma\in(\frac23,1)\). 

  \item An appendix collects technical results: \Cref{app:freeen} proves the existence of the free energy; \Cref{app:homogeneous} proves an important estimates on the homogeneous pinning model; \Cref{app:LLT} proves a refined version of the local limit theorem.
\end{itemize}

\section{Technical preliminaries}
\label{sec:prelim}

\subsection{Reinterpretation of the partition function}
\label{sec:rewrite}

Following \cite{BS10,BS11}, we interpret the (free and constrained) partition functions $Z^{Y}_{\gb,T}$ and $Z^{Y,\cons}_{\beta,T}$ as partition functions of a different model, in which a continuous time renewal process~$\tau$ interacts with a random potential~$w$. 

\subsubsection*{Rewriting the partition function}
This is done by expanding the exponential $e^{\beta H^Y_T(X)}$ appearing in~\eqref{compart}. 
Using the Markov property for $X$ (with a fixed realization of $Y$), we indeed get that
\begin{equation}
  \label{goodinho}
  \begin{split}
Z_{\gb,T}^Y&  = 1+ \sum_{k=1}^{\infty} \gb^k \int_{\cX_k(T)} \bP\big( X_{t_i}  = Y_{t_i}, \forall i \in \{1,\ldots, k\}\big) \dd t_1 \cdots \dd t_k \\
&  = 1+ \sum_{k=1}^{\infty} \gb^k \int_{\cX_k(T)} \prod_{i=1}^k\bP\big( X_{t_i-t_{i-1}}  = Y_{t_i} -Y_{t_{i-1}} \big) \dd t_1 \cdots \dd t_k \,,
 \end{split}
\end{equation}
where $\cX_k(T):=\{ {\bf t}\in \bbR^k  : 0< t_1< t_2<\dots<t_k<T\}$ is the $k$-dimensional simplex. 
An analogous formula holds for $Z_{\gb,T}^{Y,\cons}$, adding the constraint that $X_T=Y_T$: recalling the extra factor \(\beta\) in its definition and setting by convention $t_0=0$ and $t_{k+1} =T$, we have
\begin{equation}
  \label{goodinho2}
  Z_{\gb,T}^{Y,\cons} = \beta \bP(X_T =Y_T) + \sum^{\infty}_{k=1} \beta^{k+1} \int_{\cX_k(T)} \prod_{i=1}^{k+1} \bP\big( X_{t_i-t_{i-1}}  = Y_{t_i} -Y_{t_{i-1}} \big) \prod_{i=1}^{k} \dd t_i \,.
\end{equation}
Note that 
\(
\bbE[ \bP \big( X_{t_i-t_{i-1}}  = Y_{t_i} -Y_{t_{i-1}} \big)] = \P(W_{t_i-t_{i-1}}=0) \,.
\)
Next, we introduce a kernel $K(t)$ along with its \textit{disordered} counterpart $K_w$ (recall \eqref{def:beta0}). For \(t>0\), set
\begin{equation}
  \label{def:K}
  \begin{split}
    K(t)&:=\beta_0 \,\P(W_t=0) \\
    K_w(s,t,Y)&:=\beta_0\,\bP\left(X_{t-s}=Y_t-Y_s \right) .
  \end{split} 
\end{equation}
We have in particular $\int^{\infty}_0 K(t)\dd t=1$ and $\bbE\left[K_w(s,t,Y)\right]= K(t-s)$. 
The expansions \eqref{goodinho}-\eqref{goodinho2} can then be rewritten as follows: 
\begin{equation}
\label{goodexpress}
\begin{split}
Z^{Y}_{\beta,T}&= 1+\sum^{\infty}_{k=1} \left(\beta/\beta_0 \right)^k \int_{\cX_k(T)} \prod_{i=1}^k K_w(t_{i-1},t_i,Y) \dd t_i \,,\\
Z^{Y,\cons}_{\beta,T}&=  (\gb/\gb_0) K_w(0,T,Y)+\sum^{\infty}_{k=1} \left(\beta/\beta_0 \right)^{k+1} \int_{\cX_k(T)} \prod_{i=1}^{k+1}  K_w(t_{i-1},t_i,Y)  \prod_{i=1}^{k} \dd t_i \,.
\end{split}
\end{equation}
Applied to the case $\rho=0$, the annealed (free and constrained) partition functions can be rewritten as
\begin{equation}
  \label{hom-goodexpress}
  \begin{split}
   z_{\gb,T} & = 1+ \sum_{k=1}^{\infty} (\gb/\gb_0)^k \int_{\cX_k(T)} \prod_{i=1}^k K(t_i-t_{i-1}) \dd t_i \,,\\
   z_{\gb,T}^{\cons} &= (\gb/\gb_0)K(T) + \sum_{k=1}^{\infty}  (\gb/\gb_0)^{k+1} \int_{\cX_k(T)} \prod_{i=1}^{k+1} K(t_i-t_{i-1}) \prod_{i=1}^k \dd t_i \,.
\end{split} 
 \end{equation}

\subsubsection*{Interpretation as a weighted renewal process}

Let us define $\tau$ a renewal process on \(\bbR_+\) with inter-arrival density $K(\cdot)$. 
In other words, the sequence $(\tau_i)_{i\ge 0}$ satisfies $\tau_0=0$ and its increments are i.i.d.\ positive random variables with density $K(\cdot)$. The process $\tau$ can alternatively be considered as a locally finite random subset of~$\bbR_+$. 
We let $\bQ$ denote the distribution of $\tau$ and we also define the conditional probability by setting $\bQ( \,\cdot  \mid T\in \tau):=\lim_{\gep\to 0}\bQ(\, \cdot \mid \tau \cap [T,T+\gep]\ne \emptyset)$.
Let also \(u(T)\) be the renewal density $u(\cdot)$ defined for $A\subset (0,\infty)$ by $ \bQ[ |\tau \cap A|]=: \int_A u(t)\dd t$, and note that \(u(T)= z^{\cons}_{\gb_0,T}\).

We further introduce 
\begin{equation}
  \label{def:w}
w(s,t,Y):= \frac{K_w(s,t,Y)}{K(t-s)}  = \frac{\bP\left(X_{t-s}=Y_t-Y_s \right)}{\P\left(W_{t-s}=0 \right)} \,,
\end{equation}
and observe that $\bbE[w(s,t,Y)] =1$.
Setting $\cN_T:= \max\{k\ge 0 :\, \tau_k\le T\}$ we have the following interpretations for the expressions~\eqref{goodexpress}-\eqref{hom-goodexpress} in the constrained case
\begin{equation}\label{asanexpect}
 Z^{Y,\cons}_{\beta,T}=  u(T)\bQ\Big[ (\beta/\beta_0)^{\cN_T} \prod_{i=1}^{\cN_T} w(\tau_{i-1},\tau_i,Y) \ \Big| \ T\in \tau \Big] \quad  \text{ and }  \quad z^{\cons}_{\gb,T}=  u(T) \bQ\Big[ (\beta/\beta_0)^{\cN_T} \ \Big| \ T\in \tau \Big]\,.
\end{equation}
Hence $Z^{Y,\cons}_{\beta,T}$ (resp.\ $z^{\cons}_{\gb,T}$) corresponds to the partition function a constrained renewal process which receives a multiplicative bonus equal to $(\gb/\gb_0) \, w(\tau_{i-1},\tau_i,Y)$ (resp.\ $\gb/\gb_0$) for the $i$-th renewal point. 
While the expression \eqref{asanexpect} will mostly not be used in our proof, the idea of representing the partition function as an expectation with respect to a renewal process is at the core of most of our reasoning. 
% Let us now give a few properties of the distribution of the renewal process \(\tau\).

% Therefore, the partition functions in~\eqref{goodexpress} can be seen as a continuous renewal process with inter-arrival distribution~\eqref{def:K}, weighted by the potential $w$. 

\subsection{Some properties of the distribution of the renewal process}

By the local central limit theorem, we have that $K(t)\sim c_d t^{-\frac{d}{2}}$ as $t\to\infty$ when $(W_t)_{t\geq 0}$ is a simple random walk~\eqref{SRW} on \(\bbZ^d\).
In the $\gamma$-stable case~\eqref{JPP}, the \(\gamma\)-stable version of the local limit theorem (see e.g.\ \cite[Chapter~9]{GK68}) also provides an asymptotic expression for $\P(W_t=0)$ and hence for $K(t)$.
More precisely $K(t)$ tends to zero and satisfies the relation
\begin{equation}
  \label{implicit}
\varphi(1/K(t)) K(t)^{\gamma} \stackrel{t\to \infty}{\sim} \frac{c_{\gamma}}{t} ,
\end{equation}
where $c_{\gamma}$ is an explicit constant (in fact, we provide a proof in \Cref{app:LLT}).
This implies that, in all cases, $K(\cdot)$ is of the form
\begin{equation}
  \label{K-reg}
 K(t)=  L(t)t^{-(1+\alpha)} \,,
\end{equation}
where either $\alpha=\frac{d}{2}-1$ in case \eqref{SRW} or $\alpha =\frac{1-\gamma}{\gamma}$ in case \eqref{JPP} and $L(\cdot)$ is a slowly varying function (related to $\gamma$ and $\varphi$). 
For instance, in the $\gamma$-stable case, we deduce from~\eqref{implicit} that for any $\kappa \in \bbR$ and $\gamma \in (0,1)$ there exists $c_{\kappa,\gamma}>0$ such that 
\begin{equation}
  \label{philog}
 \varphi(t) \stackrel{t\to \infty}{\sim}  (\log t)^{\kappa}   \quad  \Longleftrightarrow  \quad L(t) \stackrel{t\to \infty}{\sim}   c_{\kappa,\gamma} (\log t)^{-\kappa/\gamma} \,.
\end{equation}
Let us finally mention a last property of regular varying function, referred to as Potter's bound, see~\cite[Thm.~1.5.6]{BGT89}. Given $t\ge 1$ and $\delta>0$ we have, for all $s\ge t$
\begin{equation}\label{potter}
    c_{\delta}\left(\frac{s}{t}\right)^{-(1+\alpha)-\delta}\le   \frac{K(s)}{K(t)}\le C_{\delta}\left(\frac{s}{t}\right)^{-(1+\alpha)+\delta} \,.
\end{equation}
We will also use this bound for $J(\cdot)$, replacing the exponent $\alpha$ by $\gamma$.

% \subsubsection* {About the renewal measure density.}

\smallskip

Let us observe that $u(t):= z_{\gb_0,t}$ defined above corresponds to the \textit{average density} of the renewal process in the sense that  for $A\subset (0,\infty)$ measurable we have $ \bQ[ |\tau \cap A|]=: \int_A u(t)\dd t$.
% A direct computation shows that this density $u(\cdot)$ exists and that 
% \begin{equation}
% \label{renewdensity}
% u(t) = K(t) + \sum_{k=1}^{\infty} \int_{\cX_k(t)} \prod_{i=1}^{k+1} K(t_i-t_{i-1}) \prod_{i=1}^k \dd t_i =:  z_{\gb_0,t}^{\cons} \,.
% \end{equation}
A continuous time version of Doney's~\cite{Don97} strong renewal theorem in the case of infinite mean (proven in \cite[Lem.~A.1]{BS11} and \cite[Thm.~8.3]{Top10}) yields the following asymptotic: if \(\alpha \in (0,1)\), we have
\begin{equation}
\label{eq:Doney}
u(t)  \stackrel{t\to \infty}{\sim} \frac{\alpha \sin(\pi\alpha)}{\pi} \frac{t^{\alpha-1}}{L(t)}=  \frac{\alpha \sin(\pi\alpha)}{\pi \, t^2 K(t)}.
\end{equation}
If on the other hand \(\alpha>1\), the renewal theorem shows that \(\lim_{t\to\infty} u(t) = \big(\int_0^{\infty} sK(s) \dd s\big)^{-1}\).

\subsection{Near critical estimates of the pure model}\label{phpure}

Recalling the implicit expression~\eqref{eq:freeenergy} for the free energy, we introduce for $\beta\geq \gb_0$ the probability density:
\begin{equation}
  \label{def:Kbeta}
  K_{\beta}(t) = \frac{\beta}{\beta_0} e^{-t  \tf(\gb)} K(t) \, .
\end{equation}
Indeed, thanks to~\eqref{eq:freeenergy}, we have $\int^{\infty}_0 K_{\beta}(t)\dd t=1$. 
 We let $\bQ_{\beta}$ denote the distribution of the renewal process whose i.i.d.\ increments have density $K_{\beta}(\cdot)$, and the consider $u_{\beta}(\cdot)$ the associate renewal density defined by $\bQ_{\beta}[ |\tau \cap A|]=:\int_A u_{\beta}(t)\dd t$. Like in the case $\beta=\beta_0$ an explicit computation yields,
\begin{equation}
  \label{zubet}
z_{\gb,T}^{\cons}= u_{\beta}(t) e^{\tf(\beta)t} \,.
\end{equation}
The renewal theorem implies that $u_{\beta}(t)$ converges to $(\int^t_0 tK_{\beta}(t) \dd t)^{-1}=\tf'(\beta)$. The following lemma, that we prove in \Cref{app:homogeneous}, shows that $u_{\beta}(t)$ is comparable to $u(t)$ for $t\leq \tf(\gb)^{-1}$ and comparable to $\tf'(\beta)$ for $t\ge \tf(\gb)^{-1}$.

\begin{lemma}
\label{lem:Zc}
Assume that $K(t) \sim L(t)t^{-(1+\alpha)}$. Then there exist constants $c_1,c_2$ such that for any $\gb\in [\gb_0,2\gb_0]$:
\begin{equation}\label{cawre}
c_1 u\Big( t \wedge \frac{1}{\tf(\gb)} \Big) \leq u_{\gb}(t) = z_{\gb,T}^{\cons} \, e^{-\tf(\beta)t} \leq c_2 u\Big( t \wedge \frac{1}{\tf(\gb)} \Big)  \,.
\end{equation}
Additionally, we have $c_1\tf'(\gb) \leq u(\frac{1}{\tf(\gb)}) \leq c_2 \tf'(\gb)$ where $\tf'$ denotes the derivative of $\tf$.
Furthermore for any $\gep>0$, there is some $\eta>0$ such that uniformly for $t \leq \eta/\tf(\gb)$ we have 
\begin{equation}\label{cawre2}
  (1-\gep) u(t)\leq  u_{\gb}(t) \leq (1+\gep) \frac{\gb}{\gb_0} u(t) \, .
\end{equation}
\end{lemma}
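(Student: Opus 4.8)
The plan is to transfer everything to the homogeneous constrained partition function $z^{\cons}_{\beta,\cdot}$, which is linked to $u_\beta$ by $u_\beta(t)=e^{-\tf(\beta)t}z^{\cons}_{\beta,t}$ (this is \eqref{zubet}). Two elementary remarks get things started. First, the series \eqref{hom-goodexpress} for $z^{\cons}_{\beta,t}$ has nonnegative coefficients nondecreasing in $\beta$, so $z^{\cons}_{\beta,t}\ge z^{\cons}_{\beta_0,t}=u(t)$. Second, decomposing on the first renewal point shows that $z^{\cons}_{\beta,\cdot}$ solves the excessive renewal equation $z^{\cons}_{\beta,\cdot}=\tfrac{\beta}{\beta_0}K+\tfrac{\beta}{\beta_0}\,K*z^{\cons}_{\beta,\cdot}$; subtracting this from $u=K+K*u$ and re-solving in terms of $u$ (equivalently, a one-line check on Laplace transforms, both sides having transform $\tfrac{(\beta/\beta_0)\widehat K}{1-(\beta/\beta_0)\widehat K}$, where $\widehat K(\lambda):=\int_0^\infty e^{-\lambda t}K(t)\,\dd t$) yields the identity
\begin{equation}\label{eq:zcons-id}
z^{\cons}_{\beta,t}=\frac{\beta}{\beta_0}\,u(t)+\Big(\frac{\beta}{\beta_0}-1\Big)\int_0^t u(t-s)\,z^{\cons}_{\beta,s}\,\dd s \,,
\end{equation}
which will be the workhorse. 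Finally, by the regular variation of $u$ (from \eqref{eq:Doney}, with Potter's bound \eqref{potter}), the quantities $u\big(t\wedge\tfrac{\theta}{\tf(\beta)}\big)$ and $u\big(t\wedge\tfrac1{\tf(\beta)}\big)$ are comparable uniformly over $\beta\in[\beta_0,2\beta_0]$ for any fixed $\theta\in(0,1]$; it is therefore enough to prove \eqref{cawre} with $\tf(\beta)^{-1}$ replaced by $\theta\,\tf(\beta)^{-1}$ for a small constant $\theta=\theta(\alpha)$ fixed below. Set $\mu_\beta:=\int_0^\infty s\,K_\beta(s)\,\dd s$.

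\emph{The regime $t\le\theta/\tf(\beta)$.} Here the goal is $u_\beta(t)\asymp u(t)$. The lower bound is immediate: $u_\beta(t)=e^{-\tf(\beta)t}z^{\cons}_{\beta,t}\ge e^{-1}u(t)$. For the upper bound, write $g(t):=z^{\cons}_{\beta,t}/u(t)\ge1$ and $h(t):=(u*u)(t)/u(t)$; dividing \eqref{eq:zcons-id} by $u(t)$ and bounding $z^{\cons}_{\beta,s}\le u(s)\sup_{r\le t}g(r)$ under the integral gives, using that $r\mapsto h(r)$ is up to a bounded factor nondecreasing (being regularly varying of positive index),
\[
\sup_{s\le t}g(s)\ \le\ \frac{\beta/\beta_0}{\,1-(\beta/\beta_0-1)\,h(t)\,}\qquad\text{provided }(\beta/\beta_0-1)\,h(t)<1\,.
\]
The crucial input is that $(\beta/\beta_0-1)\,h(1/\tf(\beta))$ stays bounded as $\beta\downarrow\beta_0$. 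This is a standard Tauberian computation: \eqref{eq:Doney} and Karamata's theorem control $u(t)$ and $(u*u)(t)$ (so $h$ is regularly varying of index $\alpha\wedge1$), while \eqref{eq:freeenergy} reads $1-\widehat K(\tf(\beta))=1-\beta_0/\beta$ and $1-\widehat K(\lambda)\asymp L(1/\lambda)\lambda^\alpha$ when $\alpha<1$ (resp.\ $\asymp\lambda\,\mu_{\beta_0}$ when $\alpha>1$); combining these one gets $h(1/\tf(\beta))\asymp(\beta/\beta_0-1)^{-1}$. Hence $(\beta/\beta_0-1)\,h(\theta/\tf(\beta))\le C\theta^{\alpha\wedge1}\,(\beta/\beta_0-1)\,h(1/\tf(\beta))\le\tfrac12$ once $\theta$ is small, so $z^{\cons}_{\beta,t}\le 4u(t)$ and thus $u_\beta(t)\le z^{\cons}_{\beta,t}\le 4u(t)$ for all $t\le\theta/\tf(\beta)$. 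The same bootstrap yields the upper bound in \eqref{cawre2}: if $t\le\eta/\tf(\beta)$ then $(\beta/\beta_0-1)h(t)\le C'\eta^{\alpha\wedge1}$, hence $\sup_{s\le t}g(s)\le(1+\gep)\tfrac{\beta}{\beta_0}$ for $\eta$ small; the lower bound in \eqref{cawre2} is just $u_\beta(t)=e^{-\tf(\beta)t}z^{\cons}_{\beta,t}\ge e^{-\eta}u(t)\ge(1-\gep)u(t)$.

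\emph{The regime $t\ge\theta/\tf(\beta)$.} Here the goal is $u_\beta(t)\asymp\tf'(\beta)$, together with $\tf'(\beta)\asymp u(1/\tf(\beta))$. Implicit differentiation of \eqref{eq:freeenergy} gives $\tf'(\beta)=1/(\beta\mu_\beta)$, and the Tauberian computation above shows $\mu_\beta^{-1}\asymp u(1/\tf(\beta))$ (both being $\asymp\tf(\beta)^{1-\alpha}/L(1/\tf(\beta))$ when $\alpha<1$, and both converging to $\mu_{\beta_0}^{-1}\in(0,\infty)$ when $\alpha>1$); since $1/\mu_\beta$ is also the limit of $u_\beta(t)$ by the renewal theorem, the claimed comparison is $u_\beta(t)\asymp 1/\mu_\beta$. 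For $\theta/\tf(\beta)\le t\le C/\tf(\beta)$ the lower bound follows from \eqref{eq:zcons-id} and $z^{\cons}_{\beta,\cdot}\ge u$: $u_\beta(t)=e^{-\tf(\beta)t}z^{\cons}_{\beta,t}\ge e^{-C}(\beta/\beta_0-1)(u*u)(t)=e^{-C}(\beta/\beta_0-1)h(t)\,u(t)$, and in this range $(\beta/\beta_0-1)h(t)\asymp1$ while $u(t)\asymp u(1/\tf(\beta))\asymp 1/\mu_\beta$. The remaining part — the upper bound $u_\beta(t)\le C/\mu_\beta$ for all $t\ge\theta/\tf(\beta)$, and the lower bound for $t\ge C/\tf(\beta)$ — is a uniform (in $\beta$) near-critical renewal statement, which I expect to be the main obstacle: soft bootstraps break down here because $\bQ_\beta$ is a \emph{proper} renewal and $u_\beta$ sits at its equilibrium value $1/\mu_\beta$ on this scale. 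I would establish it either (a) by Laplace inversion, using that $\widehat{u_\beta}(\lambda)=\tfrac{(\beta/\beta_0)\widehat u(\lambda+\tf(\beta))}{1-(\beta/\beta_0-1)\widehat u(\lambda+\tf(\beta))}$ has, in a fixed half-plane $\{\Re\lambda\ge-\gep\,\tf(\beta)\}$, only a simple pole at the origin with residue comparable to $1/\mu_\beta$ (an implicit-function argument, uniform in $\beta$), so that a contour shift plus the regular variation of $u$ controls the rest; or (b) by coupling $\bQ_\beta$ with the critical renewal $\bQ$ and invoking the sharp estimate \eqref{eq:Doney}; or (c) by re-running the proof of \eqref{eq:Doney} while tracking uniformity over the family $\{K_\beta\}_{\beta\in[\beta_0,2\beta_0]}$.

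Combining the two regimes gives \eqref{cawre} with $\theta/\tf(\beta)$ in place of $1/\tf(\beta)$, hence \eqref{cawre} itself by the reduction above; the estimate $c_1\tf'(\beta)\le u(1/\tf(\beta))\le c_2\tf'(\beta)$ is the identification $\tf'(\beta)\asymp u(1/\tf(\beta))\asymp 1/\mu_\beta$ just obtained, and \eqref{cawre2} was handled within the small-$t$ analysis. The marginal case $\alpha=1$ is treated in the same way, replacing the power-law Tauberian estimates by their slowly varying analogues for $u$ and for $1-\widehat K$.
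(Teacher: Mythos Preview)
Your treatment of the regime $t\le\theta/\tf(\beta)$ is genuinely different from the paper's and works. The paper writes $u_\beta(t)=u(t)e^{-\tf(\beta)t}\,\bQ\big[(\beta/\beta_0)^{|\tau\cap(0,t]|}\mid t\in\tau\big]$, then uses Cauchy--Schwarz, the symmetry of the bridge, and a separate lemma to drop the conditioning $\{t\in\tau\}$ at the cost of a constant. Your route via the identity $z^{\cons}_{\beta,\cdot}=\tfrac{\beta}{\beta_0}u+(\tfrac{\beta}{\beta_0}-1)\,u*z^{\cons}_{\beta,\cdot}$ and a Gronwall-type bootstrap on $g=z^{\cons}_{\beta,\cdot}/u$ is cleaner: it avoids the conditioning lemma entirely and gives \eqref{cawre2} essentially for free. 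The Tauberian check that $(\beta/\beta_0-1)\,h(1/\tf(\beta))\asymp 1$ is correct in both the $\alpha<1$ and $\alpha>1$ cases.

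However, there is a real gap in the regime $t\ge\theta/\tf(\beta)$. You explicitly flag the upper bound $u_\beta(t)\le C/\mu_\beta$ for all $t\ge\theta/\tf(\beta)$, and the lower bound for $t\ge C/\tf(\beta)$, as ``the main obstacle'' and only list three possible strategies (Laplace inversion with a uniform implicit-function argument, coupling, re-running Doney) without carrying any of them out. None of these is routine, and the first in particular requires justifying the contour shift uniformly over a family of regularly varying densities without a spectral gap of fixed size. The paper's argument here is much more elementary and you should know it: write the renewal equation $u_\beta(t)=K_\beta(t)+\int_0^t u_\beta(s)K_\beta(t-s)\,\dd s$, split the integral at $1/\tf(\beta)$, and evaluate at a point where $u_\beta$ attains its infimum $m_\beta$ (resp.\ supremum $M_\beta$) on $[1/\tf(\beta),\infty)$. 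On the piece $s\ge1/\tf(\beta)$ one bounds $u_\beta(s)\ge m_\beta$ (resp.\ $\le M_\beta$) and absorbs $m_\beta\int_0^{t-1/\tf(\beta)}K_\beta$ into the left-hand side; on the piece $s\le1/\tf(\beta)$ one uses the small-$t$ bound $u_\beta(s)\asymp u(s)$ already established. After cancelling the exponentials and using regular variation, this yields $m_\beta\ge c\,u(1/\tf(\beta))$ and $M_\beta\le C\,u(1/\tf(\beta))$ directly, with no complex analysis or coupling.
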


\subsection{Local limit theorem and a few random walk estimates}

Let us now collect a few useful random walk estimates, that are proven either in Appendix~\ref{app:LLT} or \cite[Section~2.3]{BLrel}.
We start with the local limit theorem (see e.g.\ \cite[Chapter 9]{GK68}), that we complement with a refinement.
Recall that \(K(t):= \beta_0\P(W_t=0)\).

\begin{proposition}[Local limit theorem]
  \label{prop:LLT}
  Let $g(x)=e^{-|x|^2/4\pi}$ in the case~\eqref{SRW} and $g=g_{\gamma}$ the density of the symmetric $\gamma$-stable distribution normalized so that $g_{\gamma}(0)=1$ in the case~\eqref{JPP}.
  Then, we have 
\begin{equation}\label{LLT}
  \lim_{t\to \infty}\sup_{y\in \Z^d} \left| \frac{\beta_0 \P(W_t=y)}{K(t)} - g \left(yK(t) \right) \right| =0\,.
\end{equation}
Moreover \(t\mapsto K(t) = \beta_0 \P(W_t=0)\) is differentiable and 
  \begin{equation}\label{krixx}
  K'(t) \stackrel{t \to \infty}{\sim} - (1+\alpha) t^{-1} K(t)\,.
  \end{equation}
\end{proposition}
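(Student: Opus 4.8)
The statement to prove is Proposition~\ref{prop:LLT}: the local limit theorem \eqref{LLT} and the derivative asymptotic \eqref{krixx}.

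\textbf{Plan for the local limit theorem \eqref{LLT}.} The plan is to invoke the classical local limit theorem for random walks in the domain of attraction of a stable law (here the generator is a continuous-time rate-$1$ walk with step kernel $J$), as found e.g.\ in \cite[Chapter~9]{GK68}, and then translate it into the scaling normalization dictated by $K(t)$. First I would recall that for a continuous-time walk $W$ with generator $\cL$, the characteristic function is $\widehat{\P(W_t=\cdot)}(\theta) = e^{-t\psi(\theta)}$ where $\psi(\theta) = \sum_{y}J(y)(1-\cos(\theta\cdot y))$ (using symmetry of $J$), so that by Fourier inversion $\P(W_t=y) = (2\pi)^{-d}\int_{[-\pi,\pi]^d} e^{-i\theta\cdot y} e^{-t\psi(\theta)}\dd\theta$. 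In the SRW case $\psi(\theta)\sim |\theta|^2/(2d)$ near $0$, and in the $\gamma$-stable case the regular variation of $J$ at infinity gives $\psi(\theta)\sim c|\theta|^\gamma \widetilde\varphi(1/|\theta|)$ near $0$ for an appropriate slowly varying $\widetilde\varphi$. Rescaling $\theta = K(t)\xi$ and using the asymptotic relation \eqref{implicit} (which encodes exactly that $t\,\psi(K(t)\xi)\to |\xi|^\gamma$, resp.\ $t\psi(K(t)\xi)\to |\xi|^2/(4\pi)$ after the $g_\gamma(0)=1$ normalization), dominated convergence on the rescaled integral yields $\beta_0\P(W_t=y)/K(t) \to g(yK(t))$ uniformly in $y$. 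The uniformity comes from bounding the tail of the Fourier integral ($|\theta|$ bounded away from $0$ contributes $e^{-ct}$) and from equicontinuity of the rescaled integrands; one uses a Potter-type bound \eqref{potter} on $J$ to dominate $e^{-t\psi(K(t)\xi)}$ by an integrable function uniformly in $t$.

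\textbf{Plan for the derivative asymptotic \eqref{krixx}.} Differentiability of $t\mapsto K(t)=\beta_0\P(W_t=0)$ is immediate from the Fourier representation $K(t) = \frac{\beta_0}{(2\pi)^d}\int e^{-t\psi(\theta)}\dd\theta$, since the integrand is smooth in $t$ with $\partial_t$-derivative $-\psi(\theta)e^{-t\psi(\theta)}$, which is dominated (for $t$ in a neighborhood of any $t_0>0$) by $\psi(\theta)e^{-t_0\psi(\theta)/2}\in L^1$; hence $K'(t) = -\frac{\beta_0}{(2\pi)^d}\int \psi(\theta)e^{-t\psi(\theta)}\dd\theta$. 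To get the asymptotic, I would again rescale $\theta = K(t)\xi$: then $K'(t) = -\frac{\beta_0}{(2\pi)^d}K(t)^d\int \psi(K(t)\xi)e^{-t\psi(K(t)\xi)}\dd\xi$, and since $K(t)^d \asymp K(t)/t \cdot (\text{const})$ via \eqref{implicit} one needs $\int \psi(K(t)\xi)e^{-t\psi(K(t)\xi)}\dd\xi$ to converge after multiplying by $t$. Writing $\psi(K(t)\xi) = \frac{1}{t}h_t(\xi)$ with $h_t(\xi)\to |\xi|^\gamma$ (resp.\ the SRW analogue), we get $t\int \psi(K(t)\xi)e^{-t\psi(K(t)\xi)}\dd\xi = \int h_t(\xi)e^{-h_t(\xi)}\dd\xi \to \int |\xi|^\gamma e^{-|\xi|^\gamma}\dd\xi =: I_\gamma$. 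Comparing with the similar identity for $K(t)$ itself, namely $1 \sim \frac{\beta_0}{(2\pi)^d}K(t)^d t \int e^{-h_t(\xi)}\dd\xi$ so that $\int e^{-h_t}\to I_\gamma^{(0)} := \int e^{-|\xi|^\gamma}\dd\xi$, one finds $K'(t)/K(t) \sim -\frac{1}{t}\cdot I_\gamma / I_\gamma^{(0)}$. Finally $I_\gamma/I_\gamma^{(0)} = 1+\alpha$ (with $\alpha = \frac{1-\gamma}{\gamma}$ in the stable case, $\alpha = \frac d2-1$ in the SRW case) follows from the integration-by-parts / scaling identity $\int_{\bbR^d}|\xi|^\gamma e^{-|\xi|^\gamma}\dd\xi = \frac d\gamma \int_{\bbR^d} e^{-|\xi|^\gamma}\dd\xi$ — in polar coordinates $\int_0^\infty r^{d-1+\gamma}e^{-r^\gamma}\dd r$ versus $\int_0^\infty r^{d-1}e^{-r^\gamma}\dd r$, and substituting $s=r^\gamma$ gives the ratio $\Gamma(\frac d\gamma + 1)/\Gamma(\frac d\gamma) = \frac d\gamma = 1+\alpha$. (For SRW, $\gamma$ is replaced by $2$ and $d/2 = 1+\alpha$.)

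\textbf{Main obstacle.} The delicate point is the uniform domination needed to justify dominated convergence in the rescaled Fourier integrals — both for \eqref{LLT} (uniformly in $y$) and for \eqref{krixx}. One must control $e^{-t\psi(K(t)\xi)}$ by a single $L^1(\dd\xi)$ function independent of $t$, which requires a uniform lower bound of the form $t\,\psi(K(t)\xi) \geq c|\xi|^{\gamma-\delta}\wedge c'$ for small $|\xi|$ and $t\psi(K(t)\xi)\geq ct$ for $|\xi|K(t)$ bounded away from $0$; this is exactly where Potter's bound \eqref{potter} applied to $J$, together with the defining relation \eqref{implicit}, does the work. Handling the slowly varying function $\varphi$ carefully (so that the limiting profile is genuinely the pure $\gamma$-stable density and the slowly varying corrections wash out in the ratio) is the part that needs the most bookkeeping; this is presumably why the authors defer it to Appendix~\ref{app:LLT}.
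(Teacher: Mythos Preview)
Your approach is essentially the same as the paper's: Fourier inversion, the change of variable $\theta = K(t)\xi$, and dominated convergence justified via a Potter-type lower bound on $t\psi(K(t)\xi)$. The paper in fact declares \eqref{LLT} to be the classical local limit theorem and focuses only on \eqref{krixx}, arriving at the same expression $\frac{tK'(t)}{K(t)} = -\frac{\beta_0}{2\pi}\int t q(uK(t))\,e^{-tq(uK(t))}\,\ind_{\{|u|K(t)\le\pi\}}\dd u$ and the same limit. One cosmetic difference: the paper does not compute the ratio $I_\gamma/I_\gamma^{(0)}$ explicitly; it simply observes that once $tK'(t)/K(t)$ is shown to converge, the value of the limit is forced by the already-known asymptotic $K(t)\sim c\,t^{-(1+\alpha)}$.

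Two minor slips in your write-up, both from trying to treat general $d$ and the one-dimensional stable case in one formula. First, the claim ``$K(t)^d \asymp K(t)/t$'' is false except when $d=1$ (for the SRW in $d\ge 3$ one has $K(t)\asymp t^{-d/2}$, so $K(t)^d\asymp t^{-d^2/2}$, not $t^{-d/2-1}$). Second, the identity you write as $1 \sim \frac{\beta_0}{(2\pi)^d}K(t)^d\, t \int e^{-h_t}$ has an extraneous factor $t$: the change of variable gives $K(t) = \frac{\beta_0}{(2\pi)^d}K(t)^d\int e^{-h_t}$, hence $1 = \frac{\beta_0}{(2\pi)^d}K(t)^{d-1}\int e^{-h_t}$. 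Neither error affects the argument once you specialize to $d=1$ for the stable case (and the SRW case is cleaner anyway); just be careful with the bookkeeping.
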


We also give a local large deviation estimate for the \(\gamma\)-stable case, which improves \Cref{prop:LLT} in the case where \(|y|\) is much larger than \(K(t)^{-1}\).
The bounds are due to the so-called one big-jump behavior. We refer to \cite[Thm.~2.4]{Ber19a}  for the analogous result in the discrete time setting (the proof can easily be adapted to the continuous-time setting).

\begin{proposition}
  \label{prop:localLD}
Assume that~\eqref{JPP} holds with \(\gamma \in (0,2)\).
Then, given $c>0$ there exist constants \(c_1,c_2\) such that, for any $|y| \geq c  K(t)^{-1}$, we have
\begin{equation*}
c_1  \, t J(y) \le  \P(W_t=y) \leq c_2\, t J(y) \,.
\end{equation*}
\end{proposition}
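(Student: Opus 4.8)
The plan is to work with the compound Poisson representation of $W$. Since $J$ is unimodal, $\lambda:=\sum_{x\neq0}J(x)<1$ and $W_t\stackrel{(d)}{=}\sum_{i=1}^{N_t}\xi_i$, where $N_t$ is Poisson of parameter $\lambda t$ and $(\xi_i)_{i\ge1}$ are i.i.d.\ of law $J(\cdot)/\lambda$ on $\Z\setminus\{0\}$. By thinning, for any threshold $r>0$ the jumps of magnitude $\le r$ and those of magnitude $>r$ form two independent compound Poisson processes; write $S^r_t$ for the sum of the former. Two asymptotics, both following from \eqref{implicit}, \eqref{K-reg} and Karamata's theorem, are used throughout: for fixed $\kappa>0$ and $r=\kappa K(t)^{-1}$,
\[
t\!\!\sum_{|x|>r}\!\!J(x)\ \asymp\ \kappa^{-\gamma}, \qquad \Var\big(S^r_t\big)\ =\ t\!\!\sum_{0<|x|\le r}\!\!x^2J(x)\ \asymp\ \kappa^{2-\gamma}K(t)^{-2},
\]
where the boundedness of the first quantity and the finiteness of the second rely on $\gamma<2$. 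The overall strategy is the \emph{one-big-jump} heuristic, following \cite[Thm.~2.4]{Ber19a}; we may assume $t$ is large, the complementary range being elementary since then the one-jump contribution dominates the compound Poisson expansion.

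\smallskip\noindent\emph{Lower bound.} Fix $\gep\in(0,\tfrac{3c}{4})$ small and set $r=\gep K(t)^{-1}$. Requiring that there be exactly one jump of magnitude $>r$, that its value $\eta$ satisfy $|\eta-y|\le\tfrac{c}{4}K(t)^{-1}$ (which forces $|\eta|\ge\tfrac{3c}{4}K(t)^{-1}>r$ since $|y|\ge cK(t)^{-1}$), and that $S^r_t=y-\eta$, we get
\[
\P(W_t=y)\ \ge \!\!\sum_{|\eta-y|\le\frac{c}{4}K(t)^{-1}}\!\! t\,e^{-t\sum_{|x|>r}J(x)}\,J(\eta)\,\P\big(S^r_t=y-\eta\big)\ \ge\ \tfrac12\,c_*\,e^{-C(\gep)}\,t\,J(y),
\]
using that $J(\eta)\ge c_*J(y)$ on the relevant range by Potter's bound \eqref{potter} for $J$; that $\P(|S^r_t|\le\tfrac{c}{4}K(t)^{-1})\ge\tfrac12$ by Chebyshev, since $\Var(S^r_t)\le\tfrac12(\tfrac c4K(t)^{-1})^2$ once $\gep$ is small; and that $t\sum_{|x|>r}J(x)\le C(\gep)$ uniformly. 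This yields $\P(W_t=y)\ge c_1tJ(y)$.

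\smallskip\noindent\emph{Upper bound.} When $c\le|y|K(t)\le R$ for a suitable constant $R=R(c,\gamma)$, use the elementary bound $\P(W_t=y)\le\P(W_t=0)\asymp K(t)$, valid for a symmetric walk since $\P(W_t=y)=\sum_w\P(W_{t/2}=w)\P(W_{t/2}=y-w)\le\sum_w\P(W_{t/2}=w)^2=\P(W_t=0)$ by Cauchy--Schwarz and symmetry, together with $tJ(y)\asymp K(t)$ on this range (via \eqref{implicit} and unimodality). When $|y|K(t)>R$, decompose according to the number $j\ge0$ of jumps of magnitude $>|y|/2$, whose rate satisfies $t\sum_{|x|>|y|/2}J(x)\asymp(|y|K(t))^{-\gamma}\le1$. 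The main term is $j=1$: on the event ``one such jump, of value $\eta$'' we have $W_t=S^{|y|/2}_t+\eta$, so this term equals $t\,e^{-t\sum_{|x|>|y|/2}J(x)}\sum_{|\eta|>|y|/2}J(\eta)\,\P(S^{|y|/2}_t=y-\eta)$. Split the inner sum according to whether $|\eta-y|\le\tfrac{|y|}{4}$ — then $|\eta|\asymp|y|$, hence $J(\eta)\le CJ(y)$ by \eqref{potter}, while $\sum_\eta\P(S^{|y|/2}_t=y-\eta)\le1$ — or $|\eta-y|>\tfrac{|y|}{4}$ — then still $J(\eta)\le J(\lceil|y|/2\rceil)\le CJ(y)$ by unimodality and \eqref{potter}. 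Either way the $j=1$ term is $\le CtJ(y)$.

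\smallskip The remaining point, and the technical heart of the proof, is to show that the ``no big jump'' term $\P(S^{|y|/2}_t=y)$ and the terms $j\ge2$ also contribute $O(tJ(y))$. This is a Fuk--Nagaev / Nagaev-type statement: reaching level $|y|$ using only jumps of magnitude $\le|y|/2$, or using two or more jumps of magnitude $>|y|/2$ that combine so as to land the walk near $y$, is at least as costly as a single jump of size $\approx y$. These estimates, together with the bookkeeping that makes every constant uniform in $t$ and $y$, are precisely the content of \cite[Thm.~2.4]{Ber19a}; the adaptation to the continuous-time setting is routine — condition on the jump times and apply the discrete-time bounds to the sum of the jumps — so we omit the details.
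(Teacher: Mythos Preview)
Your proposal is correct and aligns with the paper's own treatment: the paper does not give a proof of this proposition but simply refers to \cite[Thm.~2.4]{Ber19a} for the discrete-time result and states that the continuous-time adaptation is routine. Your sketch is exactly that adaptation---the compound Poisson thinning, the one-big-jump lower bound, and the decomposition by the number of large jumps for the upper bound are the standard ingredients behind the cited reference, and you defer the same Fuk--Nagaev type estimates to the same source; if anything, you provide more detail than the paper does.
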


We also complete these results by displaying useful monotonicity properties.

\begin{lemma}[\cite{BLrel}, Lemma~2.3]
\label{lem:unimod}
If $x\mapsto J(x)$ is unimodal, then so is \(x\mapsto \P(W_t=x)\) for all $t > 0$. 
More precisely, for all \(t>0\),
\begin{equation*}
|x| \leq |y| \quad \Rightarrow \quad \P(W_t=x) \geq \P(W_t =x) .
\end{equation*}
Furthermore $t\mapsto\P(W_t=0)$ and hence $t\mapsto K(t)$ are non-increasing.
\end{lemma}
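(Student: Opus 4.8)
The plan is to use that $W$ is a compound Poisson process: $W_t$ has the law of $\sum_{i=1}^{N_t}\xi_i$, where $N_t$ is Poisson of parameter~$t$ and the $\xi_i$ are i.i.d.\ with law $J$, independent of~$N_t$. Hence
\[
\P(W_t=x)=\sum_{n\geq 0}e^{-t}\frac{t^n}{n!}\,J^{*n}(x),
\]
where $J^{*n}$ denotes the $n$-fold convolution of $J$ (with $J^{*0}=\delta_0$). Since this is a non-negative combination of the $J^{*n}$, to obtain the first two assertions it suffices to show that each $J^{*n}$ is symmetric and non-increasing in $|x|$: then $J^{*n}(x)\geq J^{*n}(y)$ whenever $|x|\leq|y|$, and summing gives $\P(W_t=x)\geq\P(W_t=y)$. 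The hypothesis that $J$ is unimodal is only made in the one-dimensional $\gamma$-stable setting, so we take $d=1$ here.

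The key step is the claim that \emph{the convolution of two symmetric functions on $\mathbb Z$ that are non-increasing in $|x|$ is again symmetric and non-increasing in $|x|$}; granting this, the property of $J^{*n}$ follows by induction on $n$, the cases $n=0$ ($\delta_0$) and $n=1$ ($J$) being immediate. To prove the claim, I would write any symmetric $f\colon\mathbb Z\to\mathbb R_+$ which is non-increasing in $|x|$ and vanishes at infinity (such as a convolution power of $J$) as $f=\sum_{k\geq0}c_k\,\ind_{\llbracket -k,k\rrbracket}$ with $c_k:=f(k)-f(k+1)\geq0$, a telescoping identity. By bilinearity of convolution,
\[
f*g=\sum_{k,\ell\geq0}c_k\,d_\ell\,\bigl(\ind_{\llbracket -k,k\rrbracket}*\ind_{\llbracket -\ell,\ell\rrbracket}\bigr),
\]
and for each $k,\ell$ the function $\ind_{\llbracket -k,k\rrbracket}*\ind_{\llbracket -\ell,\ell\rrbracket}\colon x\mapsto\#\{a\in\mathbb Z:|a|\leq k,\ |x-a|\leq\ell\}$ is the symmetric ``trapezoid'' that equals $2\min(k,\ell)+1$ for $|x|\leq|k-\ell|$ and then decreases linearly to $0$; in particular it is symmetric and non-increasing in $|x|$. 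Since a non-negative combination of symmetric functions non-increasing in $|x|$ has the same property, so does $f*g$. All rearrangements above are licit by Tonelli's theorem, every term being non-negative.

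For the monotonicity of $t\mapsto\P(W_t=0)$ I would avoid differentiating the series and instead use the Fourier representation
\[
\P(W_t=0)=\frac{1}{2\pi}\int_{-\pi}^{\pi}e^{-t(1-\widehat J(\theta))}\,\dd\theta,\qquad\widehat J(\theta):=\sum_{x\in\mathbb Z}J(x)\cos(x\theta).
\]
As $J$ is symmetric, $\widehat J$ is real-valued, and as $J\geq0$ with $\sum_xJ(x)=1$ we have $\widehat J(\theta)\leq1$, so $1-\widehat J(\theta)\geq0$ for every $\theta$; hence the integrand is non-increasing in $t$ for each $\theta$, and integrating shows that $t\mapsto\P(W_t=0)$, and therefore $t\mapsto K(t)=\beta_0\P(W_t=0)$, is non-increasing. (Equivalently, one can differentiate the series term by term and use $J^{*(n+1)}(0)=\sum_xJ(x)J^{*n}(x)\leq J^{*n}(0)$, which is the unimodality of $J^{*n}$ just obtained.)

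The only genuinely non-routine point is the convolution claim of the second paragraph, which is moreover specific to dimension one: in $\mathbb Z^d$ with $d\geq2$ a convolution of radially non-increasing functions need not be radially non-increasing (already $\P(W_t=\cdot)$ for the simple random walk on $\mathbb Z^d$ is not non-increasing in $|x|$ for small $t$), but in the paper the unimodality of $J$ is invoked only when $d=1$, so this is harmless. Everything else is bookkeeping.
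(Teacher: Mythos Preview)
Your proof is correct. Note that the paper does not actually supply a proof of this lemma: it is imported from the companion paper~\cite{BLrel} (Lemma~2.3), so there is no in-paper argument to compare against. Your approach---the compound Poisson representation together with the discrete Wintner-type fact that symmetric unimodal densities on $\bbZ$ are closed under convolution, via the layer-cake decomposition $f=\sum_{k\ge 0}(f(k)-f(k+1))\,\ind_{\lint -k,k\rint}$---is the standard one and works as written; the Fourier argument for the $t$-monotonicity is likewise clean and has the virtue of not requiring unimodality of~$J$ at all (it applies equally to the SRW case in any dimension, where $1-\widehat J(\theta)\ge 0$ still holds). Your closing remark about $d\ge 2$ is also pertinent: the paper only invokes the $x$-unimodality of $\P(W_t=\cdot)$ in the $\gamma$-stable setting.
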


\section{A first convexity lower bound on the free energy}
\label{sec:Jensen}

We prove in this section a general lower bound on the free energy, that can be obtained with a soft argument based on Jensen's inequality. 
This result implies in particular that \Cref{th:rel} holds when $\nu=1$, \textit{i.e.}\ in the SRW case for $d\ge 4$ and in the $\gamma$-stable case for $\gamma\in (0,1/2]$.

\begin{proposition}\label{jenseninq}
  If $W=(W_t)_{t\geq 0}$ satisfies either \eqref{SRW} or \eqref{JPP}, then there exists a constant $C_0$ such that for all $\rho\in(0, \frac12)$ we have for any $\beta\in [\beta_0,2\beta_0]$
  \begin{equation}\label{trkt}
  \tf(\rho,\beta+C_0\rho)\ge \tf(\beta).
  \end{equation}
\end{proposition}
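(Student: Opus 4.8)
The plan is to work with the constrained partition function (which yields the same free energy, by Proposition~\ref{freeenergy}) and to compare it with the renewal process $\bQ_\beta$ with inter‑arrival density $K_\beta$ from~\eqref{def:Kbeta}. Set $\tilde\beta=\beta+C_0\rho$. Expanding as in~\eqref{goodexpress} and using $(\tilde\beta/\beta_0)K_w(s,t,Y)=(\tilde\beta/\beta)\,e^{(t-s)\tf(\beta)}K_\beta(t-s)\,w(s,t,Y)$ together with the telescoping of the exponentials over a gap decomposition of $[0,T]$, one obtains, exactly as in~\eqref{asanexpect}–\eqref{zubet}, the identity
\[
Z^{Y,\cons}_{\tilde\beta,T}=e^{T\tf(\beta)}\,u_\beta(T)\,\bQ_\beta\Big[(\tilde\beta/\beta)^{\cN_T}\textstyle\prod_{i=1}^{\cN_T}w(\tau_{i-1},\tau_i,Y)\ \Big|\ T\in\tau\Big].
\]
Taking $\log$, then averaging over $Y$, applying Jensen's inequality to move the logarithm inside $\bQ_\beta[\,\cdot\mid T\in\tau]$, and using the stationarity of the increments of $Y$ to get $\bbE[\log w(\tau_{i-1},\tau_i,Y)]=-\phi(\tau_i-\tau_{i-1})$ with
\[
\phi(r):=-\bbE\big[\log w(0,r,Y)\big]=\log\P(W_r=0)-\bbE\big[\log\bP(X_r=Y_r)\big]\ \geq 0 ,
\]
we obtain
\[
\bbE\big[\log Z^{Y,\cons}_{\tilde\beta,T}\big]\ \geq\ T\tf(\beta)+\log u_\beta(T)+\log\tfrac{\tilde\beta}{\beta}\,\bQ_\beta[\cN_T\mid T\in\tau]-\bQ_\beta\Big[\textstyle\sum_{i=1}^{\cN_T}\phi(\tau_i-\tau_{i-1})\ \Big|\ T\in\tau\Big].
\]

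Dividing by $T$ and letting $T\to\infty$: $\tfrac1T\log u_\beta(T)\to 0$ since $u_\beta$ is at most polynomially large and bounded below (Lemma~\ref{lem:Zc}, \eqref{eq:Doney}); by the renewal theorem $\tfrac1T\bQ_\beta[\cN_T\mid T\in\tau]\to\tf'(\beta)$; and by the renewal–reward theorem $\tfrac1T\bQ_\beta[\sum_i\phi(\tau_i-\tau_{i-1})\mid T\in\tau]\to\tf'(\beta)\int_0^\infty K_\beta(r)\phi(r)\,\dd r$. This gives
\[
\tf(\rho,\tilde\beta)\ \geq\ \tf(\beta)+\tf'(\beta)\Big[\log\tfrac{\tilde\beta}{\beta}-\int_0^\infty K_\beta(r)\phi(r)\,\dd r\Big].
\]
Since $\tf'(\beta)\geq 0$, the proposition follows once we establish the key estimate: there is $C_1=C_1(W)$ with $\int_0^\infty K_\beta(r)\phi(r)\,\dd r\leq C_1\rho$ for all $\beta\in[\beta_0,2\beta_0]$ and $\rho\in(0,\tfrac12)$. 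Indeed, it then suffices to take $C_0$ large enough that $\log\tfrac{\tilde\beta}{\beta}=\log(1+C_0\rho/\beta)\geq\log(1+C_0\rho/(2\beta_0))\geq C_1\rho$ on $(0,\tfrac12)$; as $\rho\mapsto\log(1+C_0\rho/(2\beta_0))-C_1\rho$ is concave and vanishes at $0$, this holds as soon as it is $\geq 0$ at $\rho=\tfrac12$, i.e. $C_0\geq 4\beta_0(e^{C_1/2}-1)$. The bracket is then $\geq 0$, so $\tf(\rho,\beta+C_0\rho)\geq\tf(\beta)$ (the case $\beta=\beta_0$ being trivial, as $\tf(\rho,\cdot)\geq 0$).

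It remains to bound $\int_0^\infty K_\beta(r)\phi(r)\,\dd r$; since $\int_0^\infty K_\beta=1$ it suffices to control $\phi(r)$. From $-\log w=(w-1-\log w)-(w-1)$, $\bbE[w-1]=0$, and the elementary bounds $w-1-\log w\leq\tfrac12(w-1)^2$ for $w\geq1$ and $w-1-\log w\leq\log(1/w)$ for $w\leq1$, one gets
\[
\phi(r)\ \leq\ \tfrac12\Var_Y\!\big(w(0,r,Y)\big)+\bbE\big[\log\big(1/w(0,r,Y)\big)\,\ind_{\{w<1\}}\big].
\]
For the first term I use that $t\mapsto\P(W_t=0)$ is non‑increasing and $x\mapsto\P(W_t=x)$ unimodal (Lemma~\ref{lem:unimod}): thus $w(0,r,Y)=\P(W_{(1-\rho)r}=Y_r)/\P(W_r=0)\leq M_r:=K((1-\rho)r)/K(r)$ pointwise, whence $\Var_Y(w)\leq M_r\bbE[w]-1=M_r-1$; and $\log M_r=\int_{(1-\rho)r}^r |K'(s)|/K(s)\,\dd s\leq C\rho$ uniformly in $r>0$, $\rho<\tfrac12$, using~\eqref{krixx} for large $s$ and the boundedness of $|K'|/K$ on bounded sets for small $s$. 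So $\Var_Y(w)\leq C\rho$ and $\int K_\beta\cdot\tfrac12\Var_Y(w)\leq C\rho$. The second term is the heart of the matter. On $\{w<1\}$ one has $Y_r\neq 0$, indeed $|Y_r|$ exceeds some radius $R_r$, and one splits according to whether $|Y_r|$ is of the order of the bulk scale $K((1-\rho)r)^{-1}$ of $X_r$ or much larger. On the bulk part, a quantitative local limit theorem (Appendix~\ref{app:LLT}) gives $\log(1/w)\leq-\log M_r-\log g\big(Y_rK((1-\rho)r)\big)+\text{error}\leq C\big(Y_rK((1-\rho)r)\big)^2+\text{error}$, and the truncated‑second‑moment estimate $K((1-\rho)r)^2\,\bbE[Y_r^2\ind_{\{|Y_r|\leq K((1-\rho)r)^{-1}\}}]\leq C\rho$ closes this case. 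On the one‑big‑jump part, Proposition~\ref{prop:localLD} gives $w\asymp rJ(Y_r)/K(r)$, so $\log(1/w)\asymp(1+\gamma)\log|Y_r|-\tfrac{1+\gamma}{\gamma}\log r+O(\log L(r))$; the point is that $Y_r$ conditioned to be large is Pareto‑type at scale $r^{1/\gamma}$, so the terms growing logarithmically in $r$ cancel and, using $\bbP(|Y_r|\geq cK((1-\rho)r)^{-1})\asymp\rho$, one obtains $\bbE[\log(1/w)\ind_{\{|Y_r|\geq cK((1-\rho)r)^{-1}\}}]\leq C\rho$. (For~\eqref{SRW} the one‑big‑jump part is replaced by Gaussian/large‑deviation tail bounds, which make that contribution negligible; small $r$ is handled directly using $\bbP(N^Y_r\ge1)\le\rho r$.) Combining these gives $\int_0^\infty K_\beta(r)\phi(r)\,\dd r\leq C_1\rho$, which completes the proof. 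The main obstacle is precisely this last step — the cancellation of the logarithmic‑in‑$r$ terms in the one‑big‑jump regime, and the control of the local‑limit‑theorem error in the bulk regime; the rest of the argument is soft.
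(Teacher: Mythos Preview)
Your approach is essentially the same as the paper's: both use the representation of $Z^{Y,\cons}_{\tilde\beta,T}$ as an expectation under the $\bQ_\beta$-renewal, apply Jensen's inequality to the logarithm, and reduce everything to the key uniform estimate $\phi(r):=-\bbE[\log w(0,r,Y)]\le C\rho$, proved via the same variance/tail decomposition (your bulk vs.\ one-big-jump split is exactly the paper's argument around Proposition~\ref{prop:localLD}).

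The only structural difference is that you first pass to the limit $T\to\infty$ (invoking the renewal and renewal--reward theorems under the conditioning $\{T\in\tau\}$) and then show the resulting bracket $\log(\tilde\beta/\beta)-\int K_\beta\phi$ is non-negative, whereas the paper observes directly that each summand $\log(\tilde\beta/\beta)+\bbE[\log w(\tau_{i-1},\tau_i,Y)]\ge 0$ in the Jensen lower bound, so the whole expectation dominates $\log z^{\cons}_{\beta,T}$ \emph{before} dividing by $T$. The paper's route is slightly cleaner: it avoids justifying the conditioned renewal--reward limit (which needs the very bound $\phi\le C\rho$ you only establish afterwards, a mild circularity in your presentation), and it works verbatim at $\beta=\beta_0$ without needing a separate comment. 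But your route is correct and yields the same inequality; you might simply note that since you in fact prove $\phi(r)\le C\rho$ uniformly in $r$, you could have concluded term-by-term as well.
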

\noindent  The proof of
 \Cref{jenseninq} relies on the following estimate which we prove just below.

\begin{lemma}\label{envibou}
If $W=(W_t)_{t\geq 0}$ satisfies either \eqref{SRW} or \eqref{JPP},
there exists a constant $C$ (depending on $J$) such that for all $t\ge 0$ and $\rho\in (0,\frac12)$ we have
\begin{equation}
 \bbE\left[\log w(0,t,Y)\right]\ge -C \rho \,.
\end{equation}
\end{lemma}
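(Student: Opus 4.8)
Recall that $X_t=W^{(1)}_{(1-\rho)t}$ and $Y_t=W^{(2)}_{\rho t}$ are independent, so $w(0,t,Y)=\P\big(W_{(1-\rho)t}=Y_t\big)/\P(W_t=0)$ depends on the disorder only through $Y_t$. Writing $s:=(1-\rho)t$ and $\sigma:=\rho t$ (so that $\sigma<s$ and $\sigma/s=\rho/(1-\rho)\le 2\rho$) and using $\P(W_t=0)=K(t)/\beta_0$, the plan is to split off a sign-definite factor:
\[
\log w(0,t,Y)=\log\frac{\P(W_s=Y_t)}{\P(W_s=0)}+\log\frac{K(s)}{K(t)}\ \geq\ \log\frac{\P(W_s=Y_t)}{\P(W_s=0)},
\]
the inequality holding since $t\mapsto K(t)$ is non-increasing (\Cref{lem:unimod}) and $s<t$. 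Taking $\bbE$ and setting $\phi_s(y):=\log\frac{\P(W_s=0)}{\P(W_s=y)}\ge 0$ (non-negativity from unimodality of $y\mapsto\P(W_s=y)$, again \Cref{lem:unimod}), everything reduces to proving $\bbE\big[\phi_s(W^{(2)}_\sigma)\big]\le C\rho$, \emph{uniformly} in $t>0$ and $\rho\in(0,\tfrac12)$.

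I would first dispose of small times: fix $t_0\le 1$ and suppose $t\le t_0$. Since $\P(W_s=0)\le 1$ we have $\phi_s(y)\le-\log\P(W_s=y)$, and bounding $\P(W_s=y)$ below by the contribution of a single jump gives $\P(W_s=y)\gtrsim sJ(y)$ for $y\ne0$ in case~\eqref{JPP} (respectively $\P(W_s=y)\gtrsim s^{|y|_1}/|y|_1!$ from a straight lattice path in case~\eqref{SRW}); hence $\phi_s(y)\le C+|\log t|+\psi(y)$ with $\psi$ of at most logarithmic growth in $|y|$. Since $\P(W_\sigma\ne0)\le(1-J(0))\sigma=(1-J(0))\rho t$, one gets
\[
\bbE[\phi_s(W_\sigma)]\le\big(C+|\log t|\big)\P(W_\sigma\ne0)+\bbE\big[\psi(W_\sigma)\ind_{\{W_\sigma\ne0\}}\big]\le C\rho,
\]
using $\sup_{t\le1}t|\log t|<\infty$, $\sigma\le\rho$, and the summability $\sum_{y\ne0}J(y)\psi(y)<\infty$ (valid for all $\gamma\in(0,2)$), respectively the analogous convergent Poisson expansion, to bound the last expectation by $C\sigma$.

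For $t\ge t_0$ (so $s$ large and $K(s)$ small), I would split $\bbZ^d$ into the bulk $\{|y|\le\epsilon K(s)^{-1}\}$ and the tail. On the bulk I would invoke the refined local limit theorem of \Cref{app:LLT} to upgrade \Cref{prop:LLT} to the quadratic control $\phi_s(y)\le C(|y|K(s))^2$ (respectively $\le C|y|^2/s$ in the SRW case), so that $\sum_{|y|\le\epsilon K(s)^{-1}}\P(W_\sigma=y)\phi_s(y)\le CK(s)^2\,\bbE\big[\min(|W_\sigma|,\epsilon K(s)^{-1})^2\big]$; estimating this truncated second moment via the one–big–jump bound $\P(W_\sigma=y)\asymp\sigma J(y)$ for $|y|\gtrsim K(\sigma)^{-1}$ (\Cref{prop:localLD}) in case~\eqref{JPP}, and via $\bbE|W_\sigma|^2\asymp\sigma$ in case~\eqref{SRW}, and then using the implicit relation~\eqref{implicit} so that the slowly varying prefactors cancel, I expect to obtain a bound $\lesssim\sigma/s\lesssim\rho$. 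On the tail $\{|y|>\epsilon K(s)^{-1}\}$, \Cref{prop:localLD} applied to both $W_s$ and $W_\sigma$ gives $\P(W_s=y)\asymp sJ(y)$ and $\P(W_\sigma=y)\asymp\sigma J(y)$, hence $\phi_s(y)^+\lesssim\log^+(|y|K(s))$, and after the substitution $|y|=K(s)^{-1}v$ together with~\eqref{implicit} and Potter's bound~\eqref{potter} the tail sum reduces to $\sigma K(s)^{\gamma}\varphi(K(s)^{-1})\int_{\epsilon}^{\infty}v^{-(1+\gamma-\delta)}\log^+v\,\dd v\asymp\sigma/s\lesssim\rho$, the integral converging for $\delta<\gamma$. (The SRW tail is handled the same way with Gaussian/moderate–deviation bounds for $W_s$ in place of \Cref{prop:localLD}.)

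The main obstacle is precisely the large-$t$ bulk estimate. One cannot afford to bound $\phi_s$ by a constant there and multiply by $\P(W_\sigma\ne0)$, since $\P(W_\sigma\ne0)\approx(1-J(0))\rho t$ carries an \emph{unbounded} factor $t$; one genuinely needs that $\phi_s$ is quadratically small on the bulk, which forces the refined local limit theorem rather than the plain statement~\eqref{LLT} (whose additive $o(1)$ error would dominate $\phi_s$ exactly where $\phi_s$ is small). The second delicate point is bookkeeping: the $\gamma$-stable walk has infinite second moment, so the truncation at spatial scale $K(s)^{-1}$ is indispensable, and the slowly varying prefactors only cancel when the relation~\eqref{implicit} between $K$ and $\varphi$ is used systematically; making all of this uniform in both $t$ and $\rho$ — in particular in the regime where $\sigma=\rho t$ is itself small while $s$ is large — is where the care is needed.
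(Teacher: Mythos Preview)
Your strategy can be made to work, but as written it has a real gap, and it is worth seeing how the paper's argument avoids it.

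The gap is exactly where you flag it: the bulk step needs $\phi_s(y)\le C(|y|K(s))^2$ uniformly for $|y|\le\epsilon K(s)^{-1}$, and nothing stated in \Cref{prop:LLT} or proved in \Cref{app:LLT} delivers this. Equation~\eqref{LLT} only has an additive $o(1)$, which (as you note) dominates the quadratic; \eqref{krixx} is about $t$-derivatives of $K$, not $y$-regularity of $\P(W_s=y)$. The bound you need \emph{can} be extracted from the Fourier representation~\eqref{FourierK}: since
\[
\P(W_s=0)-\P(W_s=y)=\frac{1}{2\pi}\int_{-\pi}^{\pi}(1-\cos\xi y)\,e^{-sq(\xi)}\,\dd\xi
\ \le\ \frac{y^2}{4\pi}\int_{-\pi}^{\pi}\xi^2 e^{-sq(\xi)}\,\dd\xi\ \asymp\ y^2K(s)^3
\]
by the same change of variables and domination used in \Cref{app:LLT}, dividing by $\P(W_s=0)\asymp K(s)$ gives $\phi_s(y)\lesssim(|y|K(s))^2$. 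But you would have to write this out; in the proposal the crucial ingredient is asserted, not proved.

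The paper bypasses this entirely by a trick you do not use: exploit $\bbE[w(t)]=1$. With $u=w(t)-1$ and $\log(1+u)\ge u-u^2+\ind_{\{u\le-1/2\}}\log(1+u)$, the linear term has mean zero, so the ``bulk'' reduces to $\bbE[w(t)^2]-1\le C\rho$. That in turn follows from the one-line monotonicity bound $\bP(X_t=Y_t)^2\le\bP(X_t=0)\,\bP(X_t=Y_t)$ and then $K((1-\rho)t)/K(t)\le 1+C\rho$, which is a direct consequence of~\eqref{krixx}. No refined LLT is needed. The remaining tail $\{w\le\tfrac12\}$ is handled essentially as you do, via \Cref{prop:localLD} (resp.\ Gaussian bounds for the SRW). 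In short, the first-moment cancellation does for free what your pointwise quadratic expansion is meant to do; throwing away $\log K(s)/K(t)\ge0$ at the outset is legal but destroys exactly the structure $\bbE[w]=1$ that makes the paper's argument short.
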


\begin{proof}[Proof of Proposition \ref{jenseninq}]
Recalling the definition of $\bQ_{\beta}$ from \eqref{def:Kbeta}, we define the conditional distribution $\bQ_{\beta,T}:=\lim_{\gep \to 0}\bQ_{\beta}(  \cdot \ | \ \tau\cap [T,T+\gep)\ne \emptyset )$, \textit{i.e.}\ the distribution of the constrained renewal.
From~\eqref{asanexpect}, we have 
 \begin{equation*}
  \bbE\left[  \log Z^{Y,\mathrm{c}}_{\beta+C_0\rho,T}\right]= \log z^{\mathrm{c}}_{\beta,T} +\bbE\bigg[ \log \bQ_{\beta,T}\bigg[  \Big(\frac{\beta+C_0\rho}{\beta}\Big)^{\cN_T}\prod_{i=1}^{\cN_T} w(\tau_{i-1},\tau_i,Y) \bigg] \bigg] \,.
 \end{equation*}
By Jensen's inequality, passing the logarithm inside the expectation $\bQ_{\beta,T}$, we have that the second term is bounded from below by
\begin{equation*}
%  \bbE\bigg[ \log \bQ_{\beta,T}\bigg[  \Big(\frac{\beta+C\rho}{\beta}\Big)^{\cN_T}\prod_{i=1}^{\cN_T} w(\tau_{i-1},\tau_i,Y) \bigg] \bigg]\\
%  \ge 
\bQ_{\beta,T}\bigg[  \sum_{i=1}^{\cN_T}  \Big( \log \Big(\frac{\beta+C_0\rho}{\beta}\Big)- \bbE\left[\log w(\tau_{i-1},\tau_i,Y)  \right]\Big) \bigg]\ge 0 \,,  
 \end{equation*}
 where in the last inequality we have used \Cref{envibou}, which implies that all terms in the sum are non-negative (provided that $C_0$ has been fixed large enough, recalling also that $\beta \leq 2 \beta_0$).
 We conclude that $\bbE\big[  \log Z^{Y,\mathrm{c}}_{\beta+C\rho,T}\big]\ge \log z^{\mathrm{c}}_{\beta,T}$, so we get the inequality after dividing by~$T$ and letting $T\to\infty$.
\end{proof}

\begin{proof}[Proof of Lemma \ref{envibou}]
For this proof only we set $w(t):=w(0,t,Y)$, for better readability.
We use the following bound, valid for all \(u\geq -1\):
\(
\log (1+u)\ge u-u^2+\ind_{\{u\le -1/2\}} \log(1+u) \,.
\)
Applied to $u=w(t)-1$ and using the fact that $\bbE[w(t)]=1$, we obtain
\begin{equation}
\bbE[ \log w(t)]\ge -\bbE[ w(t)^2-1] + \bbE\left[\ind_{\{w(t)\le 1/2\}} \log w(t)\right] \,.
\end{equation}
To conclude we are going to show that there exists  $C>0$ such that for all $\rho\in (0,\frac12)$
\begin{equation}
  \label{tthing}
  \bbE[ w(t)^2-1] \leq C \rho 
  \quad \text{ and } \quad
  \bbE\left[\ind_{\{w(t)\le 1/2\}} \log w(t)\right] \ge -C\rho\,.
\end{equation}

\smallskip\noindent
\textit{First inequality in~\eqref{tthing}. }
By the definition~\eqref{def:w} of \(w(t) = w(0,t,Y)\), we have 
\begin{equation*}
 \bbE[ w(t)^2]= \frac{\bbE\left[ \bP(X_t=Y_t)^2\right]}{\P(W_t=0)^2}\le \frac{\bP(X_t=0)}{\P(W_t=0)}
 =\frac{K( (1-\rho) t)}{K(t)} \,,
\end{equation*}
where for the middle inequality we have just used that $\bP(X_t=Y_t)^2\le \bP(X_t=Y_t) \bP(X_t=0)$ (thanks to \Cref{lem:unimod}) and \(\bbE[\bP(X_t=Y_t)] = \P(W_t=0)\).
The proof of the first line of \eqref{tthing} is thus reduced to showing that
\begin{equation}\label{CCCC}
 \sup_{t\ge 0} \frac{K((1-\rho)t)}{K(t)}\le 1+C\rho \,.
\end{equation}
For $t\in [0,1]$, using the fact that $K(\cdot)$ is Lipschitz on $[0,1]$, we have
\begin{equation*}
 \frac{K((1-\rho)t)}{K(t)}-1\le \frac{ K((1-\rho)t)-K(t)}{K(t)}\le  \rho \max_{t\in [0,1]} |K'(t)|.
\end{equation*}
For the case of $t\ge 1$ we observe that 
\begin{equation*}
  \frac{K((1-\rho)t)}{K(t)}-1 =\frac{t\int^{1}_{1-\rho} (-K'( u t)) \dd u   }{ K(t)}\le   \rho \sup_{t\ge 1,u\ge \frac12} \frac{|t K'(ut)|}{K(t)} \,,
\end{equation*}
and the supremum is finite by \Cref{prop:LLT}-\eqref{krixx}. This concludes the proof of \eqref{CCCC}. 

\smallskip\noindent
\textit{Second inequality in~\eqref{tthing}. }
Because of the local limit theorem \Cref{prop:LLT}-\eqref{LLT}, we observe that having $w(t)\le 1/2$ means that $|Y_t|$ is unusually large. 
This corresponds to $|Y_t|> ct^{1/2}$ for the simple random walk and $|Y_t|>c K(t)^{-1}$ in the case of $\gamma$-stable walk.
%  (notice that we have put a strict inequality so that the observation is also valid for small values of $t$).
In the case of the simple random walk, we therefore have,
\begin{equation}\label{rrho1}
 \bbE\left[\ind_{\{w(t)\le 1/2\}} \log w(t)\right]
 \ge \sum_{|y|> c t^{1/2}} \log\left(\frac{\bP(X_t=y)}{\bP(X_t=0)}\right) \bbP[Y_t=y].
\end{equation}
In the case of the $\gamma$-stable walk we have in the same manner
\begin{equation}\label{rrho2}
  \bbE\left[\ind_{\{w(t)\le 1/2\}}\log w(t)\right]
 \ge \sum_{|y|> c K(t)^{-1}} \log\left(\frac{\bP(X_t=y)}{\bP(X_t=0)}\right) \bbP[Y_t=y].
\end{equation}
% (in both \eqref{rrho1} and \eqref{rrho2} we use that  using also that $\P(W_t=0) \leq  \bP(X_t=0)$ which is a consequence of Lemma \ref{lem:unimod}).
To conclude the proof, we need to bound the r.h.s.\ in \eqref{rrho1}-\eqref{rrho2} by $-C\rho$.
Recall also for the following that \(\bP(X_t=y) = \P(W_{(1-\rho)t}=y)\) and \(\bbP[Y_t=y] = \P[W_{\rho t}=y]\).

\smallskip\noindent
\textit{The random walk case.}
We are going to use the fact that for every $t> 0$ and $|y|>c t^{1/2}$ we have   
\begin{equation}\label{LLT2}
 \frac{\P(W_{t}=y)}{\P(W_t=0)}\ge e^{-\frac{c'|y|^2}{t}}.
\end{equation}
The above is a consequence of the local limit theorem~\eqref{LLT} when $t\ge 1$ and $|y|=O(\sqrt{t})$ and can be obtained by rougher methods when either $t$ is small or $y$ is large.
We get directly from \eqref{LLT2}  that
\begin{equation*}
  \bbE\left[\ind_{\{w(t)\le 1/2\}} \log w(t)\right]\ge 
   -\frac{c'}{(1-\rho)t} \sum_{|y|> c K(t)^{-1}} |y|^2 \P[W_{\rho t}=y]  \geq -\frac{c'}{(1-\rho)t} \bbE\left[ |W_{\rho t}|^2 \right] \geq - c'' \rho .
\end{equation*}

\smallskip\noindent
\textit{The $\gamma$-stable case.}
We use \Cref{prop:localLD} to obtain
\begin{equation}\label{neuf}
  \bbE\left[\ind_{\{w(t)\le 1/2\}} \log w(t)\right] 
  \geq - \sum_{|y|> c K(t)^{-1}} \log \left( \frac{K(t(1-\rho))}{c_1 t(1-\rho) J(y)}\right) \cdot c_2\, \rho t \, J(y)\\
\end{equation} 
Since $K(t(1-\rho))/(c_1 t(1-\rho))\ge  \frac{c'_1 K(t)}{t}$  (recall that $\rho>1/2$)
to conclude we just need to show that there exists a constant $C$ such that 
\begin{equation}
  \sup_{t\ge 0)} \sum_{|y|> c K(t)^{-1}} \log \left( \frac{ c'_1 K(t)}{ t J(y)}\right)  t J(y) \le C.
\end{equation}
When $t\in(0,1)$ we have $K(t)/t\le \beta_0$ and thus  
\begin{equation*}
\sum_{|y|> c K(t)^{-1}} \log \left( \frac{ c'_1 K(t)}{ t J(y)}\right)  t J(y)
  \le t\bigg( \log\left(  \beta_0 c'_1\right) + \sum_{|y|>0}  (\log J(y))J(y) \bigg)\ge - C t.
\end{equation*}
When $t\ge 1$ using Potter's bound for $J(\cdot)$ (see~\eqref{potter}, with \(\alpha\) replaced by \(\gamma\) and with $\delta=\gamma/2$) we have for $|y|> c K(t)^{-1}$, 
  \begin{equation*}
    % \label{Potts1}
 \frac{1}{C}(|y| K(t))^{-1-3\gamma/2}    \le \frac{J(y)}{J(cK(t)^{-1})}\le  C (|y| K(t))^{-1-\gamma/2}.
\end{equation*}
We also have, as a consequence of \eqref{implicit}, that $\frac{1}{c'} K(t)/t\leq J(cK(t)^{-1}) \leq c' K(t)/t$, so that for $|y|> c K(t)^{-1}$ we have
\begin{equation}\label{framingJ}
\frac{1}{C'} (|y| K(t))^{-1-3\gamma/2} \leq \frac{t J(y)}{K(t)} \leq C' (|y| K(t))^{-1-\gamma/2} \,.
\end{equation}
Using these estimates in \eqref{neuf} -- we use the lower bound in \eqref{framingJ} to control the $\log$ term  and the upper bound for $J(y)$ outside of $\log$ --
this yields
\begin{equation*}
  % \label{neufp}
\sum_{|y|> c K(t)^{-1}} \log \left( \frac{ c'_1 K(t)}{ t J(y)}\right)  t J(y)
  \le  C''  K(t)\sum_{|y|> c K(t)^{-1}}  \log ( C'' |y| K(t)) \cdot (|y| K(t))^{-1-\gamma/2},
\end{equation*}
and we conclude by observing that by Riemann sum approximation
\begin{equation}
 \lim_{t\to \infty} K(t)\sum_{|y|> c K(t)^{-1}}  \log ( C'' |y| K(t)) \cdot (|y| K(t))^{-1-\gamma/2} = \int \ind_{\{|u|\ge c\}}
 \log (C'' u) u^{-1-\gamma/2} \dd u,
\end{equation}
so that in particular the quantity is bounded uniformly in $t$.
\end{proof}

% 
% 
% \section{Properties of the free energy}
% 
% 
% 
% 
% \subsection{Existence of the free energy}
% 
% 
% 
% % 
% 
% To conclude, using the relation~\eqref{freecontraint} between the free and the constrained partition function, we also get that, $\bbP$-a.s. and in $L^1(\bbP)$,
% \begin{equation}
%  \lim_{T\to \infty} \frac{1}{T}\log  Z^{Y}_{\beta,T} =\lim_{T\to \infty} \frac{1}{T} \bbE \log  Z^{Y}_{\beta,T} = \tf(\rho,\gb).
% \end{equation}

% Let us start with a lower bound on the free energy that comes as the result of the  combination of Proposition \ref{envibou} with Jensen's inequality. Recalling the definition of $L$ \eqref{L-reg}, we set 

% {\blue Note that the restriction to small $\beta$ is necessary. For the random walk on $\bbZ^d$ we have for $\beta$ large  
% $$\tf(\rho,\beta)\sim \beta -\rho \log \beta+O(1)$$}

% With \Cref{envibou} at hand, we  conclude the proof of \Cref{jenseninq}.

\section{The main lower bound on the free energy}
\label{sec:lower}

In this section, we gather a single statement all the different lower bounds that we need to establish in order to prove \Cref{th:irrel}, \Cref{th:rel} (for $\gamma \in (\frac12,\frac23)$) and \Cref{th:3DRW}, and we further reduce the proof of this statement to an estimate of the second moment of a modified partition function, see Proposition \ref{trickyy}.
This estimate, which is the technical core of the proof, is proved in \Cref{trixx,trixx1,trixx2}.

\subsection{A statement that gathers them all}

We introduce a notation to treat the case $\gamma=\frac{2}{3}$ with arbitrary slowly varying $\varphi$.
Recalling the definition of $L$ in \eqref{K-reg}, we set
\begin{equation}\label{defRR}
 \mathrm R(t):= \int_{1<u<s<t}\frac{L(u)^2}{L(s)^2}   \frac{\dd u}{u} \frac{\dd s}{s}. 
\end{equation}
The function $\mathrm{R}(\cdot)$ is increasing and slowly varying. Note that we necessarily have $\mathrm R(t)/\log t \to\infty$ as \(t\to\infty\), as can be seen by simply restricting the integral to $s\in (u,Cu)$, where the constant $C$ can be taken arbitrarily large. If $\varphi(u)\stackrel{u\to \infty}{\sim}  (\log u)^{\kappa}$ we have $L(t) \stackrel{t\to \infty}{\sim} c_{\kappa} (\log t)^{-3\kappa/2}$, see~\eqref{philog}, and this implies that 
\begin{equation*}
\mathrm R(t) \sim c'_{\kappa} 
\begin{cases} 
  (\log t)^2  \quad &\text{ if  }\kappa<1/3,\\
  (\log \log t)(\log t)^{2} \quad &\text{ if } \kappa=1/3,\\
  (\log t)^{1+3\kappa} &\text{ if  } \kappa>1/3. 
\end{cases}
\end{equation*}
We also set $\mathbf r(u):= (\mathrm R^{-1}(1/u))^{-1/3}$. The expression appearing in the r.h.s.\ of \eqref{soluce} corresponds to $\mathbf r(C\rho)$ for some constant $C>0$.

\begin{proposition}\label{prop:multiprop}
  Given $\gep>0$, there exists $\rho_1(\gep,J)>0$,  and $C_1(\gep,J)>0$ such that:
  
  \begin{enumerate}
   \item \label{multi-i}
    If $J(x)\stackrel{|x|\to \infty}{\sim} |x|^{-(1+\gamma)}$ with $\gamma\in \left(\frac{2}{3},1 \right)$ then  for every $\rho\in [0,\rho_1]$ and every $\beta\in (\beta_0,2\beta_0]$ we have
   \begin{equation}\label{eq:lllb}
    \tf(\rho,\beta)\ge (1-\gep)\tf(\beta).
    \end{equation}
     \item \label{multi-ii}
  If $J(x)=\varphi(|x|)(1+|x|)^{-\frac{5}{3}}$, then  for every $\rho\in [0,\rho_1]$ and every $\beta\in [\beta_0+  \mathbf{r}(C_1\rho),2\beta_0]$ we have
   \begin{equation}
    \tf(\rho,\beta)\ge (1-\gep)\tf(\beta).
   \end{equation}
     \item \label{multi-iii}
      If $(W_t)_{t\geq 0}$ is the SRW with $d=3$, then  for every $\rho\in [0,\rho_1]$ and
     $\beta\in [\beta_0+ e^{- \rho^{-1/2}/C_1},2\beta_0]$ we have
   \begin{equation}
    \tf(\rho,\beta)\ge (1-\gep)\tf(\beta).
   \end{equation}
     \item \label{multi-iv}
      If  $J(x)\stackrel{|x|\to \infty}{\sim} |x|^{-(1+\gamma)}$ with $\gamma\in \left(\frac12,\frac{2}{3}\right)$  then for every $\beta\in [\beta_0+ C_1 \rho^{\frac{1}{2-\nu}},  2\beta_0]$ we have
     \begin{equation}
    \tf(\rho,\beta)\ge (1-\gep)\tf(\beta).
   \end{equation}
  \end{enumerate}
  \end{proposition}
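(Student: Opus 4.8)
The four statements of \Cref{prop:multiprop} all assert that, above a certain threshold in $\beta$, the quenched free energy $\tf(\rho,\beta)$ is comparable to the annealed one $\tf(\beta)$. The common strategy is a second-moment/Paley--Zygmund argument carried out at a finite scale, then promoted to a statement about the free energy via an approximate factorization of the partition function. More precisely, I would first fix a length scale $\ell = \ell(\beta)$ of order $1/\tf(\beta)$ (the correlation length of the pure model), and for a renewal configuration restricted to $[0,\ell]$ consider a \emph{modified} partition function -- essentially $Z^{Y,\cons}_{\beta,\ell}$ with the disorder weights $w(\tau_{i-1},\tau_i,Y)$ in place, but normalized by the pure constrained partition function $z^{\cons}_{\beta,\ell}$. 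Writing $\cZ_\ell$ for this ratio, one has $\bbE[\cZ_\ell] = 1$ (since $\bbE[w]=1$), so the goal is to show that for $\rho$ small and $\beta$ above the stated threshold, $\bbE[\cZ_\ell^2] \le 1 + \gep'$, which by Paley--Zygmund gives $\bbE[\log \cZ_\ell]\ge -\gep''$ with high probability over $Y$; superadditivity-type arguments along successive blocks of length $\ell$ then yield $\tf(\rho,\beta) \ge \tf(\beta) - \gep/\ell \cdot \ell = (1-\gep)\tf(\beta)$ after adjusting constants. This reduction is exactly what the excerpt announces will be done in \Cref{trickyy} (the finite-volume estimate) and the ``ratio of second-to-first moment close to $1$'' remark.

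The heart of the matter is the second-moment bound. Expanding $\bbE[\cZ_\ell^2]$ over two independent renewals $\tau,\tau'$ (both conditioned to hit $\ell$, sampled under the $\beta$-tilted law $\bQ_\beta$) produces a sum over pairs of renewal trajectories weighted by $\bbE[\prod w(\tau_{i-1},\tau_i,Y)\prod w(\tau'_{j-1},\tau'_j,Y)]$. Because the $w$'s have mean one and the disorder $Y$ has only local (increment) dependence, the cross-expectation factorizes over the ``overlap'' structure of the two renewals: only pairs of intervals $[\tau_{i-1},\tau_i]$ and $[\tau'_{j-1},\tau'_j]$ that genuinely overlap contribute a factor exceeding $1$. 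This is the content of \Cref{topitop2} referenced in the outline: one gets an upper bound of the form $\bbE[\cZ_\ell^2] \le \bQ_\beta^{\otimes 2}[\,\exp(\sum_{\text{overlapping interval pairs}} v(\cdot))\,]$ for an explicit nonnegative functional $v$ controlled by quantities like $\rho \cdot (\text{overlap length})/K(\cdot)$, using the variance estimate $\bbE[w^2-1]\le C\rho$ from the proof of \Cref{envibou} together with the local limit theorem \Cref{prop:LLT} and the large-deviation estimate \Cref{prop:localLD}. The key technical input is then to show this double-renewal expectation is $1+o(1)$; this is where the four cases diverge and where the ad hoc combinatorics of interval overlaps enter. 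In the transient regime the relevant renewal has exponent $\alpha = \frac{1-\gamma}{\gamma}$, and whether the overlap series converges is governed by an integral criterion in $\alpha$ (equivalently in $\gamma$), with the threshold $\gamma = \frac23$ (i.e.\ $\nu=2$, $\alpha = \frac12$) being exactly the borderline. For $\gamma\in(\frac23,1)$ one has summability uniformly in $\ell$ for $\rho$ small, giving \ref{multi-i} with no shift; for $\gamma\le\frac23$ one must truncate the renewal at scale $\ell$ and choose $\ell$ just small enough (equivalently $\beta$ just far enough above $\beta_0$, measured by $\tf(\beta)^{-1}\asymp\ell$) that the divergent part of the overlap sum is still $o(1/\rho^{?})$ -- this is precisely how the thresholds $C_1\rho^{1/(2-\nu)}$, $\mathbf r(C_1\rho)$, and $e^{-\rho^{-1/2}/C_1}$ arise, via $\mathrm R(\cdot)$ for $\gamma=\frac23$ and via a direct power count otherwise.

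I would organize the write-up as follows. First, state and prove the factorization/finite-volume reduction (this is \Cref{trickyy} in the paper's plan, so here I would just invoke it): reduce \Cref{prop:multiprop} to showing $\bbE[\cZ_\ell^2]\le 1+\gep$ for $\ell \asymp 1/\tf(\beta)$ under the stated hypotheses. Second, establish the general upper bound on $\bbE[\cZ_\ell^2]$ in terms of the double-renewal interval-overlap functional (\Cref{topitop2}), using $\bbE[w^2-1]\le C\rho$, \Cref{lem:Zc} to compare $u_\beta$ with $u$ below scale $\ell$, and the random-walk estimates of \Cref{sec:prelim}. Third, in four separate subsections (\Cref{trixx1} for $\gamma\in(\frac12,\frac23)$, $\gamma=\frac23$, and $d=3$; \Cref{trixx2} for $\gamma\in(\frac23,1)$), bound the overlap functional: the decisive computation is an integral of the form $\int\!\!\int K_\beta$-weighted overlap contributions, which in the irrelevant case $\gamma>\frac23$ converges and can be made $<\gep$ by taking $\rho$ small, while in the marginal/relevant cases one splits the sum at the cutoff scale and balances the divergent part against the chosen threshold for $\beta - \beta_0$.

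\textbf{Main obstacle.} The delicate point is the third step in the marginal case $\gamma = \frac23$: there the overlap sum diverges logarithmically (indeed $\mathrm R(t)/\log t \to \infty$ for every slowly varying $\varphi$, as noted after \eqref{defRR}), so the argument is genuinely scale-dependent and one must track the slowly varying function $L$ through the double integral $\mathrm R(t)$ rather than work with clean powers. Getting the threshold $\mathbf r(C_1\rho)$ exactly right -- rather than an unspecified subpolynomial quantity -- requires carefully matching the truncated overlap integral $\asymp \rho^2 \mathrm R(\ell)$ against the requirement that it be $o(1)$, i.e.\ $\ell \asymp \mathrm R^{-1}(1/\rho^2)^{\text{(something)}}$, and then translating $\ell \asymp 1/\tf(\beta)$ back into a bound on $\beta - \beta_0$ via \Cref{homener}. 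The bookkeeping with $\mathrm R$, its inverse, and the slowly varying corrections is where the real work lies; the $\gamma\in(0,\frac23)$ power-law cases and the $\gamma\in(\frac23,1)$ convergent case are comparatively routine once the general overlap bound \Cref{topitop2} is in hand.
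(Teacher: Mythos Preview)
Your plan matches the paper's approach: finite-volume second moment via block factorization (\Cref{bloxyblox}, \Cref{intend}, \Cref{trickyy}), then bound the moment ratio by an interval-overlap count for two independent $K_\beta$-renewals (\Cref{topitop2}), and control that count case by case in \Cref{trixx1,trixx2}.

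Two concrete slips worth fixing. First, the overlap bound in \Cref{topitop2} is cruder than you describe: one uses the \emph{deterministic} inequality $w(a,b,Y)\le e^{c_0\rho}$ (from \eqref{CCCC}) to strip each overlapping factor, then independence on the remaining disjoint intervals, yielding a bound $\exp\big(c_0\rho\cdot(|\cJ^{(1)}_{[0,T]}|+|\cJ^{(2)}_{[0,T]}|)\big)$ on the integrand; overlap \emph{length} plays no role, and the variance estimate $\bbE[w^2-1]\le C\rho$ is used only in the Jensen argument of \Cref{sec:Jensen}, not here. Second, in the marginal case the relevant smallness condition is $\rho\,\mathrm{R}(T)\le\delta$ (linear in $\rho$, not your $\rho^2\mathrm{R}(\ell)$): with $T\asymp(\beta-\beta_0)^{-3}$ this gives exactly the threshold $\beta-\beta_0\ge(\mathrm{R}^{-1}(1/C_1\rho))^{-1/3}=\mathbf{r}(C_1\rho)$.

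Finally, for $\gamma\in(\tfrac23,1)$ the paper does not argue by a direct ``summability'' of the overlap series but via the iterated-overshoot Markov chain $(\cS_j)_{j\ge 0}$: one bounds $|\cJ^{(1)}|+|\cJ^{(2)}|\le D_T:=\min\{j:\cS_j\ge T\}$ and shows $\tilde Q^{\otimes 2}[e^{uD_T}]\to 1$ as $u\downarrow 0$ uniformly in $T$ by a supermartingale argument on the ratios $(\cS_{j+1}/\cS_j)^{-\kappa}$ (\Cref{lespetitspoissons}). Your phrase ``summability uniformly in $\ell$'' captures the conclusion but not the mechanism, which is where the restriction $\alpha<\tfrac12$ actually enters (via the explicit limit $\sin(\pi\alpha)/\sin(\pi(\alpha+\kappa))<1$).
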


\subsection{Reducing the proof to a finite volume estimate}

We explain here how the proof of \Cref{prop:multiprop} can be reduced to a finite volume estimate.
We assume that \(\gamma\in (\frac12,1)\) or that $(W_t)_{t\geq 0}$ is the SRW in dimension $d=3$.
We consider a system of length $nT$ where  $T=A \tf(\beta)^{-1}$, $A$ is a large constant (which does not depend on $\beta$) and $n$ is an arbitrary integer. 
The idea is to split our system into $n$ blocks of length $T$.
We first define for $r\le s$ the constrained partition function on a segment by setting 
\begin{equation}\label{partsegos}
  Z^{Y}_{\beta,[r,s]}:= \beta \bE\Big[ e^{\beta \int^s_r  \ind_{\{X_{t}=Y_t\}} \dd t}  \ind_{\{X_{s}=Y_s\}} \ \Big| \ X_r=Y_r\Big] =
  Z^{ \theta_r(Y),\mathrm c}_{\beta,s-r} \,,
\end{equation}
where $\theta_r(Y)=(Y_{t+r}-Y_r)_{t\ge 0}$, and by convention $Z^{Y}_{\beta,[s,s]}=1$. 
We then introduce a modified partition function for the \(i\)-th block, by setting
\begin{equation}
  \label{def:barZ}
 \hat Z_i(Y):=  K(T)\int_{(i-1+\eta)T <r<s<i T } Z^{Y}_{\beta,[r,s]} \dd r \dd s.
\end{equation}
We will show that an approximate factorization holds and that a lower bound for the partition function~$Z^{Y}_{\beta,nT}$ can be obtained by multiplying $\hat Z_i(Y)$ over an arbitrary collection of blocks (see \Cref{blocksection} below).
We then obtain a lower bound for $\tf(\rho,\beta)$ by showing that most blocks can bring a significant contribution: this is the following statement.

\begin{proposition}
\label{bloxyblox}
Given $\eta>0$, and $A\ge A_0(\eta)$ sufficiently large, there exists $\rho_1(\eta,A,J)>0$ and $C_1(\eta,A,J)>0$ such that, for $\beta$ in the range specified by \Cref{prop:multiprop}, we have
 \begin{equation*}
  \bbP\left[ \hat Z_1(Y)\ge  A^{-4} e^{(1-\eta)A} \right]\ge (1-\eta) \,.
 \end{equation*}
\end{proposition}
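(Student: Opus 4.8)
The plan is to show that $\hat Z_1(Y)$ is large with high $\bbP$-probability by a Paley--Zygmund / second-moment argument applied to a suitably renormalized random variable. First, I would isolate the annealed (first moment) behaviour. Using $\bbE[Z^Y_{\beta,[r,s]}] = z^{\cons}_{\beta,s-r}$ and~\eqref{zubet}, one has $\bbE[\hat Z_1(Y)] = K(T)\int_{(\eta)T<r<s<T} u_\beta(s-r)e^{\tf(\beta)(s-r)}\,\dd r\,\dd s$. Since $T = A\tf(\beta)^{-1}$, on the bulk of this domain $s-r$ is of order $T = A/\tf(\beta)$, so $e^{\tf(\beta)(s-r)}$ ranges up to $e^{(1-\eta)A}$-type values; combined with Lemma~\ref{lem:Zc} (which gives $u_\beta(s-r)\asymp u(s-r\wedge \tf(\beta)^{-1})\asymp \tf'(\beta)$ on most of the range) and the near-critical estimates, one checks that, after the $K(T)$ prefactor and the two-dimensional integration, $\bbE[\hat Z_1(Y)]$ is at least of order $A^{-c}e^{(1-\eta')A}$ for a controllable power of $A$; the exact exponent is where the $A^{-4}$ in the statement comes from (with room to spare once $A$ is large, absorbing the slowly varying and polynomial-in-$A$ corrections).

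Second, I would control the second moment $\bbE[\hat Z_1(Y)^2]$. Expanding the square and using the renewal representation~\eqref{asanexpect} together with $\bbE[w(s,t,Y)^2] = \bbE[\bP(X_{t-s}=Y_t-Y_s)^2]/\P(W_{t-s}=0)^2$, the second moment becomes an expectation over \emph{two} independent renewal processes $\tau,\tau'$ of a product of factors governed by the ``interval overlaps'' between $\tau$ and $\tau'$ — this is precisely the mechanism advertised in the introduction around \Cref{topitop2}. The goal is to show that, for $\rho\leq\rho_1$ small and $\beta$ in the stated range, the ratio $\bbE[\hat Z_1(Y)^2]/\bbE[\hat Z_1(Y)]^2$ is bounded by $1+\eta^3$ or so. Granting that bound (which is the substantive input, handled in \Cref{trixx,trixx1,trixx2} via the overlap estimates specific to each value of $\gamma$), the Paley--Zygmund inequality gives
\[
\bbP\Big[\hat Z_1(Y) \geq \tfrac12 \bbE[\hat Z_1(Y)]\Big] \;\geq\; \frac{(1/4)\,\bbE[\hat Z_1(Y)]^2}{\bbE[\hat Z_1(Y)^2]} \;\geq\; 1-\eta,
\]
and since $\tfrac12\bbE[\hat Z_1(Y)] \geq A^{-4}e^{(1-\eta)A}$ for $A$ large, this is exactly the claimed statement.

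Third, a few technical points need attention along the way. One must verify that the lower bound on $\bbE[\hat Z_1(Y)]$ survives the fact that the integration domain in~\eqref{def:barZ} starts at $(i-1+\eta)T$ rather than $(i-1)T$ — this only costs a factor depending on $\eta$ and $A$, not on $\beta$, which is why the constants $\rho_1, C_1$ are allowed to depend on $\eta$ and $A$. One also needs the restriction of $\beta$ to $[\beta_0+\text{threshold}, 2\beta_0]$ precisely so that $\tf(\beta)>0$ (so $T$ is finite) and so that $\tf(\beta)$ is not \emph{too} small relative to the disorder strength $\rho$; this threshold is what makes the second-moment ratio controllable, and its different forms in the four cases of \Cref{prop:multiprop} are dictated by the overlap computations. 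The main obstacle is unquestionably the second-moment estimate: reducing $\bbE[\hat Z_1(Y)^2]/\bbE[\hat Z_1(Y)]^2$ to an overlap expression and then showing this expression is close to $1$ requires the case-by-case analysis of \Cref{trixx,trixx1,trixx2}, and in particular the marginal case $\gamma=\tfrac23$ (and $d=3$) where the slowly varying function $\mathrm R$ and the threshold $\mathbf r(C_1\rho)$ enter; by contrast the first-moment lower bound and the Paley--Zygmund step are routine given the near-critical estimates of Section~\ref{phpure}.
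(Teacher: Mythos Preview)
Your overall strategy is exactly the paper's: a first-moment lower bound (the paper's Lemma~\ref{lem:firstmom}), a second-moment ratio bound (Proposition~\ref{trickyy}), and then Paley--Zygmund. However, your application of Paley--Zygmund is wrong and does not give the conclusion.

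You take $\theta=\tfrac12$ and write
\[
\bbP\big[\hat Z_1(Y)\geq \tfrac12\,\bbE[\hat Z_1(Y)]\big]\;\geq\;\frac{(1/4)\,\bbE[\hat Z_1(Y)]^2}{\bbE[\hat Z_1(Y)^2]}\;\geq\;1-\eta.
\]
But if the moment ratio is bounded by $1+\eta^3$ (or by anything $\geq 1$), the right-hand side is at most $1/4$, not $1-\eta$. The Paley--Zygmund inequality $\bbP(Z>\theta\,\bbE Z)\geq (1-\theta)^2\,\bbE[Z]^2/\bbE[Z^2]$ can only produce a probability close to~$1$ if \emph{both} the ratio is close to~$1$ \emph{and} $\theta$ is close to~$0$. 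The paper takes $\theta=A^{-1}$: with $\bbE[\hat Z_1(Y)]\geq A^{-3}e^{(1-\eta)A}$ from Lemma~\ref{lem:firstmom}, the threshold $A^{-1}\bbE[\hat Z_1(Y)]\geq A^{-4}e^{(1-\eta)A}$ is exactly the one in the statement, and then $(1-A^{-1})^2$ times the inverse moment ratio is $\geq 1-\eta$ once Proposition~\ref{trickyy} supplies the bound $\bbE[\hat Z_1(Y)^2]/\bbE[\hat Z_1(Y)]^2\leq \frac{1}{1-\eta}\big(\frac{A}{A-1}\big)^2$. In particular, the $A^{-4}$ does not come from the first-moment computation alone as you suggest; one power of $A^{-1}$ is the $\theta$ in Paley--Zygmund. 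Once you correct $\theta$ your argument goes through.
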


\noindent The details of how \Cref{prop:multiprop} is deduced from \Cref{bloxyblox} are given in the next subsections.
Proposition \ref{bloxyblox} itself is proved by using a standard second moment method, getting bounds for the first two moments and using Paley-Zygmund inequality.
The first moment is relatively easy to estimate.
\begin{lemma}
  \label{lem:firstmom}
  Let $T:= A \tf(\beta)^{-1}$. Then there is some $A_0$ such that, for $A\ge A_0$, for every $\beta$ sufficiently close to $\gb_0$, we have
\begin{equation*}
 \bbE\big[  \hat Z_1(Y)\big]\ge   A^{-3} e^{(1-\eta)A}.
\end{equation*}
\end{lemma}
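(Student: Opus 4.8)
The plan is to compute the expectation directly using the renewal representation from Section~\ref{sec:rewrite}. Recall that $\hat Z_1(Y) = K(T)\int_{\eta T < r < s < T} Z^Y_{\beta,[r,s]} \dd r\dd s$ and that $Z^Y_{\beta,[r,s]} = Z^{\theta_r(Y),\cons}_{\beta,s-r}$. Since $\bbE[w(\cdot,\cdot,Y)] = 1$ (and the disorder average of the weighted partition function gives the homogeneous one), we have $\bbE[Z^Y_{\beta,[r,s]}] = z^{\cons}_{\beta,s-r}$. Therefore by Fubini,
\[
\bbE[\hat Z_1(Y)] = K(T)\int_{\eta T < r < s < T} z^{\cons}_{\beta,s-r}\,\dd r\,\dd s = K(T)\int_{\eta T < r < s < T} u_\beta(s-r)\, e^{\tf(\beta)(s-r)}\,\dd r\,\dd s,
\]
using \eqref{zubet}. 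Substituting $v = s-r$ and integrating out the remaining free variable over a range of length $\Theta(T)$ (namely $r$ ranges over an interval of length $(1-\eta)T - v$, which is at least of order $T$ when $v \leq (1-\eta)T/2$, say), one gets a lower bound of the form
\[
\bbE[\hat Z_1(Y)] \geq c\, T\, K(T) \int_{0}^{(1-\eta)T/2} u_\beta(v)\, e^{\tf(\beta)v}\,\dd v.
\]

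The next step is to lower-bound the inner integral. Using Lemma~\ref{lem:Zc}, $u_\beta(v) \geq c_1 u(v \wedge \tf(\beta)^{-1})$, and in particular for $v$ in a window near $\tf(\beta)^{-1}$ — say $v \in [\tf(\beta)^{-1}/2, \tf(\beta)^{-1}]$, which is inside the integration range provided $A$ is large enough since $T = A\tf(\beta)^{-1}$ — we have $u_\beta(v) \geq c_1 u(\tf(\beta)^{-1}) \geq c_1 c_1' \tf'(\beta)$ and $e^{\tf(\beta)v} \geq e^{1/2}$. This window has length $\tf(\beta)^{-1}/2$, so the integral is at least $c\, \tf'(\beta)\, \tf(\beta)^{-1}$. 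Hence
\[
\bbE[\hat Z_1(Y)] \geq c'\, T\, K(T)\, \tf'(\beta)\, \tf(\beta)^{-1}.
\]
Alternatively, and more simply: for $v$ up to $(1-\eta)T/2 = \frac{(1-\eta)A}{2}\tf(\beta)^{-1}$ we have $e^{\tf(\beta)v}$ reaching $e^{(1-\eta)A/2}$, and combining $u_\beta(v)e^{\tf(\beta)v} = z^{\cons}_{\beta,v}$ with the renewal asymptotics one extracts the exponential growth factor $e^{(1-\eta)A}$ after accounting for the two-dimensional integration (each of the two integration variables contributing an exponential-in-$A$ factor through $e^{\tf(\beta)(s-r)}$ is not right — rather, it is the single factor $s-r$ that can be close to $T$, giving one factor $e^{(1-\eta)A}$, and the prefactor $T$ from integrating the center of mass).

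Finally one checks the polynomial prefactor. We need $\bbE[\hat Z_1(Y)] \geq A^{-3} e^{(1-\eta)A}$. Writing everything in terms of $T = A\tf(\beta)^{-1}$: the factor $TK(T)\tf'(\beta)\tf(\beta)^{-1}$ must be controlled. Here one uses that $\tf(\beta)^{-1}\tf'(\beta)$ and $T K(T)$ combine, via the near-critical estimates \eqref{eq:Doney}--\eqref{zubet} and Lemma~\ref{lem:Zc} (which give $u(\tf(\beta)^{-1}) \asymp \tf'(\beta) \asymp \tf(\beta)/(\beta-\beta_0)$ up to slowly varying corrections and the relation between $\tf(\beta)$, $K$, and $u$), to show that the whole prefactor is bounded below by a negative power of $A$ uniformly as $\beta \downarrow \beta_0$ — any polynomial loss is absorbed by choosing $A_0$ large, since the dominant term is the exponential $e^{(1-\eta)A}$ and we only need to beat $A^{-3}$.

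The main obstacle is the bookkeeping in this last step: one must verify that the slowly varying functions and the ratios of $\tf$, $\tf'$, $K$, $u$ appearing in the prefactor do not conspire to produce a factor decaying faster than polynomially in $A$ (uniformly over $\beta$ near $\beta_0$). This is exactly what Lemma~\ref{lem:Zc} and the Doney-type asymptotics \eqref{eq:Doney} are designed to control, so the argument is routine given those tools — the subtlety is just making sure the estimates are applied with $t = \Theta(\tf(\beta)^{-1}) = \Theta(T/A)$ and tracking the $A$-dependence explicitly rather than hiding it in $\beta$-dependent constants.
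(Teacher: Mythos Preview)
Your computation of $\bbE[\hat Z_1(Y)] = K(T)\int_{\eta T<r<s<T} z^{\cons}_{\beta,s-r}\,\dd r\,\dd s$ is correct and matches the paper. The gap is in how you localize the integral afterward. After the change of variable $v=s-r$ the integral becomes $K(T)\int_0^{(1-\eta)T}\big((1-\eta)T-v\big)\,u_\beta(v)\,e^{\tf(\beta)v}\,\dd v$, and you then restrict to $v\le (1-\eta)T/2$ so that the factor $(1-\eta)T-v$ is of order $T$. But on that range $e^{\tf(\beta)v}\le e^{(1-\eta)A/2}$, so you can extract at best $e^{(1-\eta)A/2}$, which is \emph{not} enough to beat the target $A^{-3}e^{(1-\eta)A}$ for large $A$. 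Your ``alternatively'' paragraph correctly diagnoses that $s-r$ must be close to $(1-\eta)T$, but then the center-of-mass range is of order $\tf(\beta)^{-1}=T/A$, not $T$, so you cannot keep the $cT$ prefactor: the two requirements in your write-up are mutually exclusive.

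The fix---which is exactly what the paper does---is to restrict to $r\in[\eta T,\eta T+\tf(\beta)^{-1}]$ and $s\in[T-\tf(\beta)^{-1},T]$, equivalently to $v$ within $\tf(\beta)^{-1}$ of $(1-\eta)T$ with the center-of-mass window also of length $\tf(\beta)^{-1}$. Then $s-r\ge\tf(\beta)^{-1}$, so Lemma~\ref{lem:Zc} gives $z^{\cons}_{\beta,s-r}\ge c\,u(\tf(\beta)^{-1})\,e^{\tf(\beta)(s-r)}\ge c'\,u(\tf(\beta)^{-1})\,e^{(1-\eta)A}$, and the two-dimensional integration contributes $\tf(\beta)^{-2}$. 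This yields $\bbE[\hat Z_1(Y)]\ge c\,K(T)\,\tf(\beta)^{-2}\,u(\tf(\beta)^{-1})\,e^{(1-\eta)A}$; one then uses Potter's bound to replace $K(T)$ by $cA^{-2}K(\tf(\beta)^{-1})$ and Doney's asymptotic~\eqref{eq:Doney} to see that $K(\tf(\beta)^{-1})u(\tf(\beta)^{-1})\tf(\beta)^{-2}$ is bounded below by a constant uniformly in $\beta$. Your final paragraph about the prefactor bookkeeping is essentially right, but only once the integration window has been chosen this way.
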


\begin{proof}[Proof of \Cref{lem:firstmom}]
First of all, recalling the definition~\eqref{def:barZ} of \(\hat Z_1(Y)\), we have
\begin{equation*}
  % \label{purespe}
 \bbE\big[ \hat Z_1(Y)\big]=K(T)\int_{\eta T< r <s <T} z^{\cons}_{\beta, s-r} \dd r \dd s .
\end{equation*}
Now, consider the restriction of the integral to $r\in [\eta T , \eta T+ \tf(\beta)^{-1}]$ and $s\in [ T -\tf(\beta)^{-1}, T]$, so that in particular \(s-r \geq \tf(\beta)^{-1}\).
Then, using \Cref{lem:Zc} to get that \(z^{\cons}_{\beta,s-r} \ge c \, u( \tf(\beta)^{-1} ) e^{\tf(\beta)(s-r)}\) for \(s-r\geq \tf(\beta)^{-1}\), we obtain that
\begin{equation}
  \label{eq:borneinfEZbar}
  \bbE\big[  \hat Z_1(Y)\big]\ge c' K(T)  \frac{u( 1/\tf(\beta) )}{\tf(\beta)^2}  e^{\tf(\beta) \, (1-\eta) T}\ge c''   \frac{K(1/\tf(\beta)) u(1/\tf(\gb)) }{\tf(\beta)^2}  A^{-3} e^{(1-\eta)A}.
\end{equation}
In the last inequality we used Potter's bound~\eqref{potter} to get $K(T)= K(A \tf(\beta)^{-1})\ge c A^{-2}K(\tf(\beta)^{-1})$; recall that $\gamma<1/2$ so we have  $\alpha\in (0,1)$.
Then \eqref{eq:Doney} implies that $K(t)u(t) t^{2}$ converges to a constant and we obtain that
$\bbE[\hat Z_1(Y)]\ge  c' A^{-2} e^{(1-\eta)A}$.
This gives the desired conclusion by taking~\(A\) large enough.
\end{proof}

\noindent The estimate for the second moment is given in \Cref{trickyy} below and is much harder to obtain. 
Its proof requires to distinguish between various cases and spans Sections~\ref{trixx}-\ref{trixx1}-\ref{trixx2}.

\begin{proposition}
  \label{trickyy}
 Given $\eta>0$, and $A\ge A_0(\eta)$ sufficiently large, there exists $\rho_1(\eta,A,J)>0$,  and $C_1(\eta,A,J)>0$ such that for any $\rho,\beta$ satisfying the assumption of Proposition \ref{prop:multiprop}
 \begin{equation*}
 \frac{\bbE\big[  \hat Z_1(Y)^2\big]}{\bbE\big[ \hat Z_1(Y)\big]^2}\le \frac{1}{1-\eta}\left(\frac{A}{A-1} \right)^2.
 \end{equation*}
\end{proposition}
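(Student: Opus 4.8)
The target is a bound on the ratio $\bbE[\hat Z_1(Y)^2]/\bbE[\hat Z_1(Y)]^2$, so the first task is to write both moments as renewal-type expectations. Recalling \eqref{partsegos}--\eqref{def:barZ} and the weighted renewal representation \eqref{asanexpect}, the square $\hat Z_1(Y)^2$ involves a product over two independent copies $\tau,\tau'$ of the renewal and, crucially, only \emph{one} copy of the disorder $Y$. Taking $\bbE$ over $Y$ will couple the two renewals through the correlations of $w(\cdot,\cdot,Y)$ over overlapping inter-arrival intervals. The first step is therefore to expand the product, use Fubini to bring $\bbE_Y$ inside, and show that $\bbE[w(s,t,Y)w(s',t',Y)]$ is close to $1$ unless the intervals $[s,t]$ and $[s',t']$ substantially overlap; the ``excess'' over~$1$ should be controlled by an explicit kernel in the overlap, which is precisely the point where \Cref{topitop2} (promised in the outline) would be invoked. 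This reduces the second moment to a sum over pairs of renewal configurations weighted by an ``interval-overlap'' functional of two independent renewals $\tau,\tau'$.

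Once the second moment is expressed through interval overlaps, the strategy is the standard Paley--Zygmund/second-moment scheme: factor out $\bbE[\hat Z_1(Y)]^2$ and show that the remaining correction — a sum over the number of overlapping pairs of intervals, each contributing the excess kernel — forms a convergent series whose sum is at most $\frac{1}{1-\eta}(\frac{A}{A-1})^2$. The geometric-series structure $(A/(A-1))^2$ strongly suggests that each ``generation'' of overlaps contributes a factor bounded by $1/A$ (or a constant times $1/(A-1)$), coming from the length scale $T = A\,\tf(\beta)^{-1}$: an overlap forces the two renewals to meet, which costs a factor $\asymp 1/A$ relative to the unconstrained normalization, exactly as in renewal-theory expansions for pinning-type second moments. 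The role of the hypothesis on $\beta$ (the lower bounds $\beta \geq \beta_0 + C_1\rho^{1/(2-\nu)}$, or $\beta_0 + \mathbf r(C_1\rho)$, or $\beta_0 + e^{-\rho^{-1/2}/C_1}$) is to ensure that the \emph{total} overlap mass — summed over all pairs of intervals of length up to $\tf(\beta)^{-1}$, which is the correlation length cutoff built into $\hat Z_1$ — stays below $\eta$ for $\rho$ small; this is what the companion estimate on the overlap kernel must deliver, and it is case-dependent (hence the split over \Cref{trixx,trixx1,trixx2}).

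Concretely I would proceed as follows. (i) Write $\bbE[\hat Z_1(Y)^2] = K(T)^2 \int\!\!\int\!\!\int\!\!\int \bbE\big[Z^{Y}_{\beta,[r_1,s_1]} Z^{Y}_{\beta,[r_2,s_2]}\big]\,\dd r_1\dd s_1\dd r_2\dd s_2$ over the appropriate domains, and expand each $Z^{Y}_{\beta,[\cdot,\cdot]}$ via \eqref{goodexpress} into a sum over renewal points. (ii) Interchange $\bbE_Y$ with the sums/integrals and estimate $\bbE\big[\prod_i w(\tau_{i-1},\tau_i,Y)\prod_j w(\tau'_{j-1},\tau'_j,Y)\big]$; by independence of $Y$-increments this factorizes over ``overlap clusters'' of intervals, and each cluster contributes $1$ plus an error controlled by \Cref{topitop2}. (iii) Resum the non-overlapping part to recover exactly $\bbE[\hat Z_1(Y)]^2$, leaving a correction that is a sum over nonempty collections of overlapping interval-pairs. (iv) Bound this correction by a geometric series in a quantity that is $O(1/A)$ from the block length times $O(\rho)$-type smallness from the overlap kernel under the stated range of $\beta$; choosing $A$ large then $\rho_1$ small makes the whole ratio at most $\frac{1}{1-\eta}(\frac{A}{A-1})^2$. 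The main obstacle is step (ii)--(iv): obtaining a clean, summable bound on the overlap kernel $\bbE[w w] - 1$ that is simultaneously (a) integrable against the renewal mass at all scales up to $\tf(\beta)^{-1}$, and (b) small under the precise — and delicate — lower thresholds on $\beta$; this is exactly why the argument must be split into the three separate regimes handled in \Cref{trixx,trixx1,trixx2}, and why the threshold functions $\mathbf r$ and $\rho^{1/(2-\nu)}$ appear.
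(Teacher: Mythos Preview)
Your high-level plan is in the right spirit --- write the second moment via two independent renewals and control it through their overlaps --- but step (ii) contains a genuine gap, and it is precisely the step where the paper's argument differs from yours.

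You propose to estimate $\bbE\big[\prod_i w(\tau_{i-1},\tau_i,Y)\prod_j w(\tau'_{j-1},\tau'_j,Y)\big]$ by factorizing over ``overlap clusters'' and bounding each cluster by $1$ plus an error of the form $\bbE[ww]-1$ that you call an ``overlap kernel''. This is how one would proceed for the i.i.d.\ disordered pinning model, but here the disorder is a random walk trajectory $Y$: for two intervals $[s,t]$ and $[s',t']$ that overlap \emph{partially} (four distinct endpoints), the quantity $\bbE[w(s,t,Y)\,w(s',t',Y)]$ involves the joint law of $(Y_t-Y_s,\,Y_{t'}-Y_{s'})$ and does not reduce to a clean two-point kernel. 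A cluster with many partially overlapping intervals is even worse. You would need a workable bound on these multi-point correlations, and you do not indicate one.

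The paper sidesteps this entirely. The key device (the content of \Cref{topitop2}) is the crude but uniform bound $w(a,b,Y)\le e^{c_0\rho}$, obtained from the unimodality of $x\mapsto\bP(X_t=x)$ and the estimate \eqref{CCCC}. One simply replaces $w(\tau_{i-1},\tau_i,Y)$ by $e^{c_0\rho}$ for every index $i$ in the overlap set $\cJ^{(1)}$ (and symmetrically for $\tau'$). After this removal, the surviving intervals are pairwise disjoint, so the remaining product has $\bbE_Y$-expectation exactly~$1$. The upshot is
\[
\frac{\bbE[\hat Z_1^2]}{\bbE[\hat Z_1]^2}-1 \le C_A\,\tilde Q^{\otimes 2}\Big[e^{c_0\rho(|\cJ^{(1)}_{[0,T]}|+|\cJ^{(2)}_{[0,T]}|)}-1\Big],
\]
and the rest of the proof (\Cref{trixx1,trixx2}) consists in bounding the exponential moment of the \emph{count} $|\cJ^{(1)}|+|\cJ^{(2)}|$ under the stationary renewal measure, which is where the case-by-case analysis and the thresholds on $\beta$ enter. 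So \Cref{topitop2} is not an auxiliary pointwise estimate that feeds into your cluster expansion; it \emph{is} the reduction.

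Two smaller corrections. First, the factor $(A/(A-1))^2$ in the target bound is not the trace of a geometric series in $1/A$; it is a cosmetic target chosen to mesh with the Paley--Zygmund step. The actual bound proved is $1+C_A(\text{small})$, where $C_A$ comes from the Radon--Nikodym derivative when one passes from the natural measure $\hat Q_T$ to the stationary renewal $\tilde Q$; $C_A$ is in fact increasing in $A$, and the smallness of the parenthesis is obtained by tuning $\rho_1$ and $C_1$ after $A$ is fixed. Second, the change of measure to the stationary $\tilde Q$ is itself a nontrivial ingredient (needed to get translation invariance and clean renewal estimates) that your outline omits.
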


We conclude this subsection by deducing Proposition \ref{bloxyblox} from the moment estimates.

\begin{proof}[Proof of \Cref{bloxyblox}]
Using \Cref{lem:firstmom}, Paley-Zygmund inequality and Proposition \ref{trickyy}, we get
\begin{equation*}
  \bbP\left(  \hat Z_1(Y) \geq A^{-4} e^{(1-\eta) A} \right) 
    \geq \bbP\left( \hat Z_1(Y) \geq A^{-1} \bbE\big[ \hat Z_1(Y) \big]  \right)  \geq \left(\frac{A}{A-1}\right)^2 \frac{ \bbE\big[ \hat Z_1(Y) \big]^2}{ \bbE\big[\big( \hat Z_1(Y) \big)^2\big]}\ge 1-\eta.
\qedhere
\end{equation*}
% the second inequality simply being Paley-Zygmund inequality.
% This concludes the proof of \Cref{bloxyblox} thanks to \Cref{trickyy}.  
\end{proof}

\subsection{Block decomposition and factorization of the partition function}
\label{blocksection}

Let us now explain how our factorization of the partition functions works and how one can deduce \Cref{prop:multiprop} from \Cref{bloxyblox}.

\smallskip\noindent
\textit{The block decomposition. }
For \(n\in \bbN\), we decompose $Z^{Y}_{\beta,nT}$ according to the set of segments of the form $((i-1)T,iT]$, which we refer to as \textit{blocks}, that are visited by the set $\{t_1,\dots,t_k\}$ in the integral~\eqref{goodexpress}.
For an arbitrary $k\geq 0$ and $\bt \in \cX_k(nT)$, we define $I(\bt)$ the set of blocks visited by $\bt$, that is
  \begin{equation*}
   I(\bt) = \Big\{ i  : \  \{t_1,\dots, t_k\} \cap ((i-1)T,iT] \ne \emptyset\Big\}
  \end{equation*}
More specifically, letting $I$ encode the set of visited blocks, we can write
\begin{equation}\label{decompz}
  Z^{Y}_{\beta,nT}
 =: \sum_{I\subset \lint 1, n \rint } Z^{Y}_{\beta,T,I} \,,
  \end{equation}
where $Z^{Y}_{\beta,T,\emptyset}:= 1$ and for $|I|\ge 1$,  $Z^{Y}_{\beta,T,I}$ is obtained by restricting the integrals \eqref{goodexpress} to the sets
\[
\cX_k(T,I):= \Big\{ {\bf t} \in\bigcup_{k\ge 0} \cX_k(nT)  : \ I({\bf t})=I \Big\} \,,
\]
that is
\begin{equation*}
  Z^{Y}_{\beta,T,I}:=\sum^{\infty}_{k=1} \left(\beta/\beta_0 \right)^k \int_{\cX_k(T,I)} \prod_{i=1}^{k}  K_w(t_{i-1},t_i,Y) \prod_{i=1}^k \dd t_i.
\end{equation*}
Recalling \eqref{partsegos}, by integrating over all $t_i$ within a block except for the first one we obtain that for $I=\{i_1,\dots, i_{\ell}\}$ with $1\leq i_1 < \cdots <i_{\ell} $, we have
\begin{equation}\label{blockdecompo}
  Z^{Y}_{\beta,T,I}=\!\!\!
    \int\limits_{ \substack{ (r_{j},s_{j})_{j=1}^{\ell} \\ (i_j-1)T < r_j\le s_j\le i_{j}T } } \!\!\! (\beta/\beta_0)^{\ell} \prod_{j=1}^{\ell} K_w(s_{{j-1}},r_{j},Y) Z^{Y}_{\beta,[r_{j},s_{j}]}\dd r_j(\delta_{r_j}(\dd s_j)+ \dd s_j) \,,
\end{equation}
where by convention $s_0=0$.
The Dirac mass term $\delta_{r_i}(\dd s_i)$ appearing in the integral over~$t_i$ is present to take into account the possibility that a given block is visited only once.

\smallskip\noindent
\textit{The factorization step and conclusion. }
Considering $n\ge 1$, we need to prove that if $\beta$ and $\rho$ satisfy the assumption of  Proposition \ref{prop:multiprop}
then for all $n$ sufficiently large we have
\begin{equation}\label{strox}
  Z^{Y}_{\beta,nT}\ge e^{n(1-\gep)A},
\end{equation}
which readily implies that $\frac{1}{nT}\log Z^{Y}_{\beta,nT} \geq (1-\gep) \tf(\gb)$, \textit{i.e.}\ that $\tf(\rho,\gb) \geq (1-\gep)\tf(\gb)$.
We use the bound $Z^{Y}_{\beta,nT} \ge  Z^{Y}_{\beta,T,I}$ (recall \eqref{decompz}) for one specific set $I$ of \textit{good blocks} which avoids stretches of unfavorable environment.
We can then conclude using the following factorization lower bound (recall the definition~\eqref{def:barZ} of \(\hat Z_i(Y)\)).

\begin{lemma}\label{intend}
Given $\eta\in(0,1/2)$, there exists a constant $c$ (which depends on $\eta$) and $n_0(Y,T)$ such that for any $n\ge n_0$, for any $I=\{i_1,\dots,i_{\ell}\} \subset \lint 1, n\rint$ such that $\ell=|I|\ge n/2$, for any choice of $r_j,s_j$ which satisfy $(i_j-1+\eta)T<r_j<s_j<i_jT$, we have \(\bbP\)-a.s.
\begin{equation}\label{kipoint}
  Z^{Y}_{\beta,nT} \geq  Z^{Y}_{\beta,T,I} \geq  c^n \prod_{i\in I} \hat Z_i(Y).
\end{equation}
\end{lemma}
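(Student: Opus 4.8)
The plan is to reduce the lemma to a pointwise lower bound on the connecting kernels, then prove that bound $\bbP$-almost surely by a strong-law argument (which is where the dependence $n_0(Y,T)$ enters). Starting from the block decomposition \eqref{blockdecompo}, I would throw away the Dirac part of the integral, restrict the integration variables of the $j$-th visited block to $(i_j-1+\eta)T<r_j<s_j<i_jT$, and use $(\beta/\beta_0)^{\ell}\ge 1$; since this only decreases the right-hand side, one gets (with $s_0:=0$)
\begin{equation*}
 Z^{Y}_{\beta,T,I}\ \ge\ \int\limits_{\substack{(r_j,s_j)_{j=1}^{\ell}\\ (i_j-1+\eta)T<r_j<s_j<i_jT}} \prod_{j=1}^{\ell} K_w(s_{j-1},r_j,Y)\, Z^{Y}_{\beta,[r_j,s_j]}\,\dd r_j\,\dd s_j \,.
\end{equation*}
As the integrand factorizes over $j$ and the integration region is exactly the one in \eqref{def:barZ} (so that $\int_{(i-1+\eta)T<r<s<iT}Z^{Y}_{\beta,[r,s]}\dd r\dd s=\hat Z_i(Y)/K(T)$), the lemma will follow once I prove the pointwise estimate
\begin{equation}\label{pl:main}
 \prod_{j=1}^{\ell} K_w(s_{j-1},r_j,Y)\ \ge\ c^{n}\,K(T)^{\ell} \,,
\end{equation}
valid for every admissible family $(r_j,s_j)_{j=1}^\ell$, $\bbP$-almost surely for all $n\ge n_0(Y,T)$; the factors $K(T)^{\ell}$ then cancel against the $\hat Z_i(Y)/K(T)$.

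To attack \eqref{pl:main}, write $t_j:=r_j-s_{j-1}$, $\Delta_j:=Y_{r_j}-Y_{s_{j-1}}$ and $k_j:=i_j-i_{j-1}$ (with $i_0:=0$), so that $\eta T\le t_j\le (k_j+1)T$ and $\sum_j(k_j+1)=i_\ell+\ell\le 2n$; since $\ell\ge n/2$, the AM--GM inequality gives $\prod_{j}(k_j+1)\le 4^{\ell}\le 4^{n}$. By monotonicity of $K$ (Lemma \ref{lem:unimod}) and Potter's bound \eqref{potter}, $K(t_j)\ge c_\delta\,K(T)(k_j+1)^{-(1+\alpha)-\delta}$ for every $j$, so it is enough to bound each $K_w(s_{j-1},r_j,Y)=\beta_0\bP(X_{t_j}=\Delta_j)$ from below in terms of $K(t_j)$. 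I would split the indices according to whether $|\Delta_j|\le \Lambda/K(t_j)$ (``good'' block) or not (``bad'' block), $\Lambda$ being a large constant depending on $\eta$. On a good block, $|\Delta_j|K((1-\rho)t_j)\le \Lambda\,K((1-\rho)t_j)/K(t_j)\le \Lambda(1+C\rho)$ by \eqref{CCCC}, so for $T$ large (recall $T=A\,\tf(\beta)^{-1}$ with $A$ large, hence $(1-\rho)t_j\ge\tfrac12\eta T$ is large) the local limit theorem (Proposition \ref{prop:LLT}) together with the positivity and unimodality of $g$ yields $K_w(s_{j-1},r_j,Y)\ge c_\Lambda\,K((1-\rho)t_j)\ge c_\Lambda K(t_j)$. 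On a bad block I would invoke a local large deviation lower bound: in the $\gamma$-stable case Proposition \ref{prop:localLD} gives $K_w(s_{j-1},r_j,Y)\ge c_1(1-\rho)t_j J(\Delta_j)\ge c_1'\,K(t_j)\big(|\Delta_j|K(t_j)\big)^{-1-3\gamma/2}$ (using the comparison between $tJ(y)$ and $K(t)$ for $|y|>cK(t)^{-1}$ behind \eqref{framingJ}), while for the simple random walk the Gaussian estimate \eqref{LLT2} gives $K_w(s_{j-1},r_j,Y)\ge K(t_j)\,e^{-2c'|\Delta_j|^2/t_j}$.

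Collecting the good-block factors $c_\Lambda K(t_j)$, the Potter bound for $K(t_j)$, and $\prod_j(k_j+1)^{-(1+\alpha)-\delta}\ge 4^{-((1+\alpha)+\delta)n}$, the estimate \eqref{pl:main} reduces to controlling the product of the bad-block correction factors, i.e.\ to showing that, $\bbP$-almost surely, for $n\ge n_0(Y,T)$ and uniformly over admissible $(r_j,s_j)$,
\begin{equation*}
 \sum_{j\text{ bad}}\log^+\!\big(|\Delta_j|K(t_j)\big)\le C\,n \quad(\gamma\text{-stable}),\qquad\text{resp.}\qquad \sum_{j\text{ bad}}\frac{|\Delta_j|^2}{t_j}\le C\,n\quad(\mathrm{SRW}).
\end{equation*}
This is the crux and the main obstacle. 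In the $\gamma$-stable case, since $|\Delta_j|$ is at most the oscillation of $Y$ over the blocks $i_{j-1},\dots,i_j$ and $K(t_j)\le K(\eta T)\le 1$ for $T$ large, one gets $\log^+(|\Delta_j|K(t_j))\le \log(k_j+1)+\sum_{c=i_{j-1}}^{i_j}\log(D_c\vee 1)$, where $D_c$ is the oscillation of $Y$ over the $c$-th block; as each block index is used at most twice and $\sum_j\log(k_j+1)\le n\log4$, the total is $\le n\log4+2\sum_{c\le n}\log(D_c\vee 1)$, and since the $(D_c)_{c\ge1}$ are i.i.d.\ with $\bbE[\log^+D_1]<\infty$ (the oscillation of $Y$ on $[0,T]$ has a tail of index $\gamma$), the strong law of large numbers — which is precisely what forces $n_0$ to depend on $Y$ — closes the estimate. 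In the SRW case the large-deviation cost is quadratic, so I expect this to be the genuinely delicate point: using sub-Gaussian tails for the oscillation of the walk on a block together with a union bound over the polynomially-many possible gap-intervals, one shows that a bad gap-interval of length $(k+1)T$ must have oscillation at least of order $\Lambda(kT)^{3/2}$, hence be atypical by a factor growing with $T$; summing the contributions of these rare and individually not-too-large intervals then gives $\sum_{j\text{ bad}}|\Delta_j|^2/t_j\le C^{(T)}n$ with $C^{(T)}\to0$ as $T\to\infty$, so that choosing $\Lambda$ and $T$ large makes the loss harmless. Combining everything yields \eqref{pl:main}, and hence the lemma.
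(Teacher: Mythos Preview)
Your reduction to a pointwise lower bound $\prod_j K_w(s_{j-1},r_j,Y)\ge c^n K(T)^{\ell}$ and the subsequent factorization into $\prod_{i\in I}\hat Z_i(Y)$ is exactly the paper's route, and the bookkeeping (AM--GM on $\prod_j(k_j+1)$, Potter's bound for $K(t_j)/K(T)$, each block index used at most twice) is correct.

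The gap is that your argument produces a constant depending on $T$, whereas the lemma requires $c=c(\eta)$ uniform in $T$: in the proof of Proposition~\ref{prop:multiprop} one fixes $\eta$ first and only then sends $T=A/\tf(\beta)\to\infty$ as $\beta\downarrow\beta_0$, so a $T$-dependent loss cannot be absorbed. In the $\gamma$-stable case the slip is the bound $K(t_j)\le 1$: you then sum $\log^+ D_c$, but $\bbE[\log^+ D_1]\asymp\log T$ since the typical block oscillation is of order $K(T)^{-1}$. The remedy is to keep the normalization (bound $K(t_j)\le C_\eta K(T)$ and work with $K(T)D_c$, whose $\log^+$-moment is uniformly bounded in $T$). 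The paper does this via the uniform lower bound $\bP(X_t=y)\ge c\,K(t)(1+|y|K(t))^{-2}$ of \eqref{doublebound} and the telescoping inequality $1+\sum a_j\le\prod(1+a_j)$ applied to the normalized increments $K(T)(\hat Y_{jT}-\hat Y_{(j-1)T})$ of the monotone walk $\hat Y$, finishing by LLN on $\log(1+K(T)\hat Y_T)$. In the SRW case your good/bad split is the wrong tool and the sketch does not close: on a bad gap $|\Delta_j|^2/t_j\ge c\Lambda^2(\eta T)^2$, so these terms are not ``individually not-too-large'', and since block oscillations are i.i.d.\ with a fixed positive probability of exceeding any threshold there are a.s.\ infinitely many bad gaps---no rarity argument controls $\sum_{\mathrm{bad}}|\Delta_j|^2/t_j$. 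The paper drops the split: it uses the Gaussian bound $\bP(X_t=y)\ge c\,t^{-1/2}e^{-c'|y|^2/t}$ for \emph{every} gap, then Cauchy--Schwarz $(Y_{r_j}-Y_{s_{j-1}})^2\le(k_j+1)\sum_c(\text{per-block increments})^2$, so that after dividing by $t_j\asymp k_j T$ and summing one lands on $\tfrac{C}{T}\sum_{c\le n}\sup_{r,s\in[(c-1)T,cT]}(Y_r-Y_s)^2$, and concludes by LLN since $\bbE\big[\tfrac{1}{T}\sup_{[0,T]}(Y_r-Y_s)^2\big]$ is bounded uniformly in $T$.
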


\begin{proof}[Proof of Proposition \ref{prop:multiprop}]
Recalling Proposition \ref{bloxyblox} a block is said to be \textit{good} if $\hat Z_i(Y) \geq A^{-4} e^{(1-\eta) A}$.
The law of large number implies that \(\bbP\)-a.s., for $n$ sufficiently large, the number of good blocks in $\lint 1, n\rint$ is larger than $(1-2\eta)n$. 
Therefore, taking $I$ to be the set of such good blocks, we have thanks to \Cref{intend}
\begin{equation*}
  Z^{Y}_{\beta,nT}\ge c^n \left(A^{-4} e^{(1-\eta)A}\right)^{(1-2\eta)n}.
\end{equation*}
We conclude that \eqref{strox} holds by taking first $\eta$ sufficiently small and then $A$ sufficiently large.
\end{proof}

\subsection{Proof of \texorpdfstring{\Cref{intend}}{Lemma}}
\label{technis}

Recall the definition~\eqref{blockdecompo} of  \(Z^{Y}_{\beta,T,I}\).
The terms $K_w(s_{j-1},r_{j},Y)$ appearing in the product within the integrand are problematic because they prevent the factorization of the integral into a product.
To solve this, we introduce a minimal distance between $s_{j-1}$ and $r_j$.
We obtain a simple lower bound for $Z^{Y}_{\beta,T,I}$ by forbidding the visit of a proportion $\eta$ of each block where~$\eta$ is a fixed small parameter:
\begin{equation}\label{blockdecompobis}
Z^{Y}_{\beta,T,I}\ge
  \!\!\! \int\limits_{ \substack{ (r_{j},s_{j})_{j=1}^{\ell} \\ (i_j-1+\eta)T < r_j\le s_j\le i_{j}T } }  \!\!\! \prod_{j=1}^{\ell} K_w(s_{{j-1}},r_{j},Y) Z^{Y}_{\beta,[r_{j},s_{j}]}\dd r_j \dd s_j =: \hat Z^{Y}_{\beta,T,I}.
\end{equation}
Notice that we suppressed the term $(\beta/\beta_0)^{\ell}\ge 1$ and the $\delta_{r_j}(\dd s_j)$ to make the expression more compact (their contribution is negligible).
Then, recalling the definition~\eqref{def:barZ} of \(\hat Z_i(Y)\), \eqref{kipoint} is obtained by using the following inequality in \eqref{blockdecompobis}: there exists a constant $c$ (which depends on $\eta$) and $n_0(Y,T)$ such that for any $n\ge n_0$, 
\begin{equation}
  \label{dodge2}
  \prod_{j=1}^{\ell} K_w(s_{{j-1}},r_{j},Y) = \prod_{i\in I} K_w(s_{i_{-}},r_i,Y)  \ge c^n K(T)^{|I|} \,,
\end{equation}
where $i_{-}$ denotes the index preceding $i$ in $I$, that is $i_{-}=\max\{j\in I, j<i\}$.
The inequality holds provided \(r_i-s_{i_-}\geq \eta T\) for all \(i\); it is very rough, so we use \textit{ad-hoc} (brute force) method.

\smallskip\noindent
\textit{Under Assumption~\eqref{JPP}. }
Using the monotonicity of $\bP(X_{s}=x)$ in $|x|$ (see \Cref{lem:unimod}), we obtain
\begin{equation}\label{jkljkl}
\gb_0^{-1} K_w(s_{i_{-}},r_i,Y)\ge \bP\big( X_{r_i-s_{i-}}= \hat Y_{iT}- \hat Y_{(i_{-}-1)T} \big) \,,
\end{equation}
where $(\hat Y_{t})$ is the walk obtained from $(Y_t)$ by  reversing the values of negative jumps, making all jumps positive.
Now, there exists $c>0$ such that for $t$ sufficiently large
\begin{equation}\label{doublebound}
 \bP(X_t=y)\ge c \min(K(t), tJ(|y|) )\ge c' K(t)(1+ |y| K(t))^{-2}.
\end{equation}
The first inequality follows from \Cref{prop:LLT}-\eqref{LLT} (for $|y|\le 1/K(t)$) and \Cref{prop:localLD} (for $|y|\ge 1/K(t)$). 
The second inequality follows from Potter's bound~\eqref{potter} applied to $J(y)$ with $\delta=1-\gamma$ when $|y|\ge 1/K(t)$. More precisely
we have
\[
tJ(|y|) \ge c t J(1/K(t))  (|y|K(t))^{-2} \ge c' K(t) (|y|K(t))^{-2} \,,
\]
where the last inequality comes from \eqref{implicit}. 
We deduce from~\eqref{doublebound} that 
\[
 \prod_{i\in I} K_w(s_{i_{-}},r_i,Y) \geq (c')^n  \prod_{i\in I} K(r_i-s_{i-}) \Big( 1+ K(r_i-s_{i-}) (\hat Y_{iT}- \hat Y_{(i_{-}-1)T} ) \Big)^{-2}\,.
\]
Since by construction $ r_i-s_{i-}\in [\eta T, (i- i_- + 1)T]$, using again Potter's bound~\eqref{potter}, there exist a constant $C=C(\eta)>0$  such that
\begin{equation*}
\frac{1}{C} K(T)\; (i-i_-)^{-2}\leq K(r_i-s_{i-}) \leq C K(T) \,.
\end{equation*}
Hence we obtain (for a different $c>0$) that
\[  \prod_{i\in I} K_w(s_{i_{-}},r_i,Y)\ge 
c^{n} K(T)^{|I|} \prod_{i\in I} (i-i_-)^{-2}\left(1+ C K(T)(\hat Y_{iT}- \hat Y_{(i_{-}-1)T} ) \right)^{-2}\,.
\]    
Using the assumption $|I|\ge n/2$, Jensen's inequality implies that $\prod_{i\in I} (i-i_-)\le (n/|I|)^{|I|} \leq 2^{|I|}$. 
Hence, to conclude, we only need to show that
\begin{equation*}
 \prod_{i\in I} \big( 1+ K(T)(\hat Y_{iT}- \hat Y_{(i_{-}-1)T} )\big)\le C^n
\end{equation*}
with a constant $C$ independent of $T$.
By applying the simple inequality $(1+\sum a_j)\le \prod(1+a_j)$ (valid for any collection of positive numbers) to $a_j = \hat  Y_{jT}-\hat Y_{(j-1)T}$ for $j \in \{i_-, \ldots, i\}$, $i\in I$, we obtain
\begin{equation*}
  % \label{laborrn}
 \prod_{i\in I}   \big( 1+ K(T)(\hat Y_{iT}- \hat Y_{(i_{-}-1)T})\big)
 \le \prod_{j=1}^{T} \big( 1+ K(T)(\hat Y_{jT}- \hat Y_{(j-1)T})\big)^{2} \,,
\end{equation*}
where the power $2$ accounts for the possibility that each $ \hat  Y_{j}-\hat Y_{j-1}$  might appear twice in the product.
Then, using the law of large numbers we have for all $n$ sufficiently large
\begin{equation}
  \log \prod_{i\in I}   \big( 1+ K(T)(\hat Y_{iT}- \hat Y_{(i_{-}-1)T})\big)
  \le 3 n\log \bbE\big[ \log \big( 1+ K(T)\hat Y_{T}\big) \big] \,.
\end{equation}
To conclude, we observe that $\bbE[ \log ( 1+ K(T)\hat Y_{T}) ] $ is uniformly bounded for $T\ge 1$. Indeed, $K(T)\hat Y_T$ converges in law, and it is a standard exercise to check that its tail distribution can be uniformly bounded by a negative power (see e.g.\ \cite[Thm.~1.1]{Nag79} or \cite[Thm.~2.1]{Ber19a} for a simpler statement, both in the discrete setting).

\smallskip\noindent
\textit{Under Assumption~\eqref{SRW}. }
Since the coordinates are independent, we may focus on proving \eqref{dodge2} for $d=1$, where \(K(T)\sim c /\sqrt{T}\).
% (again, we use \textit{ad-hoc} rough methods):
% \begin{equation}
%   \label{dodge2SRW}
%   \prod_{i\in I} \bP( X_{r_i- s_{i_-}} = Y_{r_i}-Y_{s_{i_-}})\ge c^n (1/\sqrt{T})^{|I|} \,.
% \end{equation}
By combining the local limit theorem and easy tail estimates, we have
$\bP(X_t =y) \geq \frac{c}{\sqrt{t}}\, e^{- c' y^2/t}$ for any $t\geq 1$ and $y\in \bbZ$.
Hence, we obtain 
\[
  \prod_{i\in I} \bP( X_{r_i- s_{i_-}} = Y_{r_i}-Y_{s_{i_-}}) \geq  c^n \prod_{i\in I} \frac{1}{ \sqrt{r_i-s_{i_{-}}}} e^{- c'(Y_{r_i}-Y_{s_{i_{-}}})^2/(r_i-s_{i_{-}})} \,.
\]
Since by construction we have $r_i-s_{i-}  \in [\eta T, (i-i_{-}+1)T]$, we get (for different constants $c,c'$)
\begin{equation}
  \label{interm-dodge}
   \prod_{i\in I} \bP( X_{r_i- s_{i_-}} = Y_{r_i}-Y_{s_{i_-}})  \ge c^n  K(T)^{|I|} \exp\bigg(  - c' \sum_{i\in I} \frac{(Y_{r_i}-Y_{s_{i_{-}}})^2}{(i-i_{-}) T}\bigg) \,.
\end{equation}
% and it remains to show that the sum in the exponential is bounded from above by $Cn$, independently of $T,I$ (and $r_i,s_{i_-}$).
By Cauchy-Schwarz inequality, we have
\[
  (Y_{r_i}-Y_{s_{i_{-}}})^2 \leq (i - i_{-}+1)\Big((Y_{r_i}-Y_{(i-1) T})^2 +  \sum_{j=i_-+2}^{i-1}  (Y_{jT} -Y_{(j-1)T})^2+ (Y_{(i_-+1)T}-Y_{s_{i_-}})^2  \Big) \,.
\]
Bounding $(i - i_{-}+1)/(i-i_{-})$ by $2$ and adding another factor $2$ to account for the fact that some terms may appear twice, we get that for $n$ sufficiently large we have
\[
  \sum_{i\in I} \frac{(Y_{r_i}-Y_{s_{i_{-}}})^2}{(i-i_{-}) T} \leq 4 \sum_{j=1}^n  \sup_{r,s\, \in[(j-1)T,jT]} \frac{1}{T} ( Y_r-Y_s )^2 \le 5n \bbE\Big[ \sup_{r,s\,  \in [0,T]} \frac{1}{T} ( Y_r-Y_s )^2\Big] \,,
\]
the last inequality following from the law of large numbers.
Since the last expectation is bounded by a constant, plugging this in~\eqref{interm-dodge} proves \eqref{dodge2}.
\qed

\section{Bounding the moment ratio using renewal processes}
\label{trixx}

In this section, we give the first step of the proof of \Cref{trickyy}.
It is common to all cases and consists in bounding the moment ratio using the interpretation in terms of weighted renewal processes.
More precisely, we give a general bound on \(\bbE[\hat Z_1(Y)^2]/\bbE[\hat Z_1]^2\) in terms of visits of renewal intervals by another independent renewal.

Given two renewal process $\tau$ and $\tau'$ and $0<a<b$, we define the following set of indices
\begin{equation}
  \label{def:J}
  \begin{split}
    \cJ^{(1)}_{[a,b]} &:= \big\{ i\ge 1  : \  \tau'\cap [\tau_{i},\tau_{i+1}] \ne \emptyset  \text{ and } [\tau_i,\tau_{i+1}]\subset[a,b]  \big\}, \\
  \cJ^{(2)}_{[a,b]} & := \big\{ j\ge 1  : \ \tau \cap [\tau'_{j},\tau'_{j+1}] \ne \emptyset \text{ and } [\tau'_j,\tau'_{j+1}]\subset[a,b]  \big\},
  \end{split}
\end{equation}
and
\begin{equation}
  \label{def:J2}
  \begin{split}
  \mathfrak J_{[a,b]}&:= \Big\{ i\ge 1 : \Big[\tau_i,\frac{\tau_i+\tau_{i+1}}{2}\Big]\cap \tau' \ne \emptyset \text{ and }  [\tau_i,\tau_{i+1}]\subset[a,b]  \Big\} \,, \\
  \mathfrak J'_{[a,b]}&:= \Big\{ i\ge 1 : \Big[\frac{\tau_i+\tau_{i+1}}{2},\tau_{i+1}\Big]\cap \tau' \ne \emptyset \text{ and }  [\tau_i,\tau_{i+1}]\subset[a,b]  \Big\} \,.
\end{split}
\end{equation}
For \(\beta>\beta_0\), we also introduce $\tilde Q = \tilde Q_{\gb}$ the distribution of the stationary renewal with inter-arrival law $K_{\beta}(\cdot)$ on~$\bbR_+$.
Under $\tilde Q_{\beta}$, the increments $(\tau_{i}-\tau_{i-1})_{i\ge 2}$ are i.i.d.\ and independent of~$\tau_1$, with \(\tau_1\) having density $\tf'(\beta) \bar K_{\beta}(t)$ where  $\bar K_{\beta}(t)=\int^{\infty}_t K_{\beta}(s)\dd s$. Note that \(K_{\beta}(\cdot)\) has a finite mean \(1/\tf'(\beta)\).
We stress that this choice makes the renewal process invariant by translation and time reversal. 

We are going to prove the following proposition.
We will use it afterwards, in \Cref{trixx1,trixx2}, to prove Proposition~\ref{trickyy} in all different cases.

\begin{proposition}
  \label{topitop2}
 There exist positive constant $c_0=c_0(J)$ and $C_A$ such that for all $\beta$ and $\rho$
 \begin{equation}
  \label{topitop}
\frac{\bbE\big[  \hat Z_1(Y)^2\big]}{ \bbE\big[  \hat Z_1(Y)\big]^2} -1 
 \leq C_A  \tilde Q^{\otimes 2} \left[ e^{c_0 \rho \, (|\cJ^{(1)}_{[0,T]}|+|\cJ^{(2)}_{[0,T]}|)}-1 \right]\le   C_A  \tilde Q^{\otimes 2} \left[ e^{4c_0 \rho  |\mathfrak J_{[0,T]}|}-1 \right].
\end{equation}
\end{proposition}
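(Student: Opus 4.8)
Expanding $Z^{Y}_{\beta,[r,s]}$ with~\eqref{goodexpress} and integrating the endpoints $r<s$ out of the definition~\eqref{def:barZ}, one writes
\[
\hat Z_1(Y) = K(T)\sum_{N\ge 1}\Big(\tfrac{\beta}{\beta_0}\Big)^{N}\int_{\eta T< t_0<\dots<t_N<T}\ \prod_{i=1}^{N}K_w(t_{i-1},t_i,Y)\ \prod_{i=0}^{N}\dd t_i .
\]
Since $\bbE_Y[K_w(s,t,Y)]=K(t-s)$, the quantity $\bbE_Y[\hat Z_1(Y)]$ is the same sum with each $K_w(t_{i-1},t_i,Y)$ replaced by $K(t_i-t_{i-1})$; using $(\beta/\beta_0)K(t)=e^{t\tf(\beta)}K_{\beta}(t)$ and dividing, one obtains $\hat Z_1(Y)=\bbE_Y[\hat Z_1(Y)]\cdot\mu\big[\prod_i w(t_{i-1},t_i,Y)\big]$, where $\mu$ is the probability law on finite increasing tuples $t_0<\dots<t_N$ inside $[\eta T,T]$ with weight proportional to $e^{(t_N-t_0)\tf(\beta)}\prod_{i=1}^N K_{\beta}(t_i-t_{i-1})$. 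Consequently
\[
\frac{\bbE[\hat Z_1(Y)^2]}{\bbE[\hat Z_1(Y)]^2}=\mu^{\otimes 2}\Big[\,\bbE_Y\Big[\prod_i w(\tau_{i-1},\tau_i,Y)\prod_j w(\tau'_{j-1},\tau'_j,Y)\Big]\Big],
\]
the two configurations $\tau,\tau'$ being sampled independently. Because $T=A/\tf(\beta)$, the reweighting $e^{(t_N-t_0)\tf(\beta)}$ lies in $[1,e^{A}]$; together with \Cref{lem:Zc} (which controls $u_{\beta}$ on the scale $1/\tf(\beta)$ in terms of $u$ and of the constant $\tf'(\beta)$) this lets me replace $\mu^{\otimes 2}$ by $\tilde Q^{\otimes 2}$ up to a multiplicative constant $C_A$, depending only on $A$ and $\eta$ and not on $\beta$, when evaluating nonnegative functionals of the traces of the two renewals on $[0,T]$. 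Since the integrand above is such a functional and equals $1$ in the absence of overlaps, subtracting $1$ reduces the problem to bounding $\tilde Q^{\otimes 2}\big[\bbE_Y[\prod w\prod w]-1\big]$.

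\textbf{Step 2: the core inequality.} The technical heart is the deterministic estimate
\[
\bbE_Y\Big[\prod_i w(\tau_{i-1},\tau_i,Y)\prod_j w(\tau'_{j-1},\tau'_j,Y)\Big]\ \le\ e^{\,c_0\rho\,(|\cJ^{(1)}_{[0,T]}|+|\cJ^{(2)}_{[0,T]}|)} ,
\]
valid for every pair of finite configurations $\tau,\tau'\subset[0,T]$. The only analytic input is the bound $\sup_{t\ge0}K((1-\rho)t)/K(t)\le 1+C\rho$, that is~\eqref{CCCC}, which by unimodality (\Cref{lem:unimod}) gives $\bbE_Y[w(s,t,Y)^2]\le 1+C\rho$ and, more flexibly, $\bbE_Y[w(s,t,Y)\mid\mathcal G]\le 1+C\rho$ whenever $\mathcal G$ reveals $Y$ on a subset of $[s,t]$ of measure $\ell\le t-s$ — such a conditional expectation being equal to $\P(W_{(t-s)-\rho\ell}=\,\cdot\,)/\P(W_{t-s}=0)$, hence at most $K((1-\rho)(t-s))/K(t-s)$ by monotonicity of $K$. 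I would prove the core inequality by first factorizing the left-hand side over the connected components of the ``overlap graph'' on the $\tau$- and $\tau'$-intervals (two intervals being adjacent when their interiors meet): increments of $Y$ over distinct components are independent and each $w$ has $\bbE_Y$-mean $1$. It then remains to bound the contribution of a single component, carrying say $p$ intervals of $\tau$ and $q$ of $\tau'$, by $e^{c_0\rho\,\mathfrak m}$, where $\mathfrak m$ is the component's contribution to $|\cJ^{(1)}_{[0,T]}|+|\cJ^{(2)}_{[0,T]}|$; this is obtained by revealing $Y$ progressively inside the component and integrating out its intervals one at a time, each step producing a factor $\le 1+C\rho$ when the integrated-out interval straddles the boundary of the already-revealed region — which over the procedure happens at most $\mathfrak m$ times — and a factor $1$ when it does not.

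\textbf{Step 3: from the $\cJ$'s to $\mathfrak J$.} A $\tau'$-point lying inside $[\tau_i,\tau_{i+1}]$ lies in its first or its second half, so $\cJ^{(1)}_{[0,T]}\subseteq\mathfrak J_{[0,T]}\cup\mathfrak J'_{[0,T]}$, and symmetrically $\cJ^{(2)}_{[0,T]}\subseteq\widetilde{\mathfrak J}_{[0,T]}\cup\widetilde{\mathfrak J}'_{[0,T]}$, where the tilde denotes the analogous sets with the roles of $\tau$ and $\tau'$ exchanged; hence $|\cJ^{(1)}_{[0,T]}|+|\cJ^{(2)}_{[0,T]}|\le|\mathfrak J|+|\mathfrak J'|+|\widetilde{\mathfrak J}|+|\widetilde{\mathfrak J}'|$. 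By convexity of $x\mapsto e^{x}$ one has $e^{c_0\rho(a+b+c+d)}-1\le\frac14\big[(e^{4c_0\rho a}-1)+(e^{4c_0\rho b}-1)+(e^{4c_0\rho c}-1)+(e^{4c_0\rho d}-1)\big]$, and under $\tilde Q^{\otimes 2}$ each of $|\mathfrak J'_{[0,T]}|,|\widetilde{\mathfrak J}_{[0,T]}|,|\widetilde{\mathfrak J}'_{[0,T]}|$ has the same law as $|\mathfrak J_{[0,T]}|$, thanks to the invariance of $\tilde Q$ under time reversal (which exchanges $\mathfrak J$ and $\mathfrak J'$) and under exchange of the two independent copies. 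Taking $\tilde Q^{\otimes 2}$-expectations yields the second inequality in~\eqref{topitop}, the first one being Steps~1 and~2 combined.

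\textbf{Main obstacle.} The crux is the per-component estimate of Step~2, and specifically the configurations in which a single interval of one renewal encloses a long stack of intervals of the other: there, the naive bounds (Cauchy--Schwarz, or revealing $Y$ on the enclosing interval) produce a factor that grows with the size of the stack, whereas $\mathfrak m$ remains bounded. Controlling these requires exploiting the symmetry of $W$ — which annihilates the first-order effect of the tilt induced by the inner intervals — together with the local limit theorem (\Cref{prop:LLT}), so that a nested stack interacts with its enclosing interval only through an aggregate quantity of size $O(\rho)$. Making this book-keeping uniform in $\beta$ near $\beta_0$, and compatible with the comparison $\dd\mu^{\otimes2}/\dd\tilde Q^{\otimes2}\le C_A$ of Step~1, is where the real work lies; once these are in place, the rest is formal.
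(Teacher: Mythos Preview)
Your Steps~1 and~3 are fine and close to the paper's (the paper computes the Radon--Nikodym derivative $d\hat Q_T/d\tilde Q_T$ explicitly and bounds it by $\sqrt{C_A}$ using \eqref{infov}; your Step~3 uses convexity where the paper uses Cauchy--Schwarz, but both work).

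The gap is in Step~2. You use only moment and conditional-expectation bounds on $w$, and this is what forces you into the peeling argument and the ``long stack'' difficulty you flag as the main obstacle. But unimodality gives the much stronger \emph{pointwise, deterministic} bound
\[
w(s,t,Y)=\frac{\bP(X_{t-s}=Y_t-Y_s)}{\P(W_{t-s}=0)}\le \frac{\P(W_{(1-\rho)(t-s)}=0)}{\P(W_{t-s}=0)}=\frac{K((1-\rho)(t-s))}{K(t-s)}\le e^{c_0\rho},
\]
valid for every realization of $Y$. With this in hand, the core inequality is a one-line removal: for each $i\in\cJ^{(1)}_{[0,T]}$ and $j\in\cJ^{(2)}_{[0,T]}$ replace the corresponding $w$ factor by $e^{c_0\rho}$; the remaining $\tau$- and $\tau'$-intervals are then pairwise disjoint (if two surviving intervals met, one would contain an endpoint of the other, contradicting $i\notin\cJ^{(1)}$ or $j\notin\cJ^{(2)}$), so the $\bbE_Y$-expectation of the surviving product factorizes to~$1$. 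Your ``long stack'' case becomes trivial: only the enclosing $\tau$-interval and the two boundary $\tau'$-intervals are removed (a bounded contribution to $|\cJ^{(1)}|+|\cJ^{(2)}|$), while the many interior $\tau'$-intervals survive and are independent of everything else after the removal. There is no need for symmetry cancellations, the local limit theorem, or any per-component induction.
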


\begin{proof}
Let us first start with the easier second inequality  in \eqref{topitop}. Using Cauchy-Schwartz inequality twice, and noting that $|\cJ^{(1)}_{[0,T]}|$ and $|\cJ^{(2)}_{[0,T]}|$ have the same distribution, as well as $|\mathfrak J_{[0,T]}|$ and $|\mathfrak J'_{[0,T]}|$ by time reversal, we get
\begin{equation*}
  \tilde Q^{\otimes 2} \Big[ e^{\gl (|\cJ^{(1)}_{[0,T]}|+|\cJ^{(2)}_{[0,T]}|)}\Big]\le   \tilde Q^{\otimes 2} \Big[ e^{2\gl |\cJ^{(1)}_{[0,T]}|}\Big]=  \tilde Q^{\otimes 2} \left[ e^{2\gl (|\mathfrak J_{[0,T]}|+|\mathfrak J'_{[0,T]}| )}\right]\le   \tilde Q^{\otimes 2} \left[ e^{4\gl |\mathfrak J_{[0,T]}|}\right]\,.
\end{equation*}

To prove the first inequality in \eqref{topitop} we define a point process on  $[\eta T, T]$ whose distribution  $\hat Q_T$ is given by (for $k\ge 2$)
\begin{equation}
 \hat Q_T \big( \# \tau =k \ ; (\tau_1,\tau_2,\dots, \tau_k)\in \dd t_1\dots \dd t_k \big)
 =  \frac{\dd t_1}{\hat z_T}\left(\frac{\beta}{\beta_0}\right)^{k-1} \prod_{i=2}^k K(t_i-t_{i-1})\dd t_i \,.
\end{equation}
where $\tau \subset [\eta T, T]$ is a finite set which is identified with a finite increasing sequence and $\hat z_T$ is the associated partition function and can be written as \(\hat z_T=\int_{\eta T< s< t< T} z^{\cons}_{\beta,t-s} \dd s \dd t \).
We first prove an intermediate estimate by showing that
\begin{equation}\label{lefirsta}
  \frac{\bbE\big[  \hat Z_1(Y)^2\big]}{ \bbE\big[  \hat Z_1(Y)\big]^2}-1  \le  \hat Q^{\otimes 2}_T \bigg[ e^{c \rho \, (| \cJ^{(1)}_{[\eta T,T]}|+| \cJ^{(2)}_{[\eta T,T]}|)}-1 \bigg].
\end{equation}
Recalling the definition~\eqref{def:w} of \(w(s,t,Y)\), note that we have
\begin{equation*}
 \frac{\hat Z_1(Y)}{\bbE\big[ \hat Z_1(Y) \big]}=  \hat Q_T\bigg[ \prod_{i=1}^{\#\tau}   w(\tau_{i-1},\tau_i,Y)   \bigg] \,.
\end{equation*}
Thus the second moment ratio \(\bbE[\hat Z_1(Y)^2]/\bbE[\hat Z_1]^2\) is equal to
\begin{equation*}
 \bbE\bigg[\hat Q^{\otimes 2}_T \bigg( \prod_{i=2}^{\#\tau}   w(\tau_{i-1},\tau_i,Y) \prod_{j=2}^{\#\tau'}   w(\tau'_{j-1},\tau'_j,Y) \bigg)  \bigg]
 = \hat Q^{\otimes 2}_T \bbE\bigg[\prod_{i=2}^{\#\tau}   w(\tau_{i-1},\tau_i,Y) \prod_{j=2}^{\#\tau'}   w(\tau'_{j-1},\tau'_j,Y) \bigg].
\end{equation*}
Note that $w(a,b,Y)$ is independent of $w(c,d,Y)$ if and only if the intervals $(a,b)$ and $(c,d)$ are disjoint.
To bound the expectation above for a fixed realization of $\tau$ and $\tau'$, we simply ``remove''  from the product 
the indices $i\in \cJ^{(1)}_{[\eta T,T]}$ and $j\in \cJ^{(2)}_{[\eta T,T]}$ so that the remaining intervals 
$(\tau_{i-1},\tau_i)$ and $(\tau'_{j-1},\tau'_j)$ are disjoint.
By ``removing'', we simply mean that we bound these terms by a constant. 
More precisely, we have $\bP(X_{b-a} = Y_b-Y_a) \leq \bP(X_{b-a} =0)$ thanks to \Cref{lem:unimod}, so that recalling the definition~\eqref{def:w} of \(w(a,b,Y)\) we obtain that
\begin{equation*}
  % \label{eq:constantbound} 
 w(a,b,Y)\leq  \sup_{t\ge 0}\frac{ K((1-\rho)t)}{K(t)} \le e^{c_0 \rho} \, \,,
\end{equation*}
using also \eqref{CCCC} for the last inequality.
Using this bound for all $i\in \cJ^{(1)}_{[\eta T,T]}$ and $j\in \cJ^{(2)}_{[\eta T,T]}$ we get
\begin{multline*}
  \prod_{i=2}^{\#\tau}   w(\tau_{i-1},\tau_i,Y) \prod_{j=2}^{\#\tau'}   w(\tau'_{j-1},\tau'_j,Y) \\ \le
  e^{c_0 \rho \, (|\mathcal  \cJ^{(1)}_{[\eta T,T]}|+|  \cJ^{(2)}_{[\eta T,T]}|)} \prod_{i =2 ,\,  i\notin  \cJ^{(1)}_{[\eta T,T]}}^{\# \tau}   w(\tau_{i-1},\tau_i,Y) \prod_{j =2 ,\, j\notin  \cJ^{(2)}_{[\eta T,T]}}^{\# \tau'}  w(\tau'_{j-1},\tau'_j,Y) \,.
\end{multline*}
Since having $i\notin \mathcal  \cJ^{(1)}_{[\eta T,T]}$ and $j\notin  \cJ^{(2)}_{[\eta T,T]}$ implies that $[\tau_{i-1},\tau_i] \cap [\tau'_{j-1},\tau'_j]=\emptyset$, we can use the independence of the weights $w(a,b,Y)$ on disjoint intervals and the fact that $\bbE[ w(a,b,Y) ]=1$ to obtain
\begin{equation*}
  \bbE\bigg[\prod_{i=2}^{\#\tau}   w(\tau_{i-1},\tau_i,Y) \prod_{j=2}^{\#\tau'}   w(\tau'_{j-1},\tau'_j,Y)\bigg] \le
  e^{c_0 \rho \, (| \cJ^{(1)}_{[\eta T,T]}|+| \cJ^{(2)}_{[\eta T,T]}|) } \,.
\end{equation*}
This concludes the proof of~\eqref{lefirsta} and the last step is then to replace $\hat Q_T$ by the stationary distribution~$\tilde Q$.

We let $\tilde Q_T$ denote the distribution of $\tau\cap[\eta T,T]$ under $\tilde Q$. 
We have, for any $k\ge 1$,
\begin{equation*}
 \tilde Q_T\big( \# \tau=k \ ; (\tau_1,\tau_2,\dots, \tau_k \big)\in \dd t_1\dots \dd t_k )
 = \tf'(\beta) \bar K_{\beta}(t_1-\eta T)\dd t_1\prod_{i=2}^k K_{\beta}(t_i-t_{i-1}) \dd t_i \bar K_{\beta}(T-t_k) \,.
\end{equation*}
The distribution $\hat Q_T$ is thus absolutely continuous with respect to $\tilde Q_T$. We are going to prove that there exists $C_A>0$ such that for every $k\ge 2$ and $t_1,\dots, t_k$
\begin{equation}\label{byca}
 \frac{\dd \hat Q_T}{\dd \tilde Q_T}(t_1,\dots,t_k) := \frac{e^{\tf(\beta)(t_k-t_1)}}{\hat z_T \tf'(\beta) \bar K_{\beta}(t_1-\eta T)  \bar K_{\beta}(T-t_k)} \leq \sqrt{C_A}.
\end{equation}
Combining \eqref{lefirsta} and \eqref{byca}, we therefore obtain that 
\begin{equation*}
\frac{\bbE\big[  \hat Z_1(Y)^2\big]}{ \bbE\big[  \hat Z_1(Y)\big]^2} -1 
 \le C_A  \tilde Q_T^{\otimes 2} \left[ e^{c \rho \, (| \cJ^{(1)}_{[\eta T,T]}|+| \cJ^{(2)}_{[\eta T,T]}|)}-1 \right]  \le C_A  \tilde Q^{\otimes 2} \left[ e^{c \rho \, (| \cJ^{(1)}_{[0,T]}|+| \cJ^{(2)}_{[0,T]}|)}-1 \right]\,,
\end{equation*}
which concludes the proof.
Let us now prove \eqref{byca}.
By~\eqref{eq:borneinfEZbar} and \Cref{lem:Zc}, we get that
\[
\hat z_T = K(T)^{-1} \bbE[\hat Z_1(Y)] \geq c_A \frac{\tf'(\gb)}{\tf(\gb)^2} e^{\tf(\beta) \, (1-\eta)T}
\]
and it thus sufficient to prove that for some \(c_A'>0\)
\begin{equation}
  \label{infov} 
 \forall v \in [0,T]\,, \quad  \frac{\tf'(\gb)}{\tf(\gb)} e^{v\tf(\gb)} \bar K_{\gb}(v)\ge c_A'\,.
 \end{equation}
Using this inequality for $v= t_1-\eta T$ and $v=T-t_k$, we obtain then \eqref{byca} for $C_A= (c_A^2 c_A')^{-2}$.
To prove~\eqref{infov}, recalling that \(T=A\tf(\gb)^{-1}\) and \(\beta\geq \beta_0\), let us observe that
\[
  e^{v \tf(\gb)}\bar K_{\gb} (v) = \frac{\gb}{\gb_0} \int_0^{+\infty} e^{-s \tf(\gb)} K(v+s) \dd s \geq e^{-A} \int_0^{T} K(2T) \dd s \geq c e^{-A} T K(2T) \,,
\] 
using also the monotonicity of $K(\cdot)$ (see \Cref{lem:unimod}).
Now, by Potter's bound, we have \(T K(2T) \ge c''_A \tf(\gb)^{-1} K(\tf(\gb)^{-1})\).
We then simply observe that $\tf(\beta)^{-2}\tf'(\beta)K(1/\tf(\beta))$ is of order~$1$, as can be deduced from the fact that \(\tf'(\gb)\) is comparable to \(u(\tf(\gb)^{-1})\) (see \Cref{lem:Zc}) together with~\eqref{eq:Doney}.
This concludes the proof of~\eqref{infov} and thus of \Cref{topitop2}.
\end{proof}

\section{Proving \texorpdfstring{\Cref{trickyy}}{the proposition} when disorder is relevant: \texorpdfstring{\(\gamma\in (\frac12,\frac23]\) or \(d=3\)}{}}
\label{trixx1}

We now expose how \Cref{trickyy} can be obtained from \Cref{topitop2}.
We distinguish between the non-marginal case $\gamma\in \left(\frac{1}{2},\frac{2}{3}\right)$, \textit{i.e.}\ item~\ref{multi-iv}, and the marginal case $\gamma=\frac23$ (or $d=3$ for the SRW), \textit{i.e.}\ items~\ref{multi-ii}-\ref{multi-iii}.

\subsection{The non-marginal case \texorpdfstring{$\gamma\in \left(\frac{1}{2},\frac{2}{3}\right)$}{}: main statement}

Recall the definition~\eqref{def:J2} of $\mathfrak J_{[a,b]}$ and let us define, for $H>1$ and $[a,b] \subset [0,T]$, the quantities
\begin{equation}
  \label{def:NH}
N_H([a,b]):=\#\left\{ i\in \mathfrak{J}_{[a,b]} :  \tau_{i+1}-\tau_{i} \in (H,2H] \right\}
\end{equation}
and $N_1([a,b]):=\#\left\{ i\in \mathfrak{J}_{[a,b]}:  \tau_{i+1}-\tau_{i} \in (0,2] \right\}$.
We also set $N_H(t):=N_H([0,t])$.
From H\"older's multi-index inequality, we have for any $r>1$ 
\begin{equation}
  \label{multiHolder}
  \tilde Q^{\otimes 2} \left[ e^{ u |\mathfrak J_{[0,T]}| } \right]\le \prod_{j=1}^{\infty}  \tilde Q^{\otimes 2} \left[
  e^{ u\, \frac{r ^j}{r-1} \, N_{2^j}(T)}\right]^{\frac{r-1}{r^j}}.
\end{equation}
Note that $N_{2^j}(T)=0$ if $2^j>T$ so that the above product is in fact a finite one.
We need to show the following result.
\begin{lemma}
  \label{squiz}
  Assume that $\gamma\in (\frac12,\frac23)$ and let $\alpha =\frac{1-\gamma}{\gamma} \in (\frac12,1)$.
There are constant $c,c'$ such that whenever $H\le T$ and $v \le c (H/T)^{2\alpha-1}$, we have
\begin{equation*}
  \tilde Q^{\otimes 2} \left[
  e^{ v  N_H(T) }\right]\le 1 + c' v\, (T/H)^{2\alpha-1}.
\end{equation*}
\end{lemma}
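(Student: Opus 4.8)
The plan is to control the factorial moments of $N_H(T)$ and then sum the series $e^{vN_H(T)}=\sum_{k\ge0}(e^v-1)^k\binom{N_H(T)}{k}$. I would first dispose of the range $H\ge\kappa^{-1}T$, for a large constant $\kappa=\kappa(\gamma,A)$ to be fixed, using the deterministic bound $N_H(T)\le T/H$ (the counted intervals are pairwise disjoint, of length $>H$, and lie in $[0,T]$): on this range $(T/H)^{2\alpha-1}$ is bounded and the conclusion is immediate. For $H\le\kappa^{-1}T$ it then suffices to find $C=C(\gamma,A)$ with $\tilde Q^{\otimes2}\big[\binom{N_H(T)}{k}\big]\le\big(C(T/H)^{2\alpha-1}\big)^{k}$ for all $k\ge1$, since then the series is geometric as soon as $(e^v-1)C(T/H)^{2\alpha-1}\le\tfrac12$ — in particular for $v\le c(H/T)^{2\alpha-1}$ with $c$ small — and sums to at most $1+c'v(T/H)^{2\alpha-1}$.

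For the factorial moment I would expand $\binom{N_H(T)}{k}$ over the positions of the $k$ ordered ``good'' renewal intervals $[a_1,b_1],\dots,[a_k,b_k]$ of $\tau$ — pairwise disjoint, contained in $[0,T]$, each with $b_j-a_j\in(H,2H]$ and with left half $I_j:=[a_j,\tfrac{a_j+b_j}{2}]$ met by $\tau'$. Since under $\tilde Q_\beta$ the renewal points have constant intensity $\tf'(\beta)$ and the process restarts as a $\bQ_\beta$ renewal after each point, the $\tau$-intensity of such a configuration is $\tf'(\beta)\prod_{j=1}^{k}K_\beta(b_j-a_j)\prod_{j=1}^{k-1}U_\beta$, where $U_\beta$ is the renewal measure of $\bQ_\beta$ (with an atom at $0$ accounting for adjacent intervals); this is multiplied by the $\tau'$-probability of meeting all the $I_j$'s. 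A strong-Markov peeling for $\tau'$ bounds that probability by $\tilde Q(\tau'\cap I_1\ne\emptyset)\prod_{j=2}^{k}h_j$, where $h_j:=\sup_{x\le\sup I_{j-1}}\bQ_\beta^{x}(\tau'\cap I_j\ne\emptyset)$ and $\bQ_\beta^{x}$ is the renewal $\bQ_\beta$ translated to start at $x$.

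The decisive ingredient — and the step I expect to be the main obstacle — is a sharp hitting estimate reflecting the ``one big jump'' clustering of heavy-tailed renewals: for $w\in(\tfrac12H,H]$ and $x$ with $\delta:=a-x\ge\tfrac12H$,
\[
\bQ_\beta^{x}\big(\tau'\cap[a,a+w]\ne\emptyset\big)\le C\,\delta^{\alpha-1}w^{1-\alpha},\qquad
\tilde Q\big(\tau'\cap[a,a+w]\ne\emptyset\big)\le C\,\tf'(\beta)\,w^{1-\alpha}
\]
(the second for the stationary renewal); the crude bound $\bQ_\beta^{x}(\tau'\cap[a,a+w]\ne\emptyset)\le U_\beta((a-x,a-x+w])$ loses a factor $\asymp H^{\alpha}$ and is useless here. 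I would derive these from the decomposition on the last point of $\tau'$ before $a$,
\[
\bQ_\beta^{x}\big(\tau'\cap[a,a+w]\ne\emptyset\big)=\int_{[0,\delta]}\Big(\int_{\delta-r}^{\delta-r+w}K_\beta(s)\,\dd s\Big)\,U_\beta(\dd r),
\]
splitting at $r=\delta/2$ and invoking Potter's bound~\eqref{potter} for $K_\beta$ together with $U_\beta([0,t])\asymp t^{\alpha}$ for $1\le t\le\tf(\beta)^{-1}$ and $u_\beta\lesssim1$ (consequences of~\eqref{eq:Doney} and \Cref{lem:Zc}), and using that $\int_0^\infty\big(\int_{v}^{v+w}K_\beta\big)\,\dd v=\int_0^\infty K_\beta(s)\,(s\wedge w)\,\dd s\asymp w^{1-\alpha}$ for $w\ge1$.

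Finally I would integrate by peeling off the last good interval. With $a_1,\dots,b_{k-1}$ fixed, set $g=a_k-b_{k-1}$, so that $\delta_{k-1}:=a_k-\sup I_{k-1}=g+\tfrac12(b_{k-1}-a_{k-1})\in(g+\tfrac12H,g+H]$ and hence $\delta_{k-1}^{\alpha-1}\lesssim(g+H)^{\alpha-1}$; the hitting estimate then bounds the integral over $(a_k,b_k)$ by $C\,H^{1-\alpha}\big(\int_{(H,2H]}K_\beta(\ell)\,\dd\ell\big)\int_0^{T}(g+H)^{\alpha-1}\,U_\beta(\dd g)$, uniformly in the remaining coordinates. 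By Potter's bound the first factor is $\asymp H^{-\alpha}$; in the second, $g\le H$ contributes $\asymp H^{\alpha-1}U_\beta([0,H])\asymp H^{2\alpha-1}$ and $g\ge H$ contributes $\asymp_A T^{2\alpha-1}$ (a Karamata-type computation via \Cref{lem:Zc} and~\eqref{eq:Doney}; here $\tf'(\beta)\asymp\tf(\beta)^{1-\alpha}$ and $T\asymp\tf(\beta)^{-1}$), so the whole bracket is $\asymp_A H^{1-\alpha}T^{2\alpha-1}$. It is exactly here that $\alpha>\tfrac12$ is used: the relevant integral $\int_H^{T}g^{2\alpha-2}\,\dd g$ is then governed by its upper endpoint and ``forgets'' $H$ (for $\alpha=\tfrac12$ one gets instead a $\log(T/H)$, which is why the marginal case $\gamma=\tfrac23$ requires a separate treatment). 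Consequently the per-interval factor is $\le C(T/H)^{2\alpha-1}$ uniformly, and the same computation with the stationary bound gives $\tilde Q^{\otimes2}[N_H(T)]\le C(T/H)^{2\alpha-1}$; iterating the peeling $k$ times yields $\tilde Q^{\otimes2}\big[\binom{N_H(T)}{k}\big]\le\big(C(T/H)^{2\alpha-1}\big)^{k}$, which completes the argument.
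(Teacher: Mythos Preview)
Your argument is correct, and it takes a genuinely different route from the paper's.

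The paper proves \Cref{squiz} by introducing the sequence $(\cT_k)$ of left endpoints of ``good'' $\tau$-intervals (those with length in $(H,2H]$ whose left half is hit by $\tau'$), showing via a careful conditional computation (\Cref{preskiid}) that the increments $\cT_{k+1}-\cT_k$ are approximately i.i.d.\ up to bounded Radon--Nikodym factors, and then proving a lower bound $\tilde Q^{\otimes2}(\cT_2-\cT_1\ge\gep T)\gtrsim(H/T)^{2\alpha-1}$ (\Cref{tobeshown}). This yields a geometric tail for $N_H(\gep T)$, and Hölder plus translation invariance transfers this back to $N_H(T)$.

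Your approach bypasses the regeneration structure entirely by computing factorial moments directly. The crucial input is the sharp overshoot estimate $\bQ_\beta^x(\tau'\cap[a,a+w]\ne\emptyset)\le C\delta^{\alpha-1}w^{1-\alpha}$ for $\delta=a-x\ge H/2$, $w\asymp H$; as you correctly identify, the naive Markov bound via $U_\beta$ loses a factor $H^\alpha$ and would not close. Your derivation of this estimate (last-exit decomposition, split at $r=\delta/2$, Fubini for the $r>\delta/2$ piece) is standard and works; the scale restriction $\delta\le T$ together with \Cref{lem:Zc} keeps all the $\beta$-dependent quantities comparable to their $\beta=\beta_0$ counterparts. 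The peeling then separates cleanly because $\delta_{j-1}\asymp g_{j-1}+H$ depends on the previous gap but only through quantities already bounded uniformly over $\ell_{j-1}\in(H,2H]$.

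What each approach buys: yours is more elementary and self-contained for this lemma, and makes the role of $\alpha>\tfrac12$ transparent through a single Karamata integral. The paper's approach, however, builds reusable machinery: \Cref{preskiid} is a general approximate-factorisation lemma valid for all $\gamma\in(0,1)$, and it is reused verbatim in the marginal case $\gamma=\tfrac23$ (\Cref{squiz2}, \Cref{tobeshown2}), where the direct factorial-moment approach would need to be redone with the slowly varying correction $\psi_H(T)$ in place of $(T/H)^{2\alpha-1}$.
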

\noindent Let us quickly deduce the desired result  before proving Lemma \ref{squiz}.

\begin{proof}[Proof of \Cref{trickyy}]
We set $u:=4c_0 \rho$. According to \Cref{topitop2}, we have
\begin{equation}
\label{wobz}
     \frac{\bbE\big[  \hat Z_1(Y)^2\big]}{\bbE\big[  \hat Z_1(Y)\big]^2}\le    1+ C_A\left(\tilde Q^{\otimes 2} \left[ e^{ u |\mathfrak J_{[0,T]}| } \right]-1\right).
\end{equation}
Recalling the assumption of \Cref{prop:multiprop}-\ref{multi-iv}, given $\delta>0$, if $C_1$ is sufficiently large we may assume that $u\le \delta T^{1-2\alpha} = \delta T^{(\nu-2)/\nu}$: indeed, by \Cref{homener}, $T$ is of order $(\beta-\beta_0)^{-\nu}$ and by assumption $(\beta-\beta_0)\ge  C_1 \rho^\frac{1}{2-\nu}$.
We then apply~\eqref{multiHolder} and we notice that $u \, \frac{r^{j}}{r-1} \leq \frac{\delta}{r-1} (2^j/T)^{2\alpha-1}$ for every $j$.
We can thus use \Cref{squiz} with $v=\frac{r^{j}u}{r-1}$ for each term, and obtain
\begin{align*}
  \tilde Q^{\otimes 2} \left[ e^{ u |\mathfrak J_{[0,T]}| } \right]
 &  \le \prod_{j=1}^{\infty}   \left( 1 + c' \frac{u\, r ^j}{r-1}  (T/2^j)^{2\alpha-1} \right)^{\frac{r-1}{r^j}}\\
  & \leq  \exp \left( c'' u\, T^{2\alpha-1} \sum_{j\geq 0} 2^{-(2\alpha-1)j} \right)\le e^{C \, \delta}.
\end{align*}
We can then conclude using \eqref{wobz}, provided $\delta$ is sufficiently small.
\end{proof}

\subsection{Proof of Lemma \ref{squiz}}

To simplify notation we assume that $H>1$ (the argument easily adapts to the case $H=1$).
Let us fix $\gep>0$ such that $1/\gep \in \mathbb{N}$.
Let us stress right away that if \(\gep T > 2H\) then necessarily \(N_H(T) \leq 2/\gep\) and we can expand \(e^{v N_H(T)} \leq 1+ c'_{\gep} v\) which concludes the proof (recall \(H\leq T\)).
We therefore assume in the rest of the proof that \(\gep T \geq  2H\).
Note that we have
\[ 
N_H(T) \leq \sum_{i=1}^{1/\gep} N_H\big([(i-1)\gep T, i\gep T ]\big) + \sum_{i=1}^{1/\gep} N_H\big([(i-\tfrac12)\gep T, (i+\tfrac12)\gep T ]\big) \,,
\] 
since any interval \([\tau_i,\tau_{i-1}] \subset [0,T]\) of length inferior to $2H$ must be counted in one of the two sums.
Combining the above decomposition of \(N_H(T)\) with  H\"older's inequality (with $2/\gep$ terms), we obtain that \(\tilde Q^{\otimes 2} [ e^{ v  N_H(T) }]\) is bounded by
\begin{equation*}
   \bigg(\prod_{i=1}^{1/\gep}\tilde Q^{\otimes 2} \left[ e^{ \frac{2v}{\gep}  N_H([(i-1)\gep T, i\gep T ]) }\right]\tilde Q^{\otimes 2} \left[ e^{ \frac{2v}{\gep}  N_H([(i-\frac 1 2 )\gep T, (i+\frac 1 2)\gep T ]) }\right]\bigg)^{\gep/2} =\tilde Q^{\otimes 2} \left[e^{ \frac{2 v}{\gep}  N_H(\gep T) }\right] \,,
\end{equation*}
where we have used translation invariance for the last identity.
To control the r.h.s.\ we are going to prove the following bound. For some universal constant \(C>0\) and any \(k\geq 1\),
\begin{equation}
\label{boundNH}
\tilde Q^{\otimes 2}\left[ N_H(\gep T)\ge k \right] \leq \Big(1-  \frac{\gep}{C} (H/T)^{2\alpha-1} \Big)^{k-1} \leq e^{ - \frac{\gep}{C} (H/T)^{2\alpha-1} (k-1)} \,.
\end{equation}
Using the identity
\begin{equation*}
  % \label{poipoi}
  \tilde Q^{\otimes 2} \left[ e^{ v  N_H(T) } \right] \leq \tilde Q^{\otimes 2} \left[ e^{ \frac{2 v}{\gep}  N_H(\gep T) }\right]
  =  1 + \sum_{k\ge 1} e^{\frac{2v}{\gep}(k-1)}\left(e^{\frac{2v}{\gep}}-1\right)  \tilde Q^{\otimes 2}\left[ N_H(\gep T)\ge k \right] \,,
\end{equation*}
we obtain thanks to~\eqref{boundNH} that
\begin{align*}
  \tilde Q^{\otimes 2} \left[ e^{ v  N_H(T) } \right] -1 
& \leq  \left(e^{\frac{2v}{\gep}}-1\right) \sum_{j\geq 0} e^{ (\frac{2v}{\gep} -  \frac{\gep}{C} (H/T)^{2\alpha-1}) \, j} \,.
\end{align*} 
Choosing \(c:= \gep^2/2C\) and provided that \(v\leq c (H/T)^{2\alpha-1}\), we therefore get that 
\begin{align*}
\tilde Q^{\otimes 2} \left[ e^{ v  N_H(T) } \right] -1
& \leq  C' v \sum_{j\geq 0} e^{  -  \frac{\gep}{2 C} (H/T)^{2\alpha-1} \, j}  = C' v \Big(1-  e^{-\frac{\gep}{2C} (H/T)^{2\alpha-1}} \Big)^{-1} \leq  C'' v (H/T)^{2\alpha-1} \,,
\end{align*}
as desired.
It therefore only remains to prove~\eqref{boundNH}.
For this we introduce $(\cT_k)_{k\ge 0}= (\tau_{i_k})_{k\ge 0}$ the ordered sequence of the elements  of
\[
\left\{ \tau_i : \  i \in\mathfrak J_{[0,\infty)} ,\ \tau_{i}-\tau_{i-1} \in (H,2H] \right\} \,.
\]
In particular, we have 
\begin{equation}\label{lghui}
\tilde Q^{\otimes 2}( N_H(\gep T)\ge k)
 =  \tilde Q^{\otimes 2}\big(\cT_{k} \leq \gep T \big)
\le \bbP\big(\forall i\in \lint 1,k-1 \rint,\ \cT_{i+1}-\cT_i \le \gep T \big)\,.
\end{equation}
Note that the increment $\cT_{k+1}-\cT_{k}$ is \textit{not} independent of $\tau'\cap [0,\cT_k]$ (in fact, we compute its conditional distribution below), so in particular the right-hand side in \eqref{lghui} is \textit{not} equal to $\bbP( \cT_2-\cT_1\le \gep T)^{k-1}$.
We are however going to show that the dependence is sufficiently weak so that an approximate factorization holds.
We then estimate $\bbP( \cT_2-\cT_1\ge  \gep T)$ to conclude the proof.
These two steps are given in \Cref{preskiid} and \Cref{tobeshown} below.
%  and provide and adequate bound on $\bbP( \cT_2-\cT_1\le \gep T)^{k-1}$ to conclude the proof of \Cref{squiz}.

Let us start with the factorization lemma. This part of the reasoning only requires that $\gamma\in (0,1)$, and will also be used in the next subsection to cover the marginal case $\gamma=\frac23$.
We need to introduce some notation (we refer to \Cref{fig:thetas} for an illustration).
Let us define $\theta_k:=\inf \{ \tau'\cap[\tau_{i_k-1},\tau_{i_k}] \}$ the entrance point of \(\tau'\) in the interval. 
It follows from our definitions (recall~\eqref{def:NH} and \eqref{def:J}-\eqref{def:J2}) that the set over which the infimum is taken is nonempty and we have  $\tau_{i_{k-1}} \leq \theta_k \leq \frac{\tau_{i_k-1}+\tau_{i_k}}{2}$.
We also define the $\sigma$-algebra 
\[
\cG_{k}:= \sigma\big( \tau \cap (-\infty, \cT_k],  \tau'\cap(-\infty,\theta_k] \big)\,,
\]
in such a way that $(\cG_k)$ is an adapted filtration for $\cT_k$.

\begin{lemma}
  \label{preskiid} 
Assume that~\eqref{JPP} holds with $\gamma\in (0,1)$.
There exists a constant $C=C_A$ (which does not depend on \(H,T\) with $H\leq T = A\tf(\gb)^{-1}$) such that for any measurable $E\subset \bbR_+$ and any \(\gb\in (\gb_0,2\gb_0)\)  we have
 \begin{equation*}
     \frac{1}{C} \tilde Q^{\otimes 2}\left[ (\cT_{2}-\cT_1)\in E  \right] \le  \tilde Q^{\otimes 2}\left[ (\cT_{k+1}-\cT_k)\in E \mid \cG_k \right]\le  C   \tilde Q^{\otimes 2}\left[ (\cT_{2}-\cT_1)\in E  \right]\,.
 \end{equation*}
In particular we have (for $C$ as above) for any \(t >0\)
\begin{equation}\label{kobeshown}
 \bbP\big(\forall i\in \lint 1,k-1 \rint, \cT_{i+1}-\cT_i \le t \big)\le \Big(1-\frac{1}{C} \tilde Q^{\otimes 2}\big( (\cT_{2}-\cT_1)> t \big)  \Big)^{k-1}.
\end{equation}
\end{lemma}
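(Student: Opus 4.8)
The goal is to show that the increments $\cT_{k+1}-\cT_k$ of the subsequence $(\cT_k)$ form an "almost i.i.d." sequence, in the sense that the conditional law of $\cT_{k+1}-\cT_k$ given $\cG_k$ is comparable (up to a multiplicative constant $C_A$ depending only on $A$) to the law of $\cT_2-\cT_1$. The key observation is that, conditionally on $\cG_k$, the future of the pair of renewals is governed by only two pieces of data: the position $\cT_k = \tau_{i_k}$ of the last marked point of $\tau$, and the position $\theta_k$ of the entrance point of $\tau'$ into the interval $[\tau_{i_k-1},\tau_{i_k}]$ (together with the fact that $\tau'\cap[\theta_k,\infty)$ is "fresh"). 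By the strong renewal property, conditionally on $\cG_k$ the process $\tau$ restarts at $\cT_k$ with law $\bQ_\beta$ shifted, and $\tau'$ restarts at $\theta_k$ with the overshoot density $\tf'(\beta)\bar K_\beta(\cdot - \theta_k)$ on $(\theta_k,\infty)$; the only dependence on the past is through the "lag" $\ell_k := \cT_k-\theta_k$, which by the constraint $\tau_{i_{k-1}}\le\theta_k\le\frac{\tau_{i_k-1}+\tau_{i_k}}{2}$ and $\tau_{i_k}-\tau_{i_k-1}\le 2H\le 2T$ satisfies $\ell_k\in[0,T]$ (in fact $\ell_k \geq (\tau_{i_k}-\tau_{i_k-1})/2 \geq 0$).

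\textbf{Comparing conditional laws.} First I would write down explicitly the event $\{\cT_{k+1}-\cT_k\in E\}$ in terms of the fresh renewals started at $\cT_k$ and $\theta_k$: it is the event that, scanning forward, the first interval $[\tau_j,\tau_{j+1}]$ of $\tau$ (with $\tau_j\ge\cT_k$) whose length lies in $(H,2H]$ \emph{and} which is hit by $\tau'$ in its first half, ends at a point $\cT_k + e$ with $e\in E$. By the renewal structure this probability is a functional of $\ell_k$ only. The claim is then that this functional, as a function of the lag $\ell\in[0,T]$, is bounded above and below by constant multiples of its value at the "stationary lag" (which is what produces $\cT_2-\cT_1$). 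This is where I would use \Cref{lem:Zc}: the relevant renewal-mass quantities — e.g.\ $u_\beta(\cdot)$, the overshoot density $\tf'(\beta)\bar K_\beta(\cdot)$, and the probability that a fresh $\tau'$ started at distance $\ell$ hits a given later interval — are all, for arguments in $[0,T]=[0,A/\tf(\beta)]$, comparable up to constants depending only on $A$ to their stationary counterparts, because on this scale $\bar K_\beta$ and $u_\beta$ do not differ from $\bar K$, $u$ by more than a constant factor (Potter's bound plus \eqref{eq:Doney}). Crucially, changing the lag $\ell$ only reweights \emph{which} $\tau'$-configurations hit the relevant half-interval, and the reweighting factor is bounded: if the first half-interval has length $m\in(H/2,H]$, the probability it is hit by a fresh $\tau'$ at lag $\ell$ versus at lag $\ell'$ differs by a bounded factor because both are of the form $\int$ (overshoot density)$\times$(renewal mass), and all these are comparable on $[0,2T]$. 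I would package this into: for all $\ell,\ell'\in[0,T]$ and all measurable $E$, the functional $F_E(\ell):=\bbP(\cT_{k+1}-\cT_k\in E\mid \ell_k=\ell)$ satisfies $C^{-1}F_E(\ell')\le F_E(\ell)\le C F_E(\ell')$ with $C=C_A$; integrating $\ell'$ against the stationary lag distribution gives exactly the two-sided bound against $\tilde Q^{\otimes 2}[(\cT_2-\cT_1)\in E]$.

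\textbf{From the one-step bound to \eqref{kobeshown}.} Once the two-sided conditional bound is established, \eqref{kobeshown} is a routine submartingale-type estimate: writing $p:=\tilde Q^{\otimes2}((\cT_2-\cT_1)>t)$, the lower bound gives $\tilde Q^{\otimes2}(\cT_{i+1}-\cT_i>t\mid\cG_i)\ge p/C$ $\tilde Q^{\otimes2}$-a.s., so by the tower property and induction on $i$,
\[
\bbP\big(\forall i\in\lint 1,k-1\rint,\ \cT_{i+1}-\cT_i\le t\big)=\tilde Q^{\otimes2}\Big(\bigcap_{i=1}^{k-1}\{\cT_{i+1}-\cT_i\le t\}\Big)\le\Big(1-\tfrac{p}{C}\Big)^{k-1},
\]
which is the assertion.

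\textbf{Main obstacle.} The genuinely delicate point is the comparability of the conditional law of $\cT_{k+1}-\cT_k$ uniformly over the lag $\ell_k\in[0,T]$: one must argue that neither an atypically small nor an atypically large lag changes by more than a constant factor the joint probability that (a) $\tau$ produces an interval of length in $(H,2H]$ and (b) $\tau'$ hits its first half. The $\tau$-part is lag-independent (fresh renewal from $\cT_k$), so the work is entirely in the $\tau'$-hitting probability: for a target interval $[a,b]$ with $b-a\in(H/2,H]$ located at distance $d$ ahead, the probability a fresh stationary-overshoot $\tau'$ launched from lag $\ell$ behind $\cT_k$ meets $[a,b]$ is $\int_{[a,b]}$(effective renewal density seen from the launch point)$\,dx$, and I must show this effective density is, for all relevant $d\le 2T$ and $\ell\le T$, within a constant of the stationary renewal density — which is precisely the content of \Cref{lem:Zc} combined with \eqref{eq:Doney} (or the $\alpha>1$ renewal theorem). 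Carefully setting up this "effective density seen from a shifted launch point" and invoking the right line of \eqref{cawre} is the crux; everything else is bookkeeping with the strong renewal property.
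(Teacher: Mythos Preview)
Your outline is on the right track --- the conditional law of $\cT_{k+1}-\cT_k$ given $\cG_k$ does depend only on the lag $\ell_k=\cT_k-\theta_k$ (the paper writes $\bar\theta_k$), and the tower-property derivation of \eqref{kobeshown} from the one-step bound is correct. Two points need fixing.

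First, a slip: since $\theta_k\in\tau'$ is itself a renewal point, conditionally on $\cG_k$ the process $\tau'$ restarts at $\theta_k$ as a \emph{fresh} renewal with inter-arrival density $K_\beta$, not with the stationary overshoot density $\tf'(\beta)\bar K_\beta$.

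Second, and this is the real gap: you work with the range $\ell_k\in[0,T]$, but the comparability you need is false over that wide a range (the overshoot law of $\tau'$ past $\cT_k$ changes qualitatively between $\ell$ small and $\ell$ large). The crucial constraint you almost wrote is that $\ell_k\in[H/2,2H]$, since $\theta_k$ lies in the first half of an interval of length in $(H,2H]$. The paper exploits this tight range via an extra layer of conditioning: set $\theta'_k:=\min(\tau'\cap[\cT_k,\infty))$ and $\bar\theta'_k:=\theta'_k-\cT_k$. Conditionally on the enlarged $\sigma$-algebra $\cG'_k:=\sigma(\tau\cap(-\infty,\cT_k],\tau'\cap(-\infty,\theta'_k])$, both processes are now fresh past $\cT_k$ and $\theta'_k$, so the law of $\cT_{k+1}-\cT_k$ depends only on $\bar\theta'_k$; call it $\varphi_E(\bar\theta'_k)$. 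The whole problem then collapses to showing that the conditional density $f_s(t)$ of $\bar\theta'_k$ given $\bar\theta_k=s$ is, uniformly over $s\in[H/2,2H]$, comparable to a function of $(t,H)$ alone. That is a single explicit computation:
\[
f_s(t)=K_\beta(s+t)+\int_0^s u_\beta(s-x)K_\beta(x+t)\,\dd x,
\]
and using \Cref{lem:Zc} together with~\eqref{eq:Doney} one checks that for $s\in[H/2,2H]$ this is $\asymp e^{-t\tf(\beta)}H^{\alpha}t^{-(1+\alpha)}$ for $t\ge H$ and $\asymp e^{-t\tf(\beta)}H^{-1}(H/t)^{\alpha}$ for $t\le H$. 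Your direct approach --- comparing the full hitting functional $F_E(\ell)$ --- would have to integrate over all joint futures of $(\tau,\tau')$ and is far harder to make rigorous; the reduction to the one-dimensional overshoot density $f_s$ is precisely the idea you are missing.
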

\noindent To conclude the proof of~\eqref{boundNH}, we combine \eqref{kobeshown} with the following result.

\begin{lemma}
\label{tobeshown}
Assume that $J(x)\stackrel{|x|\to \infty}{\sim} |x|^{-(1+\gamma)}$ with $\gamma\in (\frac12,\frac23)$ and let $\alpha =\frac{1-\gamma}{\gamma} \in (\frac12,1)$.
Then, for $\gep >0$ small enough, we have
\begin{equation*}
  % \label{eq:tobeshown}
  \tilde Q^{\otimes 2}\left( \cT_{2}-\cT_1\ge \gep T \right) \ge   \gep\left(\frac{T}{H}\right)^{1-2\alpha}.
 \end{equation*}
\end{lemma}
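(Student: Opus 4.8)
The plan is to argue as follows. First I would dispose of the case $H\ge\gep T$: by definition the $k$-th marked interval has length $>H$, and since the marked indices are increasing, its right endpoint exceeds its left endpoint, which is itself $\ge\cT_1$; hence $\cT_2-\cT_1>H\ge\gep T$ almost surely, and since $T/H\ge 1$ and $1-2\alpha<0$ we have $\gep(T/H)^{1-2\alpha}\le\gep\le 1$, so the bound is trivial. From now on assume $1<H<\gep T$; note that then no $\tau$-gap exceeding $2\gep T$ can lie in $(H,2H]$.

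The mechanism I would exploit is that under $\widetilde Q_\gb$ the renewal $\tau$ has $\alpha$-heavy-tailed increments ($\alpha\in(\tfrac12,1)$) at all scales up to $T_\gb:=\tf(\gb)^{-1}$, so the distance $\cT_2-\cT_1$ between two consecutive marked intervals inherits a heavy tail. Let $K_0$ be (a small constant times) the typical number of $\tau$-intervals lying strictly between two consecutive marked ones. For $\cT_2-\cT_1>\gep T$ it suffices that, among the first $K_0$ $\tau$-intervals after $\cT_1$, one carries a gap $>2\gep T$ while none of them is marked; by the one-big-jump principle this has probability of order $K_0\,\bar K_\gb(2\gep T)$. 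Using \eqref{eq:Doney}, \Cref{lem:Zc}, and (for $\gamma\ne\tfrac23$) the fact that the slowly varying $L$ in $K(t)=L(t)t^{-(1+\alpha)}$ is then bounded above and below, one computes $\int_H^{2H}K_\gb(t)\,\dd t\asymp H^{-\alpha}$, the equilibrium probability that $\tau'$ meets an interval of length $\asymp H$ is $\bar p_H\asymp(H/T_\gb)^{1-\alpha}$ (from the stationary identity $\widetilde Q(\tau'\cap[x,x+a]=\emptyset)=\tf'(\gb)\int_a^\infty(g-a)K_\gb(g)\,\dd g$ together with \Cref{lem:Zc} and \eqref{eq:Doney}), and $\bar K_\gb(2\gep T)\asymp\gep^{-\alpha}T_\gb^{-\alpha}$; hence $K_0\asymp(\bar p_H\int_H^{2H}K_\gb)^{-1}\asymp H^{2\alpha-1}T_\gb^{1-\alpha}$ and $K_0\bar K_\gb(2\gep T)\asymp\gep^{-\alpha}(T/H)^{1-2\alpha}$, which is $\gtrsim\gep(T/H)^{1-2\alpha}$ once $\gep$ is small (we are content to lose the factor $\gep^{-\alpha}$).

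To turn this into a proof I would condition on the $\sigma$-algebra $\cG_1$ of \Cref{preskiid}: writing $\cT_1=\tau_{i_1}$, given $\cG_1$ the increments of $\tau$ past $\cT_1$ are i.i.d.\ $\sim K_\gb$ and $\tau'$ past its entrance point $\theta_1$ is a fresh, independent renewal, with $\cT_1-\theta_1\le 2H$. Set $K_0:=\lfloor c_1/(\bar p_H\int_H^{2H}K_\gb(t)\,\dd t)\rfloor$ for a small constant $c_1>0$, let $A$ be the event that one of the first $K_0$ $\tau$-intervals after $\cT_1$ has a gap $>2\gep T$, and $B$ the event that one of these $K_0$ intervals is marked. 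A short deterministic check gives $A\setminus B\subseteq\{\cT_2-\cT_1>\gep T\}$: on $A\setminus B$ none of the first $K_0$ intervals after $\cT_1$ is marked (the one carrying a gap $>2\gep T>2H$ being automatically not a $(H,2H]$-interval), so the second marked interval lies past index $i_1+K_0$, hence past that huge gap, which already ends beyond $\cT_1+\gep T$. Conditionally on $\cG_1$ one has $\widetilde Q^{\otimes2}(A\mid\cG_1)\ge\tfrac12\min(1,K_0\bar K_\gb(2\gep T))$ by a Bonferroni inequality on the i.i.d.\ gaps, and a first-moment bound on the number of marked intervals among the first $K_0$ gives $\widetilde Q^{\otimes2}(B\mid A,\cG_1)\le 2c_1\le\tfrac12$; consequently $\widetilde Q^{\otimes2}(\cT_2-\cT_1>\gep T\mid\cG_1)\ge\tfrac14\min(1,K_0\bar K_\gb(2\gep T))$. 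As this bound does not depend on $\cG_1$, taking expectations and substituting the asymptotics of the previous paragraph finishes the proof.

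The main obstacle is the estimate $\widetilde Q^{\otimes2}(B\mid A,\cG_1)\le 2c_1$: since $\cT_1$ is an endpoint of a marked interval, $\tau'$ has a point within $2H$ of $\cT_1$, so it is \emph{not} in equilibrium there, and the first few post-$\cT_1$ $(H,2H]$-intervals can be met by $\tau'$ with probability much larger than the equilibrium value $\bar p_H$; to recover the first-moment bound one must either discard the initial intervals (where $\tau'$ has not yet had the chance to produce one of its heavy gaps) and run the argument on a later window of $K_0$ intervals, or estimate the relevant hitting probabilities directly through the renewal mass function $u_\gb$ using \Cref{lem:Zc}. A secondary issue is uniformity in $\gb\in(\gb_0,2\gb_0]$: the displayed asymptotics are transparent as $\gb\downarrow\gb_0$ (where $T_\gb\to\infty$), but for $\gb$ bounded away from $\gb_0$ one should instead observe that $T$, $H$ and the law of $(\tau,\tau')$ vary over a compact set on which $\widetilde Q^{\otimes2}(\cT_2-\cT_1>\gep T)$ is bounded below, or carry the implied constants uniformly via Potter's bound \eqref{potter} and \Cref{lem:Zc}.
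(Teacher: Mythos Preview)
Your approach is genuinely different from the paper's, and the obstacle you flag is not a technicality but the place where the argument breaks.

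\medskip

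\textbf{What the paper does.} The paper never tries to force a big $\tau$-gap directly. It first proves a two-sided moment estimate
\[
c\,(t/H)^{2\alpha-1}-c' \;\le\; \tilde Q^{\otimes 2}\big[N_H([\cT_1,\cT_1+t])\big] \;\le\; c'\,(t/H)^{2\alpha-1}
\]
(this is the core computation, done via an explicit integral for $I_w$), and then argues indirectly: with $k=2\,\tilde Q^{\otimes 2}[N_H([\cT_1,\cT_1+T])]$, Markov's inequality gives $\tilde Q^{\otimes 2}(\cT_k-\cT_1\ge T)\ge\tfrac12$, and on this event either some increment $\cT_{i+1}-\cT_i$ exceeds $\gep T$ or the sum of truncated increments exceeds $T$. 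A further Markov bound on the truncated sum, together with the upper tail $\tilde Q^{\otimes 2}(\cT_2-\cT_1>t)\le C/\tilde Q^{\otimes 2}[N_H([\cT_1,\cT_1+t])]$, closes the loop. No event of the type ``no marked interval among the first $K_0$'' is ever needed.

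\medskip

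\textbf{Why your argument has a genuine gap.} Your $K_0\asymp H^{2\alpha-1}T_\gb^{1-\alpha}$ is calibrated on the \emph{stationary} marking probability $\bar p_H\asymp (H/T_\gb)^{1-\alpha}$, but just after $\cT_1$ the process $\tau'$ has a point within distance $\asymp H$ and is very far from equilibrium. In fact the paper's estimate above says that the expected number of marked intervals in a spatial window $[\cT_1,\cT_1+t]$ is $\asymp (t/H)^{2\alpha-1}$, which is already of order $1$ as soon as $t$ reaches a fixed multiple of $H$. Since your first $K_0$ $\tau$-intervals span a distance much larger than $H$ (their increments have mean $1/\tf'(\gb)\asymp T_\gb^{1-\alpha}\gg 1$), the expected number of marked intervals among the first $K_0$ diverges as $H/T\to 0$, so the first-moment bound cannot give $\tilde Q^{\otimes 2}(B\mid\cG_1)\le 2c_1$. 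Replacing ``first $K_0$ intervals'' by a spatial window $[\cT_1,\cT_1+D]$ with $D\asymp H$ (so that $\tilde Q^{\otimes 2}(B)\le c_1$) does not help either: the number of $\tau$-intervals in such a window is only $\asymp H^\alpha$, and $H^\alpha\,\bar K_\gb(2\gep T)\asymp (H/\gep T)^\alpha$ is much smaller than the target $\gep(T/H)^{1-2\alpha}$ when $H\ll T$. There is also the separate issue that conditioning on $A$ shifts the positions of the later $\tau$-intervals, so even a correct unconditional first-moment bound on $B$ would not immediately transfer to $\tilde Q^{\otimes 2}(B\mid A,\cG_1)$.

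\medskip

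Your suggested fixes (discard an initial stretch, or compute hitting probabilities via $u_\gb$) amount to computing the analogue of the paper's $I_w\asymp (t/H)^{2\alpha-1}$; once that is in hand, the paper's Markov-inequality route is both shorter and avoids the conditioning on $A$ altogether.
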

\noindent This ends the proof of~\eqref{boundNH} and thus concludes the proof of \Cref{squiz} -- it only remains to prove Lemmas~\ref{preskiid} and~\ref{tobeshown}.
\qed

\begin{proof}[Proof of \Cref{preskiid}]
Let us introduce
\begin{equation*}
  \theta'_k:= \min\tau'\cap [\cT_k,\infty), \quad \bar \theta_k= \cT_k-\theta_k \ \text{ and }\ \bar \theta'_k:=\theta'_k-\cT_k \,,
\end{equation*}
and let $\cG'_k:= \sigma( \tau \cap (-\infty, \cT_k],  \tau'\cap(-\infty,\theta'_k])$.
We refer to Figure~\ref{fig:thetas} for an illustration.
\begin{figure}
\centering 
  \includegraphics[scale=1.2]{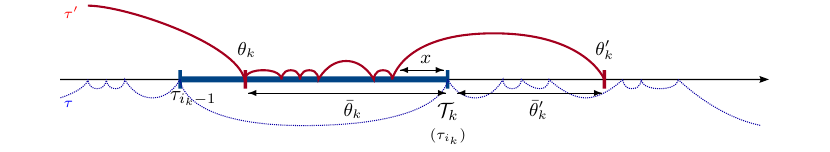}
  \caption{Illustration of the definitions of \(\theta_k,\theta_k'\) and \(\bar \theta_k,\bar \theta_k'\).
  The renewal \(\tau\) is represented below (in blue), the renewal \(\tau'\) above (in red); \(\theta_k\) is the first point of \(\tau'\) in the interval \([\tau_{i_k-1}, \tau_{i_k}]\) and \(\theta_k'\) the first point of \(\tau'\) after it, \(\bar \theta_k, \bar\theta_k'\) are the respective distances to \(\cT_k = \tau_{i_k}\).
  Recall that, by definition of $\mathfrak{J}_{[a,b]}$, the point \(\theta_k\) is in the first half of \([\tau_{i_k-1}, \tau_{i_k}]\). Recall also that \(\tau_{i_k}-\tau_{i_k-1} \in (H,2H]\).
  }
  \label{fig:thetas}
\end{figure}
Using the renewal structure of~$\tau$ we notice that the conditional probability $\bbP[ (\cT_{k+1}-\cT_k)\in E  \ | \ \cG'_k]$ is a function of the overshoot~$\bar \theta_k'$. 
Hence, there exists a non-negative function $\varphi_E$ such that 
\begin{equation*}
  \tilde Q^{\otimes 2}\left[ (\cT_{k+1}-\cT_k)\in E  \ | \ \cG'_k \right]= \varphi_E(\bar \theta'_k).
\end{equation*}
Moreover the conditional distribution of $\bar \theta'_k$ given $\cG_k$ only depends on $\bar \theta_k$, hence
\begin{equation}\label{okoki}
  \tilde Q^{\otimes 2}\big[ (\cT_{k+1}-\cT_k)\in E  \ \big| \ \cG_k \big]= \bbE\big[\varphi_E(\bar \theta'_k) \ \big| \  \cG_k\big]
   = \tilde Q^{\otimes 2}\big[\varphi_E(\bar \theta'_k) \ \big| \  \bar \theta_k\big]\,.
\end{equation}
Now, using a simple renewal computation, we can compute explicitly the conditional density of $\bar \theta'_k$ given $\{\bar \theta_k=s\}$, that we denote $f_s$.
Thanks to the stationarity of \(\tilde Q = \tilde Q_{\beta}\), we have
\begin{equation*}
 f_s(t):=K_{\beta}(s+t)+ \int^s_0 u_{\beta}(s-x)K_{\beta}(x+t)\dd x \,,
\end{equation*}
where the variable $x$ corresponds to the distance between \(\cT_k\) and the last contact of $\tau'$ before $\cT_k$; the first term \(K_{\beta}(t+s)\) corresponds to the case where \(\theta_k\) is that last contact (see \Cref{fig:thetas}).
Now, note that by definition we have $\bar \theta_k\in  [\frac12 H,2H]$.
Hence, thanks to \Cref{lem:Zc} and since \(H\leq T =A\tf(\gb)^{-1}\), we can replace $u_{\beta}$ with $u$, at the cost of a multiplicative  constant (using also the regular variation of \(u(\cdot)\)).
Using also that \(\beta \in (\beta_0,2 \beta_0)\), we obtain that \( e^{-A} K(x+t) e^{- t \tf(\gb)} \leq K_{\beta}(x+t) \leq 2 K(x+t) e^{- t \tf(\gb)}\) uniformly for \(x\in [0,s]\) with \(s\leq 2H \leq 2 A \tf(\gb)^{-1}\).
All together, we end up with 
\begin{equation*}
  f_s(t) \asymp  e^{- t \tf(\gb)} \Big( K(t+s)  + \int_0^s u(s-x) K( x+t) \dd x \Big) \,,
\end{equation*}
where $\asymp$ means that the ratio between the two sides is bounded above and below by positive constants (that may depend on \(A\)). 
Then, recalling \(K(t) \sim c t^{-(1+\alpha)}\) and the asymptotic behavior \(u(s)\sim c' s^{\alpha-1}\) from~\eqref{eq:Doney}, one can easily check that for \(s \in [\frac12 H, 2H]\) we have
\[
f_s(t) \asymp  e^{- t \tf(\gb)} \times
 \begin{cases}
    H^{\alpha} \, t^{- (1+\alpha)}   &\ \text{ if } t\ge H ,\\
    H^{-1}  \, (H/t)^{\alpha}  &\ \text{ if } t\le H .
  \end{cases}
\]
Since the r.h.s.\ does not depend on~$s$, we obtain that for any nonnegative~$\varphi_E$
\begin{equation}
  \label{eq:comparemean}
\frac{1}{C}  \tilde Q^{\otimes 2}\big[\varphi_E(\bar \theta'_k) \big]\le  \tilde Q^{\otimes 2}\big[\varphi(\bar \theta'_k) \ | \  \bar \theta_k\big]\le C\tilde Q^{\otimes 2} \big[\varphi_E(\bar \theta'_k)\big] \quad \text{a.s.},
\end{equation}
and the conclusion follows from \eqref{okoki}.
\end{proof}

\begin{proof}[Proof of \Cref{tobeshown}]
Let us first present three key statements which, combined, yield \Cref{tobeshown}. 
First of all, we have 
\begin{equation}\label{wops22}
 \bbP\left(\cT_2-\cT_1\ge \gep T \right) \ge \frac{1}{4 \tilde Q^{\otimes 2}\big[ N_H([\cT_1,\cT_1+T])\big] }- T^{-1}\tilde Q^{\otimes 2}[(\cT_{2}-\cT_{1}) \ind_{\{\cT_{2}-\cT_{1} \leq \gep T\}}].
\end{equation}
Then to estimate each of the term in the r.h.s.\ of \eqref{wops22} we use:
on the one hand that for \(t \in (0,T]\)
\begin{equation}
  \label{asymp}
 c (t/H)^{2\alpha-1} - c' \leq \tilde Q^{\otimes 2}\big[ N_H([\cT_1,\cT_1+t]) \big] \leq c'  (t/H)^{2\alpha-1} 
\end{equation}
(note that the lower bound is trivial when \(t\leq (c'/c)^{\frac{1}{2\alpha-1}} H\)); 
on the other hand that, for $t\in (H,T]$,
\begin{equation}\label{petitup}
 \tilde Q^{\otimes 2}(\cT_{2}-\cT_1 > t)  \leq \frac{C}{ \tilde Q^{\otimes 2}\left[ N_H([\cT_1,\cT_1+t])\right]}.
\end{equation}
Indeed, using \eqref{petitup} and the upper bound in~\eqref{asymp}, we obtain that 
\begin{equation}\label{rufff}
 T^{-1}\tilde Q^{\otimes 2}[(\cT_{2}-\cT_{1}) \ind_{\{\cT_{2}-\cT_{1} \leq \gep T\}}]\le T^{-1} \int^{\gep T}_0  \tilde Q^{\otimes 2}(\cT_{2}-\cT_1 > t) \dd t \le C \gep^{2(1-\alpha)} (T/H)^{1-2\alpha} \,.
\end{equation}
We then conclude the proof of \Cref{tobeshown} by using the lower bound in~\eqref{asymp} (note that the term \(-c'\) can be neglected when \(t=T \geq 4 \gep^{-1} H\), if \(\gep\) is small enough) and \eqref{rufff} inside \eqref{wops22} and taking $\gep$ sufficiently small. 
We next prove \eqref{wops22}, \eqref{asymp} and \eqref{petitup} (in that order).

\smallskip\noindent 
\textit{Proof of \eqref{wops22}.}
Let us set $k= k_T:= 2 \tilde Q^{\otimes 2}[ N_H([\cT_1,\cT_1+T])]$. 
Using Markov's inequality we have
\begin{multline*}
  \frac{1}{2}\le \tilde Q^{\otimes 2}( N_H([\cT_1,\cT_1+T])\leq k)
  = \tilde Q^{\otimes 2}( \cT_k-\cT_1 \ge  T) \\
  \leq   \tilde Q^{\otimes 2} \Big( \exists i\in \lint 1, k-1\rint,\  \cT_{i+1}-\cT_{i} \ge \gep T \Big)
  +  \tilde Q^{\otimes 2} \bigg( \sum_{i=1}^{k} (\cT_{i+1}-\cT_{i}) \ind_{\{\cT_{i+1}-\cT_{i} \leq \gep T\}} \geq  T \bigg) \,.
\end{multline*}
By sub-additivity and translation invariance, the first term is bounded by $k\bbP(\cT_2-\cT_1\geq \gep T)$.
For the second term, using Markov's inequality and translation invariance, we have
\begin{equation}
 \tilde Q^{\otimes 2} \Big( \sum_{i=1}^{k} (\cT_{i+1}-\cT_{i}) \ind_{\{\cT_{i+1}-\cT_{i} \leq \gep T\}} \geq  T \Big) \le k T^{-1} \tilde Q^{\otimes 2}[(\cT_{2}-\cT_{1}) \ind_{\{\cT_{2}-\cT_{1} \leq \gep T\}}].
\end{equation}
Reorganizing the terms and recalling the definition of $k$ we obtain \eqref{wops22}.

\smallskip\noindent 
\textit{Proof of \eqref{asymp}.}
We only deal with the case \(t \in (2H,T]\), since otherwise the bounds are trivial.
In view of the proof of \Cref{preskiid}, we may condition to the value of $\bar \theta_1$ (say $\bar \theta_1=w\in [H/2,2H]$), since it only affects the expectation by a constant factor (see in particular~\eqref{eq:comparemean}).
We now consider the expectation \(\tilde Q^{\otimes 2}[ N_H([\cT_1+H,\cT_1+t]) \mid \bar \theta_1=w ]\), which is slightly different than what appears in~\eqref{asymp}, and write it as an integral.
To help the reader, we provide the role of the variables appearing in the integral.
\begin{align*}
  s&= \tau_{i_k-1}-\tau_{i_1}\    \text{for some $k\ge 2 $} \,, &  s\in (H, t-H]\,,\\
  r&=  \tau_{i_k}-\tau_{i_k-1}\,, \ &r\in (H, 2H\wedge(t-s)]\,, \\
  v&= \Big(\frac{\tau_{i_k}-\tau_{i_k-1}}{2}-  \max \tau'\cap\Big[ \tau_{i_k-1} , \frac{\tau_{i_k}-\tau_{i_k-1}}{2} \Big] \  \Big) &  v\in (0, r/2] \,.
\end{align*}
Using that $u_{\beta}(s) K_{\beta}(r)$ corresponds to the probability density that \(\tau\) has a jump of size $r$ starting at $s+\cT_1$, and that $u_\beta(w+s+r-v) \bar K_{\beta}(v)$ is the probability density that $\tau'$ visits (the first half of) the corresponding  interval with a last contact at distance $v$ from the end of the first-half, we obtain
\begin{multline*}
\tilde Q^{\otimes 2}\big[ N_H([\cT_1+H,\cT_1+t]) \ \big| \ \bar \theta_1=w \big]
  \\ 
  =\int^{t-H}_H \int^{2H}_{H}  \int^{r/2}_0 u_{\beta}(s)K_{\beta}(r)u_{\beta}(w+s+r-v) \bar K_{\beta}(v)\ind_{\{r<(t-s)\}} \dd s \dd r \dd v \,.
\end{multline*}
Then, we notice that ignoring $\ind_{\{r<(t-s)\}}$ in the integrand may result, at worst, in overcounting one unit in $N_H([\cT_1+H,\cT_1+t])$.
Similarly replacing $\cT_1+H$ by $\cT_1$ may only increase the value of $N_H$ by one unit.
This therefore gives that 
\begin{equation}
  \label{compareIw}
  I_{w} -2 \leq \tilde Q^{\otimes 2}[N_H([\cT_1,\cT_1+t]) \mid \bar \theta_1=w] \leq I_{w} \,,
\end{equation}
with 
\[
I_w :=\int^{t-H}_H \int^{2H}_{H}  \int^{r/2}_0 u_{\beta}(s)K_{\beta}(r)u_{\beta}(w+s+r-v) \bar K_{\beta}(v) \dd s \dd r \dd v \,.
\]
Notice that since $s>H$, the quantity $(w+s+r-v)$ is always of order $s$ and therefore there exist constants such that
\[
c u_{\beta}(s) \leq u_\beta(w+s+r-v) \leq c' u_{\beta}(s) \,.
\]
for all choices of $w$, $r$ and $v$ (recall \Cref{lem:Zc}).
Replacing $r/2$ by~$H$ and $t-H$ by $t$ also does not change the order of magnitude (recall we work with \(t\in (2H,T]\)) of the integral so that we obtain
\begin{equation}
\label{theabove}
   I_w \asymp \int^{t}_H \int^{2H}_{H}  \int^{H}_0 u_{\beta}(s)^2 K_{\beta}(r) \bar K_{\beta}(v) \dd v  \dd r \dd s.
\end{equation}
Using \Cref{lem:Zc} and~\eqref{eq:Doney}, and the fact that all variables are smaller than the correlation length $T=A \tf(\gb)^{-1}$, we have $u_{\beta}(s)\asymp s^{\alpha-1}$, $K_{\beta}(r) \asymp r^{-1-\alpha}$ and  
$\bar K_{\beta}(v)\asymp v^{-\alpha}$ so that 
\(
I_w \asymp t^{2\alpha-1} H^{1-2\alpha} .
\)
We then deduce~\eqref{asymp}, recalling~\eqref{compareIw} and the fact that the conditioning affects the expectation only by a constant factor, see~\eqref{eq:comparemean}.

\smallskip\noindent 
\textit{Proof of \eqref{petitup}.}
First of all, let us write
\begin{align}\label{tryp}
  \tilde Q^{\otimes 2}[N_H([\cT_1,\cT_1+t])] = \sum_{k=1}^{\infty} \tilde Q^{\otimes 2}(N_H([\cT_1,\cT_1+t]) \geq k) = \sum_{k=1}^{\infty} \tilde Q^{\otimes 2}( \cT_{k}-\cT_1 \leq t) \,.
\end{align}
Now, by \Cref{preskiid} (cf.\ \eqref{kobeshown}), we obtain that
\begin{equation}\label{tryop}
  \tilde Q^{\otimes 2}( \cT_{k}-\cT_1 \leq t) \leq \Big( 1- C^{-1}\tilde Q^{\otimes 2}( \cT_{2}-\cT_1 > t) \Big)^{k-1} \,.
\end{equation}
Combining \eqref{tryp} and \eqref{tryop} we get that for $t\in (H,T]$, \(\tilde Q^{\otimes 2}[N_H([\cT_1,\cT_1+t])] \leq C/ \tilde Q^{\otimes 2}( \cT_{2}-\cT_1 > t) \).
\end{proof}

\subsection{The marginal case \texorpdfstring{$\gamma=\frac23$ or $d=3$}{}}

Since only the expression for $K(\cdot)$ is important, the case \eqref{SRW} for $d=3$ is equivalent to the case where $\varphi$ converges to a constant and $\gamma=\frac{2}{3}$ (in which case $\mathrm{R}(t)\sim c\, (\log t)^2$). Hence we can focus on the \(\gamma\)-stable case.
The proof relies of the following analogue of Lemma \ref{squiz}.
\begin{lemma}
  \label{squiz2}
  Assume that $K(t) = L(t)t^{-3/2}$ and, for $1\leq H\leq T$, define
  \[
  \psi_H(T) := \int_{H/2}^T \frac{L(H)^2}{L(s)^2} \frac{\dd s}{s} \,.
  \]
  Then, there exist constants \(c,c'>0\) such that, if $v\, \psi_H(T) \leq c$, we have
  \begin{equation*}
    \tilde Q^{\otimes 2} \left[ e^{ v N_H(T) } \right] \leq 1+ c'  v\, \psi_H(T) \,.
  \end{equation*}
\end{lemma}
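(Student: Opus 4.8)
The plan is to reproduce the proof of \Cref{squiz} essentially verbatim, replacing the power-law factor $(t/H)^{2\alpha-1}$ --- which degenerates to the constant $1$ at the marginal value $\alpha=\tfrac12$ --- by the slowly varying quantity $\psi_H(t)$, and inserting slowly-varying-function estimates where the power-law ones were used. First I would record that $\psi_H(T)\ge c_0>0$ for every $1\le H\le T$: by uniform convergence of slowly varying functions $\int_{H/2}^{H\wedge T}\tfrac{L(H)^2}{L(s)^2}\tfrac{\dd s}{s}\to\log 2$ as $H\to\infty$, and the quantity is clearly bounded below for $H$ in a bounded range. Consequently, if $\gep T<2H$ then there are fewer than $2/\gep$ intervals of length exceeding $H$ inside $[0,T]$, so $N_H(T)\le 2/\gep$ is bounded and expanding $e^{vN_H(T)}\le 1+c'v\le 1+c'c_0^{-1}v\,\psi_H(T)$ closes that case. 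I may therefore fix a small $\gep$ with $1/\gep\in\mathbb N$ and assume $2H\le\gep T$; then the Hölder reduction of \Cref{squiz} (decomposing $N_H(T)$ along the intervals $[(i-1)\gep T,i\gep T]$ and their half-shifts, Hölder with $2/\gep$ terms, translation invariance of $\tilde Q$) gives $\tilde Q^{\otimes 2}[e^{vN_H(T)}]\le \tilde Q^{\otimes 2}[e^{(2v/\gep)N_H(\gep T)}]$.

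The core is a geometric-type tail bound: $\tilde Q^{\otimes 2}(N_H(\gep T)\ge k)\le(1-c/\psi_H(T))^{k-1}$ for all $k\ge1$. Granting it, I would finish exactly as in \Cref{squiz}: writing $\tilde Q^{\otimes 2}[e^{(2v/\gep)N_H(\gep T)}]=1+\sum_{k\ge1}e^{\frac{2v}{\gep}(k-1)}(e^{2v/\gep}-1)\,\tilde Q^{\otimes 2}(N_H(\gep T)\ge k)$ and inserting the tail bound, as soon as $v\psi_H(T)\le c':=\gep c/4$ one has $\tfrac{2v}{\gep}\le\tfrac{c}{2\psi_H(T)}$, so the remaining geometric series sums to at most $C\psi_H(T)$ (since $1-e^{-c/(2\psi_H(T))}\ge c''/\psi_H(T)$, using $\psi_H(T)\ge c_0$), whence $\tilde Q^{\otimes 2}[e^{vN_H(T)}]-1\le c'''v\,\psi_H(T)$. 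For the tail bound I would argue as for \eqref{boundNH}: \Cref{preskiid} requires only $\gamma\in(0,1)$, hence applies here and reduces the tail bound to the marginal analogue of \Cref{tobeshown}, namely $\tilde Q^{\otimes 2}(\cT_2-\cT_1\ge\gep T)\ge c/\psi_H(T)$ for $\gep$ small.

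To prove this last inequality I would retrace the three steps \eqref{wops22}, \eqref{asymp}, \eqref{petitup} of the proof of \Cref{tobeshown}. Inequality \eqref{wops22} and the proof of \eqref{petitup} do not involve the value of $\alpha$ and carry over unchanged. The genuinely new ingredient is the marginal version of \eqref{asymp}, namely $\tilde Q^{\otimes 2}[N_H([\cT_1,\cT_1+t])]\asymp\psi_H(t)$ for $H\le t\le T$, obtained exactly like \eqref{asymp}: conditioning on $\bar\theta_1=w\in[H/2,2H]$ (harmless by \eqref{eq:comparemean}) and writing the expectation as $I_w\asymp\int_H^t\!\int_H^{2H}\!\int_0^H u_\beta(s)^2K_\beta(r)\bar K_\beta(v)\,\dd v\,\dd r\,\dd s$, one now inserts the $\alpha=\tfrac12$ asymptotics valid on the range $\le T=A\tf(\gb)^{-1}$, i.e.\ $u_\beta(s)\asymp s^{-1/2}L(s)^{-1}$ (from \eqref{eq:Doney} and \Cref{lem:Zc}), $K_\beta(r)\asymp L(H)H^{-3/2}$ for $r\in(H,2H]$, and $\int_0^H\bar K_\beta(v)\,\dd v\asymp L(H)H^{1/2}$; these yield $I_w\asymp L(H)^2\int_H^t\tfrac{\dd s}{s\,L(s)^2}\asymp\psi_H(t)$, the $O(1)$ corrections of \eqref{compareIw} being harmless since $\psi_H(t)\ge c_0$. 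Feeding this into \eqref{petitup} gives $\tilde Q^{\otimes 2}(\cT_2-\cT_1>t)\le C/\psi_H(t)$ for $H<t\le T$.

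The step I expect to require the most care is bounding the error term $T^{-1}\tilde Q^{\otimes 2}[(\cT_2-\cT_1)\ind_{\{\cT_2-\cT_1\le\gep T\}}]$ of \eqref{wops22} against the main term $\asymp 1/\psi_H(T)$, since the explicit power-law computation of \eqref{rufff} must be replaced by a Karamata/Potter argument. Using that $\cT_2-\cT_1>H$ always (as $\cT_2-\cT_1\ge\tau_{i_2}-\tau_{i_2-1}>H$), the error is at most $T^{-1}\big(2H+C\int_{2H}^{\gep T}\dd t/\psi_H(t)\big)$; since $\psi_H$ is slowly varying in $t$ --- Karamata's theorem applied to the slowly varying integrand $s\mapsto L(s)^{-2}$ --- one gets $\int_{2H}^{\gep T}\dd t/\psi_H(t)\asymp\gep T/\psi_H(\gep T)$, and Potter's bounds control both $\psi_H(\gep T)/\psi_H(T)$ and $\tfrac HT\psi_H(T)$ by powers of $\gep$ (respectively $H/T$) with arbitrarily small exponent. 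Hence the error term is $\le\eta(\gep)/\psi_H(T)$ with $\eta(\gep)\to0$ as $\gep\to0$, uniformly over $H/T\le\gep$; inserting this together with the $t=T$ instance of the displayed asymptotics into \eqref{wops22} yields $\tilde Q^{\otimes 2}(\cT_2-\cT_1\ge\gep T)\ge c/\psi_H(T)$ once $\gep$ is fixed small enough, which is the sought marginal analogue of \Cref{tobeshown} and closes the argument. I do not anticipate any conceptually new difficulty beyond these manipulations of $\psi_H$ and the bookkeeping of the accumulated $O(1)$ losses.
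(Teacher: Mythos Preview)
Your proposal is essentially the paper's proof and all the structural steps --- the trivial case, the H\"older reduction, the geometric tail bound via \Cref{preskiid}, the marginal analogue of \eqref{asymp} --- match.

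The one place where your sketch is too quick is the error-term estimate. You assert that $\int_{2H}^{\gep T}\dd t/\psi_H(t)\asymp\gep T/\psi_H(\gep T)$ ``by Karamata'' and that $\psi_H(\gep T)/\psi_H(T)$ is controlled ``by Potter''. Both conclusions are correct, but neither justification as written is complete: Karamata's theorem is an asymptotic as $t\to\infty$ and here both the lower limit $2H$ and the function $\psi_H$ itself depend on $H$, so one must check uniformity in $H$; likewise, Potter's bound applied to the $H$-dependent family $t\mapsto\psi_H(t)$ is not an off-the-shelf statement. The paper avoids this issue by a small algebraic trick: writing $\psi_H(t)=L(H)^2(\phi(t)-\phi(H/2))$ with $\phi(u)=\int_1^u\frac{\dd s}{sL(s)^2}$ and using that $x\mapsto\frac{b-x}{a-x}$ is nondecreasing for $b>a$ to get
\[
\frac{\psi_H(T)}{\psi_H(t)}=\frac{\phi(T)-\phi(H/2)}{\phi(t)-\phi(H/2)}\le\frac{\phi(T)-\phi(t/2)}{\phi(t)-\phi(t/2)}\le C\,L(t)^2\int_{t/2}^T\frac{\dd s}{sL(s)^2}\,.
\]
This eliminates $H$ and leaves only the fixed slowly varying function $L$, after which the integration and the bound $\frac{H}{T}\psi_H(T)\le c\,\gep^{1/2}$ are routine. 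The same monotonicity trick would also justify the uniform Potter-type bound $\psi_H(\gep T)/\psi_H(t)\le C_\delta(\gep T/t)^\delta$ that your Karamata step implicitly uses, so your approach can be completed --- but it needs this extra ingredient rather than a bare appeal to Karamata.
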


\begin{proof}[Proof of Proposition \ref{trickyy}]
We set  $r_j = \psi_{2^j}(T)$ for $1\leq j \leq \log_2 T$ and $R_T:= \sum_{i=1}^{\log_2 T} r_j$. 
We set $u=4c_0 \rho$, and we apply the multi-index H\"older inequality, similarly as in~\eqref{multiHolder}: we obtain
\begin{equation}
  \label{multiHolder2}
  \tilde Q^{\otimes 2} \left[ e^{ u |\mathfrak J_{[0,T]}| } \right]\le \prod_{j=1}^{\log_2 T}  \tilde Q^{\otimes 2} \left[
  e^{ u\, \frac{R_T}{r_j} \, N_{2^j}(T)}\right]^{r_j/R_T}\,.
\end{equation}
Now, notice that we have
\begin{equation*}
  % \label{les2rt}
  R_T= \sum_{j=1}^{\log_2 T} \psi_{2^j}(T) = \sum_{j=1}^{\log_2 T} \int_{2^{j-1}}^T \frac{L(2^{j})^2}{L(s)^2} \frac{\dd s}{s} = \int_1^T \frac{1}{L(s)^2} \bigg( \sum_{j=1}^{\lfloor \log_2 s \rfloor } L(2^j)^2 \bigg) \frac{\dd s}{s}  \asymp \mathrm{R}(T)\,,
\end{equation*}
recalling the definition~\eqref{defRR} or \(\mathrm{R}(T)\).  Since we have \(\nu=2\) in \Cref{homener}, we get that \(\tf(\beta) \geq c (\beta-\beta_0)^{3}\), so that \(R_T\le c_A \mathrm R((\beta-\beta_0)^{-3})\).
Hence, according to the assumption of \Cref{prop:multiprop}-\ref{multi-ii} we have  $R_T\le c_A (C_1\rho)^{-1}$ and we therefore may assume that $u R_T\le \delta$, for some $\delta>0$ that can be made arbitrarily small by choosing \(C_1\) large.
We can therefore apply \Cref{squiz2} in~\eqref{multiHolder2} with $v=u R_T/r_j$ (we have $v\le \delta \psi_{2^j}(T)$) and we obtain 
\[
  \tilde Q^{\otimes 2} \left[ e^{ u |\mathfrak J_{[0,T]}| } \right]\le \prod_{j=1}^{\log_2 T}  \Big(1 +c u R_T \Big)^{r_j/R_T}  \leq  e^{ c u R_T} \leq e^{c \delta} \,.
\]
Going back to \Cref{topitop2} and taking $\delta$ sufficiently small, this ends the proof of \Cref{trickyy}.
\end{proof}

% \subsection{\texorpdfstring{Proof of \Cref{squiz2}}{Proof of Lemma}}

\begin{proof}[Proof of \Cref{squiz2}]
First of all, as in the proof of \Cref{squiz}, we fix \(\gep>0\) (small) so that \(1/\gep \in \bbN\) and we may focus on the case \(\gep T \geq 2H\).
We replicate the proof of \Cref{squiz} until~\eqref{boundNH} which we replace with
\begin{equation}
  \label{boundNH2}
  \tilde Q^{\otimes 2}\left[ N_H(\gep T)\ge k \right] \leq \Big(1-  \frac{\gep}{C} \psi_H(T)^{-1} \Big)^{k-1} \,.
\end{equation}
The proof of~\eqref{boundNH2} is similar to that of~\eqref{boundNH}: it relies on the factorization in  \Cref{preskiid} combined with the following estimate.
\begin{lemma}
  \label{tobeshown2}
  Assume that $K(t) = L(t) t^{-3/2}$ and recall that $\psi_H(T) = \int_{H/2}^T \frac{L(H)^2}{L(s)^2} \frac{\dd s}{s}$.
  For $\gep>0$ small enough, we have that 
  \begin{equation*}
    % \label{eq:tobeshown2}
    \tilde Q^{\otimes 2}\left[ \cT_{2}-\cT_1\ge \gep T \right] \ge \frac{\gep}{\psi_H(T)}.
   \end{equation*}
\end{lemma}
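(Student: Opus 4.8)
The plan is to follow the proof of \Cref{tobeshown} line by line, with the power $(t/H)^{2\alpha-1}$ — which degenerates at the marginal value $\alpha=\tfrac12$ — replaced everywhere by $\psi_H(t):=\int_{H/2}^{t}\frac{L(H)^2}{L(s)^2}\frac{\dd s}{s}$ (and $\psi_H(t):=0$ for $t\le H/2$). As in the proof of \Cref{squiz2}, we may assume $\gep T\ge 2H$ and that $H$ exceeds an absolute constant, the remaining cases being dealt with as for \Cref{squiz}; throughout, all estimates must be made uniform in $1\le H\le t\le T=A\tf(\beta)^{-1}$ and $\beta\in(\beta_0,2\beta_0)$.

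First I would establish the three analogues of \eqref{wops22}, \eqref{asymp} and \eqref{petitup}. The analogue of \eqref{wops22},
\[
\bbP(\cT_2-\cT_1\ge\gep T)\ \ge\ \frac{1}{4\,\tilde Q^{\otimes 2}\big[N_H([\cT_1,\cT_1+T])\big]}\ -\ T^{-1}\,\tilde Q^{\otimes 2}\big[(\cT_2-\cT_1)\ind_{\{\cT_2-\cT_1\le\gep T\}}\big],
\]
and the analogue of \eqref{petitup}, namely $\tilde Q^{\otimes 2}(\cT_2-\cT_1>t)\le C/\tilde Q^{\otimes 2}[N_H([\cT_1,\cT_1+t])]$ for $H<t\le T$, do not use the precise form of $K$: their proofs are verbatim transcriptions of those of \eqref{wops22} (Markov's inequality applied to $N_H([\cT_1,\cT_1+T])$, subadditivity and translation invariance, and a second Markov bound) and of \eqref{petitup} (from \eqref{tryp}--\eqref{tryop}, i.e.\ \Cref{preskiid}, which holds for all $\gamma\in(0,1)$ and in particular for $\gamma=\tfrac23$, together with a geometric sum). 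The analogue of \eqref{asymp} is where the marginal exponent enters: one shows that for $t\in(0,T]$,
\[
c\,\psi_H(t)-c'\ \le\ \tilde Q^{\otimes 2}\big[N_H([\cT_1,\cT_1+t])\big]\ \le\ c'\,\psi_H(t).
\]
As for \eqref{asymp}, conditioning on $\bar\theta_1=w\in[\tfrac12H,2H]$ changes the expectation by a bounded factor only (cf.\ \eqref{eq:comparemean}), and the conditional mean equals, up to an additive $O(1)$, the integral
\[
I_w=\int_H^{t}\int_H^{2H}\int_0^{r/2} u_\beta(s)\,K_\beta(r)\,u_\beta(w+s+r-v)\,\bar K_\beta(v)\,\dd v\,\dd r\,\dd s .
\]
Inserting the $\alpha=\tfrac12$ asymptotics --- $u_\beta(s)\asymp u(s)\asymp s^{-1/2}/L(s)$ from \Cref{lem:Zc} and \eqref{eq:Doney}, $u_\beta(w+s+r-v)\asymp u_\beta(s)$ since $w+s+r-v\asymp s$ for $s\ge H$, $K_\beta(r)\asymp L(H)H^{-3/2}$ for $r\asymp H$, and $\int_0^H\bar K_\beta(v)\,\dd v\asymp L(H)H^{1/2}$ by Karamata applied to $\bar K(v)\asymp L(v)v^{-1/2}$ --- yields $I_w\asymp L(H)^2\int_H^t\frac{\dd s}{s\,L(s)^2}\asymp\psi_H(t)$.

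To conclude I would combine these exactly as at the end of the proof of \Cref{tobeshown}. The first term in the \eqref{wops22}-analogue is $\ge\frac{1}{4c'\psi_H(T)}$ by the upper bound just proved. For the second term one splits $\int_0^{\gep T}=\int_0^{C''H}+\int_{C''H}^{\gep T}$, with $C''$ an absolute constant chosen so that $c\psi_H(t)-c'\ge\frac c2\psi_H(t)$ once $t\ge C''H$; on $[0,C''H]$ one bounds $\tilde Q^{\otimes 2}(\cT_2-\cT_1>t)\le1$, and on $[C''H,\gep T]$ the \eqref{petitup}- and \eqref{asymp}-analogues give $\tilde Q^{\otimes 2}(\cT_2-\cT_1>t)\le \frac{2C/c}{\psi_H(t)}$, so the second term is at most $T^{-1}\big(C''H+\frac{2C}{c}\int_{C''H}^{\gep T}\frac{\dd t}{\psi_H(t)}\big)$. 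The key input is that $\psi_H$ is $O$-regularly varying of arbitrarily small index \emph{uniformly in $H$}: using $\psi_H(t)\ge c\,L(H)^2/L(t)^2$ for $t\ge H$ together with Potter's bound \eqref{potter} for $K$, one checks $c_\delta\lambda^{-\delta}\le\psi_H(\lambda t)/\psi_H(t)\le C_\delta\lambda^{\delta}$ for all $\lambda\ge1$ and $t\ge H$. A dyadic decomposition of $[C''H,\gep T]$ from the upper endpoint then gives $\int_{C''H}^{\gep T}\frac{\dd t}{\psi_H(t)}\le C_\delta\frac{\gep T}{\psi_H(\gep T)}$, while $\psi_H(T)/\psi_H(\gep T)\le C_\delta\gep^{-\delta}$ and $\psi_H(T)\frac HT\le C_\delta(T/H)^{\delta-1}\le C_\delta(\gep/2)^{1-\delta}$. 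Altogether the second term is $\le C_\delta\,\gep^{1-\delta}/\psi_H(T)$, which for $\gep$ small enough (and $\delta<\tfrac13$ fixed) is $\le(\frac{1}{4c'}-\gep)/\psi_H(T)$, whence $\tilde Q^{\otimes 2}(\cT_2-\cT_1\ge\gep T)\ge\gep/\psi_H(T)$.

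The main obstacle is precisely this final step. In the non-marginal case of \Cref{tobeshown} the corresponding integral is $\int t^{1-2\alpha}\,\dd t$, an elementary power computation; here it becomes $\int\frac{\dd t}{\psi_H(t)}$ with $\psi_H$ a slowly varying function built from the unknown $L$, and the bound must be uniform both in $H$ and — through $T=A\tf(\beta)^{-1}$ — in $\beta$. Everything else is a faithful copy of the proof of \Cref{tobeshown}, so the genuinely new content is this uniform $O$-regular-variation analysis of $\psi_H$: that $\int\frac{\dd t}{\psi_H(t)}$ is governed by its contribution near the upper limit, and that $\psi_H(\gep T)$ is comparable to $\psi_H(T)$ up to a constant depending only on $\gep$.
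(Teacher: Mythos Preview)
Your proof is correct and follows the paper's approach closely: the three ingredients \eqref{wops22}, the $\psi_H$-analogue of \eqref{asymp}, and \eqref{petitup} are assembled exactly as in the paper, and your computation of $I_w\asymp\psi_H(t)$ matches the paper's \eqref{asymp2}.

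The only substantive difference is in how you bound the truncated expectation $T^{-1}\tilde Q^{\otimes2}[(\cT_2-\cT_1)\ind_{\{\cT_2-\cT_1\le\gep T\}}]$. The paper writes $\psi_H(t)=L(H)^2(\phi(t)-\phi(H/2))$ with $\phi(u)=\int_1^u\frac{\dd s}{sL(s)^2}$, uses the monotonicity of $x\mapsto\frac{b-x}{a-x}$ to get $\psi_H(T)\tilde Q^{\otimes2}(\cT_2-\cT_1>t)\le C'L(t)^2\int_{t/2}^T\frac{\dd s}{sL(s)^2}$, and then applies Fubini to the resulting double integral; this yields the explicit power $\gep^{1/2}$. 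Your route via uniform $O$-regular variation of $\psi_H$ and dyadic summation is more abstract but equally valid, and in fact gives the nominally stronger $\gep^{1-\delta}$ for any $\delta>0$ (at the cost of a $\delta$-dependent constant). Both are adequate since only smallness as $\gep\downarrow0$ is needed.
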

\noindent The conclusion of the proof of \Cref{squiz2} then follows from~\eqref{boundNH2} exactly as below~\eqref{boundNH}.
\end{proof}

\begin{proof}[Proof of \Cref{tobeshown2}]
We proceed as in the proof of Lemma \ref{tobeshown}. Note that \eqref{wops22} and \eqref{petitup} are also valid in this case so we just need to an estimate analogous to~\eqref{asymp}. Indeed, we have that there exist some constants $c,c'>0$ such that, for any $t\in(0, T]$
\begin{equation}
    \label{asymp2}
   c \psi_H(t)-c'\leq \tilde Q^{\otimes 2}[N_H([\cT_1,\cT_1+t])] \leq c'  \psi_H(t).
\end{equation}
To see this, note that \eqref{compareIw} still holds, and we can do the same simplification as in~\eqref{theabove} (we can again reduce to the case \(t > 2H\)).
Hence, replacing $u_{\beta}(s)$, $K_{\beta}(r)$ and $\bar K_{\beta}(v)$ by their asymptotic equivalents (using Lemma~\ref{lem:Zc}, \eqref{eq:Doney} and the fact that \(T =A\tf(\gb)^{-1}\)), we obtain that 
\begin{equation*}
   I_w\asymp \int^t_H \frac{\dd s}{L(s)^2 s}\int^{2H}_H L(r)r^{-3/2}\dd r\int^H_0 L(v)v^{-1/2} \dd v \asymp  \int_H^t \frac{L(H)}{L(s)^2}\frac{\dd s}{ s} \,.
\end{equation*}
From~\eqref{compareIw}, this gives that \(\frac{1}{c}\psi_H(T) -2 \leq  \tilde Q^{\otimes 2}[N_H([\cT_1,\cT_1+t]) \mid \bar \theta_1=w] \leq c \psi_H(T)\), which concludes the proof of~\eqref{asymp2} since the conditioning affects the expectation only by a constant factor, see~\eqref{eq:comparemean}.
Let us now conclude the proof of \Cref{tobeshown2} from~\eqref{wops22}, \eqref{petitup} and~\eqref{asymp2}.
We need to control the second term in \eqref{wops22} and we are going to prove that 
\begin{equation}
  \label{truncatedQ}
   T^{-1}\tilde Q^{\otimes 2}[ (\cT_2 -\cT_1) \ind_{\{\cT_2 -\cT_1 \leq \gep T\}}] \le \frac{C'\gep^{1/2}}{\psi_H(T)} \,.
\end{equation}
Then, using \eqref{truncatedQ} for $\gep$ sufficiently small, together with the lower bound in \eqref{asymp2} (note that the term \(-c'\) can be neglected when \(t=T \geq 4 \gep^{-1} H\), if \(\gep\) is small enough) inside \eqref{wops22}, we obtain the conclusion of \Cref{tobeshown2}.
To prove \eqref{truncatedQ} we first combine~\eqref{petitup} together with the lower bound in \eqref{asymp2}. We get that for $t\in (C_0 H,T]$ with \(C_0\) large enough so that \(c \psi_H(t) \geq 2 c'/c\),
\begin{equation*}
  \label{wrongbound2}
  \tilde Q^{\otimes 2}(\cT_2 -\cT_1 >t) \leq \frac{2 C/c}{ \psi_H(t)} \,.
\end{equation*}
Now, writing \(\phi(u) = \int_1^u \frac{\dd s}{L(s)^2 s}\) for simplicity, we can write \(\psi_H(t)= L(H)^2(\phi(t)-\phi(H/2))\), so that (for a different constant \(C\))
\[
\psi_H(T) \tilde Q^{\otimes 2}(\cT_2 -\cT_1 >t) \leq C \frac{\phi(T) -\phi(H/2)}{\phi(t)- \phi(H/2)} \leq C \frac{\phi(T) - \phi(t/2)}{\phi(t) -\phi(t/2)}\le C' L(t)^2 \int_{t/2}^T \frac{\dd s}{L(s)^2 s}.
\]
where we have used that \(x\mapsto \frac{b-x}{a-x}\) is non-decreasing if \(b>a\) for the second inequality and that \(\phi(t) -\phi(t/2) \asymp L(t)^{-2}\) for the last one (together with the definition of \(\phi\) to write \(\phi(T) - \phi(t/2)\) as an integral). 
Hence we have that \(\psi_H(T)\tilde Q^{\otimes 2}[ (\cT_2 -\cT_1) \ind_{\{\cT_2 -\cT_1 \leq \gep T\}}]\) is bounded by
\[
\int^{\gep T}_0 \psi_H(T)\tilde Q^{\otimes 2}\left(\cT_2-\cT_1>t\right)\dd t 
\leq C_0 H\psi_H(T) + C' \int_{C_0 H}^{\gep T} L(t)^2 \int_{t/2}^T \frac{\dd s}{L(s)^2 s} \dd t \,.
\]
The last integral can be rewritten as
\[
\int_{C_0 H/2}^T \frac{\dd s}{ L(s)^2 s} \int_{C_0 H}^{s\wedge \gep T} L(t)^2 \dd t 
\leq c \int_{C_0 H/2}^{\gep T} \dd s + c \gep T  L(\gep T)^2\int_{\gep T}^T \frac{\dd s}{L(s)^2 s} \leq c \gep T  + c' \gep T  \log\big(\tfrac1\gep\big) \,.
\]
where we have used that \(\int_{C_0H}^u L(t)^2 \dd t \leq c L(u)^2 u\) for \(u=s\) or \(u=\gep T\), and then that \(L(s)^2 \asymp L(\gep T)^2\) uniformly for \(s\in [\gep T,T]\). 
Note that the constants do not depend on \(H,T\).
All together, we have 
\[
T^{-1}\psi_H(T) \tilde Q^{\otimes 2}[ (\cT_2 -\cT_1) \ind_{\{\cT_2 -\cT_1 \leq \gep T\}}] \leq \frac{c H}{T} \psi_H(T) +c' \gep \log\big(\tfrac1\gep\big) \,.
\]
To conclude the proof of \eqref{truncatedQ} it only remains to observe that, by definition of \(\psi_H(T)\), and using Potter's bound \cite[Thm.~1.5.6]{BGT89} to bound \(L(s)^2\geq c (s/H)^{-1/2} L(H)^2\) uniformly for \(s\geq H/2\), we have
\begin{equation*}
\frac{H}{T}\psi_H(T) \le  \frac{H}{T} c^{-1} \int^T_{H/2} (s/H)^{1/2} \frac{\dd s}{s} \le c' (H/T)^{1/2}\le c'' \gep^{1/2},
\end{equation*}
where the last inequality comes from the fact that we are considering the case \(\gep T \geq 2 H\).
\end{proof}

\section{Proving \texorpdfstring{\Cref{trickyy}}{the proposition} when disorder is irrelevant: \texorpdfstring{$\gamma\in (\frac23,1)$}{}}
\label{trixx2}

In this section, we prove \Cref{prop:multiprop}-\ref{multi-i}.
Recall that the assumption implies that  $K(t)\sim c t^{-(1+\alpha)}$ with $\alpha = \frac{1-\gamma}{\gamma} \in (0,\frac12)$, see~\eqref{K-reg}.
Our starting point for the proof is the first inequality in \Cref{topitop2}. 
In particular we need to bound $| \cJ^{(1)}_{[0,T]}|+| \cJ^{(2)}_{[0,T]}|$ (recall their definition~\eqref{def:J}), and  the strategy relies on the study the sequence of ``iterated overshoots'' for the renewal process with inter-arrival density $K_{\beta}(\cdot)$, that we describe below.

\subsection{Controlling the second moment via the iterated overshoots}

Let us first define what we intend by ``iterated overshoots''.
We set $\cT_{-1}=\min(\tau_1,\tau'_1)$ and  $\cT_0=\max(\tau_1,\tau'_1)$.
Now for $k\ge 1$, if $\tau_1\ge \tau'_1$ we set
\begin{equation}\begin{split}\label{taugetauprim}
  \cT_{2k-1}&:= \inf\{\tau'_i  : \ \tau'_i\ge  \cT_{2k-2}\},\\
  \cT_{2k}&:= \inf\{\tau_i  : \ \tau_i\ge  \cT_{2k-1}\},\\
  \end{split}
\end{equation}
while if $\tau'_1> \tau_1$ we set $\cT_{2k-1}:= \inf\{\tau_i  : \ \tau'_i\ge  \cT_{2k-2}\}$,  $\cT_{2k}:= \inf\{\tau_i  : \ \tau_i\ge  \cT_{2k-1}\}$.
For \(j\geq 0\), we also define \(\cS_j:= \cT_{j}-\cT_{j-1}\), so that \((\cS_j)_{j\geq 0}\) corresponds to the iterated overshoots of a renewal with inter-arrival density $K_{\beta}$.
We refer to \Cref{fig:overshoot} for an illustration.
\begin{figure}
\centering
\includegraphics[scale=1.2]{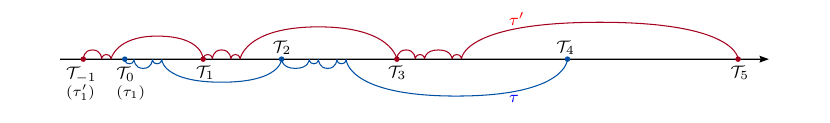}
  \caption{Illustration of the definitions of the iterated overshoots \((\cT_{i})_{i\geq -1}\); here in the case \(\tau_1'<\tau_1\).
  Each overshoot corresponds either to an interval of \(\tau\) visited by \(\tau'\) (counted in $|\cJ^{(1)}_{[0,T]}|$) or to an interval of \(\tau'\) visited by \(\tau\) (counted in $|\cJ^{(2)}_{[0,T]}|$).
  }
  \label{fig:overshoot}
\end{figure}
By construction, and recalling the definition~\eqref{def:J} of \(\cJ^{(1)}_{[0,T]},\cJ^{(2)}_{[0,T]}\), we have
$$| \cJ^{(1)}_{[0,T]}|+| \cJ^{(2)}_{[0,T]}| =\max\{ j  :\cT_j\le T    \}\le \min\{ j\ge 1:  \cS_{j}\ge T \}:= D_T \,.$$
In view of the first inequality in \Cref{topitop2}, we have that
\[
\frac{\bbE\big[  \hat Z_1(Y)^2\big]}{ \bbE\big[  \hat Z_1(Y)\big]^2} -1 
 \le C_A \tilde Q^{\otimes 2} \left[ e^{ c_0 \rho D_T}-1\right] \,.
\]
Therefore, in order to conclude the proof of \Cref{trickyy}, we need to show that for any $\delta>0$, we can find $u(\delta)$ small enough such that, for any $\gb\in (\gb_0,2\gb_0]$,
\begin{equation}
  \label{LaplaceDT}
  \tilde Q^{\otimes 2} \left[ e^{ u D_T}-1\right] \leq \delta \,
\end{equation}
(Recall that \(\tilde Q =\tilde Q_{\beta}\) is the stationary distribution with inter-arrival distribution \(K_{\gb}(\cdot)\)).
The rest of the proof consists in showing~\eqref{LaplaceDT}. It relies on the following lemma, whose proof is postponed to the next subsection, combined with the Markovian structure of \((\cS_{j})_{j\geq 0}\).

\begin{lemma}
  \label{lespetitspoissons}
There exist $\kappa= \frac14 (1-2\alpha)>0$, $\kappa'>0$, $C>0$, and $M$ (large), $\theta$ (small) such that, for all $\beta \in (\gb_0,2\gb_0]$,
 \begin{equation}
  \label{hartz0}
  \tilde Q^{\otimes 2}\left[ (\cS_2/\cS_1)^{-\kappa}\ \big| \ \cS_1=v \right] \leq 
  \begin{cases} 
    e^{-\kappa'} &\quad \text{ for all } v\in [M,\theta T]\,, \\
    C e^{-\kappa'} &\quad \text{ for all } v\in [\theta T,T] \,.
  \end{cases}
 \end{equation}
Additionally, there exists $p(M,\theta)>0$ such that:
\begin{equation}\label{hartz}
  \begin{split}
  \text{If } \cS_1\in[ \theta T,T], \quad & \    \tilde Q^{\otimes 2}\left[ \cS_2\ge T\ | \ \cS_1 \right]\ge p, \\
   \text{With probability $1$},\quad  & \      \tilde Q^{\otimes 2} \left[ \cS_2\ge M\ | \ \cS_1 \right]\ge p.
  \end{split}
\end{equation}
\end{lemma}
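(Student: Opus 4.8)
The plan is to work out explicitly the conditional law of $\cS_2$ given $\cS_1=v$, exploiting the renewal structure of $\tau$ (and the stationarity of $\tilde Q=\tilde Q_\beta$), and then to extract the two inequalities. First I would recall that $\cS_1 = \cT_0 - \cT_{-1}$ is an overshoot: if, say, $\tau_1 > \tau_1'$, then $\cS_1$ is the distance from $\tau_1'$ to the first point of $\tau$ past it. Conditionally on $\cS_1=v$, the point $\cT_0$ sits at a $\tau$-renewal, and $\cS_2 = \cT_1 - \cT_0$ is the overshoot over $\cT_0$ of the independent renewal $\tau'$. Using the renewal-density representation as in the proof of \Cref{preskiid}, the conditional density of $\cS_2$ given $\cS_1=v$ is, up to constants depending only on $A$ (via \Cref{lem:Zc} and the fact that all relevant lengths are $\leq T = A\tf(\beta)^{-1}$),
\begin{equation*}
f_v(t) \asymp e^{-t\tf(\beta)}\Big( K(v+t) + \int_0^v u(v-x) K(x+t)\,\dd x\Big),
\end{equation*}
exactly the same shape as in \Cref{preskiid}. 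Plugging in $K(s)\sim c\,s^{-(1+\alpha)}$ and $u(s)\sim c'\,s^{\alpha-1}$ (from \eqref{eq:Doney}, valid since $\alpha\in(0,\tfrac12)$) and keeping the cutoff $e^{-t\tf(\beta)}$ in mind (so effectively $t\lesssim T$), one gets for $v\leq T$
\begin{equation*}
f_v(t) \asymp e^{-t\tf(\beta)} \times
\begin{cases}
v^{\alpha} t^{-(1+\alpha)} & \text{ if } t\geq v,\\
v^{-1}(v/t)^{\alpha} & \text{ if } t\leq v,
\end{cases}
\end{equation*}
at least for $t$ in the range where the $e^{-t\tf(\beta)}$ factor is of order one; for $t$ comparable to or larger than $T$ one uses the decay of $e^{-t\tf(\beta)}$ to control the tail.

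Next I would compute $\tilde Q^{\otimes 2}[(\cS_2/\cS_1)^{-\kappa}\mid \cS_1=v] = v^{\kappa}\int_0^\infty t^{-\kappa} f_v(t)\,\dd t$. Split the integral at $t=v$. On $\{t\le v\}$ the contribution is of order $v^{\kappa}\cdot v^{-1}v^{\alpha}\int_0^v t^{-\kappa-\alpha}\,\dd t \asymp v^{\kappa}\cdot v^{-1}v^{\alpha}v^{1-\kappa-\alpha} = O(1)$ since $\kappa+\alpha<1$ (indeed $\kappa=\tfrac14(1-2\alpha)$ makes $\kappa+\alpha=\tfrac14+\tfrac12\alpha<\tfrac12$). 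On $\{t\ge v\}$ the integrand behaves like $v^{\kappa}\cdot v^{\alpha}\int_v^{\sim T} t^{-\kappa-1-\alpha}e^{-t\tf(\beta)}\,\dd t$; since $\kappa+\alpha>0$ this integral is dominated by its lower endpoint $t\asymp v$, giving again $O(1)$, with no hidden $T$-dependence because the exponential only helps. The key point for the first line of \eqref{hartz0} is that this $O(1)$ quantity is actually \emph{strictly less than $1$} once $v\geq M$ with $M$ large: the mass of $f_v$ is genuinely spread out to scales of order $v$ (the overshoot of a heavy-tailed renewal over a point at distance $v$ typically has size comparable to $v$ or larger), so $\cS_2/\cS_1$ is typically bounded below away from $0$, hence $\E[(\cS_2/\cS_1)^{-\kappa}]\le e^{-\kappa'}<1$; here one uses $v\le\theta T$ with $\theta$ small so that the exponential cutoff does not yet distort the picture. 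For $v\in[\theta T,T]$ the same computation gives a bound $\le Ce^{-\kappa'}$ with $C=C(\theta,A)$ absorbing the distortion from the $e^{-t\tf(\beta)}$ factor. For \eqref{hartz}, I would note that $\{\cS_2\ge M\}$ has probability bounded below uniformly since $f_v$ always puts order-one mass on $[M,2M]$ (for $M$ fixed, using $K(\cdot)>0$ and the density formula), and $\{\cS_2\ge T\}$ given $\cS_1\in[\theta T,T]$ has probability $\ge p(\theta,A)$ because $\int_{T}^{\infty} f_v(t)\,\dd t \asymp \int_T^\infty v^{\alpha}t^{-(1+\alpha)}e^{-t\tf(\beta)}\,\dd t \gtrsim (v/T)^{\alpha}\asymp \theta^{\alpha}$, up to the $A$-dependent constants, since $T\tf(\beta)=A$ is bounded.

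I expect the main obstacle to be pinning down the \emph{strict} inequality $e^{-\kappa'}<1$ in the first line of \eqref{hartz0} with constants that are genuinely uniform in $\beta\in(\beta_0,2\beta_0]$ — i.e.\ making sure that neither $\kappa'$ nor the threshold $M$ nor $\theta$ secretly depends on $\tf(\beta)$. This is handled by checking that all the estimates above only ever involve ratios of lengths to $T=A\tf(\beta)^{-1}$ and the scale-free asymptotics $K(s)\asymp s^{-(1+\alpha)}$, $u(s)\asymp s^{\alpha-1}$; the exponential factor $e^{-t\tf(\beta)}$ is, after rescaling $t\mapsto t/T$, a fixed function $e^{-At}$ with $A$ a constant, so the whole conditional law of $\cS_2/\cS_1$ (on the event that things stay below $T$) converges, as $\beta\downarrow\beta_0$, to a fixed $\alpha$-dependent law for which the moment $\E[(\cdot)^{-\kappa}]<1$ can be verified by direct computation, and then a compactness/continuity argument upgrades this to a uniform bound over $\beta\in(\beta_0,2\beta_0]$. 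A secondary technical point is justifying the passage from $K(s)\sim c\,s^{-(1+\alpha)}$ to the two-sided bounds $K(s)\asymp s^{-(1+\alpha)}$ uniformly on the relevant ranges, which follows from Potter's bound \eqref{potter} together with $K$ being bounded and bounded away from $0$ on compacts; and similarly for $u$ via \Cref{lem:Zc} and \eqref{eq:Doney}.
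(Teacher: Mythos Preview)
Your approach is essentially the same as the paper's: compute the conditional density of $\cS_2$ given $\cS_1=v$ via the renewal equation, use the asymptotics of $K$ and $u$, and then estimate moments and tails. The shape of $f_v$ you write down and the $O(1)$ bounds on $\tilde Q^{\otimes 2}[(\cS_2/\cS_1)^{-\kappa}\mid\cS_1=v]$ are correct.

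However, there is a genuine gap at the crucial step. Your heuristic for the \emph{strict} inequality in the first line of \eqref{hartz0}---``$\cS_2/\cS_1$ is typically bounded below away from $0$, hence $\E[(\cS_2/\cS_1)^{-\kappa}]\le e^{-\kappa'}<1$''---does not prove anything: a random variable that is typically of order one can have inverse $\kappa$-moment equal to (or larger than) $1$; take $\cS_2/\cS_1\equiv 1$ as a trivial counterexample. This is precisely where the specific choice $\kappa=\tfrac14(1-2\alpha)$ earns its keep. The paper passes to the limit $v\to\infty$ (first replacing $K_\beta,u_\beta$ by $K,u$ at the cost of $1+\gep_1$ for $v\le\theta T$, via \Cref{lem:Zc}) and \emph{computes the limiting integral explicitly}: the limiting density of $\cS_2/\cS_1$ is $\tfrac{\sin(\pi\alpha)}{\pi}\,s^{-\alpha}/(1+s)$ and one finds
\[
\lim_{v\to\infty}\int_0^\infty s^{-\kappa} f_0^{(v)}(s)\,\dd s
\;=\;\frac{\sin(\pi\alpha)}{\pi}\int_0^\infty \frac{s^{-\alpha-\kappa}}{1+s}\,\dd s
\;=\;\frac{\sin(\pi\alpha)}{\sin(\pi(\alpha+\kappa))}\;<\;1,
\]
the strict inequality holding because $\alpha+\kappa=\tfrac14+\tfrac{\alpha}{2}<\tfrac12$ and $\sin$ is increasing on $[0,\pi/2]$. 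You do say later that the limiting moment ``can be verified by direct computation,'' which is right, but this \emph{is} the content of the lemma; without carrying it out you have no contraction, and your earlier heuristic does not substitute for it.

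A secondary correction: your argument for the second line of \eqref{hartz}, that ``$f_v$ always puts order-one mass on $[M,2M]$,'' is false for large $v$---your own formula gives $f_v(t)\asymp v^{\alpha-1}t^{-\alpha}$ for $t\le v$, so the mass on any fixed window $[M,2M]$ tends to $0$ as $v\to\infty$. What you want instead is that the mass on $[0,M]$ tends to $0$, so $P(\cS_2\ge M\mid\cS_1=v)\to 1$; together with a trivial bound for bounded $v$ this gives the claim. The paper does this in one stroke by observing $\bar K_\beta(M+s)\ge p_M\,\bar K_\beta(s)$ for all $s\ge 0$, with $p_M=e^{-M\tf(2\beta_0)}\inf_{s\ge 0}K(M+s)/K(s)>0$.
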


Let us now use \Cref{lespetitspoissons} to conclude the proof of~\eqref{LaplaceDT}.
Let $M$ be as in \Cref{lespetitspoissons} and consider $(k_i)_{i\ge 1}$ the ordered sequence of $\{j\ge 1  : \cS_j\ge M\}$, with the convention $k_0=0$.
Thanks to the second line of~\eqref{hartz}, and because of the Markovian structure of \((\cS_j)_{j\geq 0}\), we have that \(k_{i+1}-k_i\), conditionally on $\cG_i:=\sigma\left( \cS_j  : j\le k_i \right)$, is dominated by a geometric random variable with parameter \(p\).
Therefore, for any $i\ge 1$, if $a$ small enough so that $(1-p)e^a<1$, we have
\begin{equation*}
 \bbE\left[  e^{ a (k_{i+1}-k_i)} \ \big| \ \cG_i\right]\le \frac{p e^a}{1-(1-p)e^a} =: e^{\lambda(a)} \,.
\end{equation*}
As a consequence $e^{a k_n - \lambda(a) n} $ is a supermartingale, so that  setting $\bar D_T:= \min\{ i : \cS_{k_i}\ge T \}$ and recalling that \(D_T := \min\{j : \cS_j \geq T \}\), we have
\begin{equation}
  \label{keyu}
  \tilde Q^{\otimes 2}\left[ e^{a D_T -\lambda(a) \bar D_T}  \right]\le 1.
\end{equation}
Hence using Cauchy-Schwarz's inequality and applying~\eqref{keyu} with \(a=2u\), we obtain
\begin{equation*}
  \tilde Q^{\otimes 2}\left[ e^{u D_T}\right]^2 = \tilde Q^{\otimes 2}\left[e^{ \frac12 \lambda(2u) \bar D_T} e^{\frac12 (2u D_T - \lambda(2u) \bar D_T)}  \right]^2 \le  \tilde Q^{\otimes 2}\left[ e^{\lambda(2u) \bar D_T}\right] \,.
 \end{equation*}
Therefore, it remains to show that $\tilde Q^{\otimes 2}[e^{ \lambda(2u)\bar D_T}]$ can be made arbitrarily close to $1$ by taking $u$ small (note that \(\lambda(2u)\downarrow 0\) as \(u\downarrow0\)).
For this, we use \Cref{lespetitspoissons}-\eqref{hartz0}. 
Using again the Markovian structure of \((\cS_j)_{j\geq 0}\), it shows that, setting $\mathcal{X}_n:=\#\{ i\le k_n  : \cS_{k_i}\in [\theta T,T] \}$,
\[
 \prod_{i=0}^{n-1} \left(\frac{\cS_{k_i}}{\cS_{k_{i}+1}}  \right)^{\kappa} e^{\kappa' n} C^{-\mathcal{X}_n}
\]
is a $\cG_n$-supermartingale.
In particular we have
\begin{equation}
  \label{smAl}
  \tilde Q^{\otimes 2}\bigg[ \prod_{i=0}^{\bar D_T-1}\left( \frac{\cS_{k_i}}{\cS_{k_{i}+1}}  \right)^{\kappa} e^{\kappa'\bar D_T} C^{-\mathcal{X}_{\bar D_T}} \bigg]\le 1 \,.
\end{equation}
Let us set $r= \kappa'/(2\lambda(2u))$ so \(\lambda(2u) = \kappa'/2r\).
Using (multi-index) H\"older's inequality with exponents $2r$, $2r$ and $\frac{r}{r-1}$ (taking \(u\) small so that \(r>1\)), we obtain that \(\tilde Q^{\otimes 2}[  e^{\lambda(2u) \bar D_T}]\) is bounded by
\begin{equation*}
    \tilde Q^{\otimes 2}\bigg[ \prod_{i=0}^{\bar D_T-1}\left( \frac{\cS_{k_i}}{\cS_{k_{i}+1}}  \right)^{\kappa} e^{\kappa' \bar D_T} C^{-\mathcal{X}_{\bar D_T}} \bigg]^{\frac{1}{2r}} 
 \tilde Q^{\otimes 2}\bigg[ \prod_{i=0}^{\bar D_T-1}\left( \frac{\cS_{k_i}}{\cS_{k_{i}+1}}  \right)^{-\kappa} \bigg]^{\frac{1}{2r}} 
  \tilde Q^{\otimes 2}\left[ C^{\frac{1}{2(r-1)}\, \mathcal{X}_{\bar D_T}}  \right]^{\frac{r-1}{r}},
\end{equation*}
and the first term is smaller than~$1$ thanks to~\eqref{smAl}.
Now since $\mathcal{X}_{\bar D_T}$ is dominated by a geometric of parameter $p$ (thanks to \eqref{hartz}), the last term verifies
\[
  \tilde Q^{\otimes 2}\left[ C^{\frac{1}{2(r-1)}\, \mathcal{X}_{\bar D_T}}  \right]^{\frac{r-1}{r}} \leq \bigg(\frac{p C^{\frac{1}{2(r-1)}}}{1- (1-p) C^{\frac{1}{2(r-1)}}}\bigg)^{\frac{r}{r-1}} \,,
\]
which can be made arbitrarily close to~$1$ by choosing $r$ large (that is $u$ small).
For the remaining term, note that we have
\begin{equation*}
  \prod_{i=0}^{\bar D_T-1}\left( \frac{\cS_{k_{i}+1}} {\cS_{k_i}} \right)^{\kappa} \le \left(\frac{\cS_{D_T}}{\cS_0}\right)^{\kappa}\le \frac{1}{2}\Big( (\cS_{ D_T}/T)^{2\kappa} +(T/\cS_0)^{2\kappa} \Big)  \,,
\end{equation*}
using the standard inequality \(ab \leq \frac12 (a^2+b^2)\).
Thus, using also Jensen's inequality, we have (recall that \(2\kappa = \frac12 -\alpha < \frac12 (1-\alpha) <1\))
\[
\tilde Q^{\otimes 2}\bigg[ \prod_{i=0}^{\bar D_T-1}\left( \frac{\cS_{k_i}}{\cS_{k_{i}+1}}  \right)^{-\kappa} \bigg]^{\frac{1}{2r}}  \leq 2^{-\frac{1}{2r}} \Big( \tilde Q^{\otimes 2}\big[(\cS_{ D_T}/T) \big]^{2\kappa} + \tilde Q^{\otimes 2}\big[(
T/\cS_{0})^{(1-\alpha)/2} \big]^{4\kappa/(1-\alpha)}\Big)^{\frac{1}{2r}} \,.
\]
To conclude we therefore need to show that \(\tilde Q^{\otimes 2}(\cS_{ D_T}/T) \leq C_A\) and that \(\tilde Q^{\otimes 2} ((T/\cS_0)^{(1-\alpha)/2}) \leq C_A\), for some constant $C_A$ that may depend on \(A\) but not on \(\beta\in (\beta_0,2\beta_0)\).
Then, the upper bound can be made arbitrarily close to~$1$ by choosing $r$ large (that is $u$ small).

\smallskip
Let us start with $\cS_{D_T}$ and show that \(\tilde Q^{\otimes 2}(\cS_{ D_T})\leq C_A T\).
Assume that $D_T$ is even (this entails no loss of generality) and consider the index \(j\) such that $\cT_{D_T}=\tau_j$ and let $\vartheta= \cT_{D_T-1}-\tau_{j-1}$ (referring to \Cref{fig:overshoot} may help the reader to get a visual grasp on the notation).
Then, conditionally on $\cT_{D_T}=\tau_j$ and on $\vartheta=x$, the distribution of $\cS_{D_T}$ is that of $\tau_1-\tau_0-x$ conditioned to be larger than $T$. 
We thus have, for \(t \geq T\),
\[
\tilde Q^{\otimes 2} \left( S_{D_T}\ge t \ | \  \cT_{D_T}=\tau_j, \vartheta=x\right)=\frac{\bar K_{\beta}(x+t)}{\bar K_{\beta}(x+T)} =  e^{ -\tf(\beta)(t-T)} \frac{\int_0^{\infty} e^{-\tf(\gb) u } K( x + t +s) \dd s }{\int_0^{\infty} e^{-\tf(\gb) u } K( x + T +s) \dd s} \leq e^{ -\tf(\beta)(t-T)} \,,
\]
where the last inequality follows from the fact that $K(t)$ is non-increasing, see \Cref{lem:unimod}. 
This entails that $\tilde Q^{\otimes 2} (\cS_{ D_T} )\le T+\tf(\beta)^{-1} \leq (1+A^{-1}) T$, as desired. 
As far as \(\cS_0\) is concerned, let us show that $\tilde Q^{\otimes 2} ( (\cS_{0})^{-\eta})\le C_A T^{-\eta}$ for \(\eta := \frac12 (1-\alpha)\).
Recalling that under \(\tilde Q\) the first renewal \(\tau_1\) has density \(\tf'(\gb) \bar K_{\gb}(s)\), the density distribution of \(\cS_0 = |\tau_1-\tau_1'|\) is given by 
\[
f_{\cS_0}(t)=  2 \tf'(\gb)^2 \int^{\infty}_0 \bar K_{\beta}(s)\bar K_{\beta}(s+t)\dd s \le  4 \tf'(\gb) K(t)  \,,
\]
where we have used that \(\bar K_{\gb}(s+t) = \frac{\gb}{\gb_0} K(s+t) e^{- (s+t) \tf(\gb)} \leq 2 K(t)\) by monotonicity of \(K(\cdot)\) (see \Cref{lem:unimod}) and then fact that \(\tf'(\gb) \bar K_{\gb}(s)\) is a density.
Hence integrating up to $T$ (and bounding the rest trivially) we obtain that
\begin{equation*}
 \tilde Q^{\otimes 2} (\cS_{0}^{-\eta} )\le 4 \tf'(\gb) \int^{T}_0  s^{-\eta}\bar K(s) \dd s + T^{-\eta} \,.
\end{equation*}
Now, combining \Cref{lem:Zc} and~\eqref{eq:Doney}, we get that \(\tf'(\gb) \leq c \tf(\gb)^{-(1-\alpha)} = c A^{1-\alpha}  T^{\alpha-1}\).
For \(\eta < 1-\alpha\), we also have that \(\int^{T}_0  s^{-\eta}\bar K(s) \dd s \leq c T^{1-\alpha-\eta}\).
All together, this shows that $\tilde Q^{\otimes 2} (\cS_{0}^{-\eta} )\le C_A T^{-\eta}$, as desired.
\qed

\subsection{Proof of \texorpdfstring{\Cref{lespetitspoissons}}{the lemma}}

We show~\eqref{hartz0} and~\eqref{hartz} separately.
Note that we can assume that $\beta/\beta_0$ is small (thus \(T\) large) as  otherwise there is nothing to prove.

\smallskip\noindent
\textit{Proof of~\eqref{hartz0}. }
Let us denote $f^{(v)} = f_{\gb}^{(v)}$ the conditional density of $\cS_2/\cS_1$ given $\cS_1 =v$, under $\tilde Q^{\otimes 2}$. 
We have the following expression
\[
 f^{(v)}(s):= v\Big(K_{\beta}(v(s+1))+ \int^v_0 u_{\beta}(y)K_{\beta}(v(s+1)- y) \dd y\Big) \,.
\]
Given~$\gep_1$, we can take $\gb$ sufficiently close to $\gb_0$ such that $K_{\gb}(t) = \frac{\gb}{\gb_0}e^{-t\tf(\gb)} K(t) \leq (1+\gep_1)^{1/2} K(t)$ for all \(t\geq 0\) and we can also choose $\theta$ sufficiently small so that for all $t\leq \theta T$ we have  $u_{\gb}(t) \leq (1+\gep_1)^{1/2} u(t)$, see \Cref{lem:Zc}.
Therefore, for $v\in[M,\theta T]$ we have
\begin{equation*}
  f^{(v)}(s)\le (1+\gep_1)v \Big( K(v(s+1))+\int^v_0 u(y)K(v(s+1)- y) \dd y \Big)=: (1+\gep_1) f^{(v)}_0(s) \,,
\end{equation*}
so that 
\[
\tilde Q^{\otimes 2}[ (\cS_2/\cS_1)^{-\kappa}] \leq (1+\gep_1) \int_0^{\infty} s^{-\kappa} f^{(v)}_0(s) \dd s \,.
\]
For $v\in [\theta T,T]$ the above is valid with an additional multiplicative constant that depends on $A$.
We then need to show that the r.h.s.\ is smaller than $e^{-\kappa'}$ for $v\geq M$, provided that \(M\) is sufficiently large.
Since $\gep_1$ is arbitrary, this is equivalent to showing that for $\kappa = \frac14 (1-2\alpha)>0$ we have
\begin{equation}
  \label{Eratio}
 \lim_{v\to \infty} \int_0^{+\infty} s^{-\kappa} f^{(v)}_0(s) \dd s  <1 \,.
\end{equation}
Note that the integral on the left hand side of~\eqref{Eratio} has two parts, corresponding to the two terms in~$f_0^{(v)}$. 
The first part is
\[
\int_0^{+\infty} s^{-\kappa} v K(v(s+1)) \dd s \leq c v^{-\alpha} \int_0^{+\infty} s^{-\kappa} (1+s)^{-(1+\alpha)}\dd s
\leq c' v^{-\alpha} \,,
\]
recalling also that \(K(t) \sim c t^{-(1+\alpha)}\) so that $K(v(s+1)) \leq c (v(1+s))^{-(1+\alpha)}$ for any $x\geq 0$ and $v\geq 1$.
This can therefore be made arbitrarily small by choosing \(v\) large.
The second part of the integral in the left-hand side of~\eqref{Eratio} is
\[
\int_0^{\infty} s^{-\kappa} \Big( v \int_0^v u(y) K(v(s+1) -y) \dd y  \Big) \dd s =   \int^{\infty}_0 \int_0^1 s^{-\kappa}  v^2 u( v x) K\big( v(1+s-x) \big)  \dd x \dd s .
\]
Now, from \eqref{eq:Doney}, we obtain that for any $x\in (0,1)$
\begin{equation*}
 \lim_{v\to \infty} v^2 u( v x) K\big( v(1+s-x) \big) = \frac{\alpha \sin(\pi \alpha)}{\pi}(1+s-x)^{-(1+\alpha)} x^{\alpha-1} \,.
\end{equation*}
Using that $u(t)\le C t^{1-\alpha} s$ and  $K(t)\le C t^{-(1+\alpha)}$, we also get that for any fixed \(v>1\) 
\begin{equation*}
 v^2 u( v x) K\big( v(1+s-x) \big) \le C^2  (1+s-x)^{-(1+\alpha)} x^{\alpha-1} \,.
\end{equation*}
Hence, using dominated convergence, we have 
\begin{equation*}
  \begin{split}
  \lim_{v\to \infty}\int^{\infty}_0 \int_0^1 x^{-\kappa}  v^2 K\big( (1+x-s)v \big) u(sv) \dd s&  = \frac{\alpha \sin(\pi \alpha)}{\pi}\int^{\infty}_0\int^1_0 s^{-\kappa} (1+s-x)^{-(1+\alpha)} x^{\alpha-1} \dd x \dd s  \\
  & =  \frac{ \sin(\pi \alpha)}{\pi}\int^{\infty}_0 \frac{s^{-\alpha-\kappa}}{1+s} \dd s \,.
  \end{split}
\end{equation*}
For the second inequality, we have computed the integral in $x$ by performing a change of variable \(t= \frac{1+s-x}{x}\) (\(x=\frac{1+s}{1+t}\)), so it is equal to \(\frac{1}{1+s}\int_{s}^{+\infty}  t^{-(1+\alpha)} \dd t = \frac{s^{-\alpha}}{\alpha(1+s)} \).
We also have that for \(u\in (0,1)\), by a change of variable \(t= \frac{s}{1+s}\),
\[
\int_0^{\infty} \frac{s^{-u}}{1+s} \dd s = \int_0^1 t^{-u} (1-t)^{u-1}  \dd t = \Gamma(u) \Gamma(1-u) = \frac{\pi}{\sin(\pi u)}\,.
\]
Hence, we get that
\[
\lim_{v\to \infty}\int^{\infty}_0 \int_0^1 x^{-\kappa}  v^2 K\big( (1+x-s)v \big) u(sv) \dd s = \frac{\sin(\pi \alpha)}{\sin(\pi (\alpha+\kappa))}  <1 \,,
\]
the last inequality being valid because \(\alpha+\kappa  <\frac12\) (recall \(\kappa = \frac14(1-2\alpha)\) and \(\alpha<\frac12\)).
This concludes the proof of~\eqref{Eratio} and thus of~\eqref{hartz0}.

\smallskip\noindent
\textit{Proof of~\eqref{hartz}. }
For the first inequality, on the event $\cS_1 \in [\theta T, T]$, we simply write 
\[
  \tilde Q^{\otimes 2}\left[ \cS_2\ge T\ | \ \cS_1 \right] \geq  \int_0^{\cS_1} u_{\gb}(t) \bar K_{\gb} ( T + \cS_1 -t ) \dd t \ge   \bar K_{\gb} ( 2T ) \int_0^{\theta T} u_{\gb}(t) \dd t \,.
\]
Now, we have that $\bar K_{\beta}(2T) \geq \frac{\gb}{\gb_0} \int_{2T}^{3T} e^{- s\tf(\gb)} K(s) \dd s \geq e^{-3A} TK(3T)$, since \(K(\cdot)\) is non-increasing (see \Cref{lem:unimod}): this gives that \(\bar K_{\beta}(2T) \geq c_A T^{-\alpha}\).
On the other hand, \Cref{lem:Zc} together with~\eqref{eq:Doney} shows that $u_{\gb}(t) \geq c_A t^{\alpha-1}$ uniformly for $t \leq T =A\theta \tf(\gb)^{-1}$, so that $\int_0^{\theta T} u_{\gb}(s) \dd s \geq c_A' (\theta T)^{\alpha}$.
All together, we obtain that
\[
\tilde Q^{\otimes 2}\left[ \cS_2\ge T\ | \ \cS_1 \right] \geq p = p(\theta,A) >0 \,.
\]
For the second inequality in~\eqref{hartz}, we also easily have that
\[
\begin{split}
\tilde Q^{\otimes 2} \left[ \cS_2\ge M\ | \ \cS_1 \right]
&  = \bar K_{\gb}(M+\cS_1)+  \int_0^{\cS_1} u_{\gb}(y) \bar K_{\gb}(M+\cS_1-y) \dd y  \\
& \ge p_M \bigg( \bar K_{\gb}(\cS_1) + \int_0^{\cS_1} u_{\gb}(y) \bar K_{\gb}(\cS_1-y) \dd y \bigg) =p_M \,,
\end{split}
\]
where we have set
\[
p_{M} := \inf_{\beta \in (\beta_0,2\beta_0]}\inf_{s\geq 0}  \frac{K_{\gb}(M+s)}{K_{\gb}(s)} = e^{-M \tf(2\beta_0)} \inf_{s\geq 0}  \frac{K(M+s)}{K(s)} \,,
\] 
so that \(\bar K_{\gb}(M+s) \geq p_M \bar K_{\gb}(s)\) for all \(s\geq 0\).
This concludes the proof of~\eqref{hartz}.
\qed

\medskip

\noindent {\bf Acknowledgments:}
H.L.\ acknowledges the support of a productivity grand from CNQq and of a CNE grant from FAPERj.
Q.B. acknowledges the support of Institut Universitaire de France and ANR Local (ANR-22-CE40-0012-02).

\appendix

\section{Existence of the free energy: proof of \texorpdfstring{\Cref{freeenergy}}{the proposition}}
\label{app:freeen}

We only need to prove the existence and coincidence of the \(\bbP\)-a.s.\ and \(L^1(\bbP)\) limits $\lim_{T\to\infty}\frac{1}{T}\log Z^{Y,\mathrm c}_{\beta,T}$ and $\lim_{T\to\infty}\frac{1}{T}\log Z^{Y}_{\beta,T}$ under the assumption \eqref{JPP}.
Indeed, pointers to adequate references have been given for all other items in the statement. 
We only deal with the convergence of $\frac{1}{T}\log Z^{Y,\mathrm c}_{\beta,T}$ to $\tf(\rho,\beta)$, since the equality of the limits follows from the following relation between the free and the constrained partition function
\begin{equation}
\label{freecontraint}
  Z^{Y}_{\beta,T} = 1+ \int^T_0   Z^{Y,\cons}_{\beta,t} \dd t  \,,
\end{equation}
recall for instance~\eqref{goodexpress}.
Note that since \(Z^{Y}_{\beta,T} \geq 1\) we readily have that \(\tf(\rho,\beta) \geq 0\).
To proceed we first use superadditivity to establish the existence of the limit in the constrained case, for integer valued $T$, then check that that variation of $\log Z^{Y,\mathrm c}_{\beta,T}$ between two consecutive integers cannot by very large and then prove that $\tf(\rho,\beta)\ge 0$ by a simple comparison argument. Recalling the definition \eqref{partsegos}
we have
\begin{equation}\label{supermu}
 Z^{Y,\mathrm c}_{\beta,T}\ge  \beta \bE\left[e^{\beta H^Y_T(X)}\ind_{\{X_S=Y_S; X_T=Y_T\}} \right]   = \beta^{-1} Z^{Y,\mathrm c}_{\beta,S} Z^{Y,\mathrm c}_{\beta,[S,T]}.
\end{equation}
By applying the subadditive ergodic Theorem to the sequence $\big(\beta^{-1}Z^{Y,\mathrm c}_{\beta,n}\big)_{n\in \bbN}$, we obtain the existence and coincidence of the following limits
\begin{equation}\label{existos}
\tf(\rho,\beta):=\lim_{n\to \infty} \frac{1}{n}\bbE\left[ \log  Z^{Y,\mathrm c}_{\beta,n}\right]=  \lim_{T\to \infty} \frac{1}{n} \log  Z^{Y,\mathrm c}_{\beta,n}.
\end{equation}
To complete the statement, we only need to check that  $\log  Z^{Y,\mathrm c}_{\beta,T}$ cannot vary a lot within an interval $[n,n+1]$. 
For $T\geq 1$, we let $\mathrm{T}_- :=\lfloor T\rfloor-1$ and $\mathrm{T}_+ := \lceil T \rceil +1$, so from~\eqref{supermu} we have
\begin{equation*}
Z^{Y,\mathrm c}_{\beta,\mathrm{T}_-} Z^{Y,\mathrm c}_{\beta,[\mathrm{T}_-,T]}\le Z^{Y,\mathrm c}_{\beta,T}\le \frac{Z^{Y,\mathrm c}_{\beta,\mathrm{T}_+}}{ Z^{Y,\mathrm c}_{\beta,[T,\mathrm{T}_+]}} \, .
\end{equation*}
We obtain that the convergence \eqref{existos} extends to $T\to \infty$, $T\in \bbR_+$ by showing that 
\begin{equation}
\lim_{T\to \infty}\frac{1}{T}\log  Z^{Y,\mathrm c}_{\beta,[\mathrm{T}_-,T]}=\lim_{T\to \infty}\frac{1}{T}\log  Z^{Y,\mathrm c}_{\beta,[T,\mathrm{T}_+]}=0 \,,
\end{equation}
almost surely and in $L^1$. 
We focus on the first case, the second one being identical.
We have 
\begin{equation*}
\bP \left( X_{T-T_{-}}= Y_{T}- Y_{T_{-}}\right) \le  Z^{Y,\mathrm c}_{\beta,[\mathrm{T}_-,T]}   \le e^{\beta (T-T_{-})}\bP \left( X_{T-T_{-}}= Y_{T}- Y_{T_{-}}\right).
\end{equation*}
Since  $T-\mathrm{T}_-\in [1,2)$ it is sufficient to show that the probability 
$\bP(X_{T-\mathrm{T}_-} = Y_T-Y_{\mathrm{T}_-})$ does not decay exponentially.
Note that the probability is bounded from below by the probability of reaching the endpoint in one jump which is equal to 
$(T-T_-)e^{-(T-T_-)} J(Y_{T}- Y_{T_{-}})$. Since $J$ has polynomial decay it is sufficient to show that $|Y_{T}- Y_{T_{-}}|$ cannot grow exponentially in $T$. Given the tail behavior of~$J$, this is an easy consequence of the Borel-Cantelli Lemma.
\qed

\section{Homogeneous pinning model of a continuous-time renewal process}
\label{app:homogeneous}

The goal of this section is to prove \Cref{lem:Zc}. But let us first give a few results about continuous-time renewal processes.
Let $K(t)$ be some positive density on $\bbR_+$ and let $\tau = (\tau_0=0,\tau_1,\ldots)$ be a renewal process with inter-arrival distribution $K(\cdot)$; we may also interpret $\tau$ as a subset of $\bbR_+$. 
Recall that we define the conditional distribution $\bQ(  \cdot \mid t\in \tau )=\lim_{\gep\to 0 }\bQ(  \cdot \mid \tau\cap[t,t+\gep]\ne \emptyset )$. 
We then have the following lemma, analogous to~\cite[Lem.~A.2]{GLT10} (with an identical proof).
\begin{lemma}
\label{lem:removecond}
Assume that the inter-arrival density is $K(t)= L(t) t^{-(1+\alpha)}$ for some $\alpha>0$ and some slowly varying function $L(\cdot)$.
Then, there exists a constant $C$ such that, for any $t>1$ and any non-negative measurable function~$f$
\[
\frac{\bQ\big[ f\big( \tau\cap [0,t] \big) \mid 2t\in \tau \big]}{\bQ\big[ f\big( \tau\cap [0,t] \big)  \big]}\leq \max_{r\in [0,t]}\left( \frac{ \int^{2t}_t K(s-r)u(2t-s)\dd s + K(2t-r)  }{u(2t)\int^{\infty}_t K(s-r)\dd s}\right) \le C  \,.
\]
\end{lemma}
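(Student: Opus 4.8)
The first inequality is a ``decompose on the last renewal'' identity, and the second is a uniform estimate resting on the regular variation of $K$ and on the renewal theorem. For the first inequality, the plan is to condition on the position $r=\tau_\ell$ of the last point of $\tau$ in $[0,t]$, where $\ell:=\max\{k:\tau_k\le t\}$; a configuration of $\tau\cap[0,t]$ is encoded by $r\in[0,t]$ together with the increasing sequence $\tau\cap[0,r]$, and $f(\tau\cap[0,t])$ depends only on this data since there is no renewal in $(r,t]$. Under $\bQ$ such a configuration carries ``density'' $\prod_{i=1}^{\ell}K(\tau_i-\tau_{i-1})\,\bar K(t-r)$, with $\bar K(x):=\int_x^{\infty}K(s)\,\dd s$, so that $\bar K(t-r)=\int_t^{\infty}K(s-r)\,\dd s$; under $\bQ(\,\cdot\mid 2t\in\tau)$ it carries density $u(2t)^{-1}\prod_{i=1}^{\ell}K(\tau_i-\tau_{i-1})\,h(r)$, where
\[
h(r):=K(2t-r)+\int_t^{2t}K(s-r)\,u(2t-s)\,\dd s
\]
is the ``density'' of $\{2t\in\tau\}$ given $\tau\cap[0,t]$: decomposing on the first renewal point $s$ after $r$, which must lie in $(t,2t]$ whenever $2t\in\tau$ (as $r$ is the last point $\le t$), either $s=2t$ (contribution $K(2t-r)$) or $s\in(t,2t)$ and the renewal restarted at $s$ hits $2t$ (contribution $K(s-r)u(2t-s)$); the normalisation $u(2t)$ is the renewal density, since $\bQ(\tau\cap[2t,2t+\gep]\ne\emptyset)\sim\gep\,u(2t)$. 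As $f\ge 0$, replacing $h(r)$ by $\bar K(t-r)\,\sup_{r'\in[0,t]}h(r')/\bar K(t-r')$ in the numerator yields the first bound $\bQ[f\mid 2t\in\tau]/\bQ[f]\le\sup_{r\in[0,t]}h(r)/(u(2t)\,\bar K(t-r))$.

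For the second inequality, put $a:=t-r\in[0,t]$ and substitute $s=t+w$: one must bound $\frac{K(t+a)+\int_0^t K(a+w)\,u(t-w)\,\dd w}{u(2t)\,\bar K(a)}$ uniformly in $a\in[0,t]$ and $t>1$ (for $t$ in a compact set this is clear, so take $t$ large). The plan is to invoke Karamata's theorem, $\bar K(x)\sim\alpha^{-1}L(x)x^{-\alpha}$, together with the asymptotics of the renewal density: by the renewal theorem and Doney's theorem~\eqref{eq:Doney}, $u(x)\sim c_\alpha x^{\alpha-1}/L(x)$ if $\alpha\in(0,1)$ while $u(x)\to(\int_0^{\infty}sK(s)\,\dd s)^{-1}$ if $\alpha>1$; hence $\int_0^x u(y)\,\dd y$ is regularly varying of index $\alpha\wedge 1$, and $u(2t)\bar K(t)\asymp t^{-1}$ when $\alpha<1$ while $u(2t)\bar K(t)\asymp\bar K(t)$ when $\alpha>1$ (the case $\alpha=1$ is handled the same way, distinguishing whether $K$ has finite mean). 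Then $K(t+a)\le CK(t)\le C\bar K(t)/t\le Cu(2t)\bar K(a)$ by Potter's bound~\eqref{potter} and $a\le t$; splitting the integral at $w=t/2$, on $\{w\le t/2\}$ one has $u(t-w)\le Cu(2t)$ (uniform behaviour of regularly varying functions), so that part is $\le Cu(2t)\int_0^{t/2}K(a+w)\,\dd w\le Cu(2t)\bar K(a)$; on $\{w\ge t/2\}$ one has $K(a+w)\le CK(t/2)$ by Potter, so that part is $\le CK(t/2)\int_0^{t/2}u(y)\,\dd y$, which the asymptotics above bound by $C'u(2t)\bar K(t)\le C'u(2t)\bar K(a)$ (using $a\le t$ and $\bar K$ non-increasing). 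Summing the three contributions gives the constant $C$.

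The renewal decomposition and the term-by-term estimates are routine; \textbf{the main obstacle} is making the second step \emph{uniform} in $t>1$ and in $a\in[0,t]$, the delicate cases being $a\to 0$ and $a\to t$, which forces a careful use of Potter's bounds for both $L$ and $1/L$ and of the uniform asymptotics for $u$. A minor technical nuisance is that $u$ may blow up near $0$ when $\alpha<1$, which is why one keeps the cumulative quantity $\int_0^{t/2}u(y)\,\dd y$ rather than estimating $u$ pointwise. This follows the scheme of~\cite[Lem.~A.2]{GLT10} adapted to continuous time.
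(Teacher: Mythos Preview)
Your proposal is correct and follows exactly the scheme of \cite[Lem.~A.2]{GLT10}, which is all the paper does as well: the paper gives no independent proof and simply states that the argument is ``identical'' to that reference. Your last-renewal decomposition yielding the ratio $h(r)/(u(2t)\bar K(t-r))$ and the subsequent split of the integral at $w=t/2$, combined with Potter's bounds and the renewal asymptotics~\eqref{eq:Doney}, is the standard route and is carried out accurately.
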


\begin{proof}[Proof of \Cref{lem:Zc}]
We only consider the case where $\gb/\gb_0$ is close to $1$, in which case $\tf(\gb)$ is small,  \textit{i.e.}\ smaller than some fixed $\delta>0$.
The case where $\tf(\gb)>\delta$ is easier since then $u(1/\tf(\gb))$ is bounded from below by a constant.
Let us start with the case $t\leq \frac{1}{\tf(\gb)}$.
Recalling \eqref{zubet} and~\eqref{asanexpect} we have
\begin{equation*}
 u_{\beta}(t) = z_{\gb,t}^{\cons} e^{-\tf(\beta)t} = u(t)e^{-\tf(\beta)t}\bQ\Big[ (\gb/\gb_0)^{|\tau\cap (0,t]|}  \; \Big| \; t\in \tau \Big] \ge u(t)e^{-\tf(\beta)t}.
\end{equation*}
This yields the desired lower-bound in \eqref{cawre}-\eqref{cawre2}.
For the upper-bound, we apply Cauchy-Schwarz inequality and use the symmetry of $\tau\cap (0,t)$ around $t/2$ under \(\bQ (\cdot \mid t\in \tau)\) to obtain
\begin{equation*}
\bQ\Big[ (\gb/\gb_0)^{|\tau\cap (0,t]|}  \; \Big| \; t\in \tau \Big] \leq  (\beta/\beta_0)\bQ\Big[ (\gb/\gb_0)^{2|\tau\cap (0,\frac12 t]|}  \; \Big| \; t\in \tau \Big].
\end{equation*}
Now, using \Cref{lem:removecond} (assume that \(t\geq 2\) otherwise the proof is simpler), we get that there exists a constant $C$ such that
\begin{equation*}
 \bQ\Big[ (\gb/\gb_0)^{|\tau\cap (0,t]|}  \; \Big| \; t\in \tau \Big]  \le (\beta/\beta_0) \left(1+  C \bQ\Big[ (\gb/\gb_0)^{2|\tau\cap (0,\frac12 t]|} -1  \Big]\right).
\end{equation*}
To conclude the proof of the upper-bound in \eqref{cawre}-\eqref{cawre2} we need to show that 
\begin{equation*}
 \bQ\Big[ (\gb/\gb_0)^{2|\tau\cap (0,\frac12 t]|}\Big]  \le \begin{cases}
  1+C   \quad &\text{ if } t\le \frac{1}{\tf(\beta)},\\
  1+\gep/C  \quad &\text{ if } t\le \frac{\eta}{\tf(\beta)}.
  \end{cases}
\end{equation*}
For more readability we set $t'=t/2$ and $\beta'=\beta^2/\beta_0$.
Using a decomposition of the expectation up to the last contact point, denoting \(\overline{K}(t) :=\int_t^{\infty} K(s) \dd s\) and similarly for \(\overline K_{\beta}\), we have
\begin{equation*}
  \bQ\Big[ (\gb'/\gb_0)^{|\tau\cap (0,t']|}\Big]= \overline K(t') + \int^{t'}_0 u_{\beta'}(s) e^{s \tf(\gb')} \overline K(t-s) \dd s  .
 \end{equation*}
We show just below that we have
\begin{equation}
  \label{supKKb}
 \sup_{r\in [0,t']} \frac{\bar K(r)}{e^{r\tf(\beta')}\bar K_{\beta'}(r)}\le 1+ C(t) \quad \text{ with } C(t) =
 \begin{cases}
  C   \quad &\text{ if } t\le \frac{1}{\tf(\beta)},\\
  \gep/C  \quad &\text{ if } t\le \frac{\eta}{\tf(\beta)} \,.
  \end{cases}
\end{equation}
Therefore, one gets
\[
\bQ\Big[ (\gb'/\gb_0)^{|\tau\cap (0,t']|}\Big] \leq (1+C(t)) e^{t \tf(\beta')} \Big( \overline K_{\beta'}(t')+ \int^{t'}_0 u_{\beta'}(s)\overline K_{\beta'}(t-s) \dd s \Big) \leq 1+C(t) \,,
\]
the last identity coming again from a last exit decomposition.
To prove~\eqref{supKKb} we start with the identity
\begin{equation*}
  % \label{lintegrule}
e^{r\tf(\beta')}\bar K_{\beta'}(r)=\frac{\beta'}{\beta_0}\int^{\infty}_0 e^{-v\tf(\beta')}K(r+v) \dd v \geq e^{-c b} \int_0^{b/\tf(\gb)} K(r+v) \dd v \,,
\end{equation*}
where \(b= 1\) or \(b=\eta^{1/2}\) depending on whether we consider the case \(t\leq \tf(\gb)^{-1}\) or \(t\leq \eta\tf(\gb)^{-1}\), and where we have used that $\tf(\beta')$ and $\tf(\beta)$ are of the same order (cf.\ \Cref{homener}). 
We therefore get that 
\[
\frac{\bar K(r)}{e^{r\tf(\beta')}\bar K_{\beta'}(r)} \leq e^{c b} \frac{\bar K(r)}{\bar K(r) - \bar K(b\tf(\gb)^{-1})} \,.
\]
In the case where \(t\leq \tf(\gb)^{-1}\), taking \(b=1\) we get that \(\bar K(r)- \bar K(\tf(\gb)^{-1}) \geq c \bar K(r)\) uniformly for \(r\leq t'= \frac12 \tf(\gb)^{-1}\). 
This concludes the first part of~\eqref{supKKb}.
In the case where \(t\leq \eta/\tf(\gb)\), taking \(b=\eta^{1/2}\), we have that \(\bar K(r)- \bar K(b\tf(\gb)^{-1}) \geq (1-c \eta^{\alpha/2}) \bar K(r)\) uniformly for \(r\leq t' = \frac12\eta^{1/2} b \tf(\gb)^{-1}\). 
This concludes the second part of~\eqref{supKKb}, with \(\gep = C(e^{c \eta^{1/2}}(1-c\eta^{\alpha/2})^{-1} -1)\)

\smallskip

Now let us treat the case where $t\ge \frac{1}{\tf(\beta)}$. 
We set 
\[
m_{\beta}:=\min_{s \geq \tf(\beta)^{-1}} u_{\beta}(s)
 \quad \text{ and } \quad   
M_{\beta}:=\max_{s \geq \tf(\beta)^{-1}} u_{\beta}(s) \,,
\]
and we need to show that 
\begin{equation}\label{dylls}
m_{\beta}\ge c_1 u(\tf(\beta)^{-1}) \quad \text{ and } \quad  M_{\beta}\le c_2 u(\tf(\beta)^{-1}).
\end{equation}
Using the renewal equation \( u_{\beta}(t)= K_{\beta}(t)+\int^{t}_0 u_{\beta}(s) K_{\beta}(t-s)\dd s\), our result for $s\le 1/\tf(\beta)$ and the definition of $m_{\gb}$, we obtain that for any \(t\geq 1/\tf(\gb)\),
\begin{equation*}
 u_{\beta}(t)\ge   K_{\beta}(t)+c\int^{1/\tf(\beta)}_0 u(s) K_{\beta}(t-s)  \dd s + m_{\beta} \int^{t-1/\tf(\beta)}_{0} K_{\beta}(r)\dd r.
\end{equation*}
Using this at a time when $u_{\beta}(t)=m_{\beta}$ (these are the only times where \eqref{dylls} needs to be proved), we obtain that 
\begin{equation*}
 \Big( \int_{t-1/\tf(\beta)}^{\infty} K_{\beta}(r)\dd r \Big)  m_{\beta}(t) \ge  K_{\beta}(t)+c\int^{1/\tf(\beta)}_0 K_{\beta}(t-s) u(s) \dd s.
\end{equation*}
Reorganizing the terms and writing \(K_{\beta}(r) =  \frac{\gb}{\gb_0} e^{-r \tf(\gb)} K(r)\), we obtain that
\begin{equation*}
 m_{\beta} \ge  \frac{K(t)+c\int^{1/\tf(\beta)}_0 e^{s\tf(\beta)} K(t-s) u(s) \dd s}{\int_{-1/\tf(\beta)}^{+\infty} e^{-s\tf(\beta)} K(t+s)\dd s}.
\end{equation*}
It is now an easy exercise to see that for \(t \geq 1/\tf(\gb)\), 
\[
\int_{-1/\tf(\beta)}^{+\infty} e^{-s\tf(\beta)} K(t+s)\dd s \asymp  \tf(\gb)^{-1} K(t)
\quad \text{and} \quad
\int^{1/\tf(\beta)}_0 e^{s\tf(\beta)} K(t-s) u(s) \dd s \asymp  \tf(\gb)^{-1} K(t) u(\tf(\gb)^{-1})  \,,
\]
using also~\eqref{eq:Doney} for the second one.
We therefore get that 
\[
 m_{\beta} \ge c \big( \tf(\gb)^{-1}+ u(\tf(\gb)^{-1})  \big)  \geq c' u(\tf(\gb)^{-1})\,,
\]
using again \eqref{eq:Doney} for the last inequality.
This concludes the proof of \eqref{dylls} for $m_{\beta}$.
The proof for $M_{\beta}$ is analogous.
\end{proof}

\section{Proof of Proposition~\ref{prop:LLT}.}
\label{app:LLT}

Note that \eqref{LLT} is simply the local limit theorem, see e.g.\ \cite[Chapter~9]{GK68}, so we focus on \eqref{krixx}.
The proof follows from the strategy to prove the local limit theorem. We provide the details in the case of the $\gamma$-stable walk (the SRW case is only simpler).
In fact, for pedagogical purposes, we start by proving~\eqref{implicit} since the proof of~\eqref{krixx} will build upon it.

\begin{proof}[Proof of~\eqref{implicit}]
We start by writing,
\begin{equation}
  \label{FourierK}
K(t):= \beta_0 \P(W_t=0) =  \frac{\beta_0}{2\pi} \int_{[-\pi,\pi]} \E\big[ e^{i\xi W_t}\big] \dd \xi=  \frac{\beta_0}{2\pi} \int_{[-\pi,\pi]} e^{-t q(\xi)} \dd \xi \,, 
\end{equation}
where the last identity follows simply by first conditioning on the (Poissonian) number of jumps, and \(1-q(\xi)\) is the characteristic function of \(J(\cdot)\), \textit{i.e.}\
\[
q(\xi) = 1- \sum_{x\in \bbZ} J(x) e^{i\xi x}  = \sum_{x\in \bbZ}  J(x) (1-\cos(\xi x)) \,,
\]
the last identity coming from the symmetry of \(J(\cdot)\).
Now, using a change of variable, we obtain that
\begin{equation}\label{identity}
 1 =  \frac{\beta_0}{2\pi} \int_{\bbR} e^{-t q(u K(t))} \ind_{\{|u| K(t) \le  \pi\} } \dd u.
\end{equation}
Note that \(q(\xi) = q(-\xi)\geq 0\).
Note also that, in view of~\eqref{JPP}, by a Riemann approximation we have
\begin{equation}
  \label{q:asymp}
  q(\xi)\stackrel{\xi\to 0}{\sim} c_{\gamma} \varphi(1/|\xi|) |\xi|^{\gamma}  \quad \text{ with } \quad  c_{\gamma}:=\int_{\bbR}  |s|^{-(1+\gamma)}(1-\cos s) \dd s. 
\end{equation}
We let \(\ell \in [0,\infty]\) be any subsequential limit of \(tq(K(t))\) and let us work along a subsequence \((t_n)_{n\geq 0}\) such that $\lim_{n\to \infty} t_nq(K(t_n))=\ell$.
Using the fact that $\varphi$ is slowly varying, we get that for any fixed \(u\in \mathbb{R}\), \(q(u K(t)) \sim q(K(t)) |u|^{\gamma}\) as \(t\to\infty\), so that
\begin{equation}
  \label{limittq}
  \lim_{n\to\infty} t_n q(u K(t_n)) = \ell |u|^{\gamma} \,.
\end{equation}
Hence, applying Fatou's lemma, we get from \eqref{identity} that \( \int_{\bbR} e^{- \ell |u|^{\gamma}} \dd y \leq \frac{2\pi}{\beta_0}\), so in particular \(\ell>0\).
Additionally, since  \(q: [0,\pi] \to [0,\infty)\) is regularly varying at \(\xi=0\) with index \(\gamma\), we get by Potter's bound that there is a constant \(c>0\) such that \(q(\xi)  \ge c  q( u^{-1} \xi ) u^{\gamma/2}\) for all \(\xi\in [0,\pi] \) and \(u>1\). 
Applying this to \(\xi=u K(t_n)\) gives that \(t_n q(u K(t_n))\geq c t_n q(K(t_n)) u^{\gamma}\) for all \(u\in [1,\pi K(t_n)^{-1}]\). A similar inequality holds for \(-u\), by symmetry.
Bounding \(t_n q(uK(t_n))\geq 0\) outside this interval and using also that \(\ell =\lim_{n\to\infty} t_n q(K(t_n)) >0\), we conclude that
\begin{equation}
  \label{domtq}
 t_n q(u K(t_n)) \ge  c' |u|^{\gamma/2}   \ind_{\{ |u| \in [1,\pi K(t_n)^{-1}] \}} \,,
\end{equation}
for some \(c'>0\).
Now, combining~\eqref{limittq} and~\eqref{domtq}, we get by dominated convergence (and simple domination) that
\[
\begin{split}
  \lim_{\gep \downarrow 0} \lim_{n\to\infty} &\int_{\bbR} e^{-t_n q(u K(t_n))} \ind_{\{|u|\in [0,\gep^{-1}]\}} \dd u =  \int_{\bbR} e^{-\ell |u|^{\gamma}} \dd y \\
\text{ and } \quad 
  \lim_{\gep \downarrow 0} \limsup_{n\to\infty} & \int_{\mathbb{R}} e^{-t_n q(u K(t_n))} \ind_{\{|u| \in [\gep^{-1},\pi K(t_n)^{-1}]\}} \dd u =0\,,
\end{split}
\]
so in particular, we deduce from~\eqref{identity} that
\begin{equation}
  \label{def:ell}
 1 = \frac{\beta_0}{2\pi}\int_{\bbR} e^{- \ell |u|^{\gamma}} \dd y \,.
\end{equation}
This shows that all subsequential limits are equal, so that \(\lim_{t\to\infty} t q(K(t)) =\ell\), where \(\ell\) is defined by the expression~\eqref{def:ell}. Recalling~\eqref{q:asymp}, this shows~\eqref{implicit}.
\end{proof}

\begin{proof}[Proof of~\eqref{krixx}]
Notice that we just have to prove that $t K'(t)/K(t)$ converges, the value of the limit being then prescribed by the behavior of $K(t)$ at infinity. 
Using the expression~\eqref{FourierK} for $K(t)$, we have 
\begin{equation*}
 K'(t):= \frac{\beta_0}{2\pi} \int_{[-\pi,\pi]} q(\xi) e^{-t q(\xi)} \dd \xi = \frac{\beta_0}{2\pi} K(t) \int_{\bbR} q(u K(t)) e^{-t q(u K(t))} \ind_{\{|u|K(t) \le \pi\} } \dd u\,.
\end{equation*}
We therefore get that 
\[
\frac{t K'(t)}{K(t)} = \frac{\beta_0}{2\pi} \int_{\bbR} t q(u K(t)) e^{-t q(u K(t))} \ind_{\{|u|K(t) \le \pi\} } \dd u \,.
\]
Then, using~\eqref{limittq}-\eqref{domtq}, we get by dominated convergence (similarly as for~\eqref{def:ell}) that 
\[
\lim_{t\to \infty} \frac{t K'(t)}{K(t)}=  \frac{\beta_0}{2\pi}\int_{\mathbb{R}} \ell |u|^{\gamma} e^{- \ell |u|^{\gamma}}  \dd u\,,
\]
which concludes the proof.
\end{proof}

\begin{rem}
  The same proof in fact shows that for any \(j\geq 1\), the \(j\)-th derivative of \(K(t)\) verifies
  \[
  \lim_{t\to \infty} \frac{t^j K^{(j)}(t)}{K(t)}=  \frac{\beta_0}{2\pi}\int_{\mathbb{R}} (\ell |u|^{\gamma})^j e^{- \ell |u|^{\gamma}}  \dd u\,,
  \]
  so that \(K^{(j)}(t) \stackrel{t\to \infty}{\sim} (-1)^j (1+\alpha) \cdots (j+\alpha) \,t^{-j} K(t)\) .
\end{rem}

\bibliographystyle{abbrv}
\bibliography{biblio.bib}

\end{document}